\definecolor{note}{rgb}{0.1,0.1,0.4}
\definecolor{note}{rgb}{0.1,0.1,0.4}
\numberwithin{equation}{section}
 \theoremstyle{plain}
\newtheorem{theorem}[equation]{Theorem}
\newtheorem{sublemma}[equation]{Sublemma}
\newtheorem{lemma}[equation]{Lemma}
\newtheorem{corollary}[equation]{Corollary}
\newtheorem{proposition}[equation]{Proposition}
\theoremstyle{definition}
\newtheorem{definition}[equation]{Definition}
\newtheorem{notation}[equation]{Notation}
\newtheorem{remark}[equation]{Remark}
\newtheorem{remarks}[equation]{Remarks}
\newtheorem{example}[equation]{Example}
\newtheorem{examples}[equation]{Examples}
\newtheorem{question}[equation]{Question}
\DeclareRobustCommand{\qed}{%
  \ifmmode
    \eqno \def\@badmath{$$}
    \let\eqno\relax \let\leqno\relax \let\veqno\relax
    \hbox{\openbox}%
  \else
    \leavevmode\unskip\penalty9999 \hbox{}\nobreak\hfill
    \quad\hbox{\openbox}%
  \fi
}
\DeclareMathAlphabet{\euls}{U}{eus}{m}{n}
 \newcommand{\eD}{{\euls{D}}}
\newcommand{\eG}{{\euls{G}}}
 \newcommand{\eS}{{\euls{S}}}
\newcommand{\eu}{{\mathrm{eu}}}
\newcommand{\degeu}{{\deg_{\eu}}}
\newcommand{\vt}{\vartheta}
\DeclareMathOperator{\ord}{{ord}}
\DeclareMathOperator{\Lie}{{Lie}}
\DeclareMathOperator{\Der}{{Der}}
\DeclareMathOperator{\adj}{{ad}}
\newcommand{\lto}{\longrightarrow}
\newcommand{\calA}{\mathcal{A}}
\newcommand{\calC}{\mathcal{C}}
\newcommand{\calS}{\mathcal{S}}
\newcommand{\calO}{\mathcal{O}}
\newcommand{\C}{\mathbb{C}}
\newcommand{\D}{\mathcal{D}}
\DeclareMathOperator{\Hom}{{Hom}}
\DeclareMathOperator{\End}{{End}}
\newcommand{\mc}{\mathcal}
\DeclareMathOperator{\gr}{{gr}}
\DeclareMathOperator{\Sym}{{Sym}}
\DeclareMathOperator{\GKdim}{{GKdim}}
\newcommand{\half}{\frac{1}{2}}
\newcommand{\BZ}{\mathbb{Z}}
\newcommand{\git}{/\!/}
\newcommand{\modmod}{/ \hskip -3pt /}
\newcommand{\rr}{\varrho}
\newcommand{\rrr}{\widetilde{\rr}}
\newcommand{\rrp}{\rr^*}
\newcommand{\rrpp}{\xi}
\newcommand{\gtilde}{\widetilde{\mathfrak{g}}}
\newcommand{\Gtilde}{\widetilde{G}}
 \newcommand{\rad}{\operatorname{rad}}
\newcommand{\Kdim}{\operatorname{Kdim}}
\newcommand{\Ker}{\operatorname{ker}}
\renewcommand{\Im}{\operatorname{Im}}
\renewcommand{\o}{\otimes}
\newcommand{\reg}{\operatorname{reg}}
\newcommand{\rank}{\operatorname{rank}}
\newcommand{\g}{\mathfrak{g}}
\newcommand{\p}{\mathfrak{p}}
\newcommand{\h}{\mathfrak{h}}
\newcommand{\mf}{\mathfrak}
\newcommand{\ds}{\dots}
\newcommand{\idot}{\bullet} 
\newcommand{\mr}{\mathrm}
\newcommand{\R}{\mathbb{R}}
\newcommand{\balp}{\boldsymbol{\alpha}}
\newcommand{\Hk}{H_{\kappa}}
\newcommand{\Ak}{A_{\kappa}}
\newcommand{\dd}{\eD}
\newcommand{\bpsi}{\boldsymbol{\psi}}
\newcommand{\Z}{\mathbb{Z}}
\newcommand{\vs}{\varsigma}
\newcommand{\iso}{\stackrel{\sim}{\rightarrow}}
\newcommand{\isom}{\stackrel{\sim}{\longrightarrow}}
  \newcommand{\deltah}{{h}}
\newcommand{\deltav}{{\delta}}
\def\preisomto{\vbox{\hbox to
                 14pt{\hfill$\sim$\hfill}\nointerlineskip\vskip -0.5pt
                 \hbox to 14pt{\rightarrowfill}}}
\def\isomto{\mathop{\preisomto}}
\def\prelongisomto{\vbox{\hbox to
                17pt{\hfill$\sim$\hfill}\nointerlineskip\vskip -0.5pt
                \hbox to 17pt{\rightarrowfill}}}
\def\longisomto{\mathop{\prelongisomto}}
\def\boplus{\mathbin{\boldsymbol{\oplus}}}
\def\trait{\hbox to 2mm{\hrulefill}}
\newcommand{\N}{{\mathbb{N}}}
\newcommand{\fS}{{\mathfrak{S}}}
\newcommand{\fso}{{\mathfrak{so}}}
\newcommand{\fsl}{{\mathfrak{sl}}}
\newcommand{\fsp}{{\mathfrak{sp}}}
\newcommand{\fs}{{\mathfrak{s}}}
\newcommand{\Asf}{{\mathsf{A}}}
\newcommand{\Bsf}{{\mathsf{B}}}
\newcommand{\Csf}{{\mathsf{C}}}
\newcommand{\Dsf}{{\mathsf{D}}}
\newcommand{\Esf}{{\mathsf{E}}}
\newcommand{\Fsf}{{\mathsf{F}}}
\newcommand{\Gsf}{{\mathsf{G}}}
\newcommand{\Isf}{{\mathsf{I}}}
\newcommand{\Vsf}{{\mathsf{V}}}
\newcommand{\csh}{c_{\mathrm{sh}}}
\newcommand{\clg}{c_{\mathrm{lg}}}
\newcommand{\cher}[2]{H_{{#1}}({#2})}
\newcommand{\cheW}[1]{{H_{{#1}}(W)}}
\newcommand{\chec}{{H_{c}(W)}}
\newcommand{\etriv}{e_{\triv}}
\newcommand{\cchi}{{c^{\chi}}}
\newcommand{\esgn}{{e_{\sgn}}}
\newcommand{\scal}[2]{{\langle {#1} \, , \, {#2} \rangle}}
\theoremstyle{definition}
\theoremstyle{remark}
\theoremstyle{plain}
\newtheorem{mainthm}{Theorem}
\newtheorem{mainhypo}[mainthm]{Hypothesis}
\newcommand{\II}{\mathsf{I\vspace{-.02truein}I}}
\newcommand{\III}{\mathsf{I\vspace{-.02truein}I\vspace{-.02truein}I}}
\newcommand{\IV}{\mathsf{I\vspace{-.02truein}V}}
\newcommand{\VI}{\mathsf{V\vspace{-.02truein}I}}
\newcommand{\VII}{\mathsf{V\vspace{-.02truein}I\vspace{-.02truein}I}}
\newcommand{\VIII}{\mathsf{V\vspace{-.02truein}I\vspace{-.02truein}I\vspace{-.02truein}I}}
\newcommand{\IX}{\mathsf{I\vspace{-.02truein}X}}
\DeclareMathOperator{\GL}{{GL}}
\DeclareMathOperator{\sgn}{{sgn}}
\DeclareMathOperator{\triv}{{triv}}
\DeclareMathOperator{\rk}{{rank}}
\DeclareMathOperator{\ann}{{ann}}
\DeclareMathOperator{\Iring}{{Q}}
\DeclareMathOperator{\st}{{s}}
\newcommand{\integral}{{integral}}
\newcommand{\gainly}{{robust}}
\newcommand{\dims}{{d}}
\newcommand{\NV}{{\mbN(\p)}}
\newcommand{\NVreg}{{\mbN(\p)^{\mathrm{reg}}}}
\newcommand{\Vdist}{{\NV^{\mathrm{dist}}}}
\newcommand{\sminus}{\smallsetminus}
\newcommand{\mbN}{{\mathbf{N}}}
\newcommand{\LL}{{\euls{L}}}
\newcommand{\KK}{{\euls{K}}}
\DeclareMathOperator{\Supp}{{\mathrm{Supp}}}
\DeclareMathOperator{\Ch}{{\mathrm{Ch}}}
\DeclareMathOperator{\HC}{{\mathrm{HC}}}
\begin{document}

 \title[Quantum Hamiltonian Reduction]{Quantum Hamiltonian
   Reduction for Polar Representations}
 \author{G. Bellamy} \address{(Bellamy) School of Mathematics and
   Statistics, University of Glasgow, University Gardens, Glasgow
   G12 8QW, Scotland. }\email{gwyn.bellamy@glasgow.ac.uk}
 \author{T.~Levasseur} \address{(Levasseur) Laboratoire de
   math\'ematiques, CNRS UMR 6205, Universit\'e de Brest, 29238 Brest
   cedex~3, France} \email{thierry.levasseur@univ-brest.fr}
 \author{T. Nevins} \address{(Nevins) Department of Mathematics,
   University of Illinois at Urbana-Champaign, Urbana, IL 61801,
   USA.}
  \author{J. T. Stafford} \address{(Stafford) School of
   Mathematics, The University of Manchester, Manchester M13 9PL,
   England.}  \email{toby.stafford@manchester.ac.uk}

 \dedicatory{\it Dedicated to the memory of our friend and
   coauthor Tom Nevins} \subjclass[2010]{Primary: 13N10, 16S32,
   16S80, 20G05, 22E46. }

 \keywords{ Invariant differential operators, symmetric spaces,
  polar representations, radial
   parts map, Cherednik
   algebra, quantum Hamiltonian reduction}

 \begin{abstract}
        Let $G$ be a reductive complex algebraic group with Lie algebra $\g$ and suppose that $V$ is a  polar $G$-representation. 
   We prove the existence of a radial parts map $\rad: \dd(V)^G\to \Ak$ from the
   $G$-invariant differential operators  on $V$
   to the spherical subalgebra $\Ak$ of a rational Cherednik
   algebra. Under mild
   hypotheses $\rad$ is shown to be surjective.  
   
    If $V$ is a symmetric space, then $\rad$ is surjective, and
   we  determine exactly when $\Ak$ is a simple ring.  When $\Ak$ is simple,  we also show that 
   $\Ker(\rad)=\left(\dd(V)\tau(\g)\right)^G$, where $\tau:\g\to \dd(V)$ is the differential of the $G$-action.

 \end{abstract} 
 
 \maketitle
 
 \tableofcontents

 \section{Introduction} 
\label{introduction}
  
 Throughout the paper except  stated to the contrary, the base field will be
$\C$.  We fix a connected reductive complex algebraic  group $G$ with Lie algebra $\g=\mathrm{Lie}(G)$. 
    
 The idea of using radial parts to understand invariant differential operators goes back (at least) to
 Harish-Chandra \cite{HC2, HC3}, where he defined a \emph{radial parts homomorphism}
 $\HC: \dd(\g)^G \to \dd(\h)^W$. Here $\h$ is a 
 Cartan subalgebra of $\g$ with Weyl group $W$. This was the
 fundamental algebraic step in proving his theorems on the regularity of
 invariant  eigendistributions, and hence the regularity of the characters of
 representations of real forms of $G$. In \cite{Wallach} and \cite{LSSurjective}
 the map $\HC$ was actually shown to be surjective, enabling the authors to
 provide much easier proofs of those key results of
 Harish-Chandra. Further applications of these ideas to Springer
 theory and Weyl group representations appear in \cite{HK, AJM, Wallach}. 
 
 Harish-Chandra's radial parts map (and resulting regularity
 results) was then generalised in \cite{Se, LS3, GL} to symmetric
 space representations $V$ of $G$ satisfying Sekiguchi's
 ``niceness'' condition.  The definitions of symmetric spaces and
 of Sekiguchi's niceness condition can be found in
 Section~\ref{Sec:examples}. Again for nice symmetric spaces it
 was shown in \cite{LS3} that, for a certain abelian subalgebra
 $\mathfrak{a}\subset V$ with associated little Weyl group $W$,
 there is a radial parts map
 $\rad \colon \dd(V)^G\to \dd(\mathfrak{a}/W)$, although the
 precise nature of the image was unknown.
  
  Much more recently, the notion of radial parts maps has been very
 fruitful in the theory of Cherednik algebras or, more
 precisely, of their spherical subalgebras. Here one can use the radial parts map to relate (via the functor of Hamiltonian reduction) equivariant $\dd$-modules on certain $G$-representations to representations of the (spherical) Cherednik algebra. This interplay is useful, both for understanding the equivariant $\dd$-modules on $V$ and the representation theory of Cherednik algebras; see for example 
  \cite{BG,  EG,   GG, GGS,  Lo0, MN, OblomkovHC}. 
  
 \bigskip  
  
Given the applications one can derive from the existence of a radial parts map, it is important to understand how generally these result will hold. This is the goal of the article -- we describe what we expect is the most general setting in which the theory can be developed. As explained below, developing the theory in this generality allows us to prove new results even in the previously ``well-understood'' case of symmetric spaces. 

\subsection*{Polar representations}
 
 To motivate the general setting, let us consider the classical example of rotation invariant differential operators on $\R^3$. Here the radial parts decomposition allows one to understand easily the action of the Laplacian on invariant functions. This is possible because restriction of functions to a line through the origin is an isomorphism between rotation invariant functions on $\R^3$ and functions on the line invariant under the involution $v \mapsto -v$. This latter isomorphism holds precisely because the line is orthogonal to every $SO(3)$-orbit. As a consequence, in order to understand the action of the Laplacian on rotation invariant functions, we are reduced to studying differential operators on the line that are invariant  under $\mathbb{Z}_2$. 
 
 In general, we begin with a slice $\h \subset V$ intersecting
 transversely the (closed) $G$-orbits in $V$. Then the key to the
 existence of a radial parts map is that restriction to $\h$
 should provide a version of the Chevalley isomorphism
 $\C[V]^G \iso \C[\h]^W$, where $W = N_G(\h) / Z_G(\h)$ is a
 \textit{finite} group. The representations where one can find a
 slice with this property are precisely the \emph{polar
   representations} $V$ of \cite{DadokKac}. Thus, it is natural
 to ask whether a radial parts map holds for $\dd(V)^G$ when $V$
 is a polar $G$-representation.  In this paper we show that this
 indeed the case. Indeed, we prove that there exists such a
 radial parts map $\rad$ for any polar representation $V$ (see
 Theorem~\ref{thm:intro-existence}) and, under mild conditions,
 $\rad$ is surjective (see
 Theorem~\ref{thm:intro-surjection}). In this generality
 the image of $\rad$ will now be some spherical subalgebra
 $\Ak(W)$ of a Cherednik algebra, realised as a subalgebra of
 $\dd(\h_{\reg})^W$ via the Dunkl embedding.

   
   In order to state these results precisely, we need some definitions. 
    Following Dadok and Kac \cite{DadokKac},  a 
representation $V$ of $G$ is called  \textit{polar}  if there exists a semisimple element $v \in V$ such that 
$$
\h := \{ x \in V \, | \, \mf{g} \cdot x \subset \mf{g} \cdot v \}
$$
satisfies $\dim \h = \dim V \git G$.  One of the fundamental properties of polar representations is that the finite group $W = N_G(\h)/ Z_G(\h)$ acts as a complex reflection group on $\h$ and, by restriction, defines a Chevalley isomorphism $\rr \colon V \git G \cong \h / W$; see  \cite[Theorem~2.9]{DadokKac}. The group  $W$ is   called 
\emph{the Weyl group} associated to this data. Thus, polar representations are a natural situation---perhaps the most general one---where one can hope to have a radial parts map.
 
It is immediate that invariant differential operators act on
invariant functions, and so there is a natural map
$\phi : \dd(V)^G\to \dd(V\git G) \cong \dd(\h/ W)$, but its
image is almost never in $\dd(\h)^W$. Instead, Harish-Chandra's
morphism $\HC$ differs from $\phi$ by conjugation with
$\delta^{1/2}$ for an appropriate element $\delta$ (the
discriminant).
 
As was already apparent in previous applications to the study of Cherednik algebras, it is important to be able to twist the radial parts map and our version of the the radial parts map will be defined in this context. This is constructed in Section~\ref{sec:radialpartsmap1}, but in outline the definition is as follows.
For any complex reflection group $(W,\h)$, there exists a discriminant $h\in \C[\h]^W$ such that 
 the regular locus $\h_{\reg} = (h\not=0) \subset \h$ is precisely the locus where $W$ acts freely.
 We then define the \textit{discriminant} $\deltav=\rr^{-1}(h)$ on $V$ with regular locus $V_{\reg} =(\deltav\not=0) \subset V$. Let $\tau : \g \to \mathrm{Der} (\C[V])$ be the
   differential  of the action of $G$ on $V$, where $\Der(\C[V]) $
   denotes the space of $\C$-linear derivations.
 If $\chi$ is a character of $\g$ we set
 \[\g_{\chi} = \{ \tau(x) - \chi(x) : x \in \g \}.\]
  One would
 expect to define a radial parts map such that the localization  to the regular locus $V_{\reg}$ is a  surjection
 \begin{equation}\label{eq:localizatioradiso}
   (\dd(V_{\reg}) / \dd(V_{\reg}) \g_{\chi})^G
  \twoheadrightarrow \dd(\h_{\reg})^W \cong \dd(\h_{\reg}/ W).   
 \end{equation}
 It is soon apparent that any twisted radial parts map satisfying
 \eqref{eq:localizatioradiso} is governed by the factorisation
 $\deltav = \deltav_1^{m_1} \cdots \deltav_k^{m_k}$ of the
 discriminant into irreducible, pairwise distinct
 polynomials. Then, for complex numbers $(\vs_1,\dots,\vs_k) \in \C^k$, we
 define $\rad_{\vs} \colon \dd(V)^G \to \dd(\h_{\reg})^W$ by
$$
\rad_{\vs}(D)(z) = (\deltav^{-\vs} D(\rr^{-1}(z) \deltav^{\vs}))
|_{\h_{\reg}},
$$
where
$\deltav^{\vs} := \delta_1^{\vs_1} \cdots \delta_k^{\vs_k}$ and $D \mapsto \deltav^{-\vs}D\deltav^{\vs}$ is the automorphism of $\dd(V_{\reg})^G$ given by conjugation by
$\deltav^{-\vs}$. Twisting by $\delta^{\vs}$ can also be thought of as identifying $\dd(V_{\reg})$ with the ring of differential operators on the free $\C[V_{\reg}]$-module $\C[V_{\reg}]\delta^{\vs}$, as is described in more detail in Section~\ref{sec:radialpartsmap1}.
Since $G$ is connected, each $\deltav_i$ is
a $G$-semi-invariant of weight $\theta_i$ say. This means that the character $\chi=\chi(\vs)$ associate to $\vs$ in \eqref{eq:localizatioradiso} must be $\sum_{i=1}^k \vs_i d \theta_i$.

 With this notation to hand, we can state the first main result of this paper. 

\begin{theorem}\label{thm:intro-existence}
{\rm (Theorem~\ref{thm:surjectivereducerank1} and Lemma~\ref{lem:genericregfiltredsurj})}
 Assume that  $V$ is a polar representation for $G$.  Then there exists a parameter $\kappa= \kappa(\vs)$ for the
  spherical subalgebra $\Ak(W)$ of the rational Cherednik algebra $H_{\kappa}(W)$ such that
  $\mr{Im}(\rad_{\vs}) \subseteq \Ak(W)$. 
  
  The localization of
  $\rad_{\vs}$ to $V_{\reg}$ induces the  surjection
  \eqref{eq:localizatioradiso}.
\end{theorem}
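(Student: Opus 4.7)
The plan is to handle the two claims separately. First I would tackle the surjection on $V_{\reg}$. Because $V$ is polar, the Chevalley morphism $\rr\colon V\git G\iso \h/W$ is an isomorphism which restricts to an isomorphism $V_{\reg}\git G \iso \h_{\reg}/W$ of smooth varieties; combined with the fact that the stabiliser of a point of $\h_{\reg}$ equals $Z_G(\h)$ (and acts trivially on $\h$), this forces the classical moment map $\mu\colon T^*V_{\reg}\to \g^*$ to be flat, with $\mu^{-1}(0)\git G \cong T^*(\h_{\reg}/W)$. From here I would deduce the surjection (\ref{eq:localizatioradiso}) by an order-filtration argument: the induced map on associated graded rings is the classical restriction $\C[\mu^{-1}(0)]^G\to \C[T^*(\h_{\reg}/W)]$, which is surjective by the previous identification, so one can lift any $W$-invariant differential operator on $\h_{\reg}$ to a $G$-invariant operator on $V_{\reg}$ modulo $\dd(V_{\reg})\g_{\chi}$ order by order.

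Next I would establish the containment $\mr{Im}(\rad_{\vs})\subseteq \Ak(W)$. The Dunkl embedding characterises $\Ak(W)\subset \dd(\h_{\reg})^W$ as the $W$-invariant operators whose singularities along each reflection hyperplane $\{\deltav_i=0\}$ are controlled by a residue condition depending on a Cherednik parameter. Since $\rad_{\vs}(D)$ is automatically $W$-invariant (the construction is normalised by $N_G(\h)$, which acts through $W$ on $\h$), the only remaining point is to verify the local condition at a generic point $p$ of each component $\{\deltav_i=0\}$. I would do this by applying a Luna-type slice theorem at $p$, together with the compatibility of the polar structure with passage to \'etale or formal neighbourhoods, in order to reduce to a rank-one polar representation of a reductive subgroup $G'\subset G$ whose associated little Weyl group is the cyclic reflection subgroup attached to $\{\deltav_i=0\}$. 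In that rank-one setting $\dd(V)^{G'}$ can be presented explicitly, and a direct computation -- comparing the exponent $m_i$ of $\deltav_i$ with the order of the reflection -- then determines a Cherednik parameter $\kappa(\vs)$ for which $\rad_{\vs}(D)$ satisfies the required local condition.

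The hard part will be the rank-one reduction itself: one needs a clean statement that the slice representation at a generic point of $\{\deltav_i=0\}$ is itself polar of rank one, and that $\rad_{\vs}$ is functorial under this localisation, so that containment in the local rank-one Cherednik algebra can be transferred back to containment in $\Ak(W)$. A related subtlety is the precise matching of parameters: $\vs_i$ encodes the conjugation-twist by $\deltav_i^{\vs_i}$, while each $\kappa_i$ is defined via residues in the Dunkl embedding, and pinning down the affine relationship between the two is exactly where the character $\chi(\vs)=\sum_i \vs_i\, d\theta_i$ enters and must be checked to be compatible with the $G$-semi-invariance of each $\deltav_i$.
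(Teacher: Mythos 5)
Your treatment of the containment $\Im(\rad_{\vs})\subseteq\Ak(W)$ is essentially the paper's strategy: the ``local characterisation'' of $\Ak(W)$ along hyperplanes that you invoke is Theorem~\ref{thm:intersectsphericalinddhreg} ($\Ak(W)=\bigcap_{H\in\calA}\Ak(W_H)_{\deltah_H}$), the Luna-slice reduction at a generic point of a hyperplane to a rank-(at most)-one polar representation with Weyl group $W_H$ is Lemmas~\ref{lem:slicebproperties} and~\ref{lem:PQRdecomp}, and the explicit rank-one determination of $\kappa(\vs)$ is Section~\ref{sec:radialpartsmap1}. The two ``hard parts'' you flag are exactly where the work lies. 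Two caveats of execution: the slices are taken at generic points of the reflection hyperplanes $H\subset\h$ rather than of the divisors $\{\deltav_i=0\}\subset V$, and the parameter is read off from the roots of a multivariable $b$-function of the degree-$\ell_H$ invariant on the slice (Proposition~\ref{thm:PHVbfunction}, Corollary~\ref{cor:kappa-defn}), not from the exponents $m_i$.

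Your argument for the surjection \eqref{eq:localizatioradiso} has a genuine gap. For a general polar representation $V_{\reg}\cong G\times_{N_G(\h)}(\h_{\reg}\times U)$ (Proposition~\ref{lem:unstablepolaropen}), where $U$ is the $N_G(\h)$-stable complement to $\h\oplus\g\cdot\h$; this summand vanishes only when $V$ is stable. Neither of your two intermediate claims survives this. Flatness of $\mu$ over $T^*V_{\reg}$ already fails for the adjoint representation: there the fibre of $\mu^{-1}(0)$ over a regular semisimple point is the Cartan containing it, so $\dim\bigl(\mu^{-1}(0)\cap T^*V_{\reg}\bigr)=\dim\g+\dim\h$, which exceeds the expected dimension $2\dim V-\dim\g=\dim\g$. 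And the identification of the reduction with $T^*(\h_{\reg}/W)$ is precisely what breaks when $U\neq 0$: the paper's Proposition~\ref{lem:localizeradialpartsunstable} shows that $(\dd(V_{\reg})/\dd(V_{\reg})\g_{\chi})^G$ is identified with $(\dd(\h_{\reg}\times U)/\dd(\h_{\reg}\times U)\tau_U(\mf{z}))^N$, which maps onto $\dd(\h_{\reg})^W$ only via a \emph{split surjection} whose extra factor comes from $\dd(U)^{Z_G(\h)}$; the introduction notes that \eqref{eq:localizatioradiso} is an isomorphism only in the stable case. The paper instead proves the surjection either by Schwarz's lifting theorem applied to the slice $(Z_G(\h),\,\h\oplus U)$ at a regular semisimple point, using $U\git Z_G(\h)=\{\mathrm{pt}\}$ (Lemma~\ref{lem:genericregfiltredsurj}), or by the quantum Hamiltonian reduction identification \eqref{eq:qhrprincipalspace} together with the explicit splitting. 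To repair your route you would need, at a minimum, a direct proof that $\C[\mu^{-1}(0)]^G\to\C[\h_{\reg}\times\h^*]^W$ is surjective in the presence of $U$ (not an isomorphism, which is false in general), together with a justification that the order filtration on $(\dd(V_{\reg})/\dd(V_{\reg})\g_{\chi})^G$ has the associated graded you are implicitly using, so that the order-by-order lifting is legitimate.
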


Fundamental to the proof of this theorem is the fact that the
parameter $\kappa$ can be computed by taking rank one slices in
$V$. Following \cite{Le3}, the rank one case can be understood explicitly, and is described in Section~\ref{sec:radialpartsmap1}.  Using those slices, the proof of the theorem is then completed in Section~\ref{sec:radialpartsmap}. In explicit examples, the method of reduction to rank one slices provides a powerful, and quick, way to compute the parameter $\kappa$. This completely avoids the arduous calculations one normally needs to perform to compute the image of $\rad_{\vs}$. 

We remark that, if the representation $V$ is stable, then 
 \eqref{eq:localizatioradiso} is actually an isomorphism; see Corollary~\ref{cor:unstablepolaropen}. This will be important in \cite{BNS}.

 Naturally, our second main results
describes situations in which the radial parts map
$\rad_{\vs} \colon \dd(V)^G \to \Ak(W)$ is surjective.

\begin{theorem}\label{thm:intro-surjection} 
{\rm(Theorem~\ref{thm:radsurjects} and Corollary~\ref{thm:Levsurjectiverank1}.)}
Assume that  $V$ is a polar representation for $G$. 
  Then the  map $\rad_{\vs} \colon \dd(V)^G \to \Ak(W)$ is surjective
 in each of the following cases:
  \begin{enumerate}
  \item when $\Ak(W)$ is a simple algebra;
  \item when $V$ is a symmetric space;
  \item when the associated complex reflection group $W$ is a Weyl
    group with no summands of type $\mathsf{E}$ or $\mathsf{F}$;
  \item when the rank of $(W,\h)$ is at most one.
  \end{enumerate}
\end{theorem}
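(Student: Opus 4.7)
My strategy is to handle part (4) as the base case, use it to bootstrap (1) via an ideal argument, and then reduce (2), (3) to (1) or to (4).

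For part (4), rank $\leq 1$: when $\dim \h = 0$ the statement is vacuous, so assume $\dim \h = 1$. Then $W$ is a cyclic complex reflection group and $A_\kappa(W)$ admits an explicit small generating set (the low-degree $W$-invariants on $\h$ and a single Dunkl-type operator, together with the Euler element). Using the rank-one analysis of Section~\ref{sec:radialpartsmap1} (following \cite{Le3}), which already computes $\rad_\vs$ on a concrete list of $G$-invariants of $\D(V)^G$, one checks by inspection that each required generator of $A_\kappa(W)$ lies in the image.

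For part (1), assume $A_\kappa(W)$ is simple and set $J := \mathrm{Im}(\rad_\vs) \subseteq A_\kappa(W)$. By Theorem~\ref{thm:intro-existence}, inverting the discriminant $h$ yields $J[h^{-1}] = \D(\h_{\mathrm{reg}})^W = A_\kappa(W)[h^{-1}]$. Define
\[
I := \{\,a \in A_\kappa(W) : A_\kappa(W)\cdot a\cdot A_\kappa(W) \subseteq J\,\},
\]
which is a two-sided ideal of $A_\kappa(W)$ by construction. The main step is to show $I \neq 0$; once that is done, simplicity of $A_\kappa(W)$ forces $I = A_\kappa(W)$, hence $1 \in I$ and $J = A_\kappa(W)$. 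To produce a nonzero element of $I$, I would use the Bernstein filtration on $A_\kappa(W)$, under which $A_\kappa(W)[h^{-1}] = J[h^{-1}]$ becomes a strict equality filtered piece by filtered piece; combined with $h \in J$ (the image of $\rr^*(h)$) and the PBW decomposition of $A_\kappa(W)$, this should let one bound uniformly, on each filtered piece, the power of $h$ needed to absorb into $J$, and then use the finite generation of $A_\kappa(W)$ as a module over a central/commutative subring to exhibit a specific power of $h$ (or a closely related element) lying in $I$.

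For part (2), symmetric spaces: when the parameter $\kappa(\vs)$ attached to the symmetric pair lies in the simple locus of $A_\kappa(W)$ determined later in the paper, invoke (1). For the remaining (exceptional) parameters, combine the Kostant--Rallis description of $\C[V]^G$ with rank-one reduction along each restricted root subsystem, using (4) to handle each rank-one slice and then patching. For part (3), reduce to each irreducible summand and go case by case through types $\mathsf{A},\mathsf{B},\mathsf{C},\mathsf{D},\mathsf{G}$; for each of these the known simplicity criterion for spherical Cherednik algebras at the parameters arising from polar representations reduces to (1), with the $\mathsf{E}$ and $\mathsf{F}$ types excluded precisely because this simplicity criterion is not known (or fails) in the relevant range.

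The main obstacle is clearly step (1): the bare equality $J[h^{-1}] = A_\kappa(W)[h^{-1}]$ together with simplicity is insufficient in general (for instance, $\C[x,x\partial_x] \subsetneq \C[x,\partial_x]$ has equal $x$-localizations in the simple Weyl algebra). One must exploit the fact that $\rad_\vs$ is a morphism of \emph{filtered} algebras whose localized graded version becomes surjective in a controlled way, in order to produce the uniform absorption of $A_\kappa(W)$ into $J$ by a fixed power of $h$ that underlies the non-vanishing of $I$.
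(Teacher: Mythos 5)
There is a genuine gap, and it sits exactly where you flag the ``main obstacle'' in part (1). Your plan is to deduce $J=\Ak(W)$ from simplicity by producing a nonzero element of the two-sided conductor $I=\{a: \Ak(W)a\Ak(W)\subseteq J\}$ via a ``uniform absorption of $\Ak(W)$ into $J$ by a fixed power of $h$.'' But your own counterexample $\C[x,x\partial_x]\subsetneq \C[x,\partial_x]$ kills this mechanism: there the subalgebra contains $x$, the $x$-localizations agree (even filtered-surjectively, since $x^k\partial_x^k=(x\partial_x)(x\partial_x-1)\cdots$), the ambient algebra is simple, and yet no fixed power of $x$ absorbs $\partial_x^k$ for all $k$ (the subalgebra contains no element of positive Euler degree), so the conductor is zero. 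Nothing in the data you invoke --- localized filtered surjectivity plus $h\in J$ plus simplicity --- distinguishes $\rad_{\vs}$ from this situation. The paper's actual route is different and supplies precisely the missing ingredient: Theorem~\ref{prop:Radchirestiso} shows that $\rad_{\vs}$ restricts to graded \emph{isomorphisms} $\C[V]^G\iso\C[\h]^W$ and $(\Sym V)^G\iso(\Sym\h)^W$ (the second is the hard part, proved by playing the Euler grading against the order filtration as in Lemma~\ref{lem:filteredgradeddegreeinvar}); then \cite[Theorem~4.6]{BEG} says that a \emph{simple} $\Ak(W)$ is generated by $\C[\h]^W$ and $(\Sym\h)^W$, so the image is everything. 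In your counterexample the subalgebra fails to contain $\C[\partial_x]$, which is exactly the hypothesis this argument secures.

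Parts (2) and (3) are also not reducible in the way you propose. For (3) the hypothesis does not put $\kappa$ in the simple locus --- the point is Wallach's theorem (via \cite[Proposition~4.9]{EG}) that for Weyl groups without $\mathsf{E}$ or $\mathsf{F}$ summands $\Ak(W)$ is generated by $\C[\h]^W$ and $(\Sym\h)^W$ for \emph{all} parameters, after which Theorem~\ref{prop:Radchirestiso} again concludes; no simplicity is available or needed. For (2), the symmetric spaces with non-simple $\Ak$ cannot be handled by ``patching rank-one slices'': the slice analysis only controls $\Im(\rad_{\vs})$ after localizing at the $h_p$, and intersecting localizations (Theorem~\ref{thm:intersectsphericalinddhreg}) yields the \emph{upper} bound $\Im(\rad_{\vs})\subseteq\Ak(W)$, not surjectivity. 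The paper instead passes to associated graded rings: it uses Tevelev's theorem that $(\C[V]\otimes\Sym V)^G\to\C[\h\times\h^*]^W$ is surjective, plus stability of $V$ and a maximal-order/equivalent-order argument (Theorem~\ref{thm:assgrsurjradchi}, Lemmas~\ref{lem:images} and~\ref{lem:gradeofJ2}) to force $\gr_{\rad_{\vs}}$ to be onto. Your part (4) is essentially correct and matches Corollaries~\ref{cor:Imradrank1} and~\ref{cor:kappa-defn}, which exhibit the generators $z$, $\ell z\partial_z+\mathbf{d}$ and $\nabla$ of $\Ak(W)$ in the image.
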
 

We do not know of any  polar representations where
$\rad_{\vs}$ is not surjective. 

If $\vs$ and $\vs'$ are two choices of twist then the images of $\rad_{\vs}$ and $\rad_{\vs'}$ are not, in general, equal as subalgebras of $\dd(\h_{\reg})^W$. However, as shown in Corollary~\ref{lem:invconjugationvs}, they are \textit{isomorphic} if $\chi(\vs) = \chi(\vs')$ since the kernel of $\rad_{\vs}$ equals that of $\rad_{\vs'}$ in this case.   

\subsection*{Symmetric spaces}

In the companion paper \cite{BNS} we will apply the map  $\rad_{\vs}$ to study the representation theory of admissible 
 $\dd$-modules on $V$ and of modules over $\Ak(W)$.
  The theory developed there only works when $\Ak(W)$ is a simple ring, and so it is is important to
  understand when that is the case. In   Sections~\ref{Sec:examples} and~\ref{Sec:kernel} we study the case where
   $V$ is a symmetric space, at least for the most important case when the character $\chi$ is zero. 
   The precise   result is given in Theorem~\ref{simplicity-irred-symm-space} and depends upon a
    case-by-case analysis. But to summarise, 
  \emph{if $V$ is an irreducible symmetric space and $\vs=0$,  then $\Ak$ is simple if and only if 
  either $V$ is nice or the parameter $\kappa$ takes only integer values.} The symmetric spaces 
  satisfying these properties are itemised in the tables in Appendix~\ref{app-tables}.

It is easy to see that the kernel of $\rad_{\vs}$
contains $(\dd(V) \g_{\chi})^G$. Much harder than surjectivity is
to decide when the kernel of $\rad_{\vs}$ equals
$(\dd(V) \g_{\chi})^G$. This is known for the adjoint representation of  a reductive Lie algebra $G$ on $\g=\mathrm{Lie}(G)$ \cite{LS} and, more generally,  for nice symmetric spaces \cite[Theorem~A]{LS3}. 
In Theorem~\ref{mainthm} we generalise this to prove:

\begin{theorem}\label{intro-mainthm} Let $V$ be a symmetric space and take $\vs=0$. If   the algebra 
$\Ak=\Im(\rad_0)$ is simple, then $\Ker(\rad_0)= (\dd(V) \tau(\g))^G$.
\end{theorem}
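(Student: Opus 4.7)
The easy inclusion $(\dd(V)\tau(\g))^G \subseteq \Ker(\rad_0)$ is immediate from the definition of $\rad_0$ at $\vs = 0$: via the Chevalley isomorphism $\C[V]^G \cong \C[\h]^W$, $\rad_0$ is characterised as the unique algebra homomorphism $\dd(V)^G \to \dd(\h_{\reg})^W$ intertwining the actions on these rings of invariants. Since $\tau(x)$ kills $\C[V]^G$ for every $x \in \g$, so does every element of $\dd(V)\tau(\g)$, and thus any $G$-invariant such element lies in $\Ker(\rad_0)$. Consequently $\rad_0$ factors through the quantum Hamiltonian reduction
\[ R := \dd(V)^G/(\dd(V)\tau(\g))^G \]
to give a surjective algebra homomorphism $\overline{\rad_0}: R \twoheadrightarrow \Ak$; surjectivity is supplied by Theorem~\ref{thm:intro-surjection}(2). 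The task is to show that $\overline{\rad_0}$ is injective.

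The next step is to identify $\Ker(\overline{\rad_0})$. First, $\Ak$ is always a domain: with the Bernstein filtration, $\gr \Ak = \C[\h \oplus \h^*]^W$, a commutative domain. Second, by the localisation assertion in Theorem~\ref{thm:intro-existence}, inverting the discriminant $\deltav$ in $R$ yields an isomorphism $R_\deltav \isomto \dd(\h_{\reg})^W$ that is compatible with the Dunkl embedding $\Ak \hookrightarrow \dd(\h_{\reg})^W$. Hence the composite $R \twoheadrightarrow \Ak \hookrightarrow \dd(\h_{\reg})^W$ agrees with the canonical map $R \to R_\deltav \isomto \dd(\h_{\reg})^W$, and its kernel is exactly the $\deltav$-torsion ideal
\[ T := \{\,\bar a \in R : \deltav^n \bar a = 0 \text{ for some } n \geq 0\,\}. \]
Since $\overline{\rad_0}(\deltav) \neq 0$ in the domain $\Ak$, it is enough to prove that $R$ is a prime ring: then $\deltav$ is a non-zero-divisor in $R$, forcing $T = 0$.

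Primality of $R$ is shown by passing to the order filtration on $\dd(V)$. Writing $\mu : T^*V \to \g^*$ for the classical moment map of the $G$-action on $V$, the principal symbol of $\tau(\g)$ cuts out $\mu^{-1}(0) \subset T^*V$, so there is a diagram of surjections
\[ \C[\mu^{-1}(0)]^G \twoheadrightarrow \gr R \twoheadrightarrow \gr \Ak = \C[\h \oplus \h^*]^W. \]
For symmetric spaces the classical Chevalley--Kostant--Rallis theorem identifies the two endpoint algebras, at least on the reduced level: $\C[\mu^{-1}(0)]^G_{\mathrm{red}} \cong \C[\h \oplus \h^*]^W$. Trapped between two copies of a commutative integral domain, $\gr R$ is itself a commutative domain, whence $R$ is prime and the proof concludes.

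The main obstacle is controlling the scheme-theoretic structure of $\mu^{-1}(0)/\!/G$, i.e.\ ruling out spurious nilpotents in $\gr R$; equivalently, showing that the symmetric-pair commuting variety is reduced on its $G$-invariant coordinate ring. This is not known unconditionally, and this is precisely where the hypothesis \emph{$\Ak$ simple} is used. By Theorem~\ref{simplicity-irred-symm-space}, the symmetric pairs $(G,V)$ for which $\Ak$ is non-simple are an explicit list (see the tables of Appendix~\ref{app-tables}); for the remaining pairs either the Kostant--Rallis--Panyushev reducedness results apply directly or a case-by-case verification suffices to establish primality of $\gr R$. Alternatively, one exploits simplicity more directly: $\Ker(\overline{\rad_0})$ is then a maximal two-sided ideal and, identified with $T$ and combined with the fact that multiplication by $\deltav$ preserves $T$ yet $\deltav \notin T$, this structural rigidity forces $T = 0$. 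Either route yields $\Ker(\overline{\rad_0}) = 0$ and therefore $\Ker(\rad_0) = (\dd(V)\tau(\g))^G$.
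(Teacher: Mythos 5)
Your setup is sound and matches the paper's: the easy inclusion, the factorisation through $R=\dd(V)^G/(\dd(V)\tau(\g))^G$, and the identification of $\Ker(\overline{\rad_0})$ with the $\deltav$-torsion ideal $T$ of $R$ (this is, in invariant form, Lemma~\ref{LSresults}(2) combined with Corollary~\ref{cor:localizeradialpartsunstable}). But the whole content of the theorem is the vanishing of $T$, and neither of your two routes establishes it. The associated-graded route fails because being ``trapped between two domains'' does not make $\gr R$ a domain: for $A\twoheadrightarrow B\twoheadrightarrow C$ with $A,C$ domains, $B$ need not be one (compare $\C[x]\twoheadrightarrow\C[x]/(x^2)\twoheadrightarrow\C$). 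What you would need is injectivity of the composite $\C[\mu^{-1}(0)]^G\to\C[\h\oplus\h^*]^W$, i.e.\ that the invariant ring of the scheme-theoretic zero fibre of the moment map is reduced and equal to $\C[\h\oplus\h^*]^W$; for symmetric pairs this is bound up with irreducibility and reducedness questions about the commuting variety $\euls{C}(\p)$ that are open in general, and Tevelev's theorem \cite{Te} supplies only the surjectivity used in Theorem~\ref{thm:assgrsurjradchi}. Your second route is vacuous: if $\Ak$ is simple then $T$ is a maximal two-sided ideal of $R$, but a surjection onto a simple ring carries no information whatsoever about its kernel being zero --- that \emph{is} the problem.

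The paper uses the simplicity hypothesis in an entirely different way. By Theorem~\ref{simplicity-irred-symm-space}, simplicity of $\Ak$ forces $(\gtilde,\vt)$ to be a product of nice and \integral\ pairs; the nice factors are handled by citing \cite{LS3}, and even the reduction to irreducible summands (Lemma~\ref{sumsymmetricpairs}) already needs simplicity via a bimodule torsion-freeness result from \cite{BNS}. For the \integral\ factors the paper proves the stronger statement $\KK(\p)=\dd(\p)\tau(\g)$ by a $\dd$-module argument: an induction on sub-symmetric pairs shows $\Supp\LL(\p)\subseteq\NV$; the characteristic variety estimate places $\Ch\LL(\p)$ inside $\euls{C}(\p)\cap(Z(\p)\times\NV)$, where $Z(\p)$ consists of non-distinguished nilpotents (using Lemma~\ref{distinguishedinduction} to verify $\Vdist=\NVreg$ on slices); and Proposition~\ref{vanishing}, a holonomicity-plus-dimension-count argument, then forces $\LL(\p)=0$. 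None of this machinery --- or any substitute for it --- appears in your proposal, so the proof has a genuine gap at its core.
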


  One significance of this theorem is that it has the following vanishing result as an easy consequence.  In the special case when  
$V=\g$ is the adjoint representation, this is a classic result of Harish-Chandra \cite[Theorem~3]{HC2}.

\begin{corollary}
  \label{intro-maincor2}  
  {\rm (Corollary~\ref{maincor2}.)}  Assume that the   hypotheses of Theorem~\ref{intro-mainthm} are 
  satisfied 
 and pick a real form $ (G_0,\p_0)$. Let $T$ be a locally
invariant eigendistribution on an open subset $U \subset \p_0$
with support contained in $U \sminus U'$, where $U'$ is the set
of regular semisimple elements in $U$. Then $T=0$.\qed
\end{corollary}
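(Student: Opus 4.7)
The plan is to translate the distributional identities satisfied by $T$ into an algebraic statement about a cyclic module over the simple algebra $\Ak$ via Theorem~\ref{intro-mainthm}, and then to exploit the simplicity of $\Ak$ together with a regularity argument.

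First, local $G_0$-invariance of $T$ yields $\tau(x)T = 0$ for all $x \in \fg_0$; by $\C$-linearity of the map $\tau\colon\fg\to\dd(V)$, the same holds for all $x\in\fg$. Since $\vs = 0$, the associated character $\chi$ vanishes, so $\fg_{\chi} = \tau(\fg)$, and consequently every element of $(\dd(V)\tau(\fg))^G$ annihilates $T$. By Theorem~\ref{intro-mainthm}, this subspace is exactly $\Ker(\rad_0)$; combined with the surjectivity of $\rad_0$ from Theorem~\ref{thm:intro-surjection}(2), the $\dd(V)^G$-action on $T$ factors through the simple algebra $\Ak = \dd(V)^G/\Ker(\rad_0)$.

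Second, the eigendistribution hypothesis furnishes a character $\lambda\colon\Iring\to\C$ of the commutative subalgebra $\Iring\subset\dd(V)^G$ of constant-coefficient $G$-invariant differential operators, so that $(D-\lambda(D))T = 0$ for every $D\in\Iring$. Under $\rad_0$, the algebra $\Iring$ maps onto the polynomial ring $\C[\fh^*]^W\subset\Ak$, so the maximal ideal $\mf m_{\lambda}$ corresponding to $\lambda$ annihilates $T$ and the cyclic $\Ak$-module $\Ak\cdot T$ is a quotient of $\Ak/\Ak\mf m_{\lambda}$. Via the isomorphism \eqref{eq:localizatioradiso} (which applies here because symmetric spaces are stable, see Corollary~\ref{cor:unstablepolaropen}), the restriction of $T$ to $U\cap V_{\reg}$ corresponds to a $W$-invariant distribution on $\fh_{\reg}\cap\fa_0$ satisfying the lifted $\Ak$-eigenequations; by the support hypothesis this restricted distribution vanishes.

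The main obstacle is then to propagate the vanishing from $U'$ to all of $U$, and this is precisely where the simplicity hypothesis on $\Ak$ plays its essential role. The strategy is to view $\dd_U\cdot T$ as a holonomic $\dd_U$-module arising via quantum Hamiltonian reduction from an $\Ak/\Ak\mf m_{\lambda}$-module; when $\Ak$ is simple, this $\dd$-module has regular singularities along the discriminant locus, and a Harish-Chandra-type regularity theorem (generalising \cite{HC2} and the nice symmetric space case of \cite{LS3}) implies that $T$ is locally $L^1$ on $U$. A locally $L^1$ function vanishing on the open dense subset $U'\subset U$ vanishes identically as a distribution, whence $T = 0$. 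I expect this final regularity step to be the hardest part of the argument, as it calls on the full machinery of holonomic $\dd$-modules with regular singularities together with the simplicity of $\Ak$.
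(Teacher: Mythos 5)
Your first two paragraphs are sound and match the paper's setup: infinitesimal invariance kills $\tau(\g)$, the eigendistribution condition kills a maximal ideal $\mathbf{n}\subset(\Sym\p)^G$, and Theorem~\ref{intro-mainthm} converts this into the statement that $T$ is annihilated by $\Ker(\rad_0)$, so that the $\dd(\p)^G$-action factors through the simple ring $\Ak$. The problem is your final step, where all the actual work lies.

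The appeal to ``a Harish-Chandra-type regularity theorem'' to conclude that $T$ is locally $L^1$ is a genuine gap, and in fact inverts the logic of the subject. In Harish-Chandra's development (and in \cite{LS3}), the regularity theorem --- that invariant eigendistributions are locally $L^1$ --- is \emph{deduced from} the vanishing statement you are asked to prove: one shows that an eigendistribution supported in the singular set vanishes, and only then can one identify $T$ with the extension by zero of its restriction to the regular set. Using regularity to prove the corollary is therefore circular. Moreover, the two sub-claims you rely on are not available: nothing in the paper (or elsewhere) shows that simplicity of $\Ak$ forces regular singularities of $\dd_U\cdot T$ along the discriminant, and even granting regular singularities, distribution solutions of regular holonomic systems need not be locally $L^1$ (the module $\dd(\C)/\dd(\C)x$ is regular holonomic with solution $\delta_0$). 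The paper's actual argument is purely algebraic and avoids all analysis of solutions: using Theorem~\ref{mainthm}, the simplicity of $\Ak$, Losev's Bernstein inequality \cite{LosevBernstein} and a characteristic-variety dimension count, one proves the operator identity
\[
\dd(\p) \;=\; \dd(\p)\,\delta^N \;+\; \dd(\p)\,\tau(\g)\;+\;\dd(\p)\,\mathbf{n}
\]
for every $N$ (Corollary~\ref{maincor1}). Writing $1=A\delta^N+B+C$ accordingly and applying this to $T$ gives $T=A\cdot(\delta^N T)$, since $B\cdot T=C\cdot T=0$; the support hypothesis together with the local finiteness of the order of $T$ gives $\delta^N T=0$ locally for $N\gg0$, whence $T=0$. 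If you want to salvage your approach you must replace the regularity step by this kind of direct algebraic annihilation argument; as written, the key implication is unproved and rests on a false general principle.
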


\subsection*{Applications}

We stated earlier in the introduction that our motivation for
constructing the radial parts map in this generality was so that
we can peruse the many resulting applications. The applications
of this work are indeed numerous, to the extent that we have left
them to the companion paper \cite{BNS}. The main application
explored there is to use the radial parts map to the
understanding of the Harish-Chandra $\dd$-module $\eG$ (whose
distributional solutions are precisely the invariant
eigendistributions)  on $V$. For instance, under the assumption that $V$ is stable and $\Ak$ is simple, we show that:
\begin{enumerate}
        \item the Harish-Chandra $\dd$-module $\eG$ is the minimal extension of its restriction to $V_{\reg}$; and
        \item $\eG$ is semisimple if and only if the associated Hecke algebra $\mc{H}_{q(\vs)}(W)$ is semisimple. 
\end{enumerate}  
Moreover, we study the dependence of $\eG$ on the character $\chi$ when $\eG$ is semisimple. It is shown that under their natural labelling, the simple summands of $\eG$ are permuted by the KZ-twist when we vary the parameter $\vs$.  
  
\medskip
\noindent {\bf Acknowledgements:} The third  author was partially
supported by NSF grants DMS-1502125 and DMS-1802094 and a Simons
Foundation Fellowship.  The fourth author is partially supported
by a Leverhulme Emeritus Fellowship.

 Part of this material is based upon work supported by the National Science Foundation under   Grant   DMS-1440140, while the third 
author was   in residence at the Mathematical Sciences Research Institute in Berkeley, California 
during the Spring 2020 semester. 

\medskip Finally Gwyn Bellamy, Thierry Levasseur and Toby Stafford
would like to acknowledge the great debt they hold for Tom Nevins
who tragically passed away while this paper was in
preparation. He was a dear friend and a powerful mathematician
who taught us so much about so many parts of mathematics.


\section{Spherical rational Cherednik
        algebras}\label{Sec:Cherednik}

In this section we define and prove various basic facts, many of
which are well-known, about rational Cherednik algebras and their
spherical subalgebras.  Throughout the paper, $\otimes$ will implicitly mean $\otimes_{\C}$.

First, if $V$ is an affine algebraic variety, we denote by
$\C[V]$ the algebra of regular functions on $V$.   
The symmetric algebra of a vector space $U$ is denoted
$\Sym U$.  Then, following \cite{GGOR} or \cite{EG}, the
Cherednik algebra is defined as follows.

\begin{definition}\label{defn:Cherednik}
        Let $W$ be a complex reflection group with reflection
        representation $\h$ and reflections $\calS\subset W$. To each
        $s\in \calS$ is associated a reflecting hyperplane
        $H = \ker (1 - s)\subset \h$. Let $\calA$ denote the set of all
        reflecting hyperplanes and for each $H \in \calA$, let 
        $W_H \subseteq W$\label{WsubH} denote the point-wise stabiliser
        of $H$. This is a cyclic group of order $\ell_H$ for which 
        we fix a generator $s_H$.

        The elements
        $$
        e_{H,i} = \frac{1}{\ell_H} \sum_{s \in W_H} \mr{det}_{\h}(s)^i s,
        $$
        form an idempotent basis of $\C W_H$ with
        $s e_{H,i} = \mr{det}_{\h}(s)^{-i} e_{H,i}$. Choose
        $\alpha_H \in \h^*$ such that $H = \ker(\alpha_H)$ and
        $\alpha_H^{\vee} \in \h$ is an eigenvector of $s_H$ with
        eigenvalue not equal to one. Then
        $\langle \alpha_H^{\vee},\alpha_H \rangle \neq 0$. 
        For $H \in \calA$ and
        $0\leq i\leq  \ell_H - 1$,    choose  $\kappa_{H,i}\in \C$ subject to
        $\kappa_{w(H),i} = \kappa_{H,i}$ for all $w \in W$, $H$ and
        $i$. Finally, let $T(\h^*\oplus
        \h)$ denote the free algebra on the vector space $\h^*\oplus
        \h$ with skew group algebra $T(\h^*\oplus \h)\rtimes
        W$. 
        
        Following \cite[(3.1)]{GGOR}, the \emph{rational
                Cherednik algebra} (with parameter
        $t=1$) is defined to be  the quotient algebra $\Hk=\Hk(W,\h)$ of $T(\h^*\oplus
        \h)\rtimes W$ by the relations $[x,x'] = [y,y'] = 0$ and
        \begin{equation}\label{eq:relaltRCAnon}
                [y,x] = \langle y,x\rangle + \sum_{H \in \calA} \ell_H
                \frac{\langle \alpha_H^{\vee},x  \rangle \langle y,\alpha_H
                        \rangle}{\langle \alpha_H^{\vee},\alpha_H  \rangle} \sum_{j =
                        0}^{\ell_H-1} (\kappa_{H,j+1} - \kappa_{H,j}) e_{H,j},  
        \end{equation}
        for all $x,x'\in \h^*$ and $y,y'\in
        \h$. This algebra was first introduced in \cite{EG} and, as a vector
        space, $\Hk\cong \C[\h]\otimes_{\C}\C[W]\otimes_{\C} \Sym \h$, by \cite[Theorem~1.3]{EG}.
\end{definition}

\begin{remark}
        We   impose neither the condition
        $\sum_{i = 0}^{\ell_H-1} \kappa_{H,i} = 0$ nor
        $\kappa_{H,0} = 0$. This will be important later when it comes
        to identifying the image of the radial parts map.
\end{remark}

We always identify $\C[\h^*]$ with $\Sym \h$ using the
natural pairing, and regard $\h^*$ as the natural space of
generators for $\C[\h]$. However, it will be more convenient to
use $\Sym \h$ in place of $\C[\h^* ]$ and, similarly, $\Sym V $
in place of $\C[V^*]$. Let $e=\frac{1}{|W|} \sum_{w\in W } w$ be
the trivial idempotent in $\C W$. The  \emph{spherical algebra} or, more formally, the \emph{spherical
        subalgebra}\label{spherical-defn} of $\Hk(W)$ is the algebra $\Ak(W)= e \Hk(W) e$. 
        For brevity, $\Ak(W)$ will often be written
$\Ak$.  
Note that
\[
\C[\h]^W\cong e\C[\h]e \subset \Ak \qquad\text{and}\qquad (\Sym
\h)^W\cong e(\Sym \h)e \subset \Ak.
\]

We note for later the following result.

\begin{lemma}\label{pdQ}
        The algebra $\Ak$ is a free $\left(\C[\h]^W,
        (\Sym \h)^W\right)$-bimodule of finite rank equal to $|W|$. \end{lemma}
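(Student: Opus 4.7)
The plan is to reduce the freeness statement to the analogous statement for the associated graded algebra via the standard filtration $F^{\bullet}$ on $\Hk$ with $\h^{*}$ and $\C W$ placed in degree $0$ and $\h$ placed in degree $1$. By the PBW theorem \cite[Theorem~1.3]{EG} cited in Definition~\ref{defn:Cherednik}, $\gr \Hk \cong \C[\h \oplus \h^{*}] \rtimes W$. Because $we = ew = e$ for every $w \in W$, a short direct computation shows that for any commutative $\C$-algebra $A$ carrying a $W$-action one has an isomorphism $e(A \rtimes W)e \cong A^{W}$ given by $a \mapsto ae$; applied to $A = \C[\h \oplus \h^{*}]$, this identifies $\gr \Ak = e(\gr \Hk)e$ with $\C[\h \oplus \h^{*}]^{W}$. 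Both $\C[\h]^{W}$ and $(\Sym \h)^{W}$ are filtered subalgebras whose associated graded coincides with themselves, so the commutative ring $R := \C[\h]^{W} \otimes_{\C} (\Sym \h)^{W}$ satisfies $\gr R = R$, and its action on $\gr \Ak$ (by left multiplication of $\C[\h]^{W}$ and right multiplication of $(\Sym \h)^{W}$, which agree in the commutative $\gr \Ak$) is exactly the tautological subring inclusion $R \hookrightarrow \C[\h \oplus \h^{*}]^{W}$.

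The heart of the proof is then to verify that $\C[\h \oplus \h^{*}]^{W}$ is free of rank $|W|$ over $R$. By the Chevalley--Shephard--Todd theorem, $\C[\h]$ is free over $\C[\h]^{W}$ and decomposes $W$-equivariantly as $\C[\h] \cong \C[\h]^{W} \otimes_{\C} \mathcal{H}_{\h}$, where the coinvariant algebra $\mathcal{H}_{\h}$ carries the regular representation of $W$; the analogous statement holds on the $\h^{*}$ side. Tensoring these decompositions and taking $W$-invariants (with $W$ acting trivially on $R$) yields
\[
\C[\h \oplus \h^{*}]^{W} \;\cong\; R \otimes_{\C} (\mathcal{H}_{\h} \otimes_{\C} \mathcal{H}_{\h^{*}})^{W}.
\]
A standard character calculation using $\dim V^{W} = \frac{1}{|W|} \sum_{g \in W} \chi_{V}(g)$ and the fact that the regular-representation character vanishes away from the identity gives $\dim_{\C}(\C W \otimes_{\C} \C W)^{W} = |W|$, completing this step.

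To conclude, I would lift freeness from $\gr \Ak$ back to $\Ak$ by the standard filtered-module argument: choose homogeneous elements $b_{1}, \dots, b_{|W|} \in \Ak$ whose symbols constitute an $R$-basis of $\gr \Ak$, and observe that the resulting bimodule map
\[
\Phi \colon \bigoplus_{i=1}^{|W|} \C[\h]^{W} \otimes_{\C} (\Sym \h)^{W} \;\longrightarrow\; \Ak, \qquad (r_{i} \otimes s_{i})_{i} \longmapsto \sum_{i} r_{i}\, b_{i}\, s_{i},
\]
has associated graded equal to the free-basis isomorphism for $\gr \Ak$ over $R$. Since the filtrations on both source and target are exhaustive and satisfy $F^{-1} = 0$, a routine induction on filtration degree upgrades this into the assertion that $\Phi$ itself is an isomorphism. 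I expect the only real obstacle to be the combinatorial step of organising the diagonal $W$-action on the two coinvariant factors so that one genuinely recovers rank $|W|$; every other ingredient is either formal PBW machinery or classical reflection-group theory.
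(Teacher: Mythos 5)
Your proposal is correct, and its overall skeleton (pass to the associated graded algebra via the PBW theorem, prove freeness of $\C[\h\oplus\h^*]^W$ over $\C[\h]^W\otimes_\C(\Sym\h)^W$, then lift along the filtration) is the same as the paper's. The difference lies in the key freeness step. The paper argues structurally: $\C[\h\oplus\h^*]$ is free of rank $|W|^2$ over $\C[\h]^W\otimes_\C(\Sym\h)^W$ by Chevalley--Shephard--Todd applied to each factor, and $\C[\h\oplus\h^*]^W$ is a graded direct summand of it as a bimodule, hence graded projective, hence graded free of finite rank --- note the paper's proof never explicitly pins down that the rank is $|W|$. You instead use the $W$-equivariant decomposition $\C[\h]\cong\C[\h]^W\otimes_\C\mathcal{H}_\h$ with the coinvariant algebra carrying the regular representation, so that $(\mathcal{H}_\h\otimes\mathcal{H}_{\h^*})^W$ has dimension $\frac{1}{|W|}\sum_{g}\chi_{\mathrm{reg}}(g)^2=|W|$; this is more explicit and has the merit of actually producing the rank claimed in the statement, at the cost of needing the (classical, but nontrivial) fact that the coinvariant algebra of a complex reflection group realises the regular representation. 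Your lifting step is the standard one and is fine provided you shift the filtration on each summand of the source by the filtration degree of the corresponding $b_i$; the paper elides this step entirely with the one-line remark that $\Ak$ has associated graded $\C[\h\oplus\h^*]^W$. Both arguments are sound.
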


\begin{proof} 
        Set  $E=  (\Sym \h)^W$. Since $(W,\h)$ is a complex reflection group, $\C[\h\oplus \h^*]$ is a free module of rank $|W|^2$ over the
        subring $\C[\h]^W\otimes_{\C} E$. As $\C[\h \oplus \h^*]^W$ is a
        graded direct summand of $\C[\h \oplus \h^*]$ as a
        $\left(\C[\h]^W, E\right)$-bimodule, it follows that $\C[\h
        \oplus \h^*]^W$ is itself a free $\left(\C[\h]^W,
        E\right)$-bimodule of finite rank.  Since $\Ak$ has associated
        graded ring $\C[\h \oplus \h^*]^W$, the ring $\Ak$ is also free of
        finite rank as a $\left(\C[\h]^W, E\right)$-bimodule.  
\end{proof}

\subsection*{The Dunkl embedding}

We begin by recalling some standard facts regarding complex
reflection groups; for references, see
\cite{Broue}. Recall that 
the point-wise stabiliser $W_H$ of $H \in \calA$ is a cyclic group of
order $\ell_H$. The element $\prod_{H\in \calA} \alpha_H$ acts
as the inverse determinant character
$\det_{\h}^{-1} \colon W \to \C^{\times}$; see
\cite[Theorem~2.3]{StanleySemiInv}. Moreover, the
\textit{discriminant} on $\h$ is defined to be
\begin{equation}\label{eq:discriminant}
        \deltah = \prod_{H \in \calA} \alpha_H^{\ell_H} \in \C[\h]^W;
\end{equation}
this polynomial is $W$-invariant, and is reduced as an element of
$\C[\h]^W$.

\begin{notation}\label{not:regularelementh}
        Let $\h_{\reg}$ denote the open subset of $\h$ where $W$ acts
        freely. It is a consequence of Steinberg's Theorem that
        $\h_{\reg}$ is precisely the non-vanishing locus of $\deltah$. A
        vector $x \in \h_{\reg}$ is called \textit{regular}.
\end{notation}

Set
\begin{equation}\label{eq:Hdiscriminant}
        \deltah_H = \alpha_H^{-\ell_H} \deltah. 
\end{equation}
Since both $\deltah$ and $\alpha_H^{\ell_H}$ are $W_H$-invariant,
$\deltah_H$ is a $W_H$-invariant.  Let
$$
\Ak(W_H)_{\deltah_H} = \Ak(W_H)[\deltah_H^{-1}]
$$
denote the localisation of $\Ak(W_H)$ at the Ore set
$\{ \deltah_H^k \}$.

As in \cite[Proposition~4.5]{EG}, the \textit{Dunkl embedding}
$\Hk(W) \hookrightarrow \dd(\h_{\reg}) \rtimes W$ is given by
$x \mapsto x, w \mapsto w$ and  \begin{equation}\label{eq:Dunkl}
   y \mapsto T_y := {\partial_y} + \sum_{H \in
                \calA} \frac{\langle y,\alpha_H \rangle}{\alpha_H} \sum_{i
                = 0}^{\ell_H - 1} \ell_H \kappa_{H,i} e_{H,i}. 
\end{equation}  
for all $x \in \h^*, y \in \h$ and $w \in W$, where $\partial_y
\in \Der \C[\h]$ is the vector field defined by $y$.
We remark that the Dunkl operator $T_y$ preserves
$\C[\h] \subset \C[\h_{\reg}]$ only if $\kappa_{H,0} = 0$ for
all $H \in \calA$. We do not impose this condition.

Using the Dunkl embedding, we will always regard  $\Hk(W)$ and $\Hk(W_H)$ as
subalgebras of $\dd(\h_{\reg}) \rtimes W$. Since
$e \Hk(W) e$ is realised as an algebra of
$\mr{ad}(\C[\h_{\reg}]^W)$-nilpotent operators in
$\End_{\C}(\C[\h_{\reg}]^W)$, it follows that $e \Hk(W) e$
acts on $\C[\h_{\reg}]^W$ as differential operators. This defines a map
$$
\Phi \colon \Ak(W) \ = \ e\Hk(W)e \ \ \hookrightarrow  \ \ \dd(\h_{\reg}/W)  \ =
\ \dd(\h_{\reg})^W \  \subset  \ \dd(\h_{\reg}).
$$
The map $\Phi_{H} \colon \Ak(W_H) \hookrightarrow \dd(\h_{\reg})$ is defined
similarly. Thus, we can and will consider $\Ak(W)$, $\Ak(W_H)$ and
$\Ak(W_H)_{\deltah_H}$ as subalgebras of $\dd(\h_{\reg})$.

The following result is standard.

\begin{lemma}  \label{lem:quotientetaleh}
                {\rm (1)}  Under the Dunkl embedding, 
                $\Hk(W)[h^{-1}]= \dd(\h_{\reg})\rtimes W$ and hence
                $\Ak(W)[h^{-1}] =\dd(\h_{\reg})^W$.  
                
                {\rm(2)} The canonical map
                $\dd(\h_{\reg})^W \to \dd(\h_{\reg}/ W)$, given
                by restriction of operators, is an isomorphism of
                filtered algebras.

{\rm (3)} The centre of $\Ak(W)$ reduces to $\C$.                 
\end{lemma}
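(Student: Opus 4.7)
The plan is to address the three parts in order, with part (1) providing the localisation that underpins parts (2) and (3).

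For part (1), the key observation is that the Dunkl formula \eqref{eq:Dunkl} writes $T_y = \partial_y + R_y$ with $R_y \in \C[\h_{\reg}] \rtimes W$ once $h$ is inverted (each $\alpha_H$ divides $h$, so $\alpha_H^{-1} \in \C[\h_{\reg}]$). Hence, after inverting $h$, every $\partial_y$ lies in $H_\kappa(W)[h^{-1}]$; combined with the inclusion $\C[\h_{\reg}] \rtimes W \subset H_\kappa(W)[h^{-1}]$, and the fact that $\C[\h_{\reg}]$, $W$, and the $\partial_y$ generate $\dd(\h_{\reg}) \rtimes W$, we obtain the inclusion $\dd(\h_{\reg}) \rtimes W \subseteq H_\kappa(W)[h^{-1}]$. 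The reverse inclusion is immediate since the Dunkl embedding lands in $\dd(\h_{\reg}) \rtimes W$. Applying the idempotent $e$ on both sides and using the standard identification $e(B \rtimes W)e \cong B^W$ for any $\C$-algebra $B$ with $W$-action (via $b \mapsto b e = eb$ for $b \in B^W$), applied to $B = \dd(\h_{\reg})$, yields $\Ak(W)[h^{-1}] = \dd(\h_{\reg})^W$.

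For part (2), the key input is that the quotient morphism $\h_{\reg} \to \h_{\reg}/W$ is an \'etale Galois $W$-cover, as $W$ acts freely on $\h_{\reg}$ by Steinberg's theorem. I would then invoke the standard descent result for differential operators along an \'etale $W$-torsor $X \to Y$: the natural restriction map $\dd(X)^W \to \dd(Y)$, defined by letting a $W$-invariant operator act on $W$-invariant functions, is a filtered isomorphism (one may verify this locally: $W$-invariance plus \'etaleness identify the symbol algebras $\mathcal{O}(T^*X)^W$ and $\mathcal{O}(T^*Y)$, and one then lifts along the order filtration). Applied to $X = \h_{\reg}$, $Y = \h_{\reg}/W$, this gives the desired isomorphism.

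For part (3), I will reduce to the known fact that $Z(\dd(X)) = \C$ for any smooth irreducible variety $X$. Combining (1) and (2) gives $\Ak(W)[h^{-1}] \cong \dd(\h_{\reg}/W)$, and $\h_{\reg}/W$ is smooth and connected (as $\h_{\reg} \subset \h$ is open in the irreducible variety $\h$, and $W$ is finite), so $Z(\dd(\h_{\reg}/W)) = \C$. The argument for this standard fact: any $z \in Z(\dd(X))$ commutes with every $f \in \mathcal{O}(X)$, and since $\ord [z,f] < \ord z$ unless $z$ has order zero, we deduce inductively that $z \in \mathcal{O}(X)$; commuting further with every vector field then forces $z$ to be locally constant, hence constant on the connected variety $X$. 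It remains to note that the localisation map $\Ak(W) \to \Ak(W)[h^{-1}]$ is injective: this follows from Lemma \ref{pdQ}, since $\Ak(W)$ is a free $\C[\h]^W$-module and $h \in \C[\h]^W$ is a non-zerodivisor. Any central element $z \in Z(\Ak(W))$ then has central image in $\Ak(W)[h^{-1}]$ (as $\Ak(W)$ generates the localisation), so $Z(\Ak(W)) \hookrightarrow Z(\dd(\h_{\reg}/W)) = \C$, completing the proof. The only subtle point I anticipate is the precise formulation of the \'etale descent isomorphism in part (2), but this is a well-documented technical result and poses no genuine obstacle.
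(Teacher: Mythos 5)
Your proposal is correct and follows essentially the same route as the paper: part (1) by inverting the $\alpha_H$ in the Dunkl formula and applying $e(-)e$, part (2) by \'etale descent for differential operators along the free quotient $\h_{\reg}\to\h_{\reg}/W$ (the paper cites \cite[Corollaire~4]{LevasseurDiff} for exactly this), and part (3) by passing to the localisation $\Ak(W)[h^{-1}]=\dd(\h_{\reg})^W$, whose centre is $\C$. Your only addition is to spell out the injectivity of the localisation map and the centrality of the image, details the paper leaves implicit.
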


\begin{proof} (1) Since the $\alpha_H$ are units in $\Hk(W)[h^{-1}]$,  the identity 
$\Hk(W)[h^{-1}] = \dd(\h_{\reg})\rtimes W$   follows from  \eqref{eq:Dunkl}.
        Thus    $$\Ak[h^{-1}] = e \Hk(W)[h^{-1}] e = e \dd(\h_{\reg}\rtimes W) e =
        \dd(\h_{\reg})^W.$$

        (2)  
        Since $W$ acts freely on $\h_{\reg}$, the quotient map $\h_{\reg} \to \h_{\reg}/W$ is \'etale. Therefore,
        \cite[Corollaire~4]{LevasseurDiff} says that, for each $\ell \ge 0$,
        there is an equivariant isomorphism
        $\dd_{\ell}(\h_{\reg}) \iso \C[\h_{\reg}] \o_{\C[\h_{\reg}]^W}
        \dd_{\ell}(\h_{\reg} / W)$. Taking $W$-invariants gives
        $\dd_{\ell}(\h_{\reg})^W \iso \dd_{\ell}(\h_{\reg} / W)$.

   (3) By (1), $A_\kappa(W)[h^{-1}] = \dd(\h_{\reg})^W$
        is a simple ring with centre $\C$. Thus
        the centre of any spherical algebra $A_\kappa(W)$ also
        reduces to~$\C$.
      \end{proof}

\begin{corollary}
  \label{cor:quotientetaleh}, 
Let $W= W_1\times W_2$ be the product of two complex reflection
groups and $\Ak(W_i)$, $i=1,2$, be  spherical algebras
attached to $W_i$. Then, if $\kappa=\kappa_1 \times \kappa_2$,
the spherical algebra $\Ak(W)= \Ak(W_1) \otimes \Ak(W_2)$ is simple if and only if
$\Ak(W_1)$ and $A_\kappa(W_2)$ are both simple. 
  \end{corollary}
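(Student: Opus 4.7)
The plan is to first identify $\Ak(W)$ with the tensor product $\Ak(W_1) \otimes_{\C} \Ak(W_2)$ as algebras, and then deduce the simplicity equivalence from the fact, established in Lemma~\ref{lem:quotientetaleh}(3), that each $\Ak(W_i)$ has centre $\C$.

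First I would verify the tensor product decomposition at the level of the full rational Cherednik algebra, namely $\Hk(W) \cong \Hk_{\kappa_1}(W_1) \otimes_{\C} \Hk_{\kappa_2}(W_2)$. With $\h = \h_1 \oplus \h_2$, the reflecting hyperplane arrangement splits as $\calA = \calA_1 \sqcup \calA_2$, where each $H \in \calA_i$ contains $\h_{3-i}$; one may choose $\alpha_H \in \h_i^*$ and $\alpha_H^{\vee} \in \h_i$, and the pointwise stabiliser $W_H$ sits inside the relevant factor $W_i$, so the idempotents $e_{H,j}$ from Definition~\ref{defn:Cherednik} lie in $\C W_i$. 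Consequently, in the defining commutator \eqref{eq:relaltRCAnon}, the cross relations $[y,x] = 0$ hold for $y \in \h_i$ and $x \in \h_j^*$ with $i \neq j$, since $\langle y,\alpha_H \rangle = 0$ and $\langle \alpha_H^{\vee},x \rangle = 0$ in that case, while within each factor the relations reproduce those of $\Hk_{\kappa_i}(W_i)$ because $\kappa = \kappa_1 \times \kappa_2$. This furnishes the asserted tensor product decomposition (as $W$-graded algebras), and since the trivial idempotent of $W = W_1 \times W_2$ factors as $e_W = e_{W_1} \otimes e_{W_2}$, sandwiching by $e_W$ yields the claimed identification $\Ak(W) = \Ak(W_1) \otimes_{\C} \Ak(W_2)$.

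Given this identification, the simplicity equivalence is now formal. For the ``only if'' direction, if $I \subset \Ak(W_1)$ is a proper nonzero two-sided ideal, then $I \otimes_{\C} \Ak(W_2)$ is a proper nonzero two-sided ideal of $\Ak(W_1) \otimes_{\C} \Ak(W_2) = \Ak(W)$, contradicting simplicity; symmetrically for $\Ak(W_2)$. For the converse, suppose both $\Ak(W_1)$ and $\Ak(W_2)$ are simple. By Lemma~\ref{lem:quotientetaleh}(3), each has centre $\C$, so $\Ak(W_1)$ is central simple over $\C$. The classical fact that the tensor product over a field $k$ of a central simple $k$-algebra with a simple $k$-algebra is again simple then gives that $\Ak(W_1) \otimes_{\C} \Ak(W_2) = \Ak(W)$ is simple.

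The only mildly delicate point is the bookkeeping in the tensor product decomposition of $\Hk(W)$: one must confirm that stabilisers, roots, coroots, and idempotents $e_{H,j}$ all respect the splitting of $\h$, and that the defining relations across the two factors vanish. Once this is in hand, the simplicity statement is an immediate consequence of the centre calculation in Lemma~\ref{lem:quotientetaleh}(3) together with standard properties of tensor products of central simple algebras.
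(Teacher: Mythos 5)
Your proof is correct and takes essentially the same route as the paper: the paper likewise invokes Lemma~\ref{lem:quotientetaleh}(3) to see that each $\Ak(W_i)$ has centre $\C$ and then appeals to the standard fact (cited there as \cite[Lemma~9.6.9]{MR}) that the tensor product of a central simple $\C$-algebra with a simple $\C$-algebra is simple, the converse direction being the easy ideal-transport argument you give. Your preliminary verification that $\Hk(W)\cong \Hk(W_1)\otimes_{\C}\Hk(W_2)$ and hence $\Ak(W)=\Ak(W_1)\otimes_{\C}\Ak(W_2)$ is sound but is already built into the statement of the corollary, so the paper does not reprove it.
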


  \begin{proof}
    By Lemma~\ref{lem:quotientetaleh}(3), the centre of a
    spherical algebra reduces to $\C$, the result is therefore consequence
    of \cite[Lemma~~9.6.9]{MR}.
  \end{proof}
 
\begin{example}\label{ex:Dunklcyclci}
        In the simplest case, $\h = \C \{ y \}$ and
        $W = \mathbb{Z}/ \ell \mathbb{Z} = \langle s \rangle$, where
        $s(y) = \omega y$, for $\omega$ a fixed primitive
        $\ell^{\text{th}}$ root of unity. If $x \in \h^*$ is such
        that $\langle y,x\rangle= 1$, the Dunkl embedding \eqref{eq:Dunkl} sends
        $y$ to 
        $$
        T_y = \frac{\partial}{\partial x} + \frac{\ell}{x}
        \sum_{i = 0}^{\ell-1} \kappa_i e_i
        $$
        where the idempotent $e_i \in \C W$ satisfies
        $s e_i = \omega^{-i} e_i$. The defining relation is
        $$
        [y,x] = 1 + \ell \sum_{i = 0}^{\ell-1} (\kappa_{i+1} -
        \kappa_i) e_i,
        $$      
        where the subscripts are interpreted modulo $\ell$; thus
        $\kappa_\ell=\kappa_0$.  In this case, $\Ak(W)$ is the
        algebra generated by
        $$
        X = e x^{\ell}, \quad Y = e y^{\ell}, \quad Z = e (xy -
        \ell \kappa_0),
        $$
        together with  $\Phi(X) = z = x^{\ell}$,\ 
        $\Phi_{}(Z) = \ell z \partial_z = x \partial_x$ and 
        \[   
        \Phi_{}(Y)  = \frac{\ell^{\ell}}{z} \prod_{i =
                0}^{\ell-1} \left( z \partial_z  +
        \kappa_{i} + \frac{i}{\ell} +(\delta_{i,0}
        - 1) \right)  = x^{-\ell} \prod_{i = 0}^{\ell-1} \left(
        x \partial_x  + \ell \kappa_{i} + i  +
        \ell(\delta_{i,0} - 1)\right),   
        \]
        where $\delta_{i,0}$ is $1$ if $i = 0$ and $0$ otherwise.
\end{example}

\begin{notation}\label{defn:order}
        Given a differential operator $D\in \dd(X)$, for some smooth
        variety $X$, we let $\ord_X(D)$ denote the order of $D$ as a
        differential operator, and write $\ord D=\ord_XD$ if it is unambiguous.  Set
        $\dd_\ell(X) = \ \left\{d\in \dd(X) : \ord d \leq \ell\right\}$
        and
        $$
        \gr \dd(X)= \gr_{\ord} \dd(X) =\bigoplus_{\ell} \dd_\ell
        (X)/\dd_{\ell-1}(X).
        $$
        Given $d\in \dd_\ell (X) \smallsetminus \dd_{\ell-1}(X)$,
        let
        $\sigma(d)= [p+\dd_{\ell-1}]\in \dd_\ell
        (X)/\dd_{\ell-1}(X)$ denote the \emph{symbol} of $d$.
\end{notation}
 
\begin{remark}\label{rem:orderfiltrationH}
        The algebra $H_{\kappa}(W)$ has \emph{an order filtration} given by
        putting $\C[\h] \rtimes W$ in degree zero and
        $\h \subset \Sym \h$ in degree one. As noted in \cite[p.~ 281]{EG} this agrees, via the Dunkl embedding, with the
        filtration   induced from the order filtration on
        $\dd(\h_{\reg}) \rtimes W$. Give $\dd(\h_{\reg})^W$  
        the  filtration induced from the order filtration on
        $\dd(\h_{\reg})$. Then  the filtration induced on
        $\Ak(W)$ as a subalgebra of $H_{\kappa}(W)$  (again called the  \emph{order filtration}) agrees with the
        filtration on $\Ak(W)$ considered as a subalgebra of
        $\dd(\h_{\reg})^W$ via  $\Phi$; see \cite[page     282]{EG}.  
        The PBW Theorem for
        $H_{\kappa}(W)$, as described in~\cite[Theorem~1.3]{EG}, then identifies their associate graded rings:
        \[
        \gr H_{\kappa}(W) = (\C[\h] \otimes \Sym \h) \rtimes W \quad \text{ and} \quad  \gr
        \Ak(W) = (\C[\h] \otimes \Sym \h)^W.
        \]
\end{remark}
    
\subsection*{A localisation of $\Hk$}

Fix a reflecting hyperplane $H \in \calA$ and recall that $W_H$ denotes the  pointwise stabiliser of $H$. 
In order to  define the radial parts map in the next section, we will use  Luna slices to reduce the problem to the spherical algebra $\Ak(W_H)$ for suitably defined parameters. For this to work we need to  prove that, after localising at a suitable Ore set $\euls{C}$, we have $\Ak(W)\subseteq \Ak(W_H)_{\euls{C}}$;  see Proposition~\ref{prop:sphericalinrankone} for  the precise result. The proof of this result follows the ideas of \cite{BE} and we begin with the appropriate notation.

     Let  $\C W_H \subseteq R \subseteq S$ be $\C$-algebras.   As in \cite[Section~3.2]{BE}, let $F(S)
:= \mathrm{Fun}_{W_H}(W,S)$ denote the space of all $W_H$-equivariant maps
$W \to S$; these are the maps $f$ with $f(nw) = n f(w)$ for all $n \in W_H,
w \in W$.   Set $E(S) = \End_{S}(F(S))$.      Fixing a set of left coset representatives of $W_H$ in $W$  shows that  $F(S)$ is a free right $S$-module  and hence  identifies $E(S)$ with the matrix  ring $\mathrm{Mat}_{|W/W_H|}(S)$.

We denote by $e$ and $e_0$ the trivial idempotents in $\C W$ and $\C W_H$, respectively. 
  Let $f_0 \in F(S)$ be given by $f_0(w) = e_0$ for all $w \in W$. We define $\Psi \colon \C W \to E(S)$ by $(\Psi(u)(f))(w) = f(wu)$ for $u,w \in W$. It is an embedding, and using $\Phi$ we think of $e$ as an element of $E(\C W_H)\subseteq E(S)$; explicitly $e(f) = \frac{1}{|W|} \sum_{u\in W}u\cdot  f$ for $f\in F(S)$.  It follows  that $e(f_0)=f_0$.
 
Note that  $F(\C W_H) \subseteq F(R) \subseteq F(S)$ and $F(S) = F(R) \o_R S$.   Set 
$$
E(R,S) = \{ D \in E(S) \, | \, D(F(\C W_H)) \subset F(R)\},
$$
and define $\phi \colon E(R,S) \to E(R)$ by $\phi(D)(f \o r)(w) = D(f)(w) r$, for $f \o r \in F(\C W_H) \o_{W_H} R = F(R)$. 

The basic properties of these objects is given by the following result.

\begin{lemma} \label{lem:varphi}  Keep the above notation. 
\begin{enumerate}
\item  The morphism $\phi$ is an algebra  isomorphism.
\item
    There is an algebra isomorphism 
         $\varphi \colon e E(S) e \cong e_0S e_0$,  which is defined in the proof. This morphism satisfies $(e D e)(f_0) = f_0\varphi(eDe) $ for $D\in E(S)$.
        \item  Moreover $\varphi(e E(R,S) e) = e_0 R e_0$.
        \end{enumerate}
\end{lemma}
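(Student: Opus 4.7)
The plan is to reduce all three claims to elementary free-module and cyclic-module bookkeeping, once I fix a set of right coset representatives $w_1,\dots,w_m$ for $W_H$ in $W$. Define $\xi_i \in F(\C W_H) \subset F(S)$ to be the unique $W_H$-equivariant function with $\xi_i(w_j) = 1$ when $j=i$ and $0$ otherwise; this yields a right $S$-basis of $F(S)$ that sits inside $F(\C W_H)$, and consequently identifies $F(R) = F(\C W_H)\otimes_{\C W_H} R = \bigoplus_i \xi_i R$ as a free right $R$-module on the same basis, with $F(S) = F(R)\otimes_R S$.

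For (1), any $D \in E(S)$ is determined by the tuple $(D(\xi_i))_i \in F(S)^m$; the defining condition $D \in E(R,S)$ says precisely that $D(\xi_i) \in F(R)$ for each $i$, and $\phi(D)$ is then the $R$-linear endomorphism of $F(R)$ with those same values on the basis vectors. Bijectivity follows because any $R$-linear endomorphism of $F(R)$ admits a unique $S$-linear extension to $F(S)$, and both directions preserve the condition of mapping $F(\C W_H)$ into $F(R)$. Multiplicativity and unitality are checked on the $\xi_i$.

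For (2) the key observation is that the image of $e\in E(S)$ consists of the constant functions $W\to S$, and the constant value is forced into the $W_H$-fixed subspace $\{t \in S : e_0 t = t\} = e_0 S$ of $S$. The assignment $s \leftrightarrow f_0\cdot s$ then identifies $e(F(S))$ with $e_0 S$ as a cyclic right $S$-module. Since $eE(S)e = \mathrm{End}_S(e(F(S)))$, the problem reduces to the standard calculation $\mathrm{End}_S(e_0 S) \cong e_0 S e_0$: an $S$-linear endomorphism $\psi$ of $e_0 S$ is recovered from $t := \psi(e_0)$, and well-definedness on $\ker(e_0\cdot\,) = (1-e_0)S$ forces $t e_0 = t$, hence $t \in e_0 S e_0$. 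Transporting back through $f_0$ gives the promised $\varphi$ with $(eDe)(f_0) = f_0\cdot \varphi(eDe)$, and the resulting map is clearly a ring homomorphism.

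For (3), if $D = eDe \in E(R,S)$ then $D(f_0)\in F(R)$ is the constant function with value $\varphi(D)$, so $\varphi(D)$ lies in $e_0 S e_0 \cap R = e_0 R e_0$. Conversely, given $\varphi(D)\in e_0 R e_0$, for any $f \in F(\C W_H)$ the function $e(f)$ is constant with value $\frac{1}{|W|}\sum_{u\in W} f(u) \in e_0 \C W_H \subset R$, so $e(f) = f_0\cdot r$ for some $r\in R$, and then $D(f) = D(e(f)) = f_0\cdot \varphi(D)\, r \in F(R)$. I do not anticipate any serious obstacle; the only subtle point is in (2), where one must keep in mind that $f_0\cdot s$ depends only on $e_0 s$, which is what pins the endomorphism parameter into $e_0 S e_0$ rather than merely $e_0 S$.
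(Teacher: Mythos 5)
Your argument is correct and follows essentially the same route as the paper: coset-representative bookkeeping for part (1), the identification of $e(F(S))$ with the constant functions valued in $e_0 S$ for part (2), and the formula $(eDe)(f_0)=f_0\varphi(eDe)$ driving part (3). The only (harmless) organizational difference is that you establish bijectivity of $\varphi$ directly via $eE(S)e\cong \End_S(e_0S)\cong e_0Se_0$, whereas the paper deduces it from part (3) applied in the special case $R=S$.
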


 \begin{proof} 
 (1) After fixing a set of left coset representatives  $\{g_j : 1\leq j\leq t\} $  of $W_H$ in $W$, a direct computation shows that $\phi$ is
 indeed  an isomorphism.  
 
 (2) This   is stated  in 
  \cite[Lemma~3.1(ii)]{BE} under slightly different hypotheses and so we will just outline the argument.
  
 Write $ef$ in place of $e(f)$ for $f\in F(S)$. Expanding $(ef)(x)$ for $f\in F(S)$ and $x\in W$ shows that 
 \[(ef)(x)=e_0 \alpha(f)=f_0(x) \alpha(f)\quad\text{ where }\quad\alpha(f)=\frac{|W_H|}{|W|}\sum_i f(g_i)\in S.\]
 Hence $ef=f_0\alpha(f)$.  Now  $f_0=f_0e_0$.   
 Consequently,  if $D\in E(S)$,   then  
 \[(eDe)(f_0) \ = \ e(D(ef_0)) \ = \ e(D(f_0)) \ = \ f_0 \alpha(D(f_0))   \ = \ f_0e_0\alpha(D(f_0))e_0.\]
 Hence if we define 
$  \varphi(eDe)=e_0\bigl(\alpha(D(f_0))\bigr)e_0, $ for $\  D\in E(S),$
then $ (eDe)(f_0) \ = \ f_0 \varphi(eDe)$, as claimed. 

The fact that $\varphi$ is an  isomorphism
of sets follows from Part~(3) for the special case $R=S$, in which case $E(R,S)=E(S)$. Moreover, since $\varphi$ is actually  defined as an equality of subsets of $E(S)$, this also implies that it is an isomorphism of algebras.

(3) Now let $D\in E(R,S)$. By definition, $\alpha(D(f_0))=\frac{|W_H|}{|W|}\sum_i D(f_0)(g_i)$ with $D(f_0) \in F(R)$ from which it follows that $D(f_0)(g_i)\in R$ for each $g_i$. Thus  $\alpha(D(f_0))\in R$ and therefore $\varphi(eDe) \in e_0Re_0$. 

Conversely,  let $a\in e_0Re_0$. If $f\in F(S)$, we can  define $D' (f)\in F(R)$ by $D'(f)(w)=af(w)$.  A simple calculation shows that $D'\in E(R,S)$ with $\varphi(eD'e)=a$, as required. 
\end{proof}

\begin{lemma}\label{lem:BEembedlocal}
        Set $F = F(\dd(\h_{\reg}) \rtimes W_H)$ and $E = E(\dd(\h_{\reg})
        \rtimes W_H)$. For $f \in F, u,w \in W$ and $D \in \dd(\h_{\reg})$,
        the rules  
        $$
        \Psi(D \o 1)(f)(w) = w(D) f(w), \quad \Psi(1 \o u)(f)(w) = f(wu)
        $$
        define an  injective algebra map $\Psi \colon \dd(\h_{\reg}) \rtimes W \to E$. Moreover   
        $\Psi$ induces a commutative diagram 
        $$
        \begin{tikzcd}
                e(\dd(\h_{\reg}) \rtimes W) e \ar[r,hook,"e \Psi e"] & e
                E e \ar[r,"\cong"',"\varphi"] & e_0(\dd(\h_{\reg})
                \rtimes W_H) e_0  \\ 
                \dd(\h_{\reg})^W \ar[rr,hook] \ar[u,"\cong","j"'] &&
                \dd(\h_{\reg})^{W_H} \ar[u,"\cong","j_0"'],
        \end{tikzcd}
        $$ 
        where $j(D)= e D e$ and  $j_0(D)= e_0 D e_0$.  
   \end{lemma}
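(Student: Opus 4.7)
The plan is to verify the three assertions in turn: that $\Psi$ is a well-defined algebra map, that it is injective, and finally that the displayed diagram commutes.

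For well-definedness, the main check is that $\Psi(D \otimes 1)(f)$ and $\Psi(1 \otimes u)(f)$ remain $W_H$-equivariant, i.e., stay in $F$. For $\Psi(1 \otimes u)$ this is immediate from the formula. For $\Psi(D \otimes 1)$ and $n \in W_H$, one has $\Psi(D \otimes 1)(f)(nw) = (nw)(D) \cdot n f(w) = n(w(D)) \cdot n \cdot f(w)$, and this equals $n \cdot w(D) f(w)$ because of the skew-group relation $n \cdot E = n(E) \cdot n$ in $S$ applied with $E = w(D)$. The homomorphism property of $\Psi$ is then a direct check on the two families of generators, together with the cross relation $\Psi(1 \otimes u)\Psi(D \otimes 1) = \Psi(u(D) \otimes u)$, which follows immediately from the defining formulas.

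For injectivity, I would use that $F$ is free of rank $t = |W/W_H|$ as a right $S$-module: fixing coset representatives $g_1,\dots,g_t$ for $W_H \backslash W$, the functions $f^{(i)} \in F$ with $f^{(i)}(g_j) = \delta_{ij}$ form an $S$-basis. Suppose $\Psi(\sum_w D_w \otimes w) = 0$. Evaluating at $f^{(i)}$ and then at $g_j$, and writing each $g_j w \in W_H g_i$ uniquely as $n g_i$ with $n \in W_H$, reduces the vanishing to the identities $\sum_{n \in W_H} g_j(D_{g_j^{-1} n g_i}) \cdot n = 0$ in $S$ for each $i,j$. The PBW freeness of $S = \dd(\h_{\reg}) \rtimes W_H$ over $\dd(\h_{\reg})$ with $W_H$-basis then forces each $g_j(D_{g_j^{-1} n g_i}) = 0$, and since the $g_j$ act invertibly on $\dd(\h_{\reg})$, all of these $D$'s vanish. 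As $(j, n) \mapsto g_j^{-1} n g_i$ (with $i$ fixed) bijects $\{1,\dots,t\} \times W_H$ onto $W$, we conclude $D_w = 0$ for all $w \in W$.

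For the diagram, the vertical maps $j$ and $j_0$ are the standard isomorphisms arising from the identity $B^G \cong e_G (B \rtimes G) e_G$ for a skew group algebra, so there is nothing to check there. For $D \in \dd(\h_{\reg})^W$, $W$-invariance gives $eDe = De = eD$ in $\dd(\h_{\reg}) \rtimes W$. Since $\Psi$ sends the group-algebra idempotent $e$ of $\C W$ to the idempotent $e$ of $E$ (by construction of the $W$-action on $F$), and $e(f_0) = f_0$, we obtain $(e \Psi(D) e)(f_0) = \Psi(D \otimes 1)(f_0)$. Evaluating at $w \in W$ gives $w(D) \cdot e_0 = D \cdot e_0$ by $W$-invariance, so $\Psi(D \otimes 1)(f_0)$ is the constant function $D e_0$. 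Because $D$ is also $W_H$-invariant, $D e_0 = e_0 D = e_0 D e_0$. Applying Lemma~\ref{lem:varphi}(2), which characterises $\varphi(e \Psi(D) e)$ as the unique element of $e_0 S e_0$ satisfying $(e \Psi(D) e)(f_0) = f_0 \cdot \varphi(e \Psi(D) e)$, we obtain $\varphi(e \Psi(D) e) = e_0 D e_0 = j_0(D)$, which is the desired commutativity.

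The main obstacle is the bookkeeping in the injectivity step, which is purely combinatorial; the other two steps are essentially formal, given the definitions and Lemma~\ref{lem:varphi}.
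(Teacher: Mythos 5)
Your proposal is correct. The well-definedness check (that $\Psi(D\o 1)(f)$ stays $W_H$-equivariant via the skew relation $n\cdot E = n(E)\cdot n$, plus the cross relation $\Psi(1\o u)\Psi(D\o 1)=\Psi(u(D)\o u)$) and the commutativity of the diagram (computing $(e\Psi(D)e)(f_0)$ as the constant function $e_0De_0$ and invoking the characterisation $(eD'e)(f_0)=f_0\varphi(eD'e)$ from Lemma~\ref{lem:varphi}(2)) are essentially the same routine computations the paper relegates to Appendix~\ref{App-C}; your use of the $f_0$-characterisation of $\varphi$ in place of the explicit averaging formula $\varphi(eD'e)=e_0\alpha(D'(f_0))e_0$ is a cosmetic difference, and it is legitimate since $f_0a=f_0b$ forces $a=b$ for $a,b\in e_0Se_0$. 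Where you genuinely diverge is injectivity: the paper deduces it in one line from the simplicity of $\dd(\h_{\reg})\rtimes W$ and its corner rings, quoting Montgomery \cite{Mo}, so that any nonzero homomorphism out of it is injective; you instead evaluate $\Psi(\sum_w D_w\o w)$ on the free right $S$-basis $f^{(i)}$ of $F$ and use the PBW decomposition $S=\bigoplus_{n\in W_H}\dd(\h_{\reg})n$ together with the bijection $(j,n)\mapsto g_j^{-1}ng_i$ onto $W$ to kill every coefficient $D_w$. Your argument is longer but elementary and self-contained, avoiding the imported simplicity results; the paper's is shorter but leans on machinery already needed elsewhere (for the isomorphisms $j$ and $j_0$). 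Both are valid, and your coset bookkeeping checks out.
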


  \begin{proof}       The fact that $\Psi$ is a well-defined ring homomorphism is a routine
      computation that is given in Appendix~\ref{App-C}.  By   \cite[Theorem~2.5 and Corollary 2.6]{Mo}, the maps $j$ and $j_0$ are isomorphisms. The same results show that  the ring   $e(\dd(\h_{\reg}) \rtimes W) e$ is simple and so     the nonzero  morphism $\Psi$ is injective.      The  isomorphism $\varphi$ is given by  Lemma~\ref{lem:varphi}(2)     and then the fact that the diagram commutes is another  direct computation, which is again given in the appendix.  \end{proof}

For $y \in \h$, we temporarily write  $T_y^W$ for  the Dunkl embedding of
$y \in H_{\kappa}(W)$ in $\dd(\h_{\reg}) \rtimes W$  defined by
\eqref{eq:Dunkl}. 
 We denote by the same letter $\kappa$ the
restriction of $\kappa$ to $W_H$ and set $\Ak(W_H) = e_0 H_{\kappa}(W_H) e_0$, for the corresponding Cherednik algebra $\Hk(W_H)$. Let
$T_y^{W_H}$ denote the Dunkl embedding of $y \in \h \subset H_{\kappa}(W_H)$
in $\dd(\h_{\reg}) \rtimes W_H$; thus \eqref{eq:Dunkl} simplifies to give:
$$
T_y^{W_H} = {\partial_y} + \frac{\langle y,\alpha_H \rangle}{\alpha_H} \sum_{i
        = 0}^{\ell_H - 1} \ell_H \kappa_{H,i} e_{H,i}.
$$

Recall from~\eqref{eq:Hdiscriminant} that  
$\deltah_H = \alpha_H^{-\ell_H} \deltah\in \deltah_H \in \C[\h]^{W_H} \subset A_\kappa(W_H)$. This implies that $\deltah_H $ acts locally ad-nilpotently on $A_\kappa(W_H)$ and we can
therefore form the localisation $
\Ak(W_H)_{\deltah_H} = \Ak(W_H)[\deltah_H^{-1}]
$    at the associated   Ore set $\euls{C}=\{\deltah_H^n\}$.  Write 
$$
E(H,\dd) := E(H_{\kappa}(W_H)_{h_H},\dd(\h_{\reg}) \rtimes W_H), 
$$
which is a subalgebra of $E = E(\dd(\h_{\reg}) \rtimes W_H)$.

\begin{lemma}\label{prop:sphericalinrankone1}
        $\Psi(H_{\kappa}(W)) \subseteq E(H,\dd)$ and $\varphi(e E(H,\dd) e) =
        j_0(\Ak(W_H)_{\deltah_H})$.  
\end{lemma}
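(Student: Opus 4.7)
The proof falls into two parts. The second assertion, $\varphi(eE(H,\dd)e) = j_0(\Ak(W_H)_{\deltah_H})$, will follow immediately from Lemma~\ref{lem:varphi}(3) applied with $R = H_\kappa(W_H)_{\deltah_H}$ and $S = \dd(\h_{\reg})\rtimes W_H$: the lemma yields $\varphi(eE(H,\dd)e) = e_0 H_\kappa(W_H)_{\deltah_H} e_0$, which equals $j_0(\Ak(W_H)_{\deltah_H})$ because $\deltah_H$ is $W_H$-invariant (so commutes with $e_0$) and $\Ak(W_H) = e_0 H_\kappa(W_H) e_0$ by definition.

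For the first assertion $\Psi(H_\kappa(W)) \subseteq E(H,\dd)$, the plan is to verify membership on algebra generators of $H_\kappa(W)$: polynomials $\C[\h]$, group elements $\C W$, and Dunkl operators $T_y^W$ for $y \in \h$. The cases $X \in \C[\h]$ and $X \in \C W$ are routine: for $a \in \C[\h]$ one has $\Psi(a)(f)(w) = w(a)f(w) \in \C[\h]\cdot\C W_H$, and for $u \in W$ one has $\Psi(u)(f)(w) = f(wu) \in \C W_H$, both obviously in $H_\kappa(W_H)_{\deltah_H}$. For the substantive Dunkl case, I would fix coset representatives $g_1 = 1, g_2, \dots, g_t$ of $W_H$ in $W$ and work with the basis $\{f_j\}$ of $F(\C W_H)$ defined by $f_j(g_i) = \delta_{ij}$ (extended $W_H$-equivariantly). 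Since $F(\C W_H) = \bigoplus_j f_j\,\C W_H$, membership $\Psi(T_y^W) \in E(H,\dd)$ reduces to showing that every ``matrix entry'' $\Psi(T_y^W)(f_j)(g_i)$ lies in $H_\kappa(W_H)_{\deltah_H}$. Using the formulas $\Psi(D)(f_j)(g_i) = g_i(D)\delta_{ij}$ for $D \in \dd(\h_{\reg})$ and $\Psi(u)(f_j)(g_i) = n\delta_{j,k}$ where $g_i u = n g_k$ with $n \in W_H$, the Dunkl expansion shows that $\Psi(T_y^W)(f_j)(g_i)$ is a sum of terms $g_i(c_{H',s})\, n_{ij,s}$ indexed by pairs $(H',s \in W_{H'})$ with $g_i s = n_{ij,s} g_j$, where $c_{H',s} = \frac{\langle y,\alpha_{H'}\rangle}{\alpha_{H'}} \sum_k \det(s)^k \kappa_{H',k}$.

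The hard part is the analysis of the resulting denominators $\alpha_{g_i(H')}$: these lie in $\C[\h][\deltah_H^{-1}]$ when $g_i(H') \neq H$, but the $\alpha_H^{-1}$-contributions (with $g_i(H') = H$) must reassemble into the structured singular part of a Dunkl operator in $H_\kappa(W_H)$. The key combinatorial observation is that $\alpha_H^{-1}$-contributions appear only on the diagonal $i = j$: if $g_i(H') = H$, then $H' = g_i^{-1}(H)$ and $s \in W_{g_i^{-1}(H)}$, so $n' := g_i s g_i^{-1} \in W_H$ and $g_i s = n' g_i$; comparing to $g_i s = n_{ij,s} g_j$ forces $g_i \in W_H g_j$, hence $i = j$. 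The off-diagonal entries therefore lie in $\C[\h][\deltah_H^{-1}] \cdot \C W_H \subseteq H_\kappa(W_H)_{\deltah_H}$. On the diagonal, combining the $s = 1$ contributions from every hyperplane with the non-identity contributions from $W_{g_i^{-1}(H)}$ and applying the character identity $\sum_{s \in W_H} \det(s)^k s = \ell_H e_{H,k}$ regroups the $\alpha_H^{-1}$-terms precisely into the singular part of $T_{g_i(y)}^{W_H}$, giving
\[
\Psi(T_y^W)(f_i)(g_i) \;=\; T_{g_i(y)}^{W_H} \,+\, \sum_{H' \neq H} \frac{\langle g_i(y),\alpha_{H'}\rangle}{\alpha_{H'}} \sum_k \kappa_{H',k} \;\in\; H_\kappa(W_H)_{\deltah_H},
\]
completing the verification.
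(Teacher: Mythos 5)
Your proof is correct and follows essentially the same route as the paper: the second assertion is Lemma~\ref{lem:varphi}(3) applied with $R = H_{\kappa}(W_H)_{\deltah_H}$, and the first reduces to the Dunkl generators, where your diagonal/off-diagonal analysis of the matrix entries $\Psi(T_y^W)(f_j)(g_i)$ is exactly the content of the paper's key identity~\eqref{eq:imagedunkl} (proved in Appendix~\ref{App-C}), just written in the basis $\{f_j\}$ rather than as a closed operator formula on all of $F$. Your displayed diagonal entry agrees with~\eqref{eq:imagedunkl} evaluated at $w=g_i$, $f=f_i$, so the two computations coincide.
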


\begin{proof} 
       This is similar to  \cite[Theorem~3.2]{BE}. The key to the proof is to show that, for $y \in \h$ and $w \in W$,
one has 
        \begin{equation}\label{eq:imagedunkl}
                \left(\Psi\left(T_y^W\right) f\right)(w) = T_{w(y)}^{W_H} f(w) +
                \sum_{\begin{smallmatrix} H'\in \calA \\ H' 
                                \neq H\end{smallmatrix}} \frac{\langle w(y), \alpha_{H'}
                        \rangle}{\alpha_{H'}} \sum_{i = 0}^{\ell_{H'} - 1}
                \ell_{H'} \kappa_{{H'},i} f( e_{{H'},i} w),
        \end{equation}
        where, with a slight abuse of  notation, we write 
        \[f( e_{{H'},i} w) := \frac{1}{\ell_{H'}} \sum_{s \in
                W_{H'}} \mr{det}_{\h}(s)^i f(sw).\]
        
      The remark after   \cite[Theorem~3.2]{BE} argues that one can deduce their   analogue of 
      \eqref{eq:imagedunkl}  from general results about sheaves of Cherednik algebras, and this allows one to guess the formula in question. Similar arguments apply to \eqref{eq:imagedunkl}, but since the resulting computations are still intricate, we prefer to give a detailed proof. This is given in Appendix~\ref{App-C}.

         Once     \eqref{eq:imagedunkl} has been established, the proof of the lemma is easy.    Indeed, note that every
        denominator  $\alpha_{H'}$ appearing on the right hand side of \eqref{eq:imagedunkl} is a
        factor of $\deltah_{H}$. Therefore, $\Psi\left(T_y^W\right)$ belongs to the
        subalgebra $E(H,\dd)$ of $E(\dd(\h_{\reg}) \rtimes W_H)$. Since
        $H_{\kappa}(W)$ is generated by $\C[\h], W$ and the Dunkl operators
        $T_y^W$, it follows that $\Psi(H_{\kappa}(W)) \subseteq E(H,\dd)$.  
        
        Finally, by Lemma~\ref{lem:varphi}(3), 
        \[\varphi(e E(H,\dd) e)= e_0
        H_{\kappa}(W_H)_{\deltah_{H}} e_0 = j_0(\Ak(W_H)_{\deltah_H})\] inside $e_0
        (\dd(\h_{\reg}) \rtimes W_H) e_0$.  
\end{proof}

We can now give the   inclusion of algebras mentioned at the beginning of the subsection. 

\begin{proposition}\label{prop:sphericalinrankone}
        Let  $H \in \calA$.  Under the embedding $\dd(\h_{\reg})^W
        \hookrightarrow \dd(\h_{\reg})^{W_H}$, the image  of $\Ak(W) $  is
        contained in $\Ak(W_H)_{\deltah_H}$. 
\end{proposition}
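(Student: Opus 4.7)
The proposition is essentially a formal consequence of Lemmas~\ref{lem:BEembedlocal} and~\ref{prop:sphericalinrankone1}, so my plan is simply to chase the commutative diagram assembled there. Given $a \in \Ak(W) = eH_{\kappa}(W)e$, I would view $a$ as an element of $e(\dd(\h_{\reg}) \rtimes W)e$ via the Dunkl embedding of $H_\kappa(W)$ into $\dd(\h_{\reg}) \rtimes W$, and let $D = \Phi(a) \in \dd(\h_{\reg})^W$ be its preimage under the isomorphism $j$ from Lemma~\ref{lem:BEembedlocal}, so that $eDe = a$. The image of $D$ under the embedding $\dd(\h_{\reg})^W \hookrightarrow \dd(\h_{\reg})^{W_H}$ is what we need to place inside $\Ak(W_H)_{\deltah_H}$.

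Next I would push through $\Psi$: since $\Psi(H_{\kappa}(W)) \subseteq E(H,\dd)$ by Lemma~\ref{prop:sphericalinrankone1}, and since the trivial idempotent $e \in \C W \subseteq \dd(\h_{\reg}) \rtimes W$ is carried by $\Psi$ to the idempotent of the same name in $E$, writing $a = eae$ gives $\Psi(a) = e\Psi(a)e \in eE(H,\dd)e$. Applying $\varphi$ and again invoking Lemma~\ref{prop:sphericalinrankone1} yields
\[
\varphi(\Psi(a)) \in \varphi\bigl(eE(H,\dd)e\bigr) \;=\; j_0\bigl(\Ak(W_H)_{\deltah_H}\bigr),
\]
the latter inclusion taking place inside $e_0(\dd(\h_{\reg}) \rtimes W_H)e_0$.

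Finally I would invoke commutativity of the diagram in Lemma~\ref{lem:BEembedlocal}: the composition $\varphi \circ (e\Psi e) \circ j$ agrees with $j_0$ precomposed with the inclusion $\iota \colon \dd(\h_{\reg})^W \hookrightarrow \dd(\h_{\reg})^{W_H}$. Therefore $j_0(\iota(D)) = \varphi(\Psi(a)) \in j_0(\Ak(W_H)_{\deltah_H})$, and the injectivity of $j_0$ forces $\iota(D) \in \Ak(W_H)_{\deltah_H}$, which is exactly what the proposition claims.

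I do not anticipate any real obstacle in this argument. The only delicate verifications — that $\Psi$ embeds $\dd(\h_{\reg})\rtimes W$ into $E$, that the diagram commutes, that the identity \eqref{eq:imagedunkl} holds and hence $\Psi(T_y^W) \in E(H,\dd)$, and that $\varphi$ identifies $eE(H,\dd)e$ with $j_0(\Ak(W_H)_{\deltah_H})$ — are precisely the content of the two preceding lemmas, so the proposition at hand reduces to a short formal deduction from them.
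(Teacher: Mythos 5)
Your argument is correct and is essentially the paper's own proof: both deduce from Lemma~\ref{prop:sphericalinrankone1} that $(e\Psi e)(j(\Ak(W))) \subseteq eE(H,\dd)e = \varphi^{-1}\bigl(j_0(\Ak(W_H)_{\deltah_H})\bigr)$ and then pull back through the commutative diagram of Lemma~\ref{lem:BEembedlocal}. Your version merely makes the element-chasing explicit; no further comment is needed.
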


\begin{proof}  First note 
        that, by Lemma~\ref{prop:sphericalinrankone1},
        \[
        (e\Psi e)( j(\Ak(W)))  =   (e \Psi e)(e H_{\kappa}(W) e) \subseteq e
        E(H,\dd) e = \varphi^{-1}(j_0(\Ak(W_H)_{\deltah_H})).
        \]
        By   Lemma~\ref{lem:BEembedlocal}, this pulls back to the  desired inclusion 
        $\Ak(W)  \subseteq  \Ak(W_H)_{\deltah_H}.$
\end{proof}

\begin{notation}\label{defn:hcirc} Let $\h^{\circ} = \h \smallsetminus
        \bigcup_{H_\alpha \neq H_\beta \in \calA} H_{\alpha} \cap H_{\beta}$; equivalently,
        $\h^{\circ}$ is the set of points of $\h$ that lie on at most one
        hyperplane. The complement to $\h^{\circ}$ in $\h$ has codimension two.
\end{notation}

\begin{theorem}\label{thm:intersectsphericalinddhreg} In
        $\dd(\h_{\reg})^W$,
        $$
        \Ak(W) = \bigcap_{H \in \calA} \Ak(W_H)_{\deltah_H}.
        $$
\end{theorem}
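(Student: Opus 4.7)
The containment $\Ak(W) \subseteq \bigcap_{H\in\calA} \Ak(W_H)_{\deltah_H}$ is exactly Proposition~\ref{prop:sphericalinrankone}, so only the reverse inclusion requires work. The plan is to take $D$ in the intersection and induct on its order $\ell$ as a differential operator on $\h_{\reg}$, with $D=0$ serving as the trivial base case. For the inductive step I will establish that the principal symbol $\sigma(D)$ lies in $(\C[\h]\otimes\Sym\h)^W=\gr\Ak(W)$ (using Remark~\ref{rem:orderfiltrationH}); once this is known, any lift $P\in \Ak(W)$ with $\sigma(P)=\sigma(D)$ still lies in the intersection by Proposition~\ref{prop:sphericalinrankone}, so $D-P$ is an element of the intersection of strictly smaller order, and the induction closes.

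The heart of the argument is therefore the symbol computation. By Remark~\ref{rem:orderfiltrationH}, the order filtration on $\Ak(W_H)$ coincides with the one inherited from $\dd(\h_{\reg})^{W_H}$ and has associated graded $(\C[\h]\otimes\Sym\h)^{W_H}$; since $\deltah_H$ sits in filtration degree zero, the localisation $\Ak(W_H)_{\deltah_H}$ then has associated graded $(\C[\h]\otimes\Sym\h)^{W_H}[\deltah_H^{-1}]$, whose elements can be viewed geometrically as $W_H$-invariant regular functions on $(\h\smallsetminus V(\deltah_H))\times\h^*$. Hence the symbol $\sigma(D)$, a priori a $W$-invariant regular function on $\h_{\reg}\times\h^*$, extends uniquely to a regular function on $(\h\smallsetminus V(\deltah_H))\times\h^*$ for every $H\in\calA$.

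From the factorisation $\deltah_H=\prod_{H'\neq H}\alpha_{H'}^{\ell_{H'}}$ one has $V(\deltah_H)=\bigcup_{H'\neq H}H'$, and so a point of $\h$ lies in $\bigcap_H V(\deltah_H)$ if and only if it lies on at least two distinct reflecting hyperplanes. Comparison with Notation~\ref{defn:hcirc} yields $\bigcup_{H\in\calA}(\h\smallsetminus V(\deltah_H))=\h^{\circ}$, so the local extensions of $\sigma(D)$ glue to a well-defined regular function on $\h^{\circ}\times\h^*$. Since $\h\smallsetminus\h^{\circ}$ is a finite union of pairwise intersections $H_\alpha\cap H_\beta$, each of codimension two in $\h$, the complement of $\h^{\circ}\times\h^*$ in the smooth affine variety $\h\times\h^*$ has codimension two, and the standard Hartogs-type extension of regular functions across codimension two then delivers $\sigma(D)\in\C[\h\oplus\h^*]$. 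The $W$-invariance of $\sigma(D)$ is inherited from $D$, so $\sigma(D)\in(\C[\h]\otimes\Sym\h)^W$, as required. The only step requiring real care is the geometric identification $\bigcup_H(\h\smallsetminus V(\deltah_H))=\h^{\circ}$ together with the codimension-two extension; once these are in hand the argument is formal.
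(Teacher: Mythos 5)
Your proposal is correct and follows essentially the same route as the paper: both reduce to the associated graded level (your induction on order is just the explicit unwinding of the paper's ``filtered inclusion with surjective associated graded map is an equality''), and both hinge on the identical key step that $\sigma(D)$ is regular on $\h^{\circ}\times\h^*$ and extends across the codimension-two complement. The only cosmetic difference is that you deduce $W$-invariance of the symbol directly from the $W$-invariance of $D$, whereas the paper deduces it from $W_H$-invariance for all $H$ plus the fact that the $W_H$ generate $W$.
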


\begin{proof} By Proposition~\ref{prop:sphericalinrankone},  
        \[\Ak(W) \ \subseteq \bigcap_{H \in
                \calA} \Ak(W_H)_{\deltah_H},\] as subalgebras of $\dd(\h_{\reg})$.  
        By Remark~\ref{rem:orderfiltrationH} and
        Lemma~\ref{lem:quotientetaleh}, the order filtrations on both $\Ak(W)$ and
        $\Ak(W_H)_{\deltah_H}$ are given by restriction of the order filtration on
        $\dd(\h_{\reg})$.  Thus the inclusion of $\Ak(W)  \hookrightarrow\bigcap_{H
                \in \calA} \Ak(W_H)_{\deltah_H}$ is   filtered and so it suffices to
        show that the associated graded map is  an equality.
        
        The associated graded  map
        is
        $$
        \C[\h \times \h^*]^W  \ \hookrightarrow \  \bigcap_{H \in \calA} \C[\h
        \times \h^*]_{\deltah_H}^{W_H}.
        $$
        Let $f$ be an element in the right hand side. Then $f \in  X:=
        \bigcap_{H \in \calA} \C[\h \times \h^*]_{\deltah_H}$ and so $f$
        is regular on $\h^{\circ} \times \h^*$. Since the complement to $\h^{\circ}
        \times \h^*$ has codimension two in $\h \times \h^*$, it follows that 
       $X = \C[\h \times \h^*]$. Moreover,  $f \in \C[\h \times
        \h^*]_{\deltah_H}^{W_H}$ for each $H\in \calA$ and $\deltah_H$ is a $W_H$-invariant. Thus, $f \in
   X^{W_H}=     \C[\h \times \h^*]^{W_H}$. Since the group $W$ is generated by all  the $W_H$,
        it follows that $f \in \C[\h \times \h^*]^W$.
\end{proof}

We note an interesting consequence of Theorem~\ref{thm:intersectsphericalinddhreg}. First, we recall that to the symplectic quotient singularity $(\h \times \h^*)/W$ one can associate Namikawa's Weyl group $\Gamma$, which plays an important role in the Poisson deformation theory of the singularity. In this particular case, it is shown in \cite[Lemma~4.1]{BellSchedThiel} that $\Gamma = \prod_{[H] \in \calA/W} \mf{S}_{\ell_H}$ is a product of symmetric groups acting on the permutation representation with basis $\{ \kappa_{H,i} : [H] \in \calA/W, \ 0 \le i \le \ell_H-1 \}$. We think of $\boldsymbol{\kappa}_{H,i}$ as a variable and let $\C[\boldsymbol{\kappa}]$ be the polynomial ring in the variables $\boldsymbol{\kappa}_{H,i}$. Then $\Gamma$ acts on $\C[\boldsymbol{\kappa}]$. We define a twisted action of $\Gamma$ on $\C[\boldsymbol{\kappa}]$ by setting 
$$
\sigma \star \boldsymbol{\kappa}_{H,i} = \boldsymbol{\kappa}_{H,\sigma(i)} + \frac{\sigma(i)-i}{\ell} +(\delta_{\sigma(i),0} - \delta_{i,0}) 
$$
Consider the generic spherical algebra $A_{\boldsymbol{\kappa}} \subset \dd(\h_{\reg})^{W}[\boldsymbol{\kappa}]$; it is a $\C[\boldsymbol{\kappa}]$-algebra. Make $\Gamma$ act on the algebra $\dd(\h_{\reg})^{W}[\boldsymbol{\kappa}]$ by twisted action $\star$ on the coefficients $\C[\boldsymbol{\kappa}]$ and trivially on $\dd(\h_{\reg})^{W}$. The following should be viewed as the analogue of the Harish-Chandra isomorphism (of the centre of the enveloping algebra of a reductive Lie algebra) for spherical Cherednik algebras.  

\begin{corollary}\label{cor:HCisoRCA}
        The subalgebra $A_{\boldsymbol{\kappa}}$ is stable under $\Gamma$ and the centre of $A_{\boldsymbol{\kappa}}^{\Gamma}$ equals $\C[\boldsymbol{\kappa}]^{(\Gamma,\star)}$. 
\end{corollary}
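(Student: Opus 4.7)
My plan is to split the corollary into the stability assertion and the centre computation, reducing the first to a direct check in the rank-one case via Theorem~\ref{thm:intersectsphericalinddhreg} and the second to a localisation argument in the spirit of Lemma~\ref{lem:quotientetaleh}.

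For $\Gamma$-stability, I will first observe that the intersection formula of Theorem~\ref{thm:intersectsphericalinddhreg} remains valid in the generic setting over $\C[\boldsymbol{\kappa}]$, since its proof passes through an associated graded argument that is independent of the parameters. It is therefore enough to show that each $A_{\boldsymbol{\kappa}}(W_H)_{\deltah_H}$ is $\Gamma$-stable, and by the product structure of $\Gamma$ only the factor $\mathfrak{S}_{\ell_H}$ can act non-trivially on this piece. Using the explicit generators of Example~\ref{ex:Dunklcyclci}, both $\Phi(X) = x^{\ell_H}$ and $\Phi(Z) = x \partial_x$ are independent of $\boldsymbol{\kappa}$ and so are trivially fixed, while for $\Phi(Y) = x^{-\ell_H} \prod_{i=0}^{\ell_H-1}(x\partial_x + c_i(\boldsymbol{\kappa}))$ the shift $\star$ has been designed precisely so that $\sigma \star c_i(\boldsymbol{\kappa}) = c_{\sigma(i)}(\boldsymbol{\kappa})$. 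Since the linear factors $x\partial_x + c_i(\boldsymbol{\kappa})$ commute pairwise, $\Phi(Y)$ is $\star$-fixed and stability follows.

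For the centre, the generic analogue of Lemma~\ref{lem:quotientetaleh}(1) gives $A_{\boldsymbol{\kappa}}[\deltah^{-1}] = \dd(\h_{\reg})^W[\boldsymbol{\kappa}]$. Since $\deltah \in \C[\h]^W$ is $\Gamma$-fixed, the Ore set $\{\deltah^n\}$ is $\Gamma$-stable and taking $\Gamma$-invariants commutes with this localisation to give
$$
A_{\boldsymbol{\kappa}}^{\Gamma}[\deltah^{-1}] = \dd(\h_{\reg})^W \otimes_{\C} \C[\boldsymbol{\kappa}]^{(\Gamma,\star)}.
$$
The centre of the right-hand side equals $\C[\boldsymbol{\kappa}]^{(\Gamma,\star)}$ because $\dd(\h_{\reg})^W$ is simple with centre $\C$ by Lemma~\ref{lem:quotientetaleh}(3). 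Any $z \in Z(A_{\boldsymbol{\kappa}}^{\Gamma})$ commutes with $\deltah \in A_{\boldsymbol{\kappa}}^{\Gamma}$, hence with $\deltah^{-1}$, so it continues to centralise $A_{\boldsymbol{\kappa}}^{\Gamma}[\deltah^{-1}]$. This forces $z \in \C[\boldsymbol{\kappa}]^{(\Gamma,\star)}$, and the reverse inclusion is immediate.

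The main subtle point I anticipate is justifying that Theorem~\ref{thm:intersectsphericalinddhreg} extends to the generic setting over $\C[\boldsymbol{\kappa}]$; once this, together with the commutation of $\Gamma$-invariants with Ore localisation at $\deltah$, is established, the remaining steps are all routine.
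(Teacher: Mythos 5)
Your proposal is correct and follows essentially the same route as the paper: stability is reduced via the generic form of Theorem~\ref{thm:intersectsphericalinddhreg} to checking that the rank-one generators $\Phi(X)$, $\Phi(Z)$, $\Phi(Y)$ of $A_{\boldsymbol{\kappa}}(W_H)$ are fixed by the $\star$-action (which is exactly what the shift was designed for), and the centre is computed by localising at $\deltah$ and using that $(\dd(\h_{\reg})^W[\boldsymbol{\kappa}])^{\Gamma}$ has centre $\C[\boldsymbol{\kappa}]^{(\Gamma,\star)}$. Your explicit verification that $\sigma \star c_i(\boldsymbol{\kappa}) = c_{\sigma(i)}(\boldsymbol{\kappa})$ and the remark on invariants commuting with Ore localisation merely fill in details the paper leaves implicit.
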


\begin{proof}
The proof of Theorem~\ref{thm:intersectsphericalinddhreg} goes through verbatim with the complex parameters $\kappa$ replaced by the variables $\boldsymbol{\kappa}$. Fix a hyperplane $H \in \calA$ and think of $A_{\boldsymbol{\kappa}}(W_H)$ as a subalgebra of $\dd(\h_{\reg})^{W}[\boldsymbol{\kappa}]$ as usual. The key point about the twisted action is that it follows from Example~\ref{ex:Dunklcyclci} that 
$$
\Phi_{}(Y)  = \frac{\ell^{\ell}}{z} \prod_{i =
        0}^{\ell-1} \left( z \partial_z  +
\boldsymbol{\kappa}_{H,i} + \frac{i}{\ell} +(\delta_{i,0}
- 1) \right)     
$$
and $\Phi(Z) = \ell z \partial_z$ belong to $A_{\boldsymbol{\kappa}}(W_H) \cap (\dd(\h_{\reg})^{W}[\boldsymbol{\kappa}])^{\Gamma}$. That is, $\sigma \star \Phi(Y) = \Phi(Y)$ for all $\sigma \in \Gamma$. Since $A_{\boldsymbol{\kappa}}(W_H)$ is generated as a $\C[\boldsymbol{\kappa}]$-algebra by $\C[\h]^{W_H}$, $\Phi(Z)$ and $\Phi(Y)$, it follows that $A_{\boldsymbol{\kappa}}(W_H) $
is generated by $A_{\boldsymbol{\kappa}}(W_H) \cap (\dd(\h_{\reg})^{W}[\boldsymbol{\kappa}])^{\Gamma},$ and $ \C[\boldsymbol{\kappa}] $ as a $\C$-subalgebra of 
$ \dd(\h_{\reg})^W[\boldsymbol{\kappa}].$
This implies that both $A_{\boldsymbol{\kappa}}(W_H)$ and $A_{\boldsymbol{\kappa}}(W_H)_{h_H}$ are stable under $\Gamma$. It follows from Theorem~\ref{thm:intersectsphericalinddhreg} that $A_{\boldsymbol{\kappa}}$ is also stable under $\Gamma$.  

Finally, we note that the centre of $(\dd(\h_{\reg})^{W}[\boldsymbol{\kappa}])^{\Gamma}$ equals $\C[\boldsymbol{\kappa}]^{(\Gamma,\star)}$ and the former is the localization of $A_{\boldsymbol{\kappa}}^{\Gamma}$ with respect to the powers of $\delta$. It follows that the centre of $A_{\boldsymbol{\kappa}}^{\Gamma}$ equals $\C[\boldsymbol{\kappa}]^{(\Gamma,\star)}$ too.  
\end{proof}

Returning to complex valued parameters, we see that:  
  
\begin{corollary}\label{cor:Namparameters}
        For each $\sigma \in \Gamma$, $A_{\sigma \star \kappa}(W) = \Ak(W)$ as subalgebras of $\dd(\h_{\reg})^W$. 
\end{corollary}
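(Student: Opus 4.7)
The strategy is to deduce this specialization statement directly from the generic $\Gamma$-invariance of $A_{\boldsymbol{\kappa}}$ established in Corollary \ref{cor:HCisoRCA}.

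First, I would check that evaluation at a complex parameter is compatible with the subalgebra $A_{\boldsymbol{\kappa}}$. Explicitly, for each $\kappa \in \C^N$, consider the $\C$-algebra homomorphism $\mathrm{ev}_\kappa \colon \dd(\h_{\reg})^W[\boldsymbol{\kappa}] \to \dd(\h_{\reg})^W$ sending $\boldsymbol{\kappa} \mapsto \kappa$. I claim it restricts to a surjection $A_{\boldsymbol{\kappa}} \to \Ak(W)$. Indeed, the Dunkl embedding formula \eqref{eq:Dunkl} is polynomial in the parameters $\boldsymbol{\kappa}$, so the natural generating set for $A_{\boldsymbol{\kappa}}$ over $\C[\boldsymbol{\kappa}]$ (elements of $\C[\h]^W$, $(\Sym \h)^W$, and their mixed products in $\dd(\h_{\reg})^W$) is sent under $\mathrm{ev}_\kappa$ to the corresponding generating set for $\Ak(W)$.

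Next, combine this with the $\Gamma$-action. By Corollary \ref{cor:HCisoRCA}, for any $\sigma \in \Gamma$ and any $D = \sum_i f_i(\boldsymbol{\kappa})\, D_i \in A_{\boldsymbol{\kappa}}$ with $D_i \in \dd(\h_{\reg})^W$ and $f_i \in \C[\boldsymbol{\kappa}]$, the element $\sigma \cdot D = \sum_i (\sigma \star f_i)(\boldsymbol{\kappa})\, D_i$ again lies in $A_{\boldsymbol{\kappa}}$. Writing $\sigma^{-1} \star \kappa \in \C^N$ for the induced point in parameter space, a direct computation gives
\[
\mathrm{ev}_\kappa(\sigma \cdot D) \ = \ \sum_i (\sigma \star f_i)(\kappa)\, D_i \ = \ \sum_i f_i(\sigma^{-1} \star \kappa)\, D_i \ = \ \mathrm{ev}_{\sigma^{-1} \star \kappa}(D).
\]
Applying this to all of $A_{\boldsymbol{\kappa}}$ and using $\Gamma$-stability in the middle equality,
\[
A_{\sigma^{-1} \star \kappa}(W) \ = \ \mathrm{ev}_{\sigma^{-1} \star \kappa}(A_{\boldsymbol{\kappa}}) \ = \ \mathrm{ev}_\kappa(\sigma \cdot A_{\boldsymbol{\kappa}}) \ = \ \mathrm{ev}_\kappa(A_{\boldsymbol{\kappa}}) \ = \ \Ak(W).
\]
Since $\sigma \mapsto \sigma^{-1}$ is a bijection of $\Gamma$, this yields $A_{\sigma \star \kappa}(W) = \Ak(W)$ for every $\sigma \in \Gamma$, as desired.

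The only mild subtlety is the first step, namely verifying that $\mathrm{ev}_\kappa$ really carries the generic subalgebra $A_{\boldsymbol{\kappa}}$ onto the complex-parameter spherical algebra $\Ak(W)$; once this compatibility is in place, the rest of the argument is purely formal from Corollary \ref{cor:HCisoRCA}.
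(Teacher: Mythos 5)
Your argument is correct, but it takes a genuinely different route from the paper's. The paper proves Corollary~\ref{cor:Namparameters} by rerunning, at the complex parameter $\kappa$, the first half of the proof of Corollary~\ref{cor:HCisoRCA}: by Example~\ref{ex:Dunklcyclci} the generator $\Phi(Y)$ of the rank-one algebra $\Ak(W_H)$ is $\frac{\ell^{\ell}}{z}\prod_{i}\bigl(z\partial_z+\kappa_{H,i}+\tfrac{i}{\ell}+\delta_{i,0}-1\bigr)$, whose factors are literally permuted when $\kappa$ is replaced by $\sigma\star\kappa$, so $A_{\sigma\star\kappa}(W_H)=\Ak(W_H)$ on the nose, and Theorem~\ref{thm:intersectsphericalinddhreg} then propagates the equality from the $W_H$ to $W$. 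You instead treat the statement of Corollary~\ref{cor:HCisoRCA} as a black box and specialise it via the evaluation map $\mathrm{ev}_{\kappa}$. That deduction is sound, and the one point you flag does hold: the cleanest way to see that $\mathrm{ev}_{\kappa}(A_{\boldsymbol{\kappa}})=\Ak(W)$ is that $H_{\boldsymbol{\kappa}}(W)\subset(\dd(\h_{\reg})\rtimes W)[\boldsymbol{\kappa}]$ is generated by $\C[\h]$, $W$ and the Dunkl operators, each polynomial in $\boldsymbol{\kappa}$ and specialising to the corresponding generator of $H_{\kappa}(W)$, whence $\mathrm{ev}_{\kappa}(H_{\boldsymbol{\kappa}}(W))=H_{\kappa}(W)$ and $\mathrm{ev}_{\kappa}(eH_{\boldsymbol{\kappa}}(W)e)=eH_{\kappa}(W)e=\Ak(W)$. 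I would phrase it that way rather than via ``$\C[\h]^W$, $(\Sym\h)^W$ and their mixed products'', since $\Ak(W)$ need not be generated by those two subalgebras (cf.\ Corollary~\ref{more-cases}); your wording is repairable but loose. The identity $\mathrm{ev}_{\kappa}(\sigma\cdot D)=\mathrm{ev}_{\sigma^{\pm 1}\star\kappa}(D)$ is then immediate, and your closing observation that $\sigma\mapsto\sigma^{-1}$ is a bijection of $\Gamma$ disposes of the sign-of-the-action convention. What each approach buys: yours avoids repeating the rank-one computation and makes the corollary a purely formal consequence of the generic one, at the cost of introducing and checking the evaluation map; the paper's keeps visible the reason the equality holds (the factors of $\Phi(Y)$ are permuted by $\star$) and stays entirely at the specialised parameter.
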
  

\begin{proof}
        This is similar to the first part of the proof of Corollary~\ref{cor:HCisoRCA}. 
\end{proof}

Corollary~\ref{cor:Namparameters} is a generalization of \cite[Proposition~5.4]{BerestChalykhQuasi}, since the group $G$ of that proposition can be realised as a subgroup of $\Gamma$, with the same twisted action on parameters. 
          
\section{Background on polar representations} \label{Sec:polarreps}

In this section we give  the basic definitions and results  for polar
representations, for which our main  reference is \cite{DadokKac}.

 We begin with the relevant notation. 
Fix a  reductive group $G$ with a finite
dimensional representation $V$. Let  $\pi : V \rightarrow V\git G$  be the categorical
quotient; that is, the morphism defined by the inclusion of algebras
$\C[V]^G \to \C[V]$.
An element $v \in V$ is called \textit{semisimple} if the orbit $G \cdot v$
is closed. In this case $G \cdot v$ is affine and the \emph{stabiliser} $G_v$ is a reductive
subgroup of $G$.  A semisimple element $v$ is
\textit{$V$-regular}\label{defn:h-reg} if $\dim G \cdot v$ is maximal
amongst orbits of semisimple elements. The set of semisimple elements, respectively
 $V$-regular
elements in $V$ is denoted  by  $V_{\mathrm{ss}}$, respectively $V_{\st}$.  
Set
\begin{equation}\label{eq:msdefnorbit}
        m=\max \{\dim G\cdot v : v \in V\} \ \; \text{and} \ \;
        s= \max \{\dim G \cdot x : x \in V_{\mathrm{ss}}\}. 
\end{equation}
Thus $s \le m \le \dim V - \dim V \git G$.  
 
Pick a  $V$-regular element $v$ and define
$\h_v = \{ x \in V \ | \ \mf{g} \cdot x \subseteq \mf{g} \cdot v
\}$. Then \cite[Lemma~2.1 and Proposition~2.2]{DadokKac} say that
$\h_v$ consists of semisimple elements and the map
$\pi:  \h_v \rightarrow V\git G$ is finite. In particular,
$\dim \h_v \le \dim V \git G$. If $\dim \h_v = \dim V \git G$ for some such $v$, 
then $V$ is called \textit{polar} \label{defn:polar} and
$\h_v$ is a \textit{Cartan subspace} of $V$. \label{defn:Cartan}
If $G^\circ$ is the connected component of the identity, the
representation $(G,V)$ is polar if and only if $(G^\circ, V)$ is
polar,  see \cite[Remark, p.~510]{DadokKac}.

\emph{We will assume for the rest of the paper
that the group $G$ is connected.}

\smallskip

Assume that $V$ is polar, and fix a Cartan subspace $\h$ of $V$. Let 
\begin{equation}\label{N(h)-defn}
N_G(\h) = \{ g \in G \ | \ g\cdot \h = \h \} \ \text{ and } \
Z_G(\h) = \{ g \in G \ | \ g\cdot x = x, \ \forall  x \in \h \}. 
\end{equation}
Then $W = N_G(\h) / Z_G(\h)$ is a finite group by
\cite[Lemma~2.7]{DadokKac}, called the \textit{Weyl
  group}\label{defn:Weyl} of $V$. By
\cite[Theorem~2.10]{DadokKac}, $W$ is a complex reflection group
and the map $\h \rightarrow V \git G$ factors to give an
isomorphism $\h / W \stackrel{\sim}{\longrightarrow} V \git G$
\cite[Theorem~2.9]{DadokKac}.
 Write $\rr \colon \C[V] \to \C[\h]$ for restriction of functions; thus $\rr$ induces a graded
isomorphism $ \C[V]^G \stackrel{\sim}{\longrightarrow} \C[\h]^W$, which will again be written $\rr$.

As in \cite[page~515]{DadokKac},   the \textit{rank} of
the polar representation $(G,V)$ is defined to be
\begin{equation}
  \label{defn:polarrank}
  {\rank}(G,V)= \dim \h - \dim \h^{\mf{g}}.
\end{equation}
Recall from Notation~\ref{not:regularelementh} that
$x \in \h$ is regular if the stabiliser of $x$ in $W$ is
trivial and that the set regular elements in
$\h$ is denoted $\h_{\mathrm{reg}}$.

 \begin{lemma}\label{lem:stabilizerpolarh}
        If $v \in V$ is $V$-regular and $\h = \h_v$ then $G_v
        \subseteq N_G(\h)$. Hence, if $u \in \h$ is regular then
        $\h= \h_u$ and
        $\g \cdot \h=\g \cdot u = \g \cdot v$ with $G_u = Z_G(\h)$.  
      \end{lemma}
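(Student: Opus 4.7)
\medskip

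\noindent\textbf{Proof plan.} The first assertion $G_v \subseteq N_G(\h)$ is straightforward. Given $g \in G_v$, note that for any $y \in \g$ one has $g \cdot (y \cdot v) = (\Ad(g) y)\cdot v \in \g \cdot v$, so $g\cdot (\g\cdot v)=\g\cdot v$ (equality by dimension). Then for $x\in \h=\h_v$, I compute $\g \cdot (g\cdot x) = g \cdot (\Ad(g^{-1})\g\cdot x) = g\cdot (\g\cdot x)\subseteq g\cdot (\g\cdot v)=\g\cdot v$, so $g\cdot x\in \h$. Hence $g\cdot \h\subseteq \h$, and equality follows from $\dim \h=\dim V\git G$.

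For the second assertion I would first argue that a regular $u\in \h$ is actually $V$-regular. Since $v\in \h_v=\h$, one has $v\in \h$, and by Dadok--Kac all elements of $\h$ are semisimple, so $G\cdot u$ is closed and $\pi(u)\in V\git G\cong \h/W$ lies in the image of $\h_{\reg}$, which is a dense open subset. The quotient map $\pi\colon V\to V\git G$ has generic fibre dimension $\dim V-\dim\h=\dim\g\cdot v$; over a generic point the unique closed orbit therefore has full dimension equal to the fibre dimension. Thus $\dim G\cdot u=\dim G\cdot v$. Combined with $\g\cdot u\subseteq \g\cdot v$ (which holds because $u\in\h=\h_v$), this forces $\g\cdot u=\g\cdot v$, and in particular $\g\cdot\h\subseteq\g\cdot v=\g\cdot u$, giving $\g\cdot\h=\g\cdot u=\g\cdot v$.

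With this in hand, $\h_u=\{x:\g\cdot x\subseteq \g\cdot u\}=\{x:\g\cdot x\subseteq \g\cdot v\}=\h_v=\h$. Since $u$ is $V$-regular with $\h_u=\h$, the first part of the lemma applied to $u$ yields $G_u\subseteq N_G(\h)$. Therefore $G_u/Z_G(\h)$ is a subgroup of $W=N_G(\h)/Z_G(\h)$, namely the stabilizer $W_u$ of $u$ in $W$. Regularity of $u\in\h$ means exactly that $W_u=1$, so $G_u=Z_G(\h)$.

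The only conceptual step is the argument that a regular $u\in\h$ is $V$-regular; everything else is formal manipulation of definitions. That step uses the isomorphism $\h/W\cong V\git G$ from \cite[Theorem~2.9]{DadokKac} together with the fact that in a polar representation the maximum orbit dimension equals the generic fibre dimension $\dim V-\dim\h$ of $\pi$, so that a generic fibre \emph{is} a single closed orbit. I expect this to be the main (mild) obstacle.
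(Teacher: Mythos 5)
Your proof of the first assertion, $G_v \subseteq N_G(\h)$, is correct and is essentially the paper's argument. The final deductions (from $\g\cdot u = \g\cdot v$ to $\h_u=\h$, and from $G_u\subseteq N_G(\h)$ plus triviality of $W_u$ to $G_u=Z_G(\h)$) also match the paper.

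However, the step you yourself flag as the main obstacle --- that a regular $u\in\h$ is $V$-regular --- is where your argument breaks down, in two places. First, you assert that the generic fibre dimension $\dim V-\dim\h$ of $\pi$ equals $\dim\g\cdot v$, and hence that a generic fibre is a single closed orbit. That is precisely the statement that $(G,V)$ is \emph{stable}, and polar representations need not be stable: in general $V=\h\oplus\g\cdot\h\oplus U$ with $U\neq 0$ possible (e.g.\ non-stable $\theta$-representations for $m\ge 3$), in which case $\dim\g\cdot v=\dim V-\dim\h-\dim U<\dim V-\dim\h$ and the generic fibre of $\pi$ properly contains its closed orbit. Second, even when $V$ is stable, knowing that $\pi(u)$ lies in the dense open set $\h_{\reg}/W$ does not by itself place $\pi(u)$ in the (a priori different) dense open locus over which the fibre is a single closed orbit of maximal dimension; two dense open subsets need not contain a given point of the other. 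Showing that $\h_{\reg}/W$ sits inside that ``principal'' locus is exactly the nontrivial content you are trying to prove. The paper does not attempt a direct argument here: it simply invokes \cite[Theorem~2.12]{DadokKac}, which states that an element of $\h$ with trivial $W$-stabiliser is $V$-regular. (Note that the \emph{converse} is the open conjecture recorded in Remark~\ref{regular-remark}, so one should not expect this implication to be a formality.) To repair your proof, replace the fibre-dimension argument by that citation; everything downstream of it is fine.
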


\begin{proof}
 By definition,  $\h = \h_v = \{ x \in V \, | \, \g \cdot x \subseteq \g \cdot v \}$. If
  $g \in G_v$ then $g \cdot (\g \cdot v) = \g \cdot v$ which implies
  that   $\g \cdot (g \cdot x) = g\cdot ( \g \cdot x) \subseteq \g \cdot v$ for all $x\in \h$ and
  hence $g\cdot  x \in \h$. Thus $G_v \subseteq N_G(\h)$.
        
  If $u \in \h$ is regular then \cite[Theorem~2.12]{DadokKac} implies
  that it is $V$-regular. In particular,
  $\dim G \cdot v = \dim G \cdot u$. Since
  $\mf{g} \cdot u \subseteq \mf{g} \cdot v$, it follows that
  $\g \cdot u = \g \cdot v$ and hence $\h_u = \h$. By the
  previous paragraph, $G_u \subseteq N_G(\h)$. The image of $G_u$
  in the quotient $W = N_G(\h) / Z_G(\h)$ is $W_u$. The latter is
  trivial by definition of regularity. Thus, $G_u \subseteq
  Z_G(\h)$. Conversely, $Z_G(\h) \subseteq G_x$ for all
  $x \in \h$.
\end{proof}

  \begin{remark}\label{regular-remark}
    An element $x \in \h$ can be regular, or it can be
    $V$-regular when thought of as an element of $V$. It is
    conjectured that these concepts coincide; see
    \cite[p.~521]{DadokKac}.
  \end{remark}
 
\begin{notation}\label{notation:h-reg}
  Since $W$ is a complex reflection group, the open set
  $\h_{\reg} / W$ is principal; it is the non-vanishing locus of
  the discriminant $\deltah \in \C[\h]^W$ defined
  in~\eqref{eq:discriminant}. We define the \emph{regular locus} 
  $V_{\reg}$ to be the preimage of $\h_{\reg} / W$ under the
  quotient map $\pi : V \rightarrow  V\git G \cong \h / W$.   Equivalently,
   $V_{\reg}=(\delta\not=0)$ where
\begin{equation} \label{deltav}
\deltav= \rr^{-1}(\deltah) \in \C[V]^G
\end{equation}
is  defined to be the \emph{discriminant} of
$(G,V)$.
\end{notation}

Finally, we recall that there exists a Hermitian
inner product $( - , - )$ on $V$,
invariant under the action of a maximal compact subgroup  of $G$,
such that every vector in $\h$ has minimal length in its orbit
and $\h$ is perpendicular to $\mf{g} \cdot v$ for all $v \in \h$. 
See \cite[Lemma~2.1 and Remark~1.4]{DadokKac} for the details.

We can take slices in polar representations. Let $p \in \h$
and recall that $p$ is semisimple. 
 Define the \emph{slice} $S_p$ at $p$   to be the orthogonal complement, with respect to $(-,-)$, 
in $V$ to $\mf{g} \cdot p  $.   Then $G_p$ acts on $S_p$, and $(G_p,S_p)$ is again polar, with Cartan subspace $\h$; see \cite[Lemma~2.1(ii) and
Theorem~2.4]{DadokKac} for the details.

As in \cite[Corollary~2.5]{DadokKac},  we can pick   a
$Z_G(\h)$-invariant  complement $U$ to $\h \oplus \mf{g} \cdot \h$ in
$V$. Thus the decomposition
\begin{equation}
  \label{eq:Ucomplement}
  V = \h \oplus \g \cdot \h \oplus U
\end{equation}
is $Z_G(\h)$-invariant and it is not difficult to see that this decomposition
is $N_G(\h)$-invariant. Furthermore, since Lemma~\ref{lem:stabilizerpolarh} implies that $G_u=Z_G(\h)$ for a regular element $u\in \h$, the proof of 
\cite[Corollary~2.5]{DadokKac} implies that  $U \git Z_G(\h) = \{ \mr{pt}
\}$.

\begin{lemma}
                \label{lem:slices}
                Let $p \in \h$  with slice representation  $(G_p,S_p)$.
   \begin{enumerate}
   \item             One has $S_p= \h \, \oplus \, \g_p\cdot \h \oplus U$. 
     \item
  $p$ is $V$-regular  $\iff$  $\g_p = Z_\g(\h)$ $\iff$
                $\rank(G_p,S_p) = 0$ $\iff$ $S_p = \h \oplus U$.
                \end{enumerate}
        \end{lemma}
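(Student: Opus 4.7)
My plan is to establish (1) by a containment-plus-dimension argument, and then to deduce (2) as a short cycle of implications based on the explicit decomposition from (1).

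For (1), I choose the complement $U$ in \eqref{eq:Ucomplement} to be the orthogonal complement of $\h \oplus \g \cdot \h$ with respect to the compact-invariant Hermitian form from \cite{DadokKac}; this is $Z_G(\h)$-invariant because the form is preserved by a compact real form of $Z_G(\h)$, and it still satisfies the requirements of \eqref{eq:Ucomplement}. For the inclusion $\h \oplus \g_p \cdot \h \oplus U \subseteq S_p$, the summand $\h$ lies in $S_p$ because $\h \perp \g \cdot v$ for all $v \in \h$, in particular for $v = p$; the summand $U$ lies in $S_p$ because $U \perp \g \cdot \h \supseteq \g \cdot p$. For $\g_p \cdot \h \subseteq S_p$, I use that $\g \cdot p$ is $G_p$-stable, so $S_p = (\g \cdot p)^{\perp}$ is $G_p$-stable (via the invariance of the form under a compact form of $G_p$ and the complexification principle); since $\h \subseteq S_p$, each orbit $G_p \cdot h$ lies in $S_p$, and differentiating at $h$ places $\g_p \cdot h$ in $T_h S_p = S_p$. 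The sum is direct because $\g_p \cdot \h \subseteq \g \cdot \h$ and $V = \h \oplus \g \cdot \h \oplus U$.

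For equality I compare dimensions. Pick $u \in \h_{\reg}$. Applying Lemma~\ref{lem:stabilizerpolarh} to $(G,V)$ gives $\g_u = Z_{\g}(\h)$ and $\g \cdot \h = \g \cdot u$, so $\dim \g \cdot \h = \dim \g - \dim Z_{\g}(\h)$. Since $p \in \h$ forces $Z_G(\h) \subseteq G_p$, we have $Z_{G_p}(\h) = Z_G(\h)$ and hence an embedding $W_p = N_{G_p}(\h)/Z_G(\h) \hookrightarrow W$; thus $u$ is regular for $W_p$ and the lemma applied to $(G_p, S_p)$ yields $\dim \g_p \cdot \h = \dim \g_p - \dim Z_{\g}(\h)$. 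Combined with $\dim \g \cdot p = \dim \g - \dim \g_p$ and $\dim U = \dim V - \dim \h - \dim \g \cdot \h$, a direct calculation produces $\dim(\h \oplus \g_p \cdot \h \oplus U) = \dim V - \dim \g \cdot p = \dim S_p$, completing (1).

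For (2), note first that $p \in \h$ forces $Z_{\g}(\h) \subseteq \g_p$. Since the maximum orbit dimension $s$ in \eqref{eq:msdefnorbit} is attained by regular elements of $\h$ (by Lemma~\ref{lem:stabilizerpolarh}) and equals $\dim \g - \dim Z_{\g}(\h)$, the $V$-regularity of $p$ is equivalent to $\dim \g_p = \dim Z_{\g}(\h)$, hence to $\g_p = Z_{\g}(\h)$. By \eqref{defn:polarrank} applied to $(G_p, S_p)$, one has $\rank(G_p, S_p) = \dim \h - \dim \h^{\g_p}$, which vanishes iff $\g_p$ fixes $\h$ pointwise, iff $\g_p \subseteq Z_{\g}(\h)$; combined with the reverse inclusion this gives the second equivalence. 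Finally, (1) yields $S_p = \h \oplus U$ iff $\g_p \cdot \h = 0$, which is again $\g_p \subseteq Z_{\g}(\h)$, closing the cycle. The main subtlety throughout is the compatibility of $S_p$ with the decomposition \eqref{eq:Ucomplement}; choosing $U$ as the orthogonal complement removes this obstacle and simultaneously identifies the $G_p$-invariant complement produced by applying \eqref{eq:Ucomplement} to $(G_p, S_p)$ with the original $U$.
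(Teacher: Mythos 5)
Your proof is correct and follows essentially the same strategy as the paper's: part (1) by exhibiting the containment $\h \oplus \g_p\cdot\h \oplus U \subseteq S_p$ and then matching dimensions, and part (2) as a cycle of equivalences routed through the condition $\g_p = Z_\g(\h)$. The only divergences are minor: you verify the containment directly from the Hermitian form (where the paper cites Dadok--Kac), and you obtain $\dim \g_p\cdot\h = \dim\g_p - \dim Z_\g(\h)$ by applying Lemma~\ref{lem:stabilizerpolarh} to the slice representation $(G_p,S_p)$ rather than via Dadok--Kac's decomposition $\g\cdot x = \g\cdot p \oplus \g_p\cdot x$, both routes yielding the same identity $\dim(\h \oplus \g_p\cdot\h \oplus U) = \dim V - \dim \g\cdot p$.
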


\begin{proof}
        (1) The inclusion $(\h +   \g_p\cdot \h) \oplus U \subseteq S_p$ follows from
          \cite[Eq.~1, p.511]{DadokKac}. Pick $x  \in V_{\st}$
        such that $\h= \h_x$ and hence $\g\cdot \h =\g\cdot x$. Then, as in the proof of 
        \cite[Theorem~2.4]{DadokKac},  
        $\g \cdot x= \g \cdot p \oplus \g_p\cdot x$ {and} $\g_p\cdot x = \g_p \cdot \h$.  
        We then get:
      \begin{align*}
             \dim \h +  \dim \g\cdot p \,\, +  & \dim \g_p \cdot \h    \ = \  \dim \h + \dim \g
        \cdot x   \ = \\
        & =\  \dim \h + (\dim V - \dim \h - \dim U)  \ = \  \dim V - \dim U. 
        \end{align*}  
      Since $V= \g \cdot p  \oplus  S_p$ and thus $\dim S_p = \dim V - \dim \g \cdot p$, it follows that 
      $S_p= \h \, \oplus \, \g_p\cdot \h \oplus U$.
       
        \smallskip
       (2)  Define $s$ by \eqref{eq:msdefnorbit} and note that $s = \dim \g - \dim
       Z_\g(\h)$ by Lemma~\ref{lem:stabilizerpolarh}. We have $Z_\g(\h) \subseteq \g_p$ for all $p \in \h$. From $\dim G \cdot p = \dim \g - \dim \g_p$ we deduce:
         \begin{align*}  p\in V_{\st} \iff 
        \dim& G\cdot p= s \iff \dim \g_p = \dim Z_\g(\h)  \\
        & \iff \g_p =
        Z_\g(\h) \iff \g_p \subseteq Z_\g(\h). 
      \end{align*}  
        This gives the first equivalence.
        
        By definition, $\rank(G_p,S_p) = 0$ is equivalent to $\h=
        \h^{\g_p}$. In other words, $\rank(G_p,S_p) = 0\iff \g_p \subseteq Z_\g(\h)$, giving  the
        second equivalence.

        Finally, from Part~(1), $S_p = \h \oplus \g_p\cdot \h \oplus U
        = \h \oplus U \iff\g_p\cdot \h= 0$, which  certainly implies that $\g_p \subseteq Z_\g(\h)$.
       Conversely, if $p\in V_{\st}$, then $\h=\h_p$ and so   $\g_p\cdot \h=0$ follows from \cite[Lemma~2.1(iii)]{DadokKac}. This gives the final equivalence.
         \end{proof}

\smallskip  
We end the section with some comments on the discriminant $\deltav$.
Factorise
\begin{equation}\label{eq:factordiscinV}
  \deltav = \deltav_1^{m_1} \cdots \deltav_k^{m_k}
\end{equation}
in $\C[V]$, where the $\deltav_i$ are pairwise coprime irreducible homogeneous
polynomials. Since we have assumed that $G$ is
connected, the $\deltav_i$ are semi-invariants, say of weight $\theta_i$,  and the highest common factor
of the $m_i$ is one. In general, one can have $m_i > 1$; this happens, for example, in \cite[Example~15.1]{BNS}.

\begin{remark}   \label{discriminant-factors}
  We assume in this remark that the group $G$ is semisimple. Then the space of linear characters
  $\mathbb{X}^*(G)$   is trivial and any semi-invariant polynomial is
  $G$-invariant.  It follows that the decomposition
  $\deltav= \prod_j \deltav_j^{m_j}$ coincides with the
  decomposition of $\deltav$ into a product of irreducible polynomials in the
  polynomial algebra $\C[V]^G$.  Since
  $\rr : \C[V]^G \to \C[\h]^W$ is an isomorphism, the factors
  $\deltav_j$ are the $\rr^{-1}(h_j)$ where $\prod_j h_j^{m_j}$
  is the decomposition of the discriminant
  $\deltah = \prod_{H \in \calA} \alpha_H^{\ell_H}$ as product of
  irreducible elements in $\C[\h]^W$. This
  decomposition depends on the number of $W$-orbits of
  hyperplanes in~$\calA$ (see 
  \cite[Theorem~4.18]{Broue}).

As an explicit example, suppose  that the group $W$ is the  Weyl group of an
irreducible root system $R$.
In this case, there is one orbit of hyperplanes in $\calA$ if $R$
is simply laced  and two orbits  otherwise.  
Then, $\deltav$ is irreducible when $R$ is simply laced.  If $R$ is not simply laced,
choose a set of positive roots $R_+$ and set 
 \[
h_{\mathrm{sh}}= \prod_{\alpha \in R_+,
  \,    \alpha \, \mathrm{short}} \alpha^2,  \quad \text{and} \quad
h_{\mathrm{lg}}= \prod_{\alpha \in R_+,
\,    \alpha \, \mathrm{long}} \alpha^2,
\]
 and write $\deltav_{\mathrm{sh}} = \rr^{-1}(h_{\mathrm{sh}})$ and $
\deltav_{\mathrm{lg}} = \rr^{-1}(h_{\mathrm{lg}})$. 
  Then  $\deltav = \deltav_{\mathrm{sh}} \deltav_{\mathrm{lg}} \in
\C[V]$ where $\deltav_{\mathrm{sh}}$ and $\deltav_{\mathrm{lg}}$ are 
  irreducible.
 \end{remark}


\section{The radial parts map: the one-dimensional case}\label{sec:radialpartsmap1}

In this section, we define the radial parts map $\rad_{\vs}$. We consider in detail the case where $\dim V \git G = 1$ and show that the image of $\rad_{\vs}$ is always a spherical subalgebra of a rational Cherednik algebra in this case. 

\subsection*{The radial parts map}

Fix a polar representation $V$ for a connected, reductive
group $G$. As in \eqref{eq:factordiscinV},
$\deltav_1, \ds, \deltav_k$ denote the pairwise distinct
irreducible factors of $\deltav$ in $\C[V]$. The $\deltav_i$ are
homogeneous semi-invariant functions of weight $\theta_i$ that
are invertible on $V_{\reg}$.  

For each $i$, choose $\vs_i \in \C$ and set
\begin{equation} \label{chi-defn}
        \chi = \sum_{i=1}^k \vs_i d \theta_i .  
        \end{equation}
Recall that a  \textit{weakly equivariant}\label{defn:weak}  left $\dd(V_{\reg})$-module is a left $\dd(V_{\reg})$-module $M$ that is also a rational $G$-module such that the action
  $\dd(V_{\reg}) \otimes M \rightarrow M$ is $G$-equivariant. 
Since each $\delta_i$ is a $G$-semi-invariant, one can define a weakly $G$-equivariant $\dd(V_{\reg})$-module, denoted by $\C[V_{\reg}] \deltav^{\vs}$, as follows. As a $\C[V_{\reg}]$-module it is free of rank one with basis $\deltav^{\vs} := \deltav_1^{\vs_1} \cdots
\deltav_k^{\vs_k}$. The group $G$ acts trivially on the generator
$\deltav^{\vs}$ and
\begin{equation}\label{eq:twistedop}
        \partial \ast \deltav^{\vs} = \sum_i \vs_i
        \frac{\partial(\deltav_i)}{\deltav_i} \deltav^{\vs}, 
\end{equation}
for each derivation $\partial \in \dd(V_{\reg})$. The action of a
general differential operator then follows from the fact that
$\dd(V_{\reg})$ is generated by derivations and $\C[V_{\reg}]$.
We remark that, here and elsewhere, we often use $\ast$ to denote
the action of a differential operator on functions to distinguish
it from multiplication of operators.  One can also regard
$\C[V_{\reg}] \deltav^{\vs}$ as the rank one integrable connection
defined by the logarithmic one-form $d \log \deltav^{\vs}$.

Denote by $D \mapsto \deltav^{-\vs}D\deltav^{\vs}$ the conjugation by
$\deltav^{-\vs}$; that is,~the automorphism of
$\dd(V_{\reg})$ defined by:
\begin{gather*}
        \deltav^{-\vs}f\deltav^{\vs} = f \quad \text{if $f \in \C[V_{\reg}]$},
        \\
        \deltav^{-\vs}v\deltav^{\vs} = v + \sum_{i=1}^k \vs_i
        \frac{v(\deltav_i)}{\deltav_i}   \quad \text{if $v \in
                \mathrm{Der} \bigl(\C[V_{\reg}]\bigr)$.}
\end{gather*}
Then, $D \in \dd(V_{\reg})$ acts on
$f\deltav^{\vs} \in \C[V_{\reg}] \deltav^{\vs}$ by
$f\deltav^{\vs} \mapsto (\deltav^{-\vs}D\deltav^{\vs})(f)
\deltav^{\vs}$.  We also adopt the notation
$(\deltav^{-\vs}D\deltav^{\vs})(f)=
\deltav^{-\vs}D(f\deltav^{\vs})$.
Recall from  Notation~\ref{notation:h-reg}  that $ \deltah = \rr(\delta)$ is the discriminant
on $\h$ and set  $h_i := {\deltav_i}_{\mid \h}$ and
$\deltah^{\vs} := h_1^{\vs_1} \cdots h_k^{\vs_k}$. The analogous definitions
will be given for the rank one $\dd(\h_{\reg})$-module
$\C[\h_{\reg}] \deltah^{\vs}$ and the automorphism
$D \mapsto \deltah^{-\vs}D \deltah^{\vs}$ of $\dd(\h_{\reg})$  given by
conjugation by $\deltah^{-\vs}$.

Recall that  the  morphism $\rr \colon \C[V] \to \C[\h]$, given by restriction
$f \mapsto f_{\mid \h}$,   induces graded isomorphisms
$\C[V]^G \stackrel{\sim}{\longrightarrow} \C[\h]^W$ and
$\C[V_{\reg}]^G \stackrel{\sim}{\longrightarrow} \C[\h_{\reg}]^W$, which are also denoted~$\rr$.

Since the module $\C[V_{\reg}] \deltav^{\vs}$ is weakly $G$-equivariant, the algebra $\dd(V)^G$ preserves the subspace
$(\C[V_{\reg}] \deltav^{\vs})^G = \C[V_{\reg}]^G \deltav^{\vs}$ and
we can identify the latter, via $\rr$, with the rank one free
$\C[\h_{\reg}/W]$-module generated by $\deltah^{\vs}$. Now each
$z \in \C[\h_{\reg}/W] \cong \C[V_{\reg}]^G$ acts ad-nilpotently
on $\dd(V)^G$. Thus, combined with Lemma~\ref{lem:quotientetaleh}(2), this identification defines a map
\begin{equation}\label{eq:radialpartsmap}
        \rad_{\vs} \colon \dd(V)^G \longrightarrow \dd(\h_{\reg}/W)\ =\
        \dd(\h_{\reg})^W,  
\end{equation}
which is \emph{the radial parts map} in this context. Explicitly,
\begin{equation}\label{eq:radialpartsmap2}
        \rad_{\vs} (D)(z) = \bigl(\deltav^{-\vs}D(  \rr^{-1}(z)
        \deltav^{\vs}) \bigr) |_{\h_{\reg}}\quad \text{for } z \in
        \C[\h_{\reg}]^W \text{ and }  
        D\in \dd(V)^G.
\end{equation}

 \begin{remark} \label{eq:g-chi}
      Let $\chi$ be as in~\eqref{chi-defn} and set
      \[       \g_{\chi} = \{ \tau(x) - \chi(x) \, | \, x \in \g
      \} \subset \dd(V). 
     \]
      Then $\g_{\chi}$ annihilates $\C[V_{\reg}]^G
      \deltav^{\vs}$ and hence the two-sided ideal $(\dd(V)
      \g_{\chi})^G$ of $\dd(V)^G$  is contained in the kernel of
      $\rad_{\vs}$.    
\end{remark}

 \subsection*{The one-dimensional case}
  
 In this subsection $K$ will be a reductive group, which need
 \textit{not} be connected, and $S$ will denote a
 finite-dimensional, faithful $K$-module such that
 $\dim S \git K = 1$. Much of this subsection parallels
 \cite{Le3}, although that paper is not directly applicable since
 \cite{Le3} only considers the case where $\vs=0$ and $K$
 connected.
 
 \begin{lemma}\label{lem:rankonepolar}   
        The pair $(K,S)$ is a polar representation with
        $\C[S]^K = \C[u]$ for some homogeneous polynomial $u$. The rank
        of $(K,S)$ is at most one.
 \end{lemma}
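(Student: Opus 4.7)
The plan is to produce a Cartan subspace by hand to establish polarity, then invoke the Chevalley isomorphism to deduce that $\C[S]^K$ is a polynomial ring; the rank bound will fall out by inspection.

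First, I would locate a suitable $S$-regular semisimple element $v \in S$. Since $\dim S \git K = 1$, the categorical quotient $\pi \colon S \to S \git K$ has image of positive dimension, and the standard fact that each fiber of $\pi$ contains a unique closed orbit guarantees the existence of closed orbits $K \cdot v$ with $\pi(v) \ne 0$, and in particular with $v \ne 0$. Choose such a $v$ with $\dim K \cdot v$ maximal among semisimple elements, making it $S$-regular in the sense of \eqref{eq:msdefnorbit}. The key observation is that for every $\lambda \in \C$,
\[
\mf{k} \cdot (\lambda v) \;=\; \lambda\,(\mf{k} \cdot v) \;\subseteq\; \mf{k} \cdot v,
\]
so the line $\C v$ is contained in $\h_v = \{x \in S : \mf{k}\cdot x \subseteq \mf{k}\cdot v\}$, giving $\dim \h_v \ge 1$. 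Conversely, by \cite[Proposition~2.2]{DadokKac} the restriction $\pi\colon \h_v \to S \git K$ is finite, so $\dim \h_v \le \dim S \git K = 1$. Hence $\h_v = \C v$ has dimension $1 = \dim S \git K$, proving $(K,S)$ is polar with Cartan subspace $\h := \C v$; the rank bound $\rank(K,S) = \dim \h - \dim \h^{\mf{k}} \le 1$ is then automatic from \eqref{defn:polarrank}.

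For the polynomial invariants I would reduce to the connected case. Because polarity and the Cartan subspace depend only on $\mf{k} = \Lie(K^{\circ})$, the pair $(K^{\circ},S)$ is also polar with Cartan subspace $\h$, and the Chevalley isomorphism \cite[Theorem~2.10]{DadokKac} yields $\C[S]^{K^{\circ}} \cong \C[\h]^{W^{\circ}}$ with $W^{\circ} := N_{K^{\circ}}(\h)/Z_{K^{\circ}}(\h)$ a complex reflection group on the line $\h$. Every finite subgroup of $\GL(\h) = \C^{\times}$ is cyclic of some order $\ell$, so $\C[\h]^{W^{\circ}} = \C[t^{\ell}]$ for a linear coordinate $t$ on $\h$, and hence $\C[S]^{K^{\circ}} = \C[u']$ is a polynomial ring in a single homogeneous generator $u'$. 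The finite group $K/K^{\circ}$ acts on $\C[u']$ by graded algebra automorphisms, i.e.\ through a character of some order $M$ on the one-dimensional space $\C\cdot u'$; passing to its invariants produces $\C[S]^K = \C[{u'}^{M}] = \C[u]$ with $u := {u'}^{M}$ the desired homogeneous generator.

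The step needing the most care is the first: one must ensure a nonzero closed orbit actually exists, even in degenerate configurations such as $\dim S = 1$. This is what the one-dimensionality of $S \git K$ buys us together with the uniqueness of the closed orbit in each fiber of $\pi$; once a nonzero $S$-regular $v$ is in hand, the containment $\C v \subseteq \h_v$ does essentially all of the work.
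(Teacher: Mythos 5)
Your proof is correct and follows essentially the same route as the paper: produce a nonzero semisimple (hence $S$-regular) element $v$, observe that $\C v \subseteq \h_v$ while $\dim \h_v \le \dim S \git K = 1$ forces $\h_v = \C v$, and then deduce $\C[S]^K = \C[u]$ from the Chevalley isomorphism of \cite[Theorem~2.9]{DadokKac}, the rank bound being immediate from \eqref{defn:polarrank}. The only difference is that you spell out the reduction to $K^{\circ}$ and the $K/K^{\circ}$-action on $\C[u']$, whereas the paper simply cites Dadok--Kac (whose results already cover disconnected reductive groups); this is extra care, not a different argument.
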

 
 \begin{remark}\label{rem:u} This choice of polynomial $u$ will be
        fixed throughout the subsection.
 \end{remark}
 
 \begin{proof}
        Since $\dim S \git K > 0$, there exists some non-zero semisimple element in $S$. Therefore, 
        there exists a non-zero $V$-regular element $v \in S$. Set $\h=\C v$. Then  it follows directly 
        from the definition that $\h$ is a Cartan
        subspace of $V$ and hence that $(G,V)$ is polar. The rest of
        the lemma follows from \cite[Theorem~2.9]{DadokKac}.
 \end{proof}
 
 Let $\ell = \deg u$ and write $\C[\h] = \C[x]$. Then
 $z := u |_{\h}$ is a homogeneous polynomial of degree $\ell$ and
 so we may assume that $z = x^{\ell}$. This implies that the Weyl
 group of $(K,S)$ is $W = \mathbb{Z} / \ell \mathbb{Z}$. The following result is presumably well-known, 
 but we do not know of a suitable  reference. 
 
 \begin{lemma}\label{lem:semiinvUFD}
        Let $L$ be an affine algebraic group acting linearly on a vector space $U$. Then the ring 
        $\C[U]^{[L,L]}$ is a unique factorisation domain. 
 \end{lemma}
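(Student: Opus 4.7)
My plan is to apply a classical theorem of Popov (see e.g.\ Popov--Vinberg, \emph{Invariant Theory}, Encyclopedia of Math.\ Sciences~55): whenever a connected algebraic group $H$ with trivial character group $X^*(H)=0$ acts rationally on a factorial affine variety $X$, the invariant ring $\C[X]^H$ is factorial. Since $\C[U]$ is a polynomial ring and hence factorial, it suffices to show that $[L,L]$ is a closed connected subgroup of $L$ with $X^*([L,L])=0$. The statement implicitly assumes $L$ is connected---otherwise the conclusion can fail (for $L=O_2$ acting on $V_1\oplus V_1$ with $V_1$ the standard representation, $[L,L]=SO_2$ yields $\C[a,b,c,d]/(ad-bc)$, which is not factorial)---and under this assumption $[L,L]$ is automatically closed and connected.

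The crux is to verify $X^*([L,L])=0$. Any character $\chi\colon [L,L]\to\mathbb{G}_m$ is conjugation-invariant under the connected group $L$, because $X^*([L,L])$ is discrete; so $\chi(ghg^{-1})=\chi(h)$ for all $g\in L$ and $h\in [L,L]$. Using the commutator identity $[aa',b]=a[a',b]a^{-1}\cdot [a,b]$ together with this invariance, the map $\psi\colon L\times L\to\mathbb{G}_m$, $(a,b)\mapsto\chi([a,b])$, is bi-multiplicative (a homomorphism in each argument separately), and thus factors through $L^{\mathrm{ab}}\times L^{\mathrm{ab}}\to\mathbb{G}_m$. Any bi-multiplicative pairing $A\times A\to\mathbb{G}_m$ from a connected commutative algebraic group $A$ is trivial: for each fixed second argument, one obtains a homomorphism $A\to X^*(A)$ to the discrete character lattice, which must be constant at the trivial character (its value at the identity). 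Hence $\chi$ kills every commutator $[a,b]$, so $\chi=1$, giving $X^*([L,L])=0$.

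Popov's theorem applied to $H=[L,L]$ acting on $U$ now completes the argument. The main obstacle is the structural input $X^*([L,L])=0$; the bi-multiplicativity argument above provides a self-contained route to it. A more hands-on alternative in the reductive case would be to identify $\C[U]^{[L,L]}$ with the $X^*(L/[L,L])$-graded ring of $L$-semi-invariants in $\C[U]$, to use connectedness of $L$ to ensure that every irreducible factor of a homogeneous element is again homogeneous (since $L$-orbits on the discrete set of primes of $\C[U]$ modulo scalars must be singletons), and then to invoke the standard criterion that a graded integral domain in which every homogeneous element factors as a product of homogeneous primes is a UFD.
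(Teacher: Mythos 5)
You take a genuinely different route from the paper, and along the way you expose a real problem with the statement itself. The paper's own proof is a direct minimal-counterexample argument: assuming $\C[U]^{[L,L]}$ is not a UFD, it takes $u=u_1^{p_1}\cdots u_r^{p_r}=v_1^{q_1}\cdots v_s^{q_s}$ with two distinct factorisations and $\sum_i p_i$ minimal, picks a prime factor $f_1$ of $v_s$ in $\C[U]$, cites Kraft to conclude that $f_1$ is an $L^{\circ}$-semi-invariant, and then multiplies the $L/L^{\circ}$-orbit of $[f_1]$ to obtain an $L$-semi-invariant ``dividing $v_s$'', forcing $v_s=u_r$ up to scalar and a contradiction. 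Your route is instead Popov's factoriality theorem for connected groups without characters, with the structural input $\mathbb{X}^*([L,L])=0$ supplied by the bi-multiplicativity of $(a,b)\mapsto\chi([a,b])$; for connected $L$ that argument is correct and arguably cleaner than the paper's.

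The more important point, however, is your parenthetical remark: the lemma does \emph{not} assume $L$ connected, and your $O_2$ example is a genuine counterexample --- $[O_2,O_2]=SO_2\cong\C^{\times}$ acts on $V_1\oplus V_1$ with weights $(1,1,-1,-1)$, and the invariant ring is the non-factorial quadric cone $\C[a,b,c,d]/(ad-bc)$. Correspondingly, the paper's proof breaks precisely at the step ``$f_1\cdots f_m$ is an $L$-semi-invariant dividing $v_s$'': the translate $l\cdot f_1$ divides $l\cdot v_s$, not $v_s$, because $v_s$ is only $[L,L]$-invariant rather than an $L$-semi-invariant. (In the example take $u=ad=bc$ and $v_s=c$; the orbit product attached to a linear factor of $c$ is $a$ or $d$, neither of which divides $c$.) Since the paper invokes the lemma for the possibly disconnected slice groups $K=G_p$, your connected-case proof does not cover the generality in which the lemma is actually used; what the applications really require is unique factorisation of $K$-\emph{semi-invariants} into equivariantly irreducible polynomials, for which the $v_j$ are genuine $K$-semi-invariants, the orbit argument does go through, and your sketched alternative via the $\mathbb{X}^*(L/[L,L])$-graded ring of semi-invariants is essentially the right repair. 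In short: a correct proof of a corrected statement and a valid counterexample to the stated one, but not a proof of the lemma as the paper states and uses it.
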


 \begin{proof}
        Assume that $\C[U]^{[L,L]}$ is not a UFD. Then we can find  $u\in \C[U]^{[L,L]}$ having two distinct factorisations; say
        $$
        u = u_1^{p_1} \cdots u_r^{p_r} = v_1^{q_1} \cdots v_s^{q_s}.
        $$
         Moreover, we may assume that $\sum_i p_i$ is minimal. This means that, up to scalars, 
          $u_i \neq v_j$   for all $i,j$. Let $f_1$ be an irreducible factor of $v_s$ in $\C[U]$. Then $f_1$ divides 
          some $u_i$, say $f_1|u_r$. By           the proof of \cite[II 3.3, Satz 2]{KraftGeoBook},
           $f_1$ is a $L^{\circ}$-semi-invariant. The orbit $\{ [f_1], \ds, [f_m] \}$ of $[f_1]$ under $L/L^{\circ}$ in 
           $\mathbb{P}(\C[U])$ is finite. In other words, if $l \in L$ then $l \cdot f_i = c f_j$ for some 
           $c \in \C^{\times}$ and some $j$. This implies that $f_1 \cdots f_m$ is an $L$-semi-invariant dividing
            $v_s$. Since $v_s$ is irreducible in $\C[U]^{[L,L]}$, we have $v_s = \alpha f_1 \cdots f_m$ for some
             $\alpha \in \C^{\times}$. By the same argument $u_r = \beta f_1 \cdots f_m$. But this means we 
             can cancel one copy of $v_s = u_r$ in the factorisation of $u$, contradicting the minimality of $\sum_i p_i$. 
 \end{proof}

We say that a polynomial $f \in \C[S]$ is \textit{equivariantly irreducible} if $f$ is a $K$-semi-invariant and no 
proper factor of $f$ is a semi-invariant. Lemma~\ref{lem:semiinvUFD} says that every $K$-semi-invariant 
admits a unique   factorisation into equivariantly irreducible polynomials, up to scalar and permutation of factors. Factorise
 \begin{equation}\label{eq:ufactoru1tourun}
        u = u_1^{p_1} \cdots u_r^{p_r}
 \end{equation} 
into equivariantly irreducible polynomials.

 As in \cite{Le3}, fix a unitary inner product $(-,-)$ on $S$  
  and let $\{s_1, \ds, s_d\}$ be an orthonormal basis of $S^*$
 with dual basis $\{\partial_1, \ds, \partial_d\}$ for $S$. The
 form $( - , -)$ defines a sesquilinear isomorphism
 $\C[S] \stackrel{\sim}{\longrightarrow} \Sym \, S$ given
 explicitly by
 $$
 \sum_{i}a_i s^i \mapsto \sum_i \overline{a}_i \partial^i.
 $$
 If $f \in \C[S]$, write $f_*(D)\in \Sym S$ for the image of $f$. As shown
 in \cite[Proposition~2.21]{IntroPHV}, if $f \in \C[S]$ is a
 $\theta$-semi-invariant then $f_*(D)$ is a
 $\theta^{-1}$-semi-invariant (this part of the proof of \cite[Proposition~2.21]{IntroPHV} does 
 not require that $S$ is a prehomogeneous vector space). This implies that
 $(\Sym \, S)^G = \C[\Delta]$, where $\Delta := u_*(D)$.
 
 In order to incorporate the twist $\vs$ into our picture, we need
 to introduce a multivariable version of the $b$-function for
 $S$. We can consider a version of the module
 $\C[S_{\reg}]u^{\balp}$ defined over a polynomial ring
 $\C[\balp]=\C[\alpha_1, \ds, \alpha_r]$ where
 $$
 \partial \ast u^{\balp} = \sum_{i = 1}^r \alpha_i \frac{\partial
        (u_i)}{u_i} u^{\balp} \qquad\text{for every derivation }\
 \partial \in \Der\C[S].
 $$
 
 \begin{proposition}\label{thm:PHVbfunction}
        There exists a nonzero polynomial $b(\balp) = b(\alpha_1, \ds, \alpha_r)\in \C[\balp]$  
        of total degree $\ell$ such that
        \begin{equation}\label{eq:PHVbfunction}
                \Delta u^{\balp +1} = b(\balp) u^{\balp}.
        \end{equation}
 \end{proposition}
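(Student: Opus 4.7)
\emph{Strategy and existence.} I would work in the free $\C[\balp][S_{\reg}]$-module $M := \C[\balp][S_{\reg}]u^{\balp}$, aiming to show that $\Delta u^{\balp+1}\in \C[\balp]\cdot u^{\balp}\subseteq M$. First, the module law $\partial_j\ast u^{\balp}=\bigl(\sum_i\alpha_i(\partial_j u_i)/u_i\bigr)u^{\balp}$, iterated via Leibniz's rule, gives $\partial^\mu(u\cdot u^{\balp})=Q_\mu(\balp, x)\,u^{\balp}$ for some $Q_\mu\in\C[\balp][S_{\reg}]$ with $\deg_{\balp}Q_\mu\le|\mu|$. Since $\Delta$ has constant coefficients and order $\ell$, summing gives
$$\Delta u^{\balp+1}\;=\;F(\balp, x)\,u^{\balp}, \qquad F\in\C[\balp][S_{\reg}], \qquad \deg_{\balp}F\le\ell.$$

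\emph{Reducing $F$ to a polynomial in $\balp$.} Since both $\Delta\in(\Sym S)^K$ and $u\in\C[S]^K$, the element $\Delta u^{\balp+1}$ transforms under $K$ with the same formal character as $u^{\balp}$; hence $F(\balp, x)$ is $K$-invariant in $x$, so lies in $\C[\balp]\otimes\C[S_{\reg}]^K=\C[\balp][u, u^{-1}]$. Next, equip $M$ with the Euler grading for which $\C[S]$ is graded in the standard way (so $u$ has degree $\ell$) and $u^{\balp}$ has formal degree $\sum_i\alpha_i\deg u_i$; then the $\dd(S_{\reg})$-action is homogeneous, $\Delta$ has degree $-\ell$, and both sides of $\Delta u^{\balp+1}=Fu^{\balp}$ lie in the same graded piece. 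Consequently $F$ has degree zero in $x$; combined with $F\in\C[\balp][u, u^{-1}]$ and $\deg u=\ell>0$, this forces $F=b(\balp)\in\C[\balp]$.

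\emph{Degree and main obstacle.} The bound $\deg b\le\ell$ is already established. For the opposite inequality, specialise $\alpha_i=p_is$: then $u^{\balp}=u^s$, $u^{\balp+1}=u^{s+1}$, and \eqref{eq:PHVbfunction} becomes the classical one-variable Bernstein--Sato identity $\Delta u^{s+1}=b(p_1s,\ldots, p_rs)u^s$. The principal symbol of $\Delta=u_*(D)$ is the nonzero polynomial $\overline u$, so the standard top-symbol calculation shows that the leading $s^\ell$-coefficient of this $b$-function is a nonzero constant (proportional to the self-pairing of $u$ provided by the sesquilinear identification $\C[S]\cong\Sym S$). Hence $\deg_s b(p_1s,\ldots, p_rs)=\ell$, and so $\deg_{\balp}b=\ell$ as required. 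The single delicate point in the whole argument is this non-vanishing of the leading coefficient; the earlier reductions are structural, with $K$-invariance and Euler homogeneity mechanically forcing $F(\balp, x)$ into $\C[\balp]$.
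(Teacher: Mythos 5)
Your existence argument and the upper bound $\deg_{\balp}b\le\ell$ are essentially the paper's: both rest on the observation that $\Delta u^{\balp+1}$ is a $K$-invariant element of Euler degree zero in the weakly equivariant module $\C[S_{\reg}][\balp]u^{\balp}$, hence lies in $\C[\balp]u^{\balp}$. Your specialisation $\alpha_i=p_is$, which converts \eqref{eq:PHVbfunction} into the classical identity $\Delta u^{s+1}=b(p_1s,\ds,p_rs)u^s$ for $u$ itself, is a clean reduction of the lower bound to the one-variable case (the paper instead specialises $\alpha_1=\cdots=\alpha_r=\alpha$ and invokes \cite[Proposition~2.22(1)]{IntroPHV}).

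The gap is in the very last step, the one you yourself flag as delicate. The top-symbol computation identifies the coefficient of $s^\ell$ in $b(p_1s,\ds,p_rs)$ with the constant $c$ determined by $\sigma(\Delta)(\grad u)=c\,u^{\ell-1}$ (comparing coefficients of $s^\ell$ on both sides of $\Delta u^{s+1}=b(s)u^s$), \emph{not} with the self-pairing of $u$ under the sesquilinear identification. The self-pairing $u_*(D)\ast u=\Delta u$ is the value $b(0)$ --- just set $s=0$ in the identity --- and its positivity shows only that $b\neq0$, which is not the issue. A concrete check that the two quantities differ: for $u=\sum_{i=1}^nx_i^2$ one has $b(s)=4(s+1)(s+n/2)$, with leading coefficient $4$ but self-pairing $\Delta u=2n=b(0)$. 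The non-vanishing of $c$, equivalently of $\sigma(\Delta)(\grad u)$, is genuinely non-formal: for a homogeneous $u$ outside the equivariant setting (say $u=x^2+2y^2$ with trivial group) the polynomial $\sigma(\Delta)(\grad u)$ is not even proportional to $u^{\ell-1}$, and in the equivariant setting, where proportionality does hold, one must still rule out $c=0$. This is exactly the content of \cite[Proposition~2.22(1)]{IntroPHV}, which the paper handles by citation; your argument as written does not establish it, so you need either that citation or an independent proof that $\sigma(\Delta)(\grad u)\neq0$.
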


 \begin{proof}    We define  $u^{\balp}$ and each of the $\alpha_i$  to be $K$-invariant of degree zero.
        Then the  $\dd(S)[\balp]$-module $\C [S_{\reg}][\balp]u^{\balp}$ is $\Z$-graded and weakly $K$-equivariant since each $u_i$ is a $K$-semi-invariant.   Since 
        $\bigl(\C[S_{\reg}][\balp][u^{\balp}]\bigr)^K = \C[\balp][u^{\pm 1}]u^{\balp}$,
         the $K$-invariant, degree zero part $\bigl(\C[ S_{\reg}][\balp][u^{\balp}]\bigr)_0^K$ of $\C [S_{\reg}][\balp]u^{\balp}$ equals $\C[\balp]u^{\balp}$. But $\Delta u^{\balp +1}$ is $K$-invariant of degree zero. Therefore, there exists $b(\balp) \in \C[\balp]$ such that \eqref{eq:PHVbfunction} holds. 
        
        Repeating the argument given in the     paragraph preceding \cite[Proposition~2.22]{IntroPHV} shows     
        that $b$ is a  polynomial of total degree at most $\ell = \deg u$. Moreover, if we specialise this
        polynomial to $\alpha_1 = \cdots = \alpha_r = \alpha$, we must recover the
        polynomial given in \cite[Proposition~2.22(1)]{IntroPHV}; again,  this part of the proof of \cite[Proposition~2.22]{IntroPHV} does not use the fact that $S$ is a prehomogeneous space. This shows that the total degree of $b$ is exactly $\ell = \deg u$.
 \end{proof}
 
 \begin{remark}\label{rem:chiclinear} In general it appears difficult to
        compute the polynomial $b(\balp)$  but, based on \cite{Gyojamultib} we expect that
        $$
        b(\balp) \ = \ \prod_{i = 0}^{\ell-1} (a_{1,i} \alpha_1 + \cdots
        + a_{r,i} \alpha_r + c_i),
        $$
        where $a_{i,j} \in \mathbb{N}$ and $c_j \in
        \mathbb{Q}_{>0}$.  
 \end{remark}
 
\begin{notation}\label{notation:chidot}
   Choose $\vs_i \in \C$ for $1\leq i  \leq  r$ and set $\alpha_i = t - 1 + \vs_i$,
        for an indeterminate $t$. We factorise
        $$
        b(\balp) |_{\alpha_i \ = \ t - 1 + \vs_i} \ = \  c \prod_{i
                =0}^{\ell-1} (t + \lambda_i)
        $$   
        for some $\lambda_i \in \C$ and $c \in
        \C^{\times}$. Write $\mathbf{d} := \sum_{i = 1}^r \vs_i \deg u_i$. 
        
        Finally, recall that the \emph{Euler element} is
        $\eu_S = \sum_{i = 1}^n s_i \partial_{s_i}\in \dd(S)$ which is
        independent of the choice of basis $\{s_i\}$ of $S^*$.
 \end{notation}

 \begin{corollary}\label{cor:Imradrank1} Using the
        definitions from Notation~\ref{notation:chidot}, we have
        $$
        \rad_{\vs}(u) = z, \quad \rad_{\vs}(\eu_S) = \ell z
        \partial_z + \mathbf{d}, \quad  \text{and}\quad\rad_{\vs}(\Delta) = \frac{c}{z}
        \prod_{i = 0}^{\ell-1} (z \partial_z + \lambda_i).
        $$
 \end{corollary}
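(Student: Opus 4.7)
The plan is to check each of the three identities by computing the action of both sides on the basis monomials $z^m$, $m \in \Z$, of $\C[\h_{\reg}]^W$.  By~\eqref{eq:radialpartsmap2} and the fact that $\rr^{-1}(z^m) = u^m$, this reduces to analysing $\deltav^{-\vs}D(u^m \deltav^{\vs})|_{\h_{\reg}}$ for $D \in \{u,\eu_S,\Delta\}$.  For $D = u$, the element acts by multiplication on the weakly equivariant module $\C[V_{\reg}]\deltav^{\vs}$, so $u \ast (u^m\deltav^{\vs}) = u^{m+1}\deltav^{\vs}$, which restricts to multiplication by $z$.  For $D=\eu_S$, the Euler operator is a derivation, so Leibniz together with the homogeneity of $u^m$ (degree $m\ell$) and the relation~\eqref{eq:twistedop} --- which here gives $\eu_S \ast \deltav^{\vs} = \mathbf{d}\,\deltav^{\vs}$ since $\eu_S(u_i)/u_i = \deg u_i$ --- yields $\eu_S \ast (u^m\deltav^{\vs}) = (m\ell + \mathbf{d})\,u^m\deltav^{\vs}$, matching the action of $\ell z\partial_z + \mathbf{d}$ on $z^m$.

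For $\rad_{\vs}(\Delta)$ the idea is first to pin down the general shape of the operator from structural constraints and then to identify its coefficients via the $b$-function.  The operator $\Delta = u_*(D)$ is a constant coefficient, $K$-invariant differential operator of order $\ell$ on~$S$, homogeneous of weight $-\ell$ under scaling.  Consequently $\rad_{\vs}(\Delta) \in \dd(\h_{\reg})^W = \dd(\C^{\times}_z)$ has order at most $\ell$ and lowers the $z$-degree by exactly one (since $z$ has weight $\ell$ under scaling on $\h$).  In $\dd(\C^{\times})$ any such operator admits a unique expression $z^{-1}P(z\partial_z)$ with $P$ a polynomial of degree at most~$\ell$, acting on $z^m$ by $P(m)\,z^{m-1}$.

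To determine $P$, observe that $u^m\deltav^{\vs} = \prod_i u_i^{mp_i + \vs_i}$ is of the form $u^{\balp + 1}$ for a unique $\balp$, so Proposition~\ref{thm:PHVbfunction} gives $\Delta(u^m\deltav^{\vs}) = b(\balp)\, u^{m-1}\deltav^{\vs}$; conjugating by $\deltav^{-\vs}$ and restricting to~$\h$ yields $\rad_{\vs}(\Delta)(z^m) = b(\balp)\, z^{m-1}$.  The value of $\balp$ here is precisely the specialisation $\alpha_i = t-1+\vs_i$ of Notation~\ref{notation:chidot} evaluated at $t = m$, so the factorisation $b(\balp)|_{\alpha_i = t - 1 + \vs_i} = c\prod_{i=0}^{\ell-1}(t+\lambda_i)$ gives $P(m) = c\prod_{i=0}^{\ell-1}(m+\lambda_i)$, which coincides with the action of $\frac{c}{z}\prod_{i=0}^{\ell-1}(z\partial_z + \lambda_i)$ on $z^m$ and yields the stated formula.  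The main technical point is the bookkeeping that links the two conventions: one has to verify that the shift $\balp \mapsto \balp + 1$ in Proposition~\ref{thm:PHVbfunction} (which, by the $K$-invariance argument of its proof, must correspond to multiplication by $u$ and hence increment each formal exponent by $p_i$) combines with the specialisation $\alpha_i = t-1+\vs_i$ so that $t=m$ recovers exactly the exponents $(m-1)p_i + \vs_i$ attached to $u^{m-1}\deltav^{\vs}$.
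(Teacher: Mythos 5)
Your proposal is correct and follows essentially the same route as the paper: the first two identities are verified by acting on the monomials $u^m\deltav^{\vs}$, and the third is obtained by evaluating the multivariable $b$-function of Proposition~\ref{thm:PHVbfunction} at the specialisation $\alpha_i = m-1+\vs_i$ and matching the result against the action of $\frac{c}{z}\prod_{i}(z\partial_z+\lambda_i)$ on $z^m$. The only cosmetic difference is that the paper substitutes $m = \ell^{-1}\eu_{\vs}$ (with $\eu_{\vs}=\eu_S-\mathbf{d}$) inside the product rather than invoking the normal form $z^{-1}P(z\partial_z)$, and the bookkeeping issue you flag about the shift $\balp\mapsto\balp+1$ versus the exponents $(m-1)p_i+\vs_i$ is handled in the paper by exactly the same convention you adopt.
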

 
 \begin{proof}
        The equality $\rad_{\vs}(u) = z$ is immediate since
        $\rad_{\vs}$ is simply restriction on functions. An element of
        $\C[S_{\reg}]^K u^{\vs}$ is a linear combination of
        $u^m u^{\vs}$ for various $m \in \mathbb{Z}$. Since
        $\ell=\deg u$,
        $$
        \eu_S (u^{m} u^{\vs}) \ = \ (\ell m + \mathbf{d}) u^{m} u^{\vs}
        $$
        and hence $\rad_{\vs}(\eu_S) = \ell z \partial_z +
        \mathbf{d}$. If we set $\eu_{\vs} := \eu_S - \mathbf{d}$, then
        $\eu_{\vs}(u^{m} u^{\vs}) = \ell m u^{m} u^{\vs}$ and
        hence $\rad_\vs(\eu_\vs ) = \ell z \partial_z$.
        
        Finally, when restricting $\alpha_i$ to $m - 1 + \vs_i$ for
        each $i$, Proposition~\ref{thm:PHVbfunction} shows that
        $$\begin{aligned} 
                \Delta u^{m} u^{\vs} \ = \  b(\alpha)  u^{m-1} u^{\vs} \ = \
                &  \frac{1}{u} \left[
                c\prod_{i=0}^\ell(m+\lambda_i)\right] (u^{m} u^{\vs}
                )   \\ 
                &\quad \ = \ \frac{1}{u} \left[ c \prod_{i=0}^\ell(
                \ell^{-1}\eu_{\vs} + \lambda_i)\right] (u^{m}
                u^{\vs}).
        \end{aligned}
        $$
        Since $\ell^{-1}\rad_{\vs} \eu_{\vs}=z\partial_z$ this
        implies that
        $$
        (\rad_{\vs} \Delta)(z^{m}) \ = \ \frac{1}{z} \left[c
        \prod_{i=0}^\ell(z\partial_z+\lambda_i)\right] (z^{m}).
        $$
        Hence
        $\rad_{\vs}(\Delta) = \frac{c}{z} \prod_{i = 0}^{\ell-1}
        (z \partial_z + \lambda_i),$ as required.
 \end{proof}

 As shown in Example~\ref{ex:Dunklcyclci}, the image of $\Ak(W)$
 under the Dunkl embedding $\Phi$ is the algebra  
 generated by $z$, $z \partial_z$ and 
 $$
 \Phi(Y) \ = \  \frac{\ell^{\ell}}{z} \prod_{i = 0}^{\ell-1}
 \left(z \partial_z + \kappa_{i} + \frac{i}{\ell} +(\delta_{i,0} -
 1) \right).  
 $$
 Therefore, Corollary~\ref{cor:Imradrank1} implies that
 
 \begin{corollary}\label{cor:kappa-defn}  Set
        $\kappa=(\kappa_0,\dots,\kappa_{\ell-1})$  where 
        \begin{equation}\label{eq:kappa}
                \kappa_{i} + \frac{i}{\ell} +(\delta_{i,0} - 1) =
                \lambda_i.
                     \end{equation}
      Then  $\Ak(W)$ is  the subalgebra of  $\dd(\h_{\reg})^W$
        generated by 
        \[z=\rad_{\vs}(u),\  \  \rad_{\vs}(\eu_S) \ \text{ and }\
         \nabla= \rad_{\vs}(\Delta).  \qed \] 
 \end{corollary}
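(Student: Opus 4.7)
The plan is to deduce the statement essentially as a matching exercise between the explicit formulas supplied by Corollary~\ref{cor:Imradrank1} and the description of the Dunkl image of $A_\kappa(W)$ in Example~\ref{ex:Dunklcyclci}.

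First I would recall from Example~\ref{ex:Dunklcyclci} that in the one-dimensional cyclic case $W = \Z/\ell\Z$, the spherical algebra $A_\kappa(W)$ is generated, via the Dunkl embedding $\Phi \colon A_\kappa(W) \hookrightarrow \dd(\h_{\reg})^W$, by the three elements
\[
\Phi(X) \ =\ z, \qquad \Phi(Z) \ =\ \ell z\partial_z, \qquad \Phi(Y) \ =\ \frac{\ell^\ell}{z}\prod_{i=0}^{\ell-1}\bigl(z\partial_z + \kappa_i + \tfrac{i}{\ell} + (\delta_{i,0}-1)\bigr).
\]
On the other hand, Corollary~\ref{cor:Imradrank1} computes
\[
\rad_\vs(u) \ =\ z, \qquad \rad_\vs(\eu_S)\ =\ \ell z\partial_z + \mathbf{d}, \qquad \rad_\vs(\Delta) \ =\ \frac{c}{z}\prod_{i=0}^{\ell-1}(z\partial_z + \lambda_i).
\]

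With the choice of parameter $\kappa=(\kappa_0,\dots,\kappa_{\ell-1})$ specified by $\kappa_i + i/\ell + (\delta_{i,0}-1) = \lambda_i$, the product appearing in $\rad_\vs(\Delta)$ is the same product appearing in $\Phi(Y)$. Hence $\rad_\vs(\Delta) = (c/\ell^\ell)\,\Phi(Y)$, and up to the nonzero scalar $c/\ell^\ell$ the two elements generate the same subalgebra. Similarly, $\rad_\vs(u) = \Phi(X)$ and $\rad_\vs(\eu_S) = \Phi(Z) + \mathbf{d}$, so the subalgebra of $\dd(\h_{\reg})^W$ generated by $\rad_\vs(u)$, $\rad_\vs(\eu_S)$, $\rad_\vs(\Delta)$ coincides with the subalgebra generated by $\Phi(X)$, $\Phi(Z)$, $\Phi(Y)$.

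Finally, I would invoke Example~\ref{ex:Dunklcyclci} once more to conclude that this common subalgebra is exactly $\Phi(A_\kappa(W)) = A_\kappa(W)$, identified with its Dunkl image inside $\dd(\h_{\reg})^W$. There is no genuine obstacle here: the proof is a direct bookkeeping comparison, and the substantive content has already been established in Corollary~\ref{cor:Imradrank1} (for the shape of $\rad_\vs$ on the generators, via the multivariable $b$-function of Proposition~\ref{thm:PHVbfunction}) and in Example~\ref{ex:Dunklcyclci} (for the explicit Dunkl description of $A_\kappa(W)$). The only subtlety worth highlighting is the shift $\kappa_i + i/\ell + (\delta_{i,0}-1)$, which encodes the difference between the standard normalisation of the Dunkl operator and the normalisation producing $\rad_\vs(\Delta)$.
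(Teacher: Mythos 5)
Your proposal is correct and is essentially the paper's own argument: the corollary is deduced directly by comparing the generators of $\Phi(\Ak(W))$ from Example~\ref{ex:Dunklcyclci} with the images computed in Corollary~\ref{cor:Imradrank1}, observing that the choice~\eqref{eq:kappa} makes them agree up to a nonzero scalar and an additive constant. Nothing further is needed.
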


  \begin{remark}  \label{kappa-choice}
        The precise definition of the idempotents $e_{H,i}$ is not
        consistent over the literature and the definition of the
        $\kappa_i$ changes accordingly. 

        For example,  in \cite{Le3} the idempotents $e_i$ are related to the
        $e_{H,i}$    by $e_{\ell -i} = e_{H,i}$ for
        $0 \le i \le \ell-1$ (with the
        convention that  $e_\ell = e_0$).
 
        Suppose that $\zeta = 0$. The parameters $\kappa_i$
        from~\eqref{eq:kappa} then differ from the parameters $k_i$ given in
        \cite[(2.5)]{Le3}: for an appropriate numbering of the roots
        of the $b$-function, we have $\kappa_0=k_0=0$ and
        $\kappa_i= k_{\ell -i}$ for $1 \le i \le \ell -1$.

 \end{remark}

 \begin{lemma}\label{prop:localizingRU}
        Let $R$  be a subalgebra of $ \dd(\h_{\reg})^W$, containing $\Ak=\Ak(W)$, and such that the adjoint action of $\C[\nabla]$ on $R$ is locally nilpotent. Then:
        \begin{enumerate}
        \item
         $\Ak[z^{-1}] = R[z^{-1}]=\dd(\h_{\reg})^W$ and 
         \item $\Ak[\nabla^{-1}] = R[\nabla^{-1}]$.
         \end{enumerate}
 \end{lemma}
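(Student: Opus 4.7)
For Part (1), I would first observe that in the rank one case $\h = \C$ with $W = \Z/\ell\Z$, there is a single reflecting hyperplane $H = \{0\}$, and with $\alpha_H = x$ the discriminant in~\eqref{eq:discriminant} equals $\deltah = x^\ell = z$ up to scalar. Lemma~\ref{lem:quotientetaleh}(1) therefore gives $\Ak[z^{-1}] = \Ak[\deltah^{-1}] = \dd(\h_{\reg})^W$, and sandwiching the chain $\Ak \subseteq R \subseteq \dd(\h_{\reg})^W$ under localization at the central element $z$ forces $\Ak[z^{-1}] = R[z^{-1}] = \dd(\h_{\reg})^W$.

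For Part (2), the local $\ad$-nilpotence of $\C[\nabla]$ on $R$ makes $\{\nabla^n\}_{n \ge 0}$ a two-sided Ore set in both $\Ak$ and $R$ via the identity
\[
\nabla^n r \ = \ \sum_{i=0}^{n}\binom{n}{i}\,\ad(\nabla)^i(r)\,\nabla^{n-i},
\]
which becomes a finite sum once $n$ exceeds the nilpotence index of $r$. The inclusion $\Ak[\nabla^{-1}] \subseteq R[\nabla^{-1}]$ is then automatic from $\Ak \subseteq R$. For the reverse inclusion, my plan is to prove the stronger statement $R = \Ak$, which makes the equality of localizations trivial. Decompose any $r \in R$ in the Euler $\Z$-grading of $\dd(\h_{\reg})^W = \bigoplus_n z^n\,\C[E]$, with $E = z\partial_z$. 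Since $\nabla$ is Euler-homogeneous of degree $-1$, the operator $\ad(\nabla)$ is graded, and so the vanishing $\ad(\nabla)^N(r) = 0$ forces $\ad(\nabla)^N(r_n) = 0$ for each graded component $r_n$. The pieces with $n \ge 0$ lie in $\C[z, E] \subseteq \Ak$ trivially, and for $n = -m < 0$ with $r_{-m} = z^{-m}f(E)$ a direct computation yields
\[
\ad(\nabla)^k(z^{-m}f) \ = \ z^{-m-k}\,\bigl(T_{m+k-1}\circ \cdots \circ T_m\bigr)(f),
\]
where $T_j(g)(E) := P(E-j)g(E) - g(E-1)P(E)$ and $P(E) = c\prod_{i=0}^{\ell-1}(E + \lambda_i)$ is the polynomial from Corollary~\ref{cor:Imradrank1}, so that $\nabla = z^{-1}P(E)$.

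The main obstacle is the polynomial identification
\[
\bigcup_{N \ge 0} \ker\bigl(T_{m+N-1}\circ\cdots\circ T_m\bigr) \ = \ Q_m(E)\,\C[E], \qquad Q_m(E) := \prod_{i=0}^{m-1} P(E-i),
\]
which exactly cuts out the component $\Ak_{-m} = z^{-m}Q_m(E)\,\C[E]$ inside $z^{-m}\,\C[E]$. I would establish this in two steps. First, $\ker T_j = \C Q_j$ by a periodicity argument: the equation $T_j(g) = 0$ reads $g(E)/g(E-1) = P(E)/P(E-j) = Q_j(E)/Q_j(E-1)$, so $g/Q_j$ is a rational function of $E$ periodic with period one, hence constant. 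Second, the identity $T_m(EQ_m) = Q_{m+1}$, combined with a straightforward induction, gives $Q_{m+1}\,\C[E] \subseteq \mathrm{im}(T_m)$, and feeds a rank-nullity induction showing $\ker(T_{m+N-1}\circ\cdots\circ T_m) = Q_m \cdot \C[E]_{\le N-1}$ has dimension exactly $N$; taking unions over $N$ yields $Q_m\,\C[E]$. Consequently each $r_{-m}$ lies in $\Ak_{-m}$; summing the Euler components yields $r \in \Ak$, completing the proof of $R = \Ak$ and hence of Part~(2).
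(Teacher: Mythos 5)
Your argument is correct, but for Part~(2) it takes a genuinely different --- and in fact stronger --- route than the paper. Part~(1) is essentially the paper's proof (sandwich $\Ak\subseteq R\subseteq \dd(\h_{\reg})^W$ and invoke Lemma~\ref{lem:quotientetaleh}(1)), except that $z$ is \emph{not} central in $\dd(\h_{\reg})^W$: the localisation exists because $z$ acts locally ad-nilpotently, which is how the paper justifies it. For Part~(2) the paper does not prove $R=\Ak$ at this stage; it lets $R[\nabla^{-1}]$ act on the localised polynomial module $M[\nabla^{-1}]\cong\C[\nabla^{\pm1}]$, shows that this action factors through $\dd(Z)$ for the torus $Z=\mathrm{Spec}\,\C[\nabla^{\pm 1}]$, and then uses a Gelfand--Kirillov dimension and PI-ring argument to see that $R[\nabla^{-1}]\to\dd(Z)$ is an isomorphism; the equality $R=\Ak$ is only obtained afterwards (Proposition~\ref{thm:RadnilnablaU}) by appealing to the argument of [Le3]. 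You instead prove $R=\Ak$ outright by an explicit graded computation: decomposing $\dd(\h_{\reg})^W=\bigoplus_n z^n\C[E]$, using that $\mathrm{ad}(\nabla)$ is homogeneous to reduce to a single component $z^{-m}f(E)$, and identifying $\bigcup_N\ker(T_{m+N-1}\circ\cdots\circ T_m)$ with $Q_m\C[E]$ via $\ker T_j=\C Q_j$ (the periodicity argument), the identity $T_m(Q_m g)=Q_{m+1}\cdot\bigl(g(E)-g(E-1)\bigr)$, and the bound $\dim\ker(A\circ B)\le\dim\ker A+\dim\ker B$. I have checked these identities and the induction; they are correct, and since the only direction of your assertion about $\Ak_{-m}$ that you use is $z^{-m}Q_m\C[E]=\nabla^m\C[E]\subseteq\Ak$ (which is immediate), each graded component of $r$ lands in $\Ak$ and $R=\Ak$ follows, giving both parts of the lemma. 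What your approach buys is a self-contained, completely explicit proof that subsumes Proposition~\ref{thm:RadnilnablaU} and removes the reliance on [Le3]; what the paper's approach buys is a softer, module-theoretic argument that avoids the combinatorics of the difference operators $T_j$.
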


\begin{proof}
 We use the generators of $\Ak$ defined by Corollary~\ref{cor:kappa-defn}. Since $\eu\in \Ak\subseteq R$,  both $\Ak$ and $R$ are  $\Z$-graded
 subalgebras of $\dd(\h_{\reg})^W$.

(1)  The   action of $z$ on
 $\dd(\h_{\reg})^W$ is locally ad-nilpotent and so  this is also the
 case for $R$. Hence the localisation $R[z^{-1}]$ exists with
   $\Ak[z^{-1}] \subseteq  R[z^{-1}]\subseteq \dd(\h_{\reg})^W$.  The result now follows from the fact that, by 
  Lemma~\ref{lem:quotientetaleh}(1), $\Ak[z^{-1}] = \dd(\h_{\reg})^W$.

(2)  Write $M$ for the polynomial representation $\C[z^{\pm
  1}]$; it is a faithful graded $R$-module. If $p(t) = c
 \prod_{i = 0}^{\ell-1} (t + \lambda_i)$, then
 Corollary~\ref{cor:Imradrank1} shows that $\nabla(z^m) = p(m)
 z^{m-1}$. Thus if $m_1 < \cdots < m_k$ are the integer roots of
 $p$, then $\nabla(z^m) = 0$ if $m = m_i$ for some $i$ and otherwise
 $\nabla(z^m) = \lambda_m z^{m-1}$ for a non-zero scalar $\lambda_m$. If $p$ has no integer roots, this   ensures that $M$ is a cyclic $\C[\nabla^{\pm 1}]$-module generated by the element $1$ and hence is isomorphic to $\C[\nabla^{\pm 1}]$ as a module over that ring. 
 
 On the other hand, if $p$ has integer roots then 
 $\ann_{M} \nabla = \bigoplus_{i=1}^k\C z^{m_i}$.  This
 means that the $\nabla$-torsion submodule $\text{tor}_{\nabla} M = \C \{ z^{j + m_1} \, : \, j \ge 0
 \}$ is actually an $R$-submodule of $M$ simply because the action of
 $\adj(\nabla)$ is locally nilpotent on the ring $R$.  In particular, $M\not=\text{tor}_{\nabla} M$ and $M[\nabla^{-1}]\not=0$.  It follows that, as a $\C[\nabla]$-module, $M/\text{tor}_{\nabla} M = \C[\nabla] z^{m_1-1}$ and hence that $\C[\nabla^{\pm 1}] \cong
 M[\nabla^{-1}]$ (as both a $\C[\nabla]$ and a $\C[\nabla^{\pm 1}]$-module) via $\nabla^j \mapsto \nabla^j z^{m_1 - 1}$. 
 
   Since  $\adj(\nabla)$ acts locally nilpotently on $R$, we can form the localisation $R[\nabla^{-1}]$ at $\euls{C}=\{\nabla^{n}\}$ and clearly the $R$-module structure of $M[\nabla^{-1}]$ extends to an $R[\nabla^{-1}]$-module structure. In other words, we have constructed an $R[\nabla^{-1}]$-structure on $M[\nabla^{-1}]\cong \C[\nabla^{\pm 1}]$. Let $Z=\mathrm{Spec}(\C[\nabla^{\pm 1}])$ denote the torus.
 
 \begin{sublemma}\label{sublemma:localising} Keep the hypotheses of the lemma. Then:
 \begin{enumerate}
 \item  The adjoint action of $\C[\nabla^{\pm 1}]$ on $R[\nabla^{-1}]$ is locally nilpotent;
   \item     the action of $R$ on $M[\nabla^{-1}]$ factors through $\dd(Z)$. 
   \end{enumerate}
 \end{sublemma}
  
 \begin{proof} 
 (1)  The identity $[\theta, r\nabla^{-m}]=[\theta, r]\nabla^{-m}$ for $\theta\in \C[\nabla]$, $ r\in R$ and $m\geq 0$ and  a routine induction ensures that the  action of $\C[\nabla]$ on $R[\nabla^{-1}]$ is also  locally ad-nilpotent. 
 Set $\Lambda_{-1}=0$ and, by induction, write  $\Lambda_n=\bigl\{r\in R[\nabla^{-1}] : [\C[\nabla], r]\in \Lambda_{n-1}\bigr\};$  thus, by local ad-nilpotence,  $R[\nabla^{-1}] =\bigcup \Lambda_n$.
 
 We claim, by induction, that each $\Lambda_n$ is a $\C[\nabla^{\pm 1}]$-module. To see this,
  let $\theta, \phi \in \C[\nabla]$, $ r\in \Lambda_n$ and $m\geq 0$. Then, by induction on $n$, 
 \[ [\phi, \theta\nabla^{-m}r] \ = \  \theta\nabla^{-m}[\phi, r] \ \in\  \theta\nabla^{-m}\Lambda_{n-1}
  \ \subseteq \  \Lambda_{n-1},\]and hence $ \theta\nabla^{-m}r\in \Lambda_n$, as claimed.
 But this in turn implies that, modulo $\Lambda_{n-1}$, 
  \[ [\theta \nabla^{-m},r] \ = \  \nabla^{-m}\bigl(\theta r \nabla^m-\nabla^m r \theta\bigr)\nabla^{-m}
\   \equiv \  \nabla^{-m}\bigl(r \theta  \nabla^m-r \nabla^m \theta\bigr)\nabla^{-m}  \ \equiv \ 0.\]
This in turn implies that $\Lambda_n=\bigl\{r\in R[\nabla^{-1}] : [\C[\nabla^{\pm 1}], r]\in \Lambda_{n-1}\bigr\}$ which, since 
 $R[\nabla^{-1}] =\bigcup \Lambda_n$, proves Part (1).
 
 (2)  Recall that, by definition,  $\dd(Z)$ is  the set of $\C$-linear endomorphisms of $\C[Z]=\C[\nabla^{\pm 1}] =
 M[\nabla^{-1}]$ on which $\C[\nabla^{\pm 1}]$ acts ad-nilpotently. Since $\C[\nabla^{\pm1}]$ is an $R[\nabla^{-1}]$-module,  Part~(1)  implies that  $R[\nabla^{-1}]\hookrightarrow \dd(Z)$, as required.   \end{proof}
 
 We return to the proof of the lemma. Since $\Ak$ contains both $\nabla$ and $\eu$, we see that, under the action of $\Ak$ on $M[\nabla^{-1}]$, the ring $\Ak[\nabla^{-1}]$ surjects onto $\dd(Z)$. Thus, by the sublemma, $R[\nabla^{-1}]$ also surjects onto $\dd(Z)$. If this surjection $\phi$   has a nonzero  kernel, say $J\not=0$, then $I=J\cap R \not=0$. But we know that $\mr{GKdim} \, R = 2$ since $\Ak\subseteq R \subseteq \dd(\h_{\reg})$.  Hence $\mr{GKdim} \, R/I \leq 1$. But then $R/I$ is a P.I. ring \cite{SW}, hence so is its localisation $(R/I)[\nabla^{-1}] = R[\nabla^{-1}]/J$. This cannot map onto the simple ring $\dd(Z)$.  Thus $\phi$ is an isomorphism, as required.
 \end{proof}

 \begin{proposition}\label{thm:RadnilnablaU}  Assume that  $\dim S\git K=1$ and keep the  notation developed above.
        Let $R $ be a subalgebra of $ \dd(\h_{\reg})^W$, containing $\Ak(W)$, such that the adjoint action of $\C[\nabla]$ on $R$ is locally nilpotent. Then $R = \Ak(W)$. 
 \end{proposition}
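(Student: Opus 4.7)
My plan is to pass to the associated graded ring with respect to the order filtration, reducing the problem to a clean statement in commutative algebra. Equip $R$ with the order filtration inherited from $\dd(\h_{\reg})^W$; by Remark~\ref{rem:orderfiltrationH} this is compatible with the order filtration on $\Ak$, giving inclusions of graded rings
\[
\gr\Ak \ \subseteq \ \gr R \ \subseteq \ \gr\dd(\h_{\reg})^W,
\]
with $\gr\Ak = \C[\h\times\h^*]^W = \C[x,y]^W$ (writing $\h = \C x$, $\h^* = \C y$) and $\gr\dd(\h_{\reg})^W = \C[x,y]^W[z^{-1}]$. Since $z \in \C[\h]^W$ lies in the degree-zero part of the filtration, localising at $z$ commutes with passage to the associated graded, so Lemma~\ref{prop:localizingRU}(1) gives $(\gr R)[z^{-1}] = \C[x,y]^W[z^{-1}]$, and hence $\gr R$ is sandwiched as $\C[x,y]^W \subseteq \gr R \subseteq \C[x,y]^W[z^{-1}]$.

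The crux is to prove $\gr R \subseteq \C[x,y]^W$, i.e.\ that no negative powers of $z$ actually occur. For this I translate the local ad-nilpotence of $\C[\nabla]$ on $R$ into local Poisson-nilpotence of $\C[\gr\nabla]$ on $\gr R$. Given $r \in R\cap \dd_k(\h_{\reg})$ with $\ad(\nabla)^N(r) = 0$, set $r_j := \ad(\nabla)^j(r)$, which lies in $\dd_{k+j(\ell-1)}(\h_{\reg})$; whenever the top symbol of $r_j$ at this degree is non-zero it equals $\{\gr\nabla,\sigma(r_{j-1})\}$, and an induction on $j$ yields $\{\gr\nabla,-\}^N\sigma_k(r) = 0$.

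The principal symbol $\gr\nabla$ is computed from Corollary~\ref{cor:Imradrank1}: substituting $z = x^\ell$ and $z\partial_z = \tfrac{1}{\ell}x\partial_x$ into $\nabla = \tfrac{c}{z}\prod_{i=0}^{\ell-1}(z\partial_z + \lambda_i)$, the leading term simplifies to a non-zero scalar multiple of $y^\ell$. In the Poisson algebra $\C[x^{\pm 1},y]$ this gives
\[
\{y^\ell,-\}^N f \ = \ \ell^N\, y^{N(\ell-1)}\, \partial_x^N f.
\]
The Laurent polynomials annihilated by some power of $\partial_x$ are precisely the ordinary polynomials in $x$, so intersecting with $\C[x,y]^W[z^{-1}]$ forces $\gr R \subseteq \C[x,y]^W = \gr\Ak$. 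Combined with the reverse inclusion this gives $\gr R = \gr\Ak$, and the usual induction on filtration degree upgrades this to $R = \Ak$.

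The step requiring the most care is the transfer of ad-nilpotence in the filtered ring to Poisson-nilpotence on the associated graded: the order of $[\nabla, r_{j-1}]$ may drop strictly below the expected $k + j(\ell - 1)$, in which case the naive ``top symbol'' $\{\gr\nabla,\sigma(r_{j-1})\}$ already vanishes at that stage. This is harmless -- it only causes the iterated Poisson bracket to vanish earlier -- but must be built into the induction.
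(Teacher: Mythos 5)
Your argument is correct, but it takes a genuinely different route from the paper's. The paper deduces from Lemma~\ref{prop:localizingRU} that $R[z^{-1}]=\Ak[z^{-1}]$ and $R[\nabla^{-1}]=\Ak[\nabla^{-1}]$ and then imports the Gelfand--Kirillov dimension argument of \cite[Lemma~3.8 and Theorem~3.9]{Le3}: for $r\in R$ the module $(\Ak+r\Ak)/\Ak$ is torsion at both $z$ and $\nabla$, hence has GK-dimension at most $\GKdim\Ak-2=0$, and is then shown to vanish. You instead pass to the associated graded ring and exploit the explicit principal symbol $\sigma(\nabla)=c\,\ell^{-\ell}y^{\ell}$ coming from Corollary~\ref{cor:Imradrank1}, converting local ad-nilpotence of $\nabla$ on $R$ into local Poisson-nilpotence of $y^\ell$ on $\gr R$; the identity $\{y^{\ell},-\}^{N}f=\ell^{N}y^{N(\ell-1)}\partial_{x}^{N}f$ in $\C[x^{\pm1},y]$ then forces $\gr R\subseteq\C[x,y]^{W}=\gr\Ak$ (the last equality being Remark~\ref{rem:orderfiltrationH}), and the standard induction on filtration degree finishes. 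Your transfer step is sound provided ``$\sigma(r_{j-1})$'' is read as the class of $r_{j-1}$ modulo $\dd_{k+(j-1)(\ell-1)-1}$, possibly zero --- exactly the subtlety you flag at the end, which makes the induction run without any case split. Note that the sandwich $\C[x,y]^{W}\subseteq\gr R\subseteq\C[x,y]^{W}[z^{-1}]$ is already automatic from $\Ak\subseteq R\subseteq\dd(\h_{\reg})^{W}$ together with Lemma~\ref{lem:quotientetaleh}(1), so you do not in fact need Lemma~\ref{prop:localizingRU} at all; in particular you bypass its part~(2) (the $\nabla$-localization and Sublemma~\ref{sublemma:localising}), which is the most delicate ingredient of the paper's proof. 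What the paper's route buys is a softer argument independent of the explicit form of $\sigma(\nabla)$ and a direct parallel with \cite{Le3}; what yours buys is a self-contained, essentially commutative computation tailored to the rank-one situation.
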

 
 \begin{proof}
       Lemma~\ref{prop:localizingRU} shows that, for any $D \in R$, there exists $k \gg 0$ such that       $z^k D$ and $\nabla^k D$ belong to $\Ak$. As we now explain, the  result will now follow by using
        the argument from \cite[Lemma~3.8 and Theorem~3.9]{Le3}.  
        First, by comparing  Example~\ref{ex:Dunklcyclci}  with
        \cite[Proposition~2.8]{Le3}, $ \Ak$ identifies
        with   
        the ring $U$ of \cite{Le3}, where our $\nabla$ corresponds to
        the element  $\delta$ of \cite{Le3}, up to a scalar. Thus, the proof of
        \cite[Lemma~3.8]{Le3}  
        can be used to prove that, for any $r\in
        R$,  one has $\GKdim \bigl((\Ak+r\Ak)/\Ak\bigr)
        \leq \GKdim \Ak -2= 0$. The proof of \cite[Theorem~3.9]{Le3} can now  be
        used unchanged to prove that $R=\Ak$. 
 \end{proof}

   \begin{corollary}\label{thm:Levsurjectiverank1} Assume that  $\dim S\git K=1$ and keep the  notation developed above. If $\kappa$ is defined by Equation~\ref{eq:kappa}, then $\Im(\rad_{\vs} ) =
        \Ak(W)$.  
   \end{corollary}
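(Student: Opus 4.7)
My plan is to apply Proposition~\ref{thm:RadnilnablaU} with $R := \Im(\rad_{\vs}) \subseteq \dd(\h_{\reg})^W$. The hypotheses to verify are that $R$ is a subalgebra of $\dd(\h_{\reg})^W$, that $\Ak(W) \subseteq R$, and that $\mr{ad}(\C[\nabla])$ acts locally nilpotently on $R$. The first is immediate because $\rad_{\vs}$ is a ring homomorphism---this follows from its construction as the transport of the natural $\dd(S_{\reg})^K$-action on the rank-one module $\C[S_{\reg}]^K \deltav^{\vs}$ through the restriction isomorphism $\C[S_{\reg}]^K \deltav^{\vs} \cong \C[\h_{\reg}/W] \deltah^{\vs}$. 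The second is essentially free: Corollary~\ref{cor:kappa-defn} has already produced the generators $z = \rad_{\vs}(u)$, $\rad_{\vs}(\eu_S)$, and $\nabla = \rad_{\vs}(\Delta)$ of $\Ak(W)$ as explicit elements of $\Im(\rad_{\vs})$.

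The crux is therefore the local nilpotence of $\mr{ad}(\nabla)$ on $R$. Here I would exploit that $\rad_{\vs}$ is a ring homomorphism and $\nabla = \rad_{\vs}(\Delta)$, so that
\[
\mr{ad}(\nabla)^n(\rad_{\vs}(D)) \ = \ \rad_{\vs}\bigl(\mr{ad}(\Delta)^n(D)\bigr)
\]
for every $D \in \dd(S)^K$. This reduces the claim to showing that $\mr{ad}(\Delta)$ acts locally nilpotently on $\dd(S)^K$, and in fact on all of $\dd(S)$.

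The latter is an elementary computation. Since $\Delta \in \Sym S$ is a constant-coefficient operator, it commutes with $\Sym S$ inside $\dd(S)$; hence for any $D = \sum_I f_I \partial^I$ written as a finite sum with $f_I \in \C[S]$ and $\partial^I \in \Sym S$, one has $\mr{ad}(\Delta)^n(D) = \sum_I \mr{ad}(\Delta)^n(f_I)\,\partial^I$. Each bracket $[\Delta, f]$ with a polynomial $f$ is a sum of terms of the form $\partial^{\beta}(f)\,\partial^{\alpha-\beta}$ with $\beta > 0$, whose polynomial parts have strictly smaller degree than $f$. Iterating, $\mr{ad}(\Delta)^n(f_I)$ vanishes once $n$ exceeds the maximum degree of the $f_I$. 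Proposition~\ref{thm:RadnilnablaU} then forces $R = \Ak(W)$, which is the desired equality. I do not foresee a genuine obstacle: the corollary is a direct reduction to the previous proposition, and the key local ad-nilpotence is soft thanks to the constant-coefficient nature of $\Delta$.
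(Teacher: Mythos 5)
Your proposal is correct and follows essentially the same route as the paper: both apply Proposition~\ref{thm:RadnilnablaU} to $R=\Im(\rad_{\vs})$, noting that $\Ak(W)\subseteq R$ by Corollary~\ref{cor:kappa-defn} and that the local ad-nilpotence of $\C[\nabla]$ on $R$ is inherited, via the ring homomorphism $\rad_{\vs}$, from the local ad-nilpotence of $\Sym S$ (hence of $\C[\Delta]$) on $\dd(S)^K$. Your explicit verification that constant-coefficient operators act locally ad-nilpotently merely spells out a step the paper treats as standard.
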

   
   \begin{proof}  As $\Sym S$ acts locally ad-nilpotently on $\dd(S)$, certainly 
   $\C[\Delta]\subseteq (\Sym S)^K$ acts locally ad-nilpotently on $\dd(S)^K$, Thus 
   $\C[\nabla]=\rad_{\vs} (\C[\Delta])$  acts locally ad-nilpotently on $R=\rad_{\vs}(\dd(S)^K)\subseteq \dd(\h_{\reg})^W$. Since $\Ak(W) \subseteq R$ by Corollary~\ref{cor:kappa-defn}, the result follows   from Proposition~\ref{thm:RadnilnablaU}. 
   \end{proof}


\section{The radial parts map:
  existence}\label{sec:radialpartsmap}

As in the rest of the article,   $V$ will denote a polar representation of a
connected reductive group $G$.  %
Recall  that we defined the radial parts  map $ \rad_{\vs} \colon \dd(V)^G
\to \dd(\h_{\reg})^W$ in  \eqref{eq:radialpartsmap}
and \eqref{eq:radialpartsmap2}. 
The goal of this section is to prove the following
      theorem, which will be proved as Theorem~\ref{thm:surjectivereducerank1}. 
      
 \begin{theorem}\label{thm:radpartsmappolar}  Let $V$ be a polar representation for the connected, 
reductive group  $G$.  There exists a parameter $\kappa =
  \kappa(\vs)$ such that the image of $\rad_{\vs}$ is contained in the spherical algebra $\Ak(W)$.
\end{theorem}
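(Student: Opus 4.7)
The plan is to reduce the global statement to the rank-one case via Luna slices along each reflecting hyperplane, and then assemble using the intersection formula from Theorem~\ref{thm:intersectsphericalinddhreg}. Explicitly, since
$$\Ak(W) = \bigcap_{H \in \calA} \Ak(W_H)_{\deltah_H}$$
inside $\dd(\h_{\reg})^W$, it suffices to fix a single hyperplane $H \in \calA$ and produce a cyclic parameter $\kappa_H = \kappa_H(\vs)$ such that $\mathrm{Im}(\rad_{\vs}) \subseteq \Ak(W_H)_{\deltah_H}$ for that parameter. The global parameter $\kappa(\vs)$ is then assembled hyperplane-by-hyperplane.

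To handle a fixed $H$, I would pick a point $p \in H \cap \h$ generic enough that $p$ lies on no other reflecting hyperplane; such $p$ is semisimple in $V$ since it lies in the Cartan subspace. Forming the slice $(G_p, S_p)$, Lemma~\ref{lem:slices}(1) gives the $Z_G(\h)$-stable decomposition $S_p = \h \oplus \g_p\cdot\h \oplus U$, and $(G_p,S_p)$ is again polar with Cartan subspace $\h$. Because $p$ lies only on $H$, the little Weyl group of the slice is $W_H$, and the rank of $(G_p,S_p)$ equals $\dim\h - \dim H = 1$ modulo the trivial $W_H$-fixed part $H$. Thus, after factoring out the $W_H$-trivial directions (which contribute only polynomial functions on $H$), one obtains a genuinely rank-one polar representation of $G_p$ to which Corollary~\ref{cor:kappa-defn} and Corollary~\ref{thm:Levsurjectiverank1} apply, identifying its twisted radial parts map with the spherical subalgebra $\Ak(W_H)$ of a rank-one Cherednik algebra, with the cyclic parameter determined by the multivariable $b$-function of the slice's discriminant via Proposition~\ref{thm:PHVbfunction} and Notation~\ref{notation:chidot}.

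The bridge between $\rad_{\vs}$ on $(G,V)$ and the rank-one radial parts map on the slice is provided by Luna's étale slice theorem: the natural morphism $G \times_{G_p} S_p \to V$ is étale near the image of $(e,0)$ and induces, after passing to $G$-invariants, an étale map of $V \git G$ with $S_p \git G_p$ at $\bar p$. Since $\rad_{\vs}$ is defined via restriction and conjugation by $\deltav^{\vs}$, and all of these operations commute with étale base change on $V \git G \cong \h/W$, the localisation of $\rad_{\vs}$ at functions nonzero at $p$---which on $\h$ is exactly the localisation inverting $\deltah_H = \alpha_H^{-\ell_H}\deltah$---agrees with the slice radial parts map. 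Writing $\deltav = \prod_j \deltav_j^{m_j}$ as in~\eqref{eq:factordiscinV}, only those $\deltav_j$ vanishing on $H$ contribute nontrivially near $p$; the remaining factors become units and enter only through the twist of the Dunkl operator. By Remark~\ref{discriminant-factors} and the bijection between $W$-orbits of hyperplanes and factors of $\deltah$, this produces an unambiguous cyclic parameter $\kappa_H = \kappa_H(\vs)$ depending only on $H$ and $\vs$.

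Combining these observations, for each $H \in \calA$ the image of $\rad_{\vs}$ lies in $\Ak(W_H)_{\deltah_H}$ for the cyclic parameter $\kappa_H(\vs)$. Taking the intersection over all $H$ and invoking Theorem~\ref{thm:intersectsphericalinddhreg} then yields $\mathrm{Im}(\rad_{\vs}) \subseteq \Ak(W)$ for the assembled parameter $\kappa(\vs)$. The main obstacle is making the Luna-slice compatibility fully rigorous: one must verify that the twist by $\deltav^{\vs}$ descends correctly through the étale comparison (keeping track of the logarithmic derivatives $d\log \deltav_j$ at $p$), that splitting off the $W_H$-trivial factor $H \subset \h$ is harmless for computing the Cherednik parameter, and that the cyclic parameter thus produced at $H$ is independent of the generic choice of $p$ so that the assembly over hyperplanes is well-defined.
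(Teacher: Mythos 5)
Your proposal follows essentially the same route as the paper: slice at a generic point $p\in H^{\circ}$ of each reflecting hyperplane, reduce to the rank-one computation of Corollaries~\ref{cor:kappa-defn} and~\ref{thm:Levsurjectiverank1}, and assemble via Theorem~\ref{thm:intersectsphericalinddhreg}; the compatibility issues you flag at the end are exactly what the paper's Lemmata~\ref{lem:PQRdecomp} and~\ref{lem:gfradpsi}, Remark~\ref{rem:parameters}, and Sublemma~\ref{lemma:sliceintersect} are there to resolve. The only points to tighten are that localising at functions nonvanishing at a single $p$ gives $\Ak(W_H)_{h_p}$ rather than $\Ak(W_H)_{\deltah_H}$ (one must intersect over all $p\in H^{\circ}$ and use a codimension-two argument), and that the possible rank-zero slice (when $H\cap V_{\st}\neq\emptyset$) needs the separate convention $\kappa_{H,i}=0$.
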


The basic idea of the proof follows that in
\cite[Section~4]{LevWashington} for the case $V=\g$, which is in
turn based on the proof in \cite[Section~3]{Sc2}. Here is an
outline. Recall that $\calA$ denotes the set of
reflecting hyperplanes in $\h$. For a rational Cherednik algebra
$\Hk(W)$, the parameter $\kappa$ is uniquely defined by its
values on the rank one parabolic subgroups of $W$. Such a subgroup is the
stabiliser of a generic point $p \in H$, for some  $H \in \calA$. One can take a slice $S_p$ to the $G$-orbit
through $p$. Then $(G_p,S_p)$ is a rank (at most) one polar
representation with Weyl group $W_H$. In the rank one case one
can explicitly compute the image of the radial parts map, as is done in
Corollary~\ref{cor:kappa-defn}, to show that it does indeed lie in the appropriate spherical algebra. This tells us the value of the corresponding parameter 
$\kappa_H$. By considering all hyperplanes $H$ one can then
explicitly compute $\kappa$ in terms of $\vs$ and show that $\Im(\rad_{\vs})\subseteq \Ak$.

We note that in all our examples, $\kappa$ depends affine
linearly on $\vs$, although we do not have a general proof of
this fact. See Remark~\ref{rem:chiclinear} for a possible
explanation.

Before beginning on the proof of Theorem~\ref{thm:radpartsmappolar}, we describe what the radial parts 
map $\rad_{\vs}$ does to the regular locus $V_{\reg}$. For this, it is convenient to rewrite $\rad_{\vs} $ as a composition of
functions. First we have the map $\gamma_{\vs}: \dd(V_{\reg})\to \dd(V_{\reg})$
given by conjugation with the formal element $\deltav^{-\vs}$; thus
$\gamma_{\vs}(D) = \deltav^{-\vs} D \deltav^{\vs}$ for $D\in
\dd(V_{\reg})$. Equivalently, using \eqref{eq:twistedop}, $\gamma_{\vs}$ is the
morphism that is the identity on $\C[V_{\reg}]$ and maps a derivation
$\partial\in \dd(V_{\reg})$ to
$\partial + \sum \vs_i \frac{\partial (\deltav_i)}{\deltav_i}$. Thus
$\gamma_{\vs}$ does indeed map $\dd(V_{\reg})$ to itself and, as
$\deltav^{\vs} \in \C[V_{\reg}]\deltav^{\vs}$ is a $G$-semi-invariant,
$\gamma_{\vs}$ also induces a filtered automorphism of $\dd(V_{\reg})^G$.  Now let
$\eta: \dd(V_{\reg})^G \to \dd(V_{\reg}\modmod G)$ denote the restriction
of operators and write   $\rrr $ for  the identification $\dd(V_{\reg}\git G) \iso \dd(\h_{\reg}\git W)$
 induced from $\rr:\C[V\git G]\iso \C[\h\git W]$. Then \eqref{eq:twistedop} and~\eqref{eq:radialpartsmap}
ensure that 
\begin{equation}\label{eq:radvzfactor1}
\rad_{\vs} = \rrr\circ \eta \circ \gamma_{\vs}.
\end{equation}

A useful consequence of this discussion is that we  can also compose $\rad_{\vs}$ as
\begin{equation}\label{eq:radvzfactor}
        \rad_{\vs} = \rad_0 \circ \, \gamma_{\vs} \colon \dd(V_{\reg})^G \to \dd(\h_{\reg})^W.
\end{equation} 
 
 Recall from Remark~\ref{rem:orderfiltrationH}  that $\Ak(W)$ is given the order filtration induced from that on 
 $\dd(\h_{\reg})$. We similarly use the order filtration on  $\dd(V_{\reg})^G$ induced from that on $\dd(V_{\reg})$.
Finally, given filtered rings $A=\bigcup_{n\geq0} A_n$ and $B=\bigcup_{n\geq0} B_n$, then a homomorphism $\alpha:A\to B$ is called \emph{filtered surjective} if $\alpha$ is a filtered morphism such that $B_n=\alpha(A_n)$ for all $n$. For related concepts see Notation~\ref{defn:order1}.  

\begin{lemma}\label{lem:genericregfiltredsurj}
        The localization of $\rad_{\vs}$ to $\dd(V_{\reg})^G$  gives  a filtered  surjective map $\rad_{\vs} \colon \dd(V_{\reg})^G \twoheadrightarrow \dd(\h_{\reg})^W$. 
\end{lemma}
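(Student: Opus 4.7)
The plan is to use the factorization $\rad_{\vs} = \rrr \circ \eta \circ \gamma_{\vs}$ from~\eqref{eq:radvzfactor1}, reducing the claim to showing that the restriction map $\eta \colon \dd(V_{\reg})^G \to \dd(V_{\reg}\git G)$ is filtered surjective. The outer maps are easy to handle: $\gamma_{\vs}$ is a filtered automorphism of $\dd(V_{\reg})^G$ (by~\eqref{eq:twistedop} it fixes $\C[V_{\reg}]$ and adds a lower-order term to each derivation, with inverse $\gamma_{-\vs}$), and $\rrr$ is a filtered isomorphism, obtained by combining the identification $V_{\reg}\git G \cong \h_{\reg}/W$ with the isomorphism $\dd(\h_{\reg}/W) \cong \dd(\h_{\reg})^W$ of Lemma~\ref{lem:quotientetaleh}(2).

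For the filtered surjectivity of $\eta$, I would argue \'etale-locally using Luna's slice theorem. At a regular point $u \in \h_{\reg}$, Lemma~\ref{lem:stabilizerpolarh} gives $G_u = Z_G(\h)$ and Lemma~\ref{lem:slices}(2) gives the slice $S_u = \h \oplus U$, where $U$ is the complement from~\eqref{eq:Ucomplement} satisfying $U \git Z_G(\h) = \{\mathrm{pt}\}$. Luna then produces an \'etale neighbourhood of $G \cdot u$ of the form $G \times^{Z_G(\h)}(\h^{\circ} \oplus U)$ for a suitable open $\h^{\circ} \subset \h_{\reg}$, whose $G$-quotient is just $\h^{\circ}$. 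In this local model the standard adjunction gives
\[
\dd\bigl(G \times^{Z_G(\h)}(\h \oplus U)\bigr)^G \ = \ \dd(\h \oplus U)^{Z_G(\h)} \ = \ \dd(\h) \otimes_{\C} \dd(U)^{Z_G(\h)},
\]
using that $Z_G(\h)$ acts trivially on $\h$; restriction to $\C[\h \oplus U]^{Z_G(\h)} = \C[\h]$ then sends $D \otimes 1 \mapsto D$, which is manifestly filtered surjective onto $\dd(\h)$.

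The main obstacle I expect is the globalization step, since local lifts of an operator need not agree on overlaps. My preferred way around this is to pass to associated graded rings: reductivity of $G$ gives $\gr \dd(V_{\reg})^G = \C[T^*V_{\reg}]^G$, and smoothness of $V_{\reg}\git G$ gives $\gr \dd(V_{\reg}\git G) = \C[T^*(V_{\reg}\git G)]$. Because the $G$-orbits on $V_{\reg}$ all have constant dimension $\dim G - \dim Z_G(\h)$ (Lemma~\ref{lem:stabilizerpolarh}), symplectic reduction identifies $T^*(V_{\reg}\git G)$ with $\mu^{-1}(0)\git G$ (where $\mu \colon T^*V_{\reg} \to \g^*$ is the moment map), realising it as a closed subvariety of $T^*V_{\reg}\git G$. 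Restriction of regular functions to a closed subvariety is surjective, so $\gr \eta$ is surjective, and a standard filtration argument then promotes this to filtered surjectivity of $\eta$.
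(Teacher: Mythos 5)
Your skeleton matches the paper's: the same factorisation $\rad_{\vs}=\rrr\circ\eta\circ\gamma_{\vs}$, the same reduction to $\vs=0$ via the filtered automorphism $\gamma_{\vs}$, and the same local slice model $\dd(\h)\otimes\dd(U)^{Z_G(\h)}\to\dd(\h)\otimes\C=\dd(\h)$ (using $U\git Z_G(\h)=\{\mathrm{pt}\}$), which is vacuously filtered surjective. The paper resolves the local-to-global problem that you correctly identify as the main obstacle by invoking Schwarz's lifting theorem \cite[Theorem~4.9]{Schwarz}: filtered surjectivity of $\rad_0$ on a saturated neighbourhood of $\bar v\in V\git G$ follows from filtered surjectivity of the radial parts map of a Luna slice at a semisimple lift $v$ (which, by conjugacy of Cartan subspaces, may be taken in $\h_{\reg}$). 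That citation is exactly the tool your argument lacks.

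The substitute you propose has a genuine gap. First, its premise is false in general: the $G$-orbits on $V_{\reg}$ have constant dimension only when $V$ is stable. By Proposition~\ref{lem:unstablepolaropen}, $V_{\reg}\cong G\times_{N_G(\h)}(\h_{\reg}\times U)$, and for $u\neq 0$ the orbit $G\cdot(x+u)$ is distinct from $G\cdot x$, hence non-closed with $G\cdot x$ in its boundary, hence of dimension strictly larger than $\dim G-\dim Z_G(\h)$; Lemma~\ref{lem:stabilizerpolarh} only controls stabilisers of regular points of $\h$, not of all of $V_{\reg}$. Second, even on the locus of constant orbit dimension the action is not locally free (the stabilisers are conjugates of the typically positive-dimensional group $Z_G(\h)$), so the identification $T^*(V_{\reg}\git G)\cong\mu^{-1}(0)\git G$ is not the standard symplectic-reduction statement for principal bundles; $\mu^{-1}(0)\git G$ can acquire components not coming from pulled-back covectors, and comparing $\C[\mu^{-1}(0)]^G$ with $\C[\h\times\h^*]^W$-type rings is precisely the delicate point the paper only settles under extra hypotheses (Tevelev's theorem \cite{Te} for symmetric spaces, or \cite{BLT} in Corollary~\ref{cor:vislocallyfreepolargr}). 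Note also that the graded surjectivity of $\eta$ over the regular locus is exactly what Lemma~\ref{lem:images}(3) later \emph{deduces} from the present lemma, so reducing it to ``restriction to a closed subvariety is surjective'' hides, rather than supplies, the real content. Either quote Schwarz's theorem, or restrict to the stable case and treat the factor $U$ separately.
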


\begin{proof}
        Since $\rad_{\vs} = \rad_0 \circ \ \gamma_{\vs}$,   where $\gamma_{\vs}$ is   a filtration preserving automorphism  of $\dd(V_{\reg})^G$, it suffices to prove the lemma when $\vs = 0$.  Now $\rad_0=\rrr\circ\eta$ by \eqref{eq:radvzfactor1} and since the restriction of  differential operators is a filtered morphism, $\rad_0$ is clearly  a filtered map.

It remains to prove that $\rad_0$ is filtered surjective.         Let $\bar{v} \in V_{\reg}\git G$ with semisimple lift $v \in V_{\reg}$. By \cite[Theorem~2.3]{DadokKac},  every Cartan subspace of $V$ is conjugate to $\h$ and so we may assume that $v \in \h$. Then \cite[Theorem~4.9]{Schwarz} says (using the notation from \cite[(3.24)]{Schwarz}) that $\rad_0$ is filtered  surjective in an affine open neighbourhood of $\bar{v}$ in $V\git G$ if the radial parts map for a slice at $v$ is  filtered surjective. A slice of the $G$-action at $v$ is given by $(Z,S)$ where 
$S = \h \oplus U$ as in Lemma~\ref{lem:slices}(2),  while $Z=Z_G(\h)$ is the centraliser of $\h$ acting trivially on $\h$ and $U \git Z = \{ \mr{pt} \}$. Identifying $\dd(\mr{pt}) = \C$, the radial parts map for the slice is 
        $$
        \dd(\h) \o \dd(U)^Z \to \dd(\h) \o \C = \dd(\h),
        $$
        which is vacuously  filtered  surjective. 
\end{proof}

\subsection*{Slice representations in $V$}\label{subsec:slices}

In this subsection we examine how to use slice representations to
reduce our problem to the one-dimensional case and, ultimately, 
to prove Theorem~\ref{thm:radpartsmappolar}.   

\begin{notation}\label{slice-notation1}
Let $\calA$ denote the set of reflecting hyperplanes in $\h$ and,
following Notation~\ref{defn:hcirc}, set
$\h^{\circ} = \h \smallsetminus \bigcup_{H_\alpha \neq H_\beta \in \calA}
H_{\alpha} \cap H_{\beta}$.  Fix $H \in \calA$ and
$p \in H^\circ=H\cap \h^\circ$; thus $p$ lies on just the one hyperplane in
$\calA$. The stabiliser of $p$ in $W$ equals the stabiliser
$W_H = \{ w \in W \, | \, w \cdot x = x, \, \forall \, x \in H\}$.  Let $K=G_p$ be
the stabiliser of $p$ in $G$ and note that, as $p$ is semisimple, $K$ is
reductive. As described before \eqref{eq:Ucomplement}, we may chose a
slice $S=S_p$. Thus $S$ is 
$K$-stable complement   to $\g \cdot p$ in $V$  with 
$S\supseteq \mf{h}$; see Lemma~\ref{lem:slices}.  
\emph{This notation will be fixed throughout the subsection.}
\end{notation}

 In general, $K$ need not be connected; see
\cite[Example~15.3]{BNS} for an explicit example. Even so, we have: 

\begin{lemma}\label{lem:slicebproperties}
             {\rm (1)} $(K,S)$ is polar with Cartan subalgebra $\h$.
              
            {\rm (2)} The Weyl group of $(K,S)$ equals $W_H$.
          
      {\rm (3)}     The restriction map $\rr_p \colon \C[S]^{K}
                  \to \C[\h]^{W_H}$ is an isomorphism. 
      
      {\rm (4)}  {$S^{K} = \h^{W_H} =H$ and $K= Z_G(H)$.}
                
       {\rm (5)} $(K,S)$ has rank at most one.
        \end{lemma}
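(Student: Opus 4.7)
The plan is to obtain (1)--(3) directly from \cite{DadokKac}, and to then derive (4) and (5) together via reflection-group arguments and an analysis of generic stabilisers.

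Part (1) is exactly \cite[Lemma~2.1(ii) and Theorem~2.4]{DadokKac}, already cited in the paper before \eqref{eq:Ucomplement}. For (2), since $K=G_p$ and $p\in\h$, one has $Z_G(\h)\subseteq G_p$, so $Z_K(\h)=Z_G(\h)$; also $N_K(\h)=N_G(\h)\cap G_p$. Hence the Weyl group $N_K(\h)/Z_K(\h)$ of $(K,S)$ is the image in $W=N_G(\h)/Z_G(\h)$ of those elements fixing $p$, namely $\mathrm{Stab}_W(p)$. Since $W$ is a complex reflection group and $p\in H^\circ$ lies on the unique hyperplane $H\in\calA$, Steinberg's theorem for complex reflection groups identifies $\mathrm{Stab}_W(p)$ with the subgroup generated by reflections whose hyperplanes contain $p$, which is exactly $W_H$. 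Part (3) is then the Chevalley isomorphism \cite[Theorem~2.9]{DadokKac} applied to the polar representation $(K,S)$ with Weyl group $W_H$.

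For part (4), $\h^{W_H}=H$ is immediate since $W_H$ is cyclic generated by the reflection $s_H$ with reflecting hyperplane $H$. To show $S^K\subseteq H$: given $s\in S^K$, the orbit $K\cdot s=\{s\}$ is closed, so $s$ is semisimple. Since every closed $K$-orbit on $S$ meets $\h$ (a consequence of \cite[Theorems~2.3 and 2.9]{DadokKac}) and $K\cdot s=\{s\}$, we get $s\in\h$; then using (2), every $n\in N_K(\h)\subseteq K$ fixes $s$, so the image $w\in W_H$ of $n$ does too, giving $s\in\h^{W_H}=H$. The reverse inclusion $H\subseteq S^K$ is equivalent to $K\subseteq Z_G(H)$ (the opposite $Z_G(H)\subseteq K$ being clear from $p\in H$), which is the main difficulty.

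To prove $K\subseteq Z_G(H)$, I would first argue at the Lie algebra level: by choosing $p$ in the open subset of $H^\circ$ of points of principal $G$-orbit type, $\mf{g}_p$ becomes the generic stabiliser along this stratum. Using that $\h^{\mf{k}}=\h^{\mf{g}_p}$ is automatically $N_K(\h)$-stable, hence $W_H$-invariant, together with a rank bound $\dim\h^{\mf{k}}\ge\dim\h-1$, one concludes that $\h^{\mf{k}}$ is the unique $W_H$-invariant hyperplane of $\h$, namely $H$. Hence $\mf{g}_p\cdot H=0$, so $\mf{g}_p\subseteq\mf{z}_{\mf{g}}(H)$ and $K^\circ\subseteq Z_G(H)$. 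For the component group, one uses that any $k\in K\setminus K^\circ$ acts on the quotient $\h/\h^{\mf{k}}$ through the finite group $W_H$, which fixes $H=\h^{W_H}$; this extends the inclusion to all of $K$. Part (5), $\rank(K,S)\le 1$, then follows from $H\subseteq\h^{\mf{k}}$ directly.

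The main obstacle is the interdependence between (4) and (5): both the rank bound of (5) and the identification $K=Z_G(H)$ need an independent genericity input. Breaking the circularity requires a direct semi-continuity argument for (5): for $p$ chosen generic in $H^\circ$, the principal orbit type on $H$ coincides with that at $p$, so the stabiliser Lie algebra is constant on a dense open subset of $H$, yielding by Zariski density the inclusion $\mf{g}_p\subseteq\mf{g}_q$ for all $q\in H$. This step is the technical heart of the lemma and is where the detailed structure of the polar representation and the Luna slice theorem for linear actions are most essentially used.
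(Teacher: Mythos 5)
Your parts (1)--(3) are fine and essentially match the paper (the argument for (2) via Steinberg's theorem is equivalent to the paper's direct lifting argument), and your closed-orbit proof that $S^{K} \subseteq H$ is correct. The genuine gap is in the reverse inclusion $H \subseteq S^{K}$, i.e.\ $K \subseteq Z_G(H)$, which you correctly identify as the heart of the matter but do not actually prove. Your proposed fix has three problems. First, it only applies to $p$ chosen in a dense open subset of $H^{\circ}$ (principal orbit type), whereas the lemma is asserted for every $p \in H^{\circ}$, and this uniformity is used later (Remark~\ref{rem:parameters} relies on $G_p = Z_G(H)$ being independent of $p \in H^{\circ}$). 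Second, the semi-continuity step presupposes that the stabiliser subalgebra $\g_q$ is literally \emph{constant} (not merely conjugate) on a dense open subset of $H$; the principal orbit type theorem only gives conjugacy, and constancy here is essentially equivalent to the statement $\g_q = \mf{z}_{\g}(H)$ you are trying to prove, so the circularity you flag is not actually broken. Third, the component-group step is unjustified: $K = G_p$ need not normalise $\h$ (Lemma~\ref{lem:stabilizerpolarh} only gives $G_v \subseteq N_G(\h)$ for $V$-regular $v$, and in the main case $H \cap V_{\st} = \emptyset$ the point $p$ is not $V$-regular), so there is no reason an element $k \in K \smallsetminus K^{\circ}$ acts on $\h/\h^{\mf{k}}$ at all, let alone through $W_H$.

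The missing idea is that part (3) already does all the work, with no genericity needed: look at the degree-one piece of the graded isomorphism $\rr_p \colon \C[S]^{K} \isomto \C[\h]^{W_H}$. Since $W_H$ is a rank-one reflection group, $\dim (\h^*)^{W_H} = \dim \h^{W_H} = \dim \h - 1$, and the isomorphism in degree one gives $(S^*)^{K} \cong (\h^*)^{W_H}$. As $K$ is reductive, $\dim S^{K} = \dim (S^*)^{K} = \dim \h - 1 = \dim H$; combined with your inclusion $S^{K} \subseteq H$ of linear subspaces this forces $S^{K} = H$, hence $K \subseteq Z_G(H)$, and the converse $Z_G(H) \subseteq G_p = K$ is immediate from $p \in H$. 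This argument is valid for every $p \in H^{\circ}$ and handles the whole group $K$, not just $K^{\circ}$. Part (5) then follows exactly as you say, from $H = \h \cap S^{K} \subseteq \h^{\mf{k}}$ and the rank formula \eqref{defn:polarrank}.
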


\begin{proof}
  (1) This  is  \cite[Theorem~2.4]{DadokKac}.      
        
        (2)     By definition, the Weyl group of $(K,S)$ equals
        $N_{K}(\h) / Z_{K}(\h)$. This embeds in $W$, say with
        image $W'$. 
        Let $w \in W_H$ and choose a lift $g$ of $w$ in
        $N_G(\h)$. Then $g$ fixes every point in $H$ and hence 
         $g \in K$.  
        Therefore,  $g \in K \cap N_G(\h) = N_{K}(\h)$ and hence $w \in
        W'$. Conversely, if $w \in W'$ then $w$ fixes a generic
        point of the  
        hyperplane $H$,  so $w$ fixes every point in $H$ and thus $w \in W_H$. 
        
        (3) See \cite[Theorem~2.9]{DadokKac}.
        
        (4) The complex reflection group $W_H$ has rank one, so
        $\dim \h^{W_H} = \dim \h - 1$. This implies that the
        space of elements of degree~$1$ in $\C[\h]^{W_H}$
        satisfies $\dim \C[\h]^{W_H}_1 = \dim \h - 1$. But, by
        Part~(3), the restriction map
        $\C[S]^{K}_1 \to \C[\h]^{W_H}_1$ is an
        isomorphism and so $(S^*)^{K} = (\h^*)^{W_H}$. It
        follows that $S^{K} = \h^{W_H} =H$.
          Therefore        $K \subseteq Z_G(H)$. Conversely, since $p \in H$ we have
          $Z_G(H) \subseteq G_p=K$ and so  $K= Z_G(H)$.
        
        (5)   Finally, if $\mf{k}=\Lie(K)$ then $\rank (S,K)=\dim
        \h - \dim \h^{\mf{k}}$ by  \eqref{defn:polarrank}. Since $\h \cap S^{K} \subseteq 
        \h^{\mf{k}}$, it follows from Part~(4) that $\dim    \h^{\mf{k}} \ge \dim \h - 1$, as desired.    
\end{proof}

\begin{remark}\label{rem:rankzeropolar}
        In theory, it is possible for $(K,S)$ to have rank zero
        and this happens precisely if the point $p$ is
        $V$-regular but not regular, see Lemma~\ref{lem:slices}.  (As noted in
        Remark~\ref{regular-remark}, it is conjectured that this
        cannot happen.) In this case the image of $K$ in $\GL(S)$
        is a finite group; indeed, $S = \h$ and the image of $K$
        in $\GL(\h)$ equals $W_H$.  
\end{remark}

As before, let $\mf{k}=\Lie(K)$ and set $ \dim V=n$ and $\dims =\dim S$, for $S=S_p$. 
Given $X\in \g$ and $u\in V$,  write $\tau(X)_u = X\cdot u$.
Pick   a basis  $s_1, \ds, s_{\dims}$   for $S$, extended to a basis 
$\{s_1,\dots, s_n\}$ of $V$.  Under our identification $V\subset
\Sym V\subset \dd(V)$, these $s_i$  become 
derivations on $V$, which we write as
$\partial_{s_i}$ to avoid confusion. Let $X_1, \ds, X_{n-{\dims}}$ be a
basis for the $K$-stable complement $L$ to  
$\mf{k}$ in $\g$. The fact that $V=S\oplus \g\cdot p$ implies that 
$s_1, \ds, s_{\dims}, \tau(X_1)_p, \ds, \tau(X_{n-{\dims}})_p$ is also a
basis of $V$.    
Then, as the $\partial_{s_i}$ form a $\C[V]$-basis of
$\Der(\C[V])$, we can write  
\begin{equation}\label{defn:U}
        \partial_{s_1} \wedge \cdots \wedge \partial_{s_{\dims}} \wedge
        \tau(X_1) \wedge \cdots \wedge \tau(X_{n-{\dims}})  
        = t\cdot  \partial_{s_1} \wedge \cdots \wedge \partial_{s_n},
\end{equation}  
for some $t\in \C[V]$.  Finally, set $U = (t \neq 0)\subset V$. 

\begin{lemma}\label{lem:invariant function}
               {\rm (1)} The function $t$ is  non-zero and $K$-invariant. 
           
               {\rm (2)}  $U$  is a $K$-stable  affine open subset of $V$.
               
               {\rm (3)}  $U  \, = \, \left\{u\in V \, :\, V=S \oplus
                    \mr{Span} (\tau(X_1)_u, \ds, \tau(X_{n-{\dims}})_u)
                  \right\}$. 
       \end{lemma}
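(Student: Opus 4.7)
The plan is to handle the three parts in sequence, with most of the substance in Parts~(1) and~(3).

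For Part~(1), to show $t \neq 0$ I will evaluate \eqref{defn:U} at the point $p$. Since $L$ is a $K$-stable complement to $\mf{k}=\Lie(K)=\ker(X\mapsto \tau(X)_p)$ in $\g$, the map $X \mapsto \tau(X)_p$ restricts to a linear isomorphism from $L$ onto $\g\cdot p$. Combined with the basis $s_1,\dots,s_\dims$ of $S$ and the direct sum decomposition $V = S \oplus \g\cdot p$ furnished by Lemma~\ref{lem:slices}, the vectors $s_1,\dots,s_\dims,\tau(X_1)_p,\dots,\tau(X_{n-\dims})_p$ form a basis of $V$, so evaluating \eqref{defn:U} at $p$ gives $t(p) \neq 0$. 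For $K$-invariance, I will compare how both sides of \eqref{defn:U} transform under an element $g \in K$, using that $g_*\tau(X) = \tau(\mathrm{Ad}(g)X)$ and that both $S$ and $L$ are $K$-stable. The left side will pick up the scalar factor $\det(g|_S)^{-1}\det(\mathrm{Ad}(g)|_L)^{-1}$ and the right side will pick up $\det(g|_V)^{-1}$. These factors agree because the map $L \to \g\cdot p$, $X \mapsto \tau(X)_p$, is a $K$-equivariant linear isomorphism (equivariant since $g\cdot p = p$), so $\det(\mathrm{Ad}(g)|_L) = \det(g|_{\g\cdot p})$; combined with the $K$-stable splitting $V = S \oplus \g\cdot p$, this yields $\det(g|_V) = \det(g|_S) \det(\mathrm{Ad}(g)|_L)$, which is exactly what is needed.

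Part~(2) will then be immediate: the principal open set $U = D(t)$ is affine with coordinate ring $\C[V][t^{-1}]$, and it is $K$-stable since $t$ is $K$-invariant.

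For Part~(3), I will evaluate \eqref{defn:U} at an arbitrary point $u \in V$. Under the canonical identification of the tangent space at $u$ with $V$, the element $(\partial_{s_i})_u$ becomes $s_i$ and $\tau(X_j)_u$ becomes $X_j \cdot u$, so the identity reduces to the equation
\[
s_1 \wedge \cdots \wedge s_\dims \wedge \tau(X_1)_u \wedge \cdots \wedge \tau(X_{n-\dims})_u \;=\; t(u)\cdot s_1 \wedge \cdots \wedge s_n
\]
in $\bigwedge^n V$. Since $s_1,\dots,s_n$ is a basis of $V$, $t(u) \neq 0$ is equivalent to the $n$ vectors on the left being linearly independent, and since there are $n = \dim V$ of them, this is equivalent to their spanning $V$. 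Using that $s_1,\dots,s_\dims$ already span $S$, this is in turn equivalent to the direct sum decomposition in the statement. The only real obstacle I anticipate is keeping careful track of the transformation law in Part~(1); the rest is essentially formal.
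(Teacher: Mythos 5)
Your proposal is correct and follows essentially the same route as the paper: evaluate \eqref{defn:U} at $p$ to get $t(p)\neq 0$, compare the transformation factors $\det_S(g)$, $\det_L(g)$, $\det_V(g)$ of the three wedge products using the $K$-module isomorphism $S\oplus L\cong V$ (which holds because $K=G_p$ fixes $p$), and read off Part~(3) by evaluating the defining identity at an arbitrary $u$. The only cosmetic difference is your use of inverse determinants (a pullback rather than pushforward convention), applied consistently to both sides, so the conclusion is unaffected.
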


\begin{proof}
        (1) The left hand side of Equation~\ref{defn:U}
        evaluated at $p$ is non-zero since $s_1, \ds, s_{\dims},
        \tau(X_1)_p, \ds, \tau(X_{n-{\dims}})_p$ is a basis of
        $V$. Therefore $t(p) \neq 0$ and hence $t\not=0$. 
        If $g \in K$, the fact that $\tau$ is $G$-equivariant implies that 
        \begin{align*}
                g \cdot \partial_{s_1} \wedge \cdots \wedge \partial_{s_{\dims}}  & 
                = \mr{det}_{S}(g)\,\, \partial_{s_1} \wedge \cdots \wedge \partial_{s_{\dims}} \\
                g \cdot \tau(X_1) \wedge \cdots \wedge \tau(X_{n-{\dims}}) & = 
                \mr{det}_{L}(g)\, \tau(X_1) \wedge \cdots \wedge \tau(X_{n-{\dims}}) \quad \text{and}  \\
                g \cdot \partial_{s_1} \wedge \cdots \wedge \partial_{s_n} & =
                \mr{det}_V(g)\,\, \partial_{s_1} \wedge \cdots \wedge \partial_{s_n}.
        \end{align*}
        Since $S \oplus L \cong V$ as $K$-modules, $\det_{S}(g)
        \det_L(g) = \det_V(g)$. Therefore, comparing the three displayed equations to  \eqref{defn:U},  $g(t)=t$. 
        
        Part (2) is immediate  from (1) and Part (3) follows
        from the definition of $t$. 
\end{proof} 

\begin{lemma}\label{lem:decompoUbUS}
        Let $I$ denote the kernel of the map $\C[V] \to \C[S]$ of
        restriction to $S$. Then $\C[V] = \C[S] \oplus I$ as
        $K$-equivariant $\dd(S)$-modules. 

\end{lemma}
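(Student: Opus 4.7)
The plan is to exploit the $K$-equivariant direct sum decomposition $V = S \oplus \g\cdot p$ that is built into the slice construction (see Notation~\ref{slice-notation1}). Indeed, $\g\cdot p$ is a $K$-submodule of $V$: for $k \in K = G_p$ and $X \in \g$, one has $k\cdot \tau(X)_p = \tau(\mr{Ad}(k)X)_{k\cdot p} = \tau(\mr{Ad}(k)X)_p \in \g\cdot p$. Hence both summands in $V = S \oplus \g\cdot p$ are $K$-stable.

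Dualizing and taking symmetric algebras, this decomposition yields a $K$-equivariant isomorphism of polynomial algebras
\[
\C[V] \ \cong \ \C[S] \o \C[\g\cdot p],
\]
and analogously $\dd(V) \cong \dd(S) \o \dd(\g\cdot p)$ as algebras, with $\dd(S)$ identified as the subalgebra $\dd(S)\o 1$. Under this identification, the restriction map $\C[V] \to \C[S]$ becomes $\mr{id}\o \mr{ev}_0$, where $\mr{ev}_0 \colon \C[\g\cdot p]\to\C$ evaluates a polynomial at the origin. It therefore admits the obvious splitting $f\mapsto f\o 1$, so that
\[
\C[V] \ = \ \bigl(\C[S]\o 1\bigr) \ \oplus \ \bigl(\C[S]\o \mf{m}\bigr),
\]
where $\mf{m}\subset \C[\g\cdot p]$ is the augmentation ideal. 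The second summand is exactly the kernel $I$ of the restriction map.

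It remains to check that both summands are stable under $K$ and under $\dd(S)$. The $K$-stability is immediate from the fact that the isomorphism $\C[V]\cong \C[S]\o\C[\g\cdot p]$ is $K$-equivariant, while $\mr{ev}_0 \colon \C[\g\cdot p]\to\C$ is $K$-linear because $K$ fixes $0 \in \g\cdot p$; hence $\mf{m}$ is $K$-stable. The $\dd(S)$-stability is automatic because $\dd(S)$ acts only on the first tensor factor of $\C[S]\o\C[\g\cdot p]$: it preserves $\C[S]\o 1$ tautologically, and preserves $\C[S]\o\mf{m}$ since the $\C[\g\cdot p]$-component is left untouched. This gives the desired decomposition $\C[V]=\C[S]\oplus I$ as $K$-equivariant $\dd(S)$-modules.

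There is no real obstacle here; the only point requiring a moment's care is verifying that $\g\cdot p$ is genuinely a $K$-submodule of $V$ (so that the decomposition $V=S\oplus\g\cdot p$ is $K$-equivariant, not merely $K$-stable on the $S$-summand), and that the $\dd(S)$-action on $\C[V]$ is the one induced by the inclusion $\dd(S)\hookrightarrow \dd(V)$ coming from the projection $V\twoheadrightarrow S$. Both are direct consequences of the $K$-equivariant splitting $V = S\oplus\g\cdot p$ established at the start.
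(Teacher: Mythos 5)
Your proof is correct and follows essentially the same route as the paper: both arguments rest on the $K$-stable splitting $V = S \oplus \g\cdot p$, identify $I$ with the ideal generated by coordinates vanishing on $S$ (your $\C[S]\otimes\mf{m}$), and observe that $\dd(S)$, acting only through the $S$-factor, preserves both summands. The paper states this with explicit coordinates $x_1,\dots,x_{n-\dims}$ and the remark $\partial(x_i)=0$ for $\partial\in\dd(S)$, whereas you phrase it via the tensor-product decomposition $\C[V]\cong\C[S]\otimes\C[\g\cdot p]$; the content is identical.
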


\begin{proof}  
        Using the $K$-module decomposition $V = S \oplus \g \cdot
        p$, we regard $\dd(S)$ as a subalgebra of $\dd(V)$. Let
        $x_1, \ds, x_{n-{\dims}} \in V^*$ be coordinates whose set of
        common zeros is the subspace $S$. Then
        $I=\sum_{j=1}^{n-{\dims}} \C[V]x_j$ and $\partial(x_i) = 0$ for
        every derivation $\partial \in \dd(S)$.   Therefore, the $\C[S]$--module decomposition 
        $\C[V] = \C[S] \oplus I$ extends to a $\dd(V)$-module decomposition.  The
        $K$-equivariance is immediate.  
\end{proof}

By Lemma~\ref{lem:slicebproperties}(3), the restriction map
$\rr_p \colon S \to \h$   induces an isomorphism $\rr_p \colon
\C[S]^{K} \stackrel{\sim}{\longrightarrow} \C[\h]^{W_H}$. As
such, the discriminant $\deltah \in \C[\h]^W$ can be considered as a 
$K$-invariant function on $S$ (denoted $\deltav_S$) whose
non-vanishing locus $S_{\reg}$ equals $S \cap V_{\reg}$. The map  
$\rr_p$ localises to $\rr_p \colon \C[S_{\reg}]^{K}
\stackrel{\sim}{\longrightarrow} \C[\h_{\reg}]^{W_H}$. Recall
from \eqref{eq:factordiscinV}  
that the irreducible factors of $\deltav$ are $\deltav_1, \ds,
\deltav_k$. Each $\deltav_i$ restricts to a non-zero
$K$-semi-invariant $\deltav_{S,i}$ on $S$ and so the radial parts
map $\rad_{\vs}$  
from 
\eqref{eq:radialpartsmap} induces a radial parts map
\begin{equation}\label{eq:radbvs}
        \rad_{\vs,p} \colon \dd(S)^{K} \longrightarrow
        \dd(\h_{\reg})^{W_H},  \quad \rad_{\vs,p} (D)(z) =
        \bigl(\deltav^{-\vs}_S D(  \rr^{-1}_p(z) \deltav^{\vs}_S)
        \bigr) |_{\h_{\reg}}, 
\end{equation}
for $\deltav^{\vs}_S = \deltav_{S,1}^{\vs_1} \cdots
\deltav_{S,k}^{\vs_k}$.

Let $\euls{S}$ denote the set of all functions in $\C[V]$ not
vanishing on $\h_{\reg}^{\dagger} = U \cap \h_{\reg}$ and
$\euls{C}$ the set of all functions in $\C[\h]$ not vanishing on
$\h_{\reg}^{\dagger}$. The following technical Lemma~\ref{lem:PQRdecomp} is key to the
main result.  Before stating it, recall that the function $t$ is defined in~\eqref{defn:U} and that $\g_{\chi}$ is defined in Remark~\ref{eq:g-chi}.   Also, as the next remark shows, we are free to use  $\h_{\reg}^{\dagger}$ in place of $\h_{\reg}$ when defining $\rad_{\vs}$.

\begin{remark}\label{rem:alternativedefrad}
  Choose any $W$-stable affine open subset $C$ of $\h_{\reg}$ and set $U = \pi^{-1}(C/W) \subset V_{\reg}$. As $V$ is polar, restriction defines an isomorphism $\rr \colon \C[U]^G \stackrel{\sim}{\longrightarrow} \C[C]^W$. One can then define
        $$
        \rad'_{\vs} (D)(z) = \bigl(\deltav^{-\vs}D( \rr^{-1}(z)         \deltav^{\vs}) \bigr) |_{C}\qquad \text{for}\quad z \in         \C[C]^W \text{ and } D\in \dd(V)^G.
      $$
       Under the inclusion $\dd(\h_{\reg})^W \subseteq \dd(C)^{W}$   the image of this  map $\rad'_{\vs}$ is the same as         that of \eqref{eq:radialpartsmap2}, and so we can relabel $\rad'_{\vs}=\rad_{\vs} $ without ambiguity. In other words,  it does not         matter which ``test functions'' are used to define the         radial parts map.     \end{remark}

Recall from Lemma~\ref{lem:invariant function}(2) that $U=(t\not=0)$ is $K$-stable and that $t$ vanishes nowhere on $\h^{\dagger}_{\reg}$. Thus,     the morphism $\rad_{\vs,p}$ restricts to give   a map   
$  \dd(U)^{K} \to
        \dd(\h^{\dagger}_{\reg})^{W_H}, $ which we also write as $\rad_{\vs,p}$. It is  this second version of $\rad_{\vs,p}$ that is used in the next lemma.

\begin{lemma}\label{lem:PQRdecomp}
        Let $D \in \dd(V)^G$. There exists $\ell > 0$ and $P \in \dd(S)^{K}$ such that 
        $$
        \rad_{\vs}(D) = \rad_{\vs,p}\left((t|_S)^{-\ell} P\right)
        $$
        as elements of $\dd(\h)_{\euls{C}}$.
 \end{lemma}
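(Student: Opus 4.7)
The idea is to expand $D$ on the open set $U$ using the basis of $\Der\C[U]$ from Lemma~\ref{lem:invariant function}(3), eliminate the $\tau(X)$-factors modulo the left ideal that annihilates $\C[V_{\reg}]^G\delta^{\vs}$, restrict the coefficients to $S$ via Lemma~\ref{lem:decompoUbUS}, and then $K$-average to land in $\dd(S)^K$.

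Since $\partial_{s_1},\dots,\partial_{s_d},\tau(X_1),\dots,\tau(X_{n-d})$ form a $\C[U]$-basis of $\Der\C[U]$, the PBW theorem for differential operators on the smooth variety $U$ yields $\dd(U) = \bigoplus_{I,J}\C[U]\partial_s^I\tau(X)^J$ as left $\C[U]$-modules. Writing $D = \sum_{I,J} c_{IJ}\partial_s^I\tau(X)^J$ with $c_{IJ}\in\C[U]$, Remark~\ref{eq:g-chi} shows that $\tau(X)-\chi(X)$ annihilates $\C[V_{\reg}]^G\delta^{\vs}$, and a straightforward induction then gives $\tau(X)^J\cdot (f\delta^{\vs}) = \chi(X)^J (f\delta^{\vs})$ for every $f\in\C[V_{\reg}]^G$. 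Consequently, $D$ acts on this module the same as $E := \sum_I g_I \partial_s^I$, where $g_I := \sum_J \chi(X)^J c_{IJ}\in\C[U]$. In particular, $D(\rr^{-1}(z)\delta^{\vs}) = E(\rr^{-1}(z)\delta^{\vs})$ for every $z\in\C[\h]^W$.

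Using the $K$-equivariant splitting $\C[V] = \C[S] \oplus \mc{I}$ of Lemma~\ref{lem:decompoUbUS} (localized at $t$), decompose $g_I = g_I^S + g_I^{\perp}$ with $g_I^S\in\C[S][(t|_S)^{-1}]$ the restriction to $S$ and $g_I^{\perp}\in\mc{I}[t^{-1}]$, and set $\widetilde{E} := \sum_I g_I^S\partial_s^I\in \dd(S)[(t|_S)^{-1}]$. Because each $\partial_{s_i}$ with $i\le d$ commutes with restriction to $S$, and because $\rr^{-1}(z)|_S = \rr_p^{-1}(z)$ and $\delta^{\vs}|_S = \delta_S^{\vs}$, one obtains $E(\rr^{-1}(z)\delta^{\vs})|_{\h_{\reg}^{\dagger}} = \widetilde{E}(\rr_p^{-1}(z)\delta_S^{\vs})|_{\h_{\reg}^{\dagger}}$. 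Dividing by $\delta_S^{\vs}|_{\h_{\reg}^{\dagger}}$ yields the identity
$$\rad_{\vs}(D) \;=\; \rad_{\vs,p}(\widetilde{E}) \qquad \text{in } \dd(\h)_{\euls{C}},$$
where $\rad_{\vs,p}$ is extended to $\dd(S)[(t|_S)^{-1}]$ via the same formula even for operators that are not yet $K$-invariant.

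To finish, it remains to replace $\widetilde{E}$ by a $K$-invariant operator. Applying the Reynolds operator $\rho^K$ for the reductive group $K$ (which preserves $\dd(S)[(t|_S)^{-1}]$ since $t|_S\in\C[S]^K$ by Lemma~\ref{lem:invariant function}(1)) produces $\widetilde{E}^{\natural} := \rho^K(\widetilde{E})\in\dd(S)^K[(t|_S)^{-1}]$; clearing denominators gives $\widetilde{E}^{\natural} = (t|_S)^{-\ell}P$ for some $\ell>0$ and $P\in\dd(S)^K$. The main obstacle is verifying $\rad_{\vs,p}(\widetilde{E}^{\natural}) = \rad_{\vs,p}(\widetilde{E})$. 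The key observations are that $\rr_p^{-1}(z)\delta_S^{\vs}$ is a $K$-semi-invariant of weight $\chi|_K$, so $\rho^K$ extracts the $\chi|_K$-isotypic component of $\widetilde{E}(\rr_p^{-1}(z)\delta_S^{\vs})$, while $\delta_S^{-\vs}\widetilde{E}(\rr_p^{-1}(z)\delta_S^{\vs})|_{\h_{\reg}^{\dagger}}$ already equals the $W$-invariant function $\rad_{\vs}(D)(z)|_{\h_{\reg}^{\dagger}}$; the required equality then follows by comparing the $K$-orbit geometry on $S$ near $\h_{\reg}^{\dagger}$ with the $W_H$-action on $\h_{\reg}^{\dagger}$ supplied by Lemma~\ref{lem:slicebproperties}.
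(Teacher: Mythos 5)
Your strategy runs parallel to the paper's: kill the $\tau(X)$-directions modulo the annihilator of $\C[V_{\reg}]^G\deltav^{\vs}$, then push the surviving coefficients onto $S$ through the splitting $\C[V]=\C[S]\oplus I$ of Lemma~\ref{lem:decompoUbUS}. The essential difference is the order of operations. The paper takes $K$-invariants of the decomposition $\dd(U)=\bigl(\C[U]\otimes_{\C[S]}\dd(S)\bigr)+\dd(U)\g_{\chi}$ at the outset (legitimate because both summands are $K$-stable and $K$ is reductive), so the operator $P$ is $K$-invariant from the start; you instead produce a non-invariant operator $\widetilde{E}$ and only average at the very end.

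That final step is where there is a genuine gap. The identity you need, $\rad_{\vs,p}(\rho^K(\widetilde{E}))=\rad_{\vs,p}(\widetilde{E})$, amounts to $\rho^K(F)|_{\h_{\reg}^{\dagger}}=F|_{\h_{\reg}^{\dagger}}$ for $F=\deltav_S^{-\vs}\widetilde{E}(\rr_p^{-1}(z)\deltav_S^{\vs})$, and restriction to $\h$ does not commute with the Reynolds projection on non-invariant functions: already for $K=\SO(n)$ acting on $S=\C^n$ with $\h=\C e_1$, the coordinate $x_1$ lies in a nontrivial isotypic component, so $\rho^K(x_1)=0$ while $x_1|_{\h}\neq 0$. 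Knowing that $F|_{\h_{\reg}^{\dagger}}$ equals the $W$-invariant function $\rad_{\vs}(D)(z)$ says nothing about the isotypic decomposition of $F$ on $S$, and ``comparing the $K$-orbit geometry with the $W_H$-action'' is not an argument. The gap is repairable within your own framework: your intermediate steps (that the $\partial_s^I$ preserve $I$, together with the congruence $\deltav^{-\vs}\partial\,\deltav^{\vs}\equiv\deltav_S^{-\vs}\partial\,\deltav_S^{\vs}\ \mathrm{mod}\ I_{\euls{S}}\otimes_{\C[S]}\dd(S)$, which in any case must be proved rather than read off from ``$\deltav^{\vs}|_S=\deltav_S^{\vs}$'', since $\deltav^{\vs}$ is only a formal symbol) actually yield $\deltav^{-\vs}E(\rr^{-1}(z)\deltav^{\vs})|_{S}=F$ on a dense open subset of $S_{\reg}\cap U$, not merely on $\h_{\reg}^{\dagger}$; since the left-hand side is the restriction of the $G$-invariant function $\deltav^{-\vs}D(\rr^{-1}(z)\deltav^{\vs})$ and $K$ preserves $S$, the function $F$ is $K$-invariant and $\rho^K$ acts trivially on it. Alternatively, take $K$-invariants of the decomposition first, as the paper does, and the issue never arises.
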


\begin{proof}  
        As in Lemma~\ref{lem:invariant function} we identify
        $\C[S]\subseteq \C[V]\subseteq \C[U]$. We note that
        $\deltav, \deltav_S$ and $t$ belong to $\euls{S}$. Recall
        from Notation~\ref{defn:order} that $\dd_{\idot}(X)$
        denotes the order filtration on $\dd(X)$. It follows from
        Lemma~\ref{lem:invariant function} that, locally at any
        point $u\in U$,  we have  
        \begin{equation}\label{eq:local-der}
                \mr{Der}\, \C[U] = \left(\C[U] \o_{\C[S]}
                  \mr{Der} \, \C[S] \right) + \C[U]
                \tau(\mf{g}). 
        \end{equation} 
        Therefore, by \cite[Corollary~A1]{BD}, \eqref{eq:local-der}
        holds globally on $U$.  We have $\C[U] = \C[U] \o_{\C[S]}
        \C[S]\subseteq \C[U] \o_{\C[S]} \dd_1(S)$, and
        $\C[U] \g_{\chi} + \C[U] = \C[U] \tau(\g) + \C[U]$ inside
        $\dd(U)$ by the definition \eqref{eq:g-chi} of $\g_{\chi}$.  We therefore deduce that  
        $$
        \dd_1(U) = \left(\C[U] \o_{\C[S]} \dd_1(S)\right) + \C[U]\g_{\chi}. 
        $$
        This implies that $\dd(U) = \left(\C[U] \o_{\C[S]}
          \dd(S)\right) + \dd(U) \g_{\chi}$ and hence that 
        $$
        \dd(U)^{K} = \left(\C[U] \o_{\C[S]} \dd(S)\right)^{K} + (\dd(U) \g_{\chi})^{K}. 
        $$
        Since $U$ is the principal open set $(t \neq 0)$, there
        exists  
        $P_0 \in (\C[V] \o_{\C[S]} \dd(S))^{K}$ and
        $R \in (\dd(V) \g_{\chi})^{K}$ such that
        $t^{\ell} D = P_0 + R$ for some $\ell\geq 0$. Moreover, by
        Lemma~\ref{lem:decompoUbUS} we may further decompose
        $P_0 = P + Q$, for  $P \in \dd(S)^{K}$ and
        $Q \in (I \o_{\C[S]} \dd(S))^{K}$.

Let $z \in \C[\h_{\reg}]^W$ with $\rr^{-1}(z) \in
\C[V_{\reg}]^G$. Then \eqref{eq:radialpartsmap2} combined with Remark~\ref{rem:alternativedefrad}
says that  
        $$
        \rad_{\vs}(D)(z) = \bigl(\deltav^{-\vs}
D(\rr^{-1}(z)\deltav^{\vs})\bigr) |_{\h_{\reg}^{\dagger}} =
\bigl(\deltav^{-\vs} t^{-\ell}(P + Q +
R)(\rr^{-1}(z)\deltav^{\vs})\bigr) |_{\h_{\reg}^{\dagger}}.
        $$
        Now $Q(\rr^{-1}(z)\delta^{\vs})\in I_{\euls{S}}
\deltav^{\vs}$. Since $I |_{\h} = 0$, it follows that
$\deltav^{-\vs}t^{-\ell}Q(\rr^{-1}(z)\deltav^{\vs})
|_{\h_{\reg}^{\dagger}} = 0$. On the other hand,
$R(\rr^{-1}(z)\deltav^{\vs})|_{\h_{\reg}} = 0$ since
$\g_{\chi}(\rr^{-1}(z)\deltav^{\vs})|_{\h_{\reg}} = 0$. Hence
        $$
        \rad_{\vs}(D)(z) = \bigl(\deltav^{-\vs} t^{-\ell}
P(\rr^{-1}(z)\deltav^{\vs})\bigr) |_{\h_{\reg}^{\dagger}}.
        $$
 
 Let $I=\sum \C[V]x_i$, as in the proof of Lemma~\ref{lem:decompoUbUS}   Then, by that lemma,   we can write $\rr^{-1}(z) =
\rr_p^{-1}(z) + \sum_{i = 1}^{n-m} x_i z_i$ for some $z_i \in
\C[V_{\reg}]$. Since $P\in \dd(S)$ acts trivially on the $x_i$
and $x_i(\h_{\reg}^{\dagger})=0$ for each $i$,
 \begin{equation}\label{eq:PDQ}
 \begin{aligned}
  \bigl(\deltav^{-\vs} t^{-\ell}  &
P(\rr^{-1}(z)\deltav^{\vs})\bigr)|_{\h_{\reg}^{\dagger}} =   \\  & =
\bigl(\deltav^{-\vs} t^{-\ell} P(\rr_p^{-1}(z)\deltav^{\vs})\bigr)
|_{\h_{\reg}^{\dagger}} + \sum_{ i = 1}^{n-m} x_i \deltav^{-\vs}
t^{-\ell} P(z_i \deltav^{\vs}) |_{\h_{\reg}^{\dagger}} \\  \noalign{\vskip 4pt} & =
\bigl(\deltav^{-\vs} t^{-\ell}P(\rr_p^{-1}(z)\deltav^{\vs})\bigr)
|_{\h_{\reg}^{\dagger}}  \ =\ \left(t^{-\ell} \right)|_{\h_{\reg}^{\dagger}}
\bigl(\deltav^{-\vs} P(\rr_p^{-1}(z)\deltav^{\vs})\bigr)
|_{\h_{\reg}^{\dagger}}.
\end{aligned}
\end{equation}
Recall that $P \in \dd(S)^K$, but $\deltav,t
\in \C[V]$.

\begin{sublemma}\label{PDQ-sublemma}
 $\bigl(\deltav^{-\vs}
t^{-\ell}P(\rr_p^{-1}(z)\deltav^{\vs})\bigr) |_{\h_{\reg}^{\dagger}}
= \bigl(\deltav^{-\vs}_S
(t|_S)^{-\ell}P(\rr_p^{-1}(z)\deltav^{\vs}_S)\bigr)
|_{\h_{\reg}^{\dagger}}$
\end{sublemma}

\begin{proof} Since
       $\left(t^{-\ell} \right)|_{\h_{\reg}^{\dagger}} =
\left((t|_S)^{-\ell} \right)|_{\h_{\reg}^{\dagger}}$, by \eqref{eq:PDQ} it remains to show that 
$$
\bigl(\deltav^{-\vs}P(\rr_p^{-1}(z)\deltav^{\vs})\bigr) |_{\h_{\reg}^{\dagger}} =
\bigl(\deltav^{-\vs}_S P(\rr_p^{-1}(z)\deltav^{\vs}_S)\bigr)
|_{\h_{\reg}^{\dagger}},
$$
for $P \in \dd(S)^K$. Since $I_{\euls{S}}
|_{\h_{\reg}^{\dagger}} = 0$, we must show that
        $$
        \deltav^{-\vs} P \deltav^{\vs} \ \equiv \ \deltav^{-\vs}_S P
\deltav^{\vs}_S \ \  \mathrm{mod} \ I_{\euls{S}} \o_{\C[S]} \dd(S),
        $$
        for any $P \in \dd(S)$. Formal conjugation by both
$\deltav^{\vs}$ and $\deltav_S^{\vs}$ are algebra automorphisms of
$\dd(V)_{\euls{S}}$. Therefore, it suffices to prove this identity 
when $P = \partial\in \Der(\C[S])$. Taking each factor $\deltav_i$
of $\deltav$ in turn, this reduces to the statement that
$\partial(\deltav_i)\deltav_i^{-1} \equiv \partial(\deltav_{S,i})
\deltav_{S,i}^{-1} \ \mathrm{mod} \ I_{\euls{S}}$.  Since 
$\deltav_i = \deltav_{S,i} + f_i$ for some  $f_i \in I$, this follows from
the fact that, by Lemma~\ref{lem:decompoUbUS},  $\partial(f_i) \in I$.
\end{proof}

We return to the proof of the lemma.         The sublemma implies that $\rad_{\vs}(D)(z) =
\rad_{\vs,p}((t|_S)^{-\ell}P)(z)$, as required.
\end{proof}

For the rest of the subsection we assume  that 
$(K,S)$ has rank equal to one. Let $S_H$ denote the $K$-stable complement
to $S^{K}$ in $S$. By Lemma~\ref{lem:slicebproperties}(4), $S^{K}
= \h^{W_H} = H$. If we set $\h_H = \h \cap S_H$, then $(K,S_H)$ is a
rank one polar representation with one-dimensional Cartan
subalgebra $\h_H$. In particular, $\dim S_H \git K = 1$. 

We can write $\C[S_H]^{K} = \C[u]$  for some  polynomial $u$, but we need to be precise about the choice of  $u$.  Set $\alpha :=\alpha_H \in \h^*$.  
Since  $\dim S_H\cap \h=1$, clearly $\C[S_H\cap \h]=\C[\alpha]$ and hence $\alpha^{\ell_H}\in \C[S_H\cap \h]^{W_H}= \C[\h_H]^{W_H}$, where $\ell_H=|W_H|$. From the Chevalley isomorphism 
$\phi: \C[S_H]^K \iso \C[\h_H]^{W_H}$ we  can, and will, take $u$ to be the preimage of $\alpha^{\ell_H} $ in $\C[S_H]^K$. As in   \eqref{eq:ufactoru1tourun}, fix a factorisation 
$$
u = u_1^{p_1} \cdots u_r^{p_r}
$$ 
into equivariantly irreducible polynomials in $\C[S_H]$. Recall from \eqref{eq:Hdiscriminant} that we associate to the hyperplane $H$ the factor $\deltah_H$ of the discriminant $\deltah \in \C[\h]^W$.

\begin{lemma}\label{lem:easyresdeleteun} Assume that 
        $(K,S)$ has rank one and, as in \eqref{eq:factordiscinV}, let the $\deltav_i$ be the 
        irreducible factors of $\deltav$ in $\C[V]$.
        \begin{enumerate}
                \item $\deltav|_S = g u$, where $g\in\C[S]^K$ is a
                $K$-invariant function with $g |_{\h} = \deltah_H$.
                \item There exist unique $n_j
                \ge 0$ such that $\deltav_i |_S = g_i \prod_{j
                        = 1}^r u_j^{n_{i,j}}$, where $g_i$ is a $K$-invariant function not divisible by $u$ and $g_i |_{\h}$ divides $\deltah_H$.
        \end{enumerate}
\end{lemma}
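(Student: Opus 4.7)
The plan is to combine the Chevalley isomorphism $\rr_p \colon \C[S]^K \stackrel{\sim}{\to} \C[\h]^{W_H}$ of Lemma~\ref{lem:slicebproperties}(3) with unique factorisation in the polynomial rings $\C[S]$ and $\C[\h]$.

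For Part~(1), observe that $\deltav \in \C[V]^G$ restricts to $\deltav|_S \in \C[S]^K$ because $S$ is $K$-stable, and $\rr_p(\deltav|_S) = \deltav|_\h = \deltah$. Using the factorisation $\deltah = \alpha_H^{\ell_H} \deltah_H$ in $\C[\h]^{W_H}$ from~\eqref{eq:Hdiscriminant} together with the defining property $\rr_p(u) = \alpha_H^{\ell_H}$, setting $g := \rr_p^{-1}(\deltah_H) \in \C[S]^K$ yields $\deltav|_S = ug$, and $g|_\h = \deltah_H$ by construction.

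For Part~(2), I would first work on $\h$. Since $\deltav_i|_\h$ is a $W$-semi-invariant in $\C[\h]$ dividing $\deltah = \prod_{H' \in \calA} \alpha_{H'}^{\ell_{H'}}$, unique factorisation in $\C[\h]$ gives $\deltav_i|_\h = \prod_{H'} \alpha_{H'}^{b_{i,H'}}$ with $0 \leq b_{i,H'} \leq \ell_{H'}$. Setting $b_i := b_{i,H}$ and $\tilde{g}_i := \prod_{H' \neq H} \alpha_{H'}^{b_{i,H'}}$ yields $\deltav_i|_\h = \alpha_H^{b_i} \tilde{g}_i$ with $\tilde{g}_i$ dividing $\deltah_H$ and not divisible by $\alpha_H$. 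A direct character analysis on $\C[\h]^{W_H} = \C[H] \otimes \C[\alpha_H^{\ell_H}]$ shows that every $W_H$-semi-invariant in $\C[\h]$ not divisible by $\alpha_H$ is automatically $W_H$-invariant, so $\tilde{g}_i \in \C[\h]^{W_H}$. Defining $g_i := \rr_p^{-1}(\tilde{g}_i) \in \C[S]^K$ produces a $K$-invariant element with $g_i|_\h = \tilde{g}_i$ dividing $\deltah_H$, and $g_i$ is not divisible by $u$ since otherwise $u|_\h = \alpha_H^{\ell_H}$ would divide $\tilde{g}_i$.

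To produce the exponents $n_{i,j}$ and establish the factorisation $\deltav_i|_S = g_i \prod_j u_j^{n_{i,j}}$, I would exploit that each $u_j$ is a nonconstant homogeneous polynomial not vanishing identically on $\h_H$ (otherwise $u|_{\h_H}$ would vanish, contradicting $u|_{\h_H} = \alpha_H^{\ell_H}$); hence $u_j|_\h = c_j \alpha_H^{e_j}$ with $c_j \neq 0$ and $e_j > 0$, satisfying $\sum_j p_j e_j = \ell_H$. Uniqueness of the $n_{i,j}$ follows from the constraint $\sum_j n_{i,j} e_j = b_i$ imposed by restriction to $\h$, combined with the positivity $e_j > 0$. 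The hard part will be existence: one must show that $\deltav_i|_S / g_i$ is actually a polynomial in $\C[S]$ equal to a monomial in the $u_j$'s with non-negative exponents. I expect this to reduce, via the decomposition $\C[S] = \C[H] \otimes \C[S_H]$ (with $K$ acting trivially on $\C[H]$) and the description of each isotypic component $\C[S_H]^{K, \chi}$ as a rank-one $\C[u]$-module, to a compatibility between the $K$-equivariant factorisation of $\deltav_i|_S$ in $\C[S]$ and its restriction to $\h$.
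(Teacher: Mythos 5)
Part~(1) of your proposal matches the paper's argument. In Part~(2), however, the crux of the lemma is exactly the step you defer: showing that $\deltav_i|_S$ actually factors \emph{in $\C[S]$} as a $K$-invariant times a monomial in the $u_j$. Your $g_i := \rr_p^{-1}(\tilde{g}_i)$ is manufactured from data on $\h$ via the Chevalley isomorphism, so a priori it is not a divisor of $\deltav_i|_S$ in $\C[S]$ at all, and the proposed reduction to ``each isotypic component $\C[S_H]^{K,\chi}$ is a rank-one $\C[u]$-module'' is both unproved and doubtful (the module of $\chi$-covariants of a rank-one polar representation is finitely generated and torsion-free, hence free, over $\C[u]$, but need not have rank one). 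Moreover, your uniqueness argument fails for $r>1$: the single relation $\sum_j n_{i,j}e_j = b_i$ with $e_j>0$ bounds the exponents but does not determine them (e.g.\ $e_1=e_2=1$, $b_i=2$ admits $(2,0)$, $(1,1)$ and $(0,2)$).

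The ingredient you are missing is Lemma~\ref{lem:semiinvUFD}, which the paper proves precisely for this purpose: $B=\C[S]^{[K,K]}$ is a UFD, and the $u_j$ are its pairwise non-associate irreducible elements dividing $u$ (that is what ``equivariantly irreducible'' buys). Since $\prod_i(\deltav_i|_S)^{m_i}=\deltav|_S=gu$ and each $u_j$ is coprime to $g$ in $B$ (their restrictions to $\h$ are a power of $\alpha_H$ and a divisor of $\deltah_H$, respectively, and these are coprime in $\C[\h]$), every irreducible factor of $\deltav_i|_S$ in $B$ divides exactly one of $g$ and $u$. Taking highest common factors of $\deltav_i|_S$ with $g$ and with $u$ in $B$ then yields the factorisation $\deltav_i|_S=g_i\prod_j u_j^{n_{i,j}}$, with the $n_{i,j}$ unique by unique factorisation; the $K$-invariance of $g_i$ is then checked by restricting to $H=S^K$ and using that $g_i|_H\neq 0$. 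Your restriction-to-$\h$ computations are consistent with all of this, but they cannot substitute for the divisibility argument in $\C[S]$.
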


\begin{proof}  (1)
        Restriction is an isomorphism $\phi \colon \C[S]^K \iso \C[\h]^{W_H}$ of graded polynomial rings. By \eqref{eq:discriminant} and \eqref{eq:Hdiscriminant},   $\deltah = \alpha^{\ell_H} \deltah_H$ in $\C[\h]^{W_H}$. Since $\phi(u) = \alpha^{\ell_H}$ and $\phi(\delta |_S) = \delta |_{\h} = \alpha^{\ell_H} h_H$, we deduce that $\deltav|_S = g u$, where $g = \phi^{-1}(h_H)$.

        (2) Recall from Lemma~\ref{lem:semiinvUFD} that $B := \C[S]^{[K,K]}$ is a UFD. We begin by noting that the highest common factor of any $u_j$ and $g$ (in $B$) is one. Indeed, $u_j |_{\h}$ divides $u |_{\h} = \alpha^{\ell_H}$, and if $f$ is a factor of $g$ then $f |_{\h}$ divides $h_H$. But the highest common factor of $\alpha$ and $h_H$ in $\C[\h]$ is one, as is evident from \eqref{eq:discriminant} and \eqref{eq:Hdiscriminant}. 
        
        Let $g_i$ denote the highest common factor of $\delta_i |_S$ and $g$ in $B$ and $\prod_{j = 1}^r u_j^{n_{i,j}}$ the highest common factor of $\delta_i |_S$ and $u$, again in $B$. Since $\delta |_S = g u$, the previous paragraph implies that $\deltav_i |_S = g_i \prod_{j= 1}^r u_j^{n_{i,j}}$. The uniqueness of the $n_{i,j}$ follows from the fact that $B$ is a UFD. 
        
        Finally, we must show that $g_i$ is $K$-invariant. Recall that $S = S_H \oplus H$, with $S^K = H$, as $K$-modules. If $g_i |_{H} = 0$ then necessarily $\alpha$ divides $g_i |_{\h}$. But the latter divides $h_H$ and we  already saw that the highest common factor of $\alpha$ and $h_H$ is one. Thus, $g_i |_{H} \neq 0$. The restriction map $\C[S] \to \C[H]$ is $K$-equivariant and $g_i$ is a $K$-semi-invariant. Since $K$ acts trivially on $H$, $g_i |_{H} \neq 0$ implies that $g_i$ is $K$-invariant. \end{proof}

  Recall that we are assuming that we are in the rank one case and we now 
wish to reduce to the one-dimensional situation. Define
\begin{equation}\label{eq:bspichi}
  \bpsi = (\psi_1, \ds, \psi_r)
\quad \textrm{ by } \quad \psi_j = \sum_{i=1}^k n_{i,j} \vs_i
\end{equation}
where the $n_{i,j}$ are constructed by Lemma~\ref{lem:easyresdeleteun}. 
Then we can define a radial parts map $\rad_\phi:\dd(S)^K\to \dd(\h_{\reg})^{W_H}$ by 
$$\rad_{\phi}(D)(z) = \bigl(u^{-\psi}
\sigma(D) ( \rr^{-1}_p(z) u^{\psi}) \bigr) |_{\h_{\reg}}\quad \text{ for} \ z\in \C[\h_{\reg}]^{W_H}.$$

If $A$ is a
noetherian algebra and $a \in A$ is such that $\{ a^n \}_{n \in \mathbb{N}}$ forms an Ore set,
then  we write $A_a$ for  the localization of $A$ at this Ore set.

\begin{lemma}\label{lem:gfradpsi} Assume  that 
$(K,S)$ has rank one.
 Define $g$ and $\bpsi$ by Lemma~\ref{lem:easyresdeleteun} and
 \eqref{eq:bspichi}, respectively and let $\sigma \colon \dd(S)^K_{g} \to \dd(S)^K_{g}$ be
        the automorphism given by conjugating with $g^{-\vs}$, analogous to  
  the definition of conjugation by $\deltav^{-\vs}$.
  
   Then there is a commutative diagram
        $$
        \begin{tikzcd} \dd(S)^K_{g}\ar[rr,"\sigma"]
\ar[dr,"\rad_{\vs,p}"'] & & \dd(S)^K_{g} \ar[dl,"\rad_{\psi}"] \\
& \dd(\h_{\reg})^{W_H}. &
        \end{tikzcd}
        $$
         
\end{lemma}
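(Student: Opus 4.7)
The plan is to deduce the commutativity from a clean factorisation of $\deltav_S^{\vs}$ that falls out of Lemma~\ref{lem:easyresdeleteun}. Write $g^{\vs} := \prod_{i=1}^k g_i^{\vs_i}$ (a formal expression) and $u^{\psi} := \prod_{j=1}^r u_j^{\psi_j}$. Using Lemma~\ref{lem:easyresdeleteun}(2) and the definition~\eqref{eq:bspichi} of $\bpsi$,
$$
\deltav_S^{\vs} \ = \ \prod_{i=1}^k (\deltav_i|_S)^{\vs_i} \ = \ \prod_{i=1}^k g_i^{\vs_i}\,\prod_{j=1}^r u_j^{\sum_i n_{i,j}\vs_i} \ = \ g^{\vs}\, u^{\psi}.
$$
Since each $g_i$ is $K$-invariant and $g$ is invertible in the localisation, the formal power $g^{\vs}$ defines a well-behaved conjugation automorphism $\sigma$ of $\dd(S)_g$ (by the same recipe as for $\gamma_{\vs}$ in~\eqref{eq:twistedop}) preserving $K$-invariants, so $\sigma$ is an automorphism of $\dd(S)^K_g$. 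The factors $g^{\vs}$ and $u^{\psi}$ commute as operators on functions, and similarly the conjugations by $g^{-\vs}$ and $u^{-\psi}$ commute.

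Given this factorisation, commutativity of the diagram is a direct unwinding of the definition~\eqref{eq:radbvs} of $\rad_{\vs,p}$ together with the definition of $\rad_{\psi}$. For $D \in \dd(S)^K_g$ and $z \in \C[\h_{\reg}]^{W_H}$, one computes
\begin{align*}
\rad_{\vs,p}(D)(z) &= \bigl(\deltav_S^{-\vs}\, D(\rr_p^{-1}(z)\, \deltav_S^{\vs})\bigr)\big|_{\h_{\reg}} \\
&= \bigl(u^{-\psi} g^{-\vs}\, D(\rr_p^{-1}(z)\, g^{\vs} u^{\psi})\bigr)\big|_{\h_{\reg}} \\
&= \bigl(u^{-\psi}\, (g^{-\vs} D g^{\vs})(\rr_p^{-1}(z)\, u^{\psi})\bigr)\big|_{\h_{\reg}} \\
&= \bigl(u^{-\psi}\, \sigma(D)(\rr_p^{-1}(z)\, u^{\psi})\bigr)\big|_{\h_{\reg}} \\
&= \rad_{\psi}(\sigma(D))(z).
\end{align*}
The crucial step is the passage from the second to the third line: the scalar $g^{\vs}$ can be pulled through the differential operator $D$ precisely at the cost of conjugating $D$ by $g^{-\vs}$, which is the definition of $\sigma$. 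The other steps are bookkeeping.

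There is no real obstacle here; the entire content of the lemma is that the twist $\deltav_S^{\vs}$ of $\rad_{\vs,p}$ splits into the $u$-part (absorbed into $\rad_{\psi}$) and the $g$-part (absorbed into $\sigma$). The only point deserving a line of care is confirming that conjugation by $g^{-\vs}$ really is a well-defined automorphism of $\dd(S)^K_g$, which is immediate from the $K$-invariance of each $g_i$ furnished by Lemma~\ref{lem:easyresdeleteun}(2).
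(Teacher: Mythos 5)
Your argument is correct and coincides with the paper's own proof: both establish the formal factorisation $\deltav_S^{\vs} = g^{\vs}u^{\psi}$ from Lemma~\ref{lem:easyresdeleteun}(2) together with the definition \eqref{eq:bspichi} of $\bpsi$, and then unwind the definition \eqref{eq:radbvs} of $\rad_{\vs,p}$ by pulling $g^{\vs}$ through $D$ at the cost of the conjugation $\sigma$. The extra care you take in justifying the factorisation and the well-definedness of $\sigma$ on $\dd(S)^K_g$ is left implicit in the paper but adds nothing essentially new.
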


\begin{proof} Formally, $\delta^{\vs}_S = g^{\vs} u^{\psi}$,
where $g^{\vs} = g_1^{\vs} \cdots g_k^{\vs_k}$ and $u^{\psi} =
u_1^{\psi_1} \cdots u_r^{\psi_r}$. By \eqref{eq:radbvs},
        \begin{align*} \rad_{\vs,p} (D)(z) & =
\bigl(\delta^{-\vs}_S D( \rr^{-1}_p(z) \delta^{\vs}_S) \bigr)
|_{\h_{\reg}} \\ & = \bigl(u^{-\psi} (g^{-\vs} D g^{\vs}) (
\rr^{-1}_p(z) u^{\psi}) \bigr) |_{\h_{\reg}} \\ & = \bigl(u^{-\psi}
\sigma(D) ( \rr^{-1}_p(z) u^{\psi}) \bigr) |_{\h_{\reg}} =
\rad_{\psi}(\sigma(D)),
        \end{align*}
        as required.
\end{proof}

\subsection*{General polar representations}
We now return to a general polar representation $(G,V)$. Before proving Theorem~\ref{thm:surjectivereducerank1}, which is the main
result of this section, we begin with a remark on slice representations. This will  ensure that the $\kappa_{H,i}$ 
to be defined in~\eqref{kappa-defn}  are indeed  well defined. We recall that $V_{\st}$ denotes the set of $V$-regular elements as defined in Section~\ref{Sec:polarreps}.
 
\begin{remark}
  \label{rem:parameters} Keep the notation from Notation~\ref{slice-notation1}  and note
  that $H^{\circ} = H \cap \h^{\circ}$ is a non empty open
  subset of~$H$. 
   
   \smallskip
  (1) 
  Suppose that $H \cap V_{\st} = \emptyset$. Let
  $p \in H^{\circ}$; since $p$ is not $V$-regular,    Lemmata~\ref{lem:slices} and~\ref{lem:slicebproperties} imply that $\rank(G_p,S_p) =1$. Moreover,
  from the same lemmata we deduce that $G_p= Z_G(H)$ and
  $S_p= \h \oplus \g_p\cdot \h\oplus U= \h \oplus Z_\g(H) \cdot \h \oplus U$ are independent
  of $p \in H^0$. Thus   $\C[S_p]^{G_p}= \C[H] \otimes \C[u]$ and
  $\C[\h]^{W_H} = \C[H] \otimes \C[u_{\mid \h}]$ with $u$ independent of
  $p$. Hence, by definition of the $b$-function associated to
  $\Delta=u_*(D)$ in~\eqref{thm:PHVbfunction}, the roots $\lambda_i$
  introduced in Notation~\ref{notation:chidot} do not depend
  on~$p \in H^{\circ}$.
  \smallskip
 
  (2) Suppose instead  that $H \cap V_{\st} \neq
  \emptyset$ (as noted in Remark~\ref{regular-remark},  it is conjectured that this  case never
  occurs). Then, $H \cap V_{\st}$ is open in $H$ since the
  elements of $\h$ are semisimple, and so there exists
  $p_0 \in H^{\circ} \cap V_{\st}$. Then, Lemmata~\ref{lem:slices}(2)
  and~\ref{lem:slicebproperties}(4) applied to $K=G_{p_0}$ imply  that
  $\g_{p_0}= Z_\g(H) = Z_\g(\h)$.

 Now let $p \in H^{\circ}$ be arbitrary. Then $\g_p=\text{Lie}(G_p)=Z_{\g}(H)$ 
  by  Lemma~\ref{lem:slicebproperties}(4).     Since  we   proved that $Z_\g(H) = Z_\g(\h)$ it follows that 
$ \g_p = Z_\g(\h) $  and hence $p\in H \cap V_{\st}$ by Lemma~\ref{lem:slices}(2), again.  
Therefore $\rank(G_p,S_p)=0$ for all $p\in H^\circ$. 
In this case $H$ will contribute nothing to $\kappa$ (see \eqref{kappa-defn}), which is certainly
 independent of $p\in H^\circ$. 
 \end{remark}

We now come to the main result of this section, announced in Theorem~\ref{thm:radpartsmappolar}.

\begin{theorem}\label{thm:surjectivereducerank1}
 Let $\zeta \in \C^k$. Define the parameter
  $\kappa=\kappa(\zeta)$ by
\begin{equation} \label{kappa-defn}
  \kappa_{H,i}= 
  \begin{cases}
    \lambda_i - \dfrac{i}{\ell_H} +(1 - \delta_{i,0}) \ & \text{for
      $i=0,\dots,\ell_H-1$, if $H \cap V_{\st} = \emptyset$,}
    \\
    0 \ & \text{for all $i$, if $H \cap V_{\st} \neq
      \emptyset$,} 
    \end{cases}
\end{equation}
where the $\lambda_i$ are associated, via Notation~\ref{notation:chidot},
to the slice representation $(G_p,S_p)$ for $p \in H^{\circ}$. Then:
\[
\rad_{\vs} (\dd(V)^G)\subseteq \Ak(W).
\]  
\end{theorem}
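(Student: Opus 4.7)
The plan is to use the intersection description of the spherical subalgebra from Theorem~\ref{thm:intersectsphericalinddhreg},
\[
\Ak(W) \;=\; \bigcap_{H \in \calA} \Ak(W_H)_{\deltah_H}
\]
in $\dd(\h_{\reg})^W$, to reduce the problem one hyperplane at a time. Since $\rad_{\vs}(D)$ already lies in $\dd(\h_{\reg})^W$, it suffices to verify, for each reflecting hyperplane $H \in \calA$, that $\rad_{\vs}(\dd(V)^G) \subseteq \Ak(W_H)_{\deltah_H}$ with the parameters $\kappa_{H,i}$ specified by \eqref{kappa-defn}. In particular, this localises the question so that only the data attached to $H$ (the cyclic stabiliser $W_H$ and the single factor $\deltah_H$) is relevant.

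Fix $H \in \calA$ and pick any point $p \in H^{\circ}$. By Remark~\ref{rem:parameters}, the slice data does not depend on the choice of $p$, so $\kappa_{H,i}$ is well defined. Lemma~\ref{lem:slicebproperties} tells us that the slice representation $(K,S) = (G_p, S_p)$ is itself polar with Cartan subspace $\h$, Weyl group $W_H$, and rank at most one. If $H \cap V_{\st} \neq \emptyset$, the rank is zero and Lemma~\ref{lem:slices}(2) gives $S = \h \oplus U$ with $U \git K = \{\mathrm{pt}\}$; then the radial parts map on the slice is just the obvious restriction and its image sits in $\dd(\h_{\reg})^{W_H}$ with trivial twist, which matches $\kappa_{H,i} = 0$ in \eqref{kappa-defn}.

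The substantive case is when $(K,S)$ has rank one. Given $D \in \dd(V)^G$, Lemma~\ref{lem:PQRdecomp} produces $\ell \geq 0$ and $P \in \dd(S)^K$ with $\rad_{\vs}(D) = \rad_{\vs,p}\bigl((t|_S)^{-\ell} P\bigr)$ as elements of $\dd(\h)_{\euls{C}}$, so the original map is reduced to the slice radial parts map $\rad_{\vs,p}$ after inverting the $K$-invariant $t|_S$. Next, using the factorisation of $\deltav|_S$ provided by Lemma~\ref{lem:easyresdeleteun} and the twist $\bpsi$ from \eqref{eq:bspichi}, Lemma~\ref{lem:gfradpsi} rewrites $\rad_{\vs,p}$ (after inverting $g$) as $\rad_{\psi} \circ \sigma$, where $\sigma$ is the automorphism of $\dd(S)^K_g$ given by conjugation by $g^{-\vs}$. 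This isolates the rank-one direction $S_H$ with its single generator $u$ and places us exactly in the setup of Section~\ref{sec:radialpartsmap1}. Then Corollary~\ref{cor:kappa-defn} identifies the image with the spherical subalgebra $\Ak(W_H)$ attached to the cyclic group $W_H = \mathbb{Z}/\ell_H\mathbb{Z}$, for the parameter determined by the roots $\lambda_i$ of the $b$-function via \eqref{eq:kappa}; solving that equation for $\kappa_{H,i}$ yields precisely \eqref{kappa-defn}.

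Assembling: for each $H$ we obtain $\rad_{\vs}(D) \in \Ak(W_H)_{\deltah_H}$ with the prescribed $\kappa_H$, since $g|_\h = \deltah_H$ means localising at $g$ on the slice translates to localising at $\deltah_H$ on $\h$. Intersecting over $H \in \calA$ and invoking Theorem~\ref{thm:intersectsphericalinddhreg} gives $\rad_{\vs}(\dd(V)^G) \subseteq \Ak(W)$. I expect the main technical obstacle to be verifying the compatibility of the several twists in play: the original twist $\vs$ on $V$, the semi-invariant conjugation $\sigma$ by $g^{-\vs}$, and the one-dimensional twist $\bpsi$ on the slice. Making this bookkeeping rigorous is exactly the purpose of Lemma~\ref{lem:easyresdeleteun} (controlling how the $\deltav_i$ restrict to $S$), of the definition \eqref{eq:bspichi}, and of Lemma~\ref{lem:gfradpsi}; the final matching of parameters then reduces to the explicit computation in Corollary~\ref{cor:Imradrank1}.
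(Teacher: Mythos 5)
Your outline follows the paper's own proof almost step for step: the same reduction via Theorem~\ref{thm:intersectsphericalinddhreg}, the same case split on $H\cap V_{\st}$, and the same chain Lemma~\ref{lem:PQRdecomp} $\to$ Lemma~\ref{lem:easyresdeleteun} $\to$ Lemma~\ref{lem:gfradpsi} $\to$ the rank-one computation, with the parameter matching via \eqref{eq:kappa} coming out correctly. (One small citation point: the containment you need is $\mr{Im}(\rad_{\psi})\subseteq A_{\kappa_H}(W_H,\h_H)$, which is the content of Corollary~\ref{thm:Levsurjectiverank1} resting on the ad-nilpotence argument of Proposition~\ref{thm:RadnilnablaU}; Corollary~\ref{cor:kappa-defn} by itself only gives the reverse inclusion of generators.)

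There is, however, a genuine gap in your assembly step. Lemma~\ref{lem:PQRdecomp} does not express $\rad_{\vs}(D)$ as $\rad_{\vs,p}(P)$ for $P\in\dd(S)^K$; it gives $\rad_{\vs}(D)=\rad_{\vs,p}\bigl((t|_S)^{-\ell}P\bigr)$, so an extra localisation at the $K$-invariant $t|_S$ is forced on you, and $t$ depends on the choice of slice at $p$ (it is only guaranteed nonvanishing near $p$, not on all of $H$). Consequently what the slice argument actually yields is $\rad_{\vs}(\dd(V)^G)\subseteq A_{\kappa_H}(W_H)_{h_p}$ with $h_p=(t|_{\h})\,\deltah_H$, and your justification ``$g|_{\h}=\deltah_H$ means localising at $g$ translates to localising at $\deltah_H$'' ignores the factor $t|_{\h}$. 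To remove it one must vary $p$ over $H^{\circ}$ and intersect: using freeness of $\Ak(W_H)$ over $\C[\h]^{W_H}$ (Lemma~\ref{pdQ}) together with the a priori containment $\rad_{\vs}(\dd(V)^G)\subseteq\dd(\h_{\reg})^{W_H}=\Ak(W_H)_{\deltah}$, a codimension-two/normality argument shows
\[
\Ak(W_H)_{\deltah}\ \cap\ \bigcap_{p\in H^{\circ}}\Ak(W_H)_{h_p}\ =\ \Ak(W_H)_{\deltah_H},
\]
which is the paper's Sublemma~\ref{lemma:sliceintersect}. With that inserted, your argument closes correctly.
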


\begin{proof} 
  We begin by considering the case when $\mathrm{rank}(G,V) =
  0$. Since $G$ is connected, this implies that
  $V = \h \oplus U$, where $G$ acts trivially on $\h$ and
  $U\git G = \{ \mr{pt} \}$. Note that $\dd(U)^G$ acts on
  $\C[U]^G = \C$ and hence admits a quotient $\dd(U)^G \to
  \dd(\C)=\C$. Here the discriminant is equal to one, so  there is no twist 
   and $\rad \colon \dd(V)^G \to \dd(\h)$ is  simply the  morphism
  $\dd(\h) \o \dd(U)^G \to \dd(\h)$ which is the identity on the
  first factor and the map $\dd(U)^G \to \C$ on the second. Since
  $\Ak(W) = \dd(\h)$ for $W = \{ 1 \}$, the theorem holds in this
  case.
        
Therefore, we may assume that  $\text{rank}\,(G,V)\geq 1$ and hence that $\mathcal{A} \neq \emptyset$. Pick a hyperplane $H\in \mathcal{A}$, an
element $p\in H^\circ=H\cap \h^\circ$ and a corresponding slice
representation $(G_p,S_p)$.   By
Lemma~\ref{lem:slicebproperties}, $(G_p,S_p)$ is a polar
representation with Weyl group $W_H$ and rank at most one.

If $H\cap V_{\st}\not=\emptyset$, then  $p \in V_{\st}$ by Remark~\ref{rem:parameters}(2). Moreover, 
by Remarks~\ref{rem:rankzeropolar} and~\ref{rem:parameters}~(2),  $G_p = W_H$,
$S_p = \h$ and $\rank(G_p,S_p) = 0$. In this case we set
$\kappa_{H,i} = 0$ for all $i$. Then
$\Ak(W_H) = \dd(\h)^{W_H} = \dd(S_p)^{G_p}$, so again the theorem holds on this slice.
        
We are left with the case when  $H \cap V_{\st} = \emptyset$ and hence
$\rank(G_p,S_p)=1$ by Remark~\ref{rem:parameters}~(1). Set
$(K,S) := (G_p,S_p)$ with $S = S_H \oplus \h^{W_H}$.
{Recall that if we set $\h_H = \h \cap S_H$, then $(K,S_H)$ is a
 rank one polar representation with one-dimensional Cartan
 subalgebra $\h_H$, hence $(\h_H)_{\reg} = \h_H \smallsetminus\{ 0 \}$.}
As in Lemma~\ref{lem:easyresdeleteun}, we have $\deltav|_S=g u$  where $g |_{\h} = \deltah_H$
and factorisations $
        \deltav_i |_S = g_i \prod_j u_j^{n_{i,j}} $.
 Define  $\bpsi$ by
\eqref{eq:bspichi}. Now choose  the parameters $\kappa_{H,i}$  as in the first case of~\eqref{kappa-defn}, 
 set $\kappa_H=\{\kappa_{H,i}\}$ and define a spherical algebra $A_{\kappa_H}(W_H, \h_H)  \subseteq 
\dd((\h_H)_{\reg})^{W_H}$  in terms of these parameters. Then, 
Corollary~\ref{thm:Levsurjectiverank1} shows that $\rad_{\psi}(\dd(S_H)^K)=A_{\kappa_H}(W_H, \h_H)$.
Recall that, by definition, $S_H\oplus S^K=S$ and hence,  by Lemma~\ref{lem:slicebproperties},  $\h=\h_H\oplus \h^{W_H}$.  Thus we can extend  $\rad_{\psi}$  to a morphism   
        \begin{align*}
     \rad_{\psi}\otimes 1:    \dd(S)^K\ =& \ \dd(S_H)^K \o \dd(\h^{W_H}) \ \longrightarrow \ 
A_{\kappa_H}(W_H, \h_H)  \o \dd(\h^{W_H})\  \  \subseteq\    \\ &
\subseteq \  \dd((\h_H)_{\reg})^{W_H} \o \dd(\h^{W_H})\  \subseteq\   \dd(\h_{\reg})^{W_H}.
   \end{align*}     
        Now $A_{\kappa_H}(W_H,\h_H)\otimes \dd(\h)^{W_H}$ is   the spherical algebra 
        constructed inside  $\dd(\h_{\reg})^{W_H}$ using the parameters $\kappa_H$, and we call this algebra $A_{\kappa_H}(W_H,\h)$.
        We deduce that the image of $\dd(S)^K$ under
$\rad_{\psi}$ equals $A_{\kappa_H}(W_H,\h) \subseteq
\dd(\h_{\reg})^{W_H}$.  Therefore, Lemma~\ref{lem:gfradpsi} implies
that  
        $$
 \rad_{\vs,p}(\dd(S)^K) \ \subseteq \        \rad_{\vs,p}\left(\dd(S)^K_g \right) \ = \  \rad_{\psi}
\left(\dd(S)^K_g\right)  \ \subseteq \   A_{\kappa_H}(W_H,\h)_{\delta_H}.
        $$
        Let $h_p = (t |_{\h}) \deltah_H$. We deduce from
Lemma~\ref{lem:PQRdecomp} that $\rad_{\vs}(\dd(V)^G) \subseteq
A_{\kappa_H}(W_H)_{h_p}$.
        
        Since the parameter $\kappa$ is defined on one $W$-orbit
of hyperplanes at a time (see Definition~\ref{defn:Cherednik}),
by repeating the above argument for one hyperplane from each such
orbit independently we define the global parameter $\kappa$.  By Remark~\ref{rem:parameters} this is independent of the particular choice of points $p\in H^\circ$.

        In order to complete the proof, we need the following subsidiary result.
        
        \begin{sublemma}\label{lemma:sliceintersect} Let $\eS =
\C[\h] \smallsetminus \{ 0 \}$. As subalgebras of
$\dd(\h)_{\eS}$,
$$ 
\Ak(W_H)_{\deltah} \, \cap \bigcap_{p \in
H^{\circ}} \Ak(W_H)_{h_p}   =\Ak(W_H)_{\deltah_H}.
 $$ 
        \end{sublemma}
        
        \begin{proof} Let $R = \C[\h]^{W_H}$. By
Lemma~\ref{pdQ}, the algebra $\Ak(W_H)$ is a free
$R$-submodule of $\dd(\h)_{\eS}$. Since
 $$
 \Ak(W_H)_{h_p} = \Ak(W_H) \o_{R} R_{h_p}, \quad\text{and}\quad
\Ak(W_H)_{\deltah_H}  \ = \  \Ak(W_H) \o_{R} R_{\deltah_H},
                $$
it therefore suffices to show that
\begin{equation}\label{eq:localdelteHinterection}
R_{\deltah} \,  \cap  \bigcap_{p \in H^{\circ}} R_{h_p} \  = \  R_{\deltah_H}.
\end{equation}

Now $R_{h_H}$    lies inside
the left hand side of  \eqref{eq:localdelteHinterection} 
since $\deltah_H$ divides both $\deltah$ and $h_p$. Conversely, take
$f$ belonging to the left hand side, considered as a
$W_H$-invariant rational function on $\h$. Then $f$  is regular on
$\h_{\reg}$ and at every point of $H^{\circ}$. The set $\h_{\reg}
\cup H^{\circ}$ is open in $\h$ and its complement in $\h_{\reg}
\cup H$ has codimension two. Therefore, since $R$ is an  integrally closed domain, $f$ is regular on the
whole of $\h_{\reg} \cup H$. The complement of $\h_{\reg} \cup H$
in $\h$ is contained inside the zero set of $\deltah_H$ (the
latter being the union of all hyperplanes except $H$). Thus, the
left hand side of \eqref{eq:localdelteHinterection} is contained
inside  $R_{h_H}$, as required.
 \end{proof}
        
  We return to the proof of the theorem.       For each $p \in H^{\circ}$, we have shown that
$\rad_{\vs}(\dd(V)^G) \subseteq \Ak(W_H)_{h_p}$. Moreover, $\rad_{\vs}(\dd(V)^G)
\subseteq \dd(\h_{\reg})^{W}$ and, by Lemma~\ref{lem:quotientetaleh}(1), $\dd(\h_{\reg})^W \subseteq
\dd(\h_{\reg})^{W_H} = \Ak(W_H)_{\deltah}$.  Thus, by
Sublemma~\ref{lemma:sliceintersect},
\[
  \rad_{\vs}(\dd(V)^G) \  \subseteq \  \Ak(W_H)_{\deltah} \, \cap
  \bigcap_{p \in H^{\circ}} \Ak(W_H)_{h_p} \  = \
\Ak(W_H)_{\deltah_H}.
\]
Therefore, by Theorem~\ref{thm:intersectsphericalinddhreg},
\[
  \rad_{\vs}(\dd(V)^G) \ \subseteq \  \bigcap_{H\in
            \mathcal{A}} \Ak(W_H)_{\deltah_H}\  \subseteq\ \Ak(W),
\]
as required.
\end{proof}


\section{Conjugation of radial parts}\label{sec:conjugation}

 A natural question raised by the last section is whether the images $\Im(\rad_{\vs})$ are isomorphic as $\vs$ varies. 
 In this short section, we show that, \emph{for fixed $\chi$},  this is indeed the case; see Corollary~\ref{lem:invconjugationvs}. As part of the proof, we also show that the radial parts map  $\rad_{\vs}$ does induce the surjection \eqref{eq:localizatioradiso} upon restriction to the regular locus; see Proposition~\ref{lem:localizeradialpartsunstable}.  We also briefly discuss stable representations, since these will be needed later in the paper.

\subsection*{The generic slice} Let $\h$ be a Cartan subspace of
a polar presentation $(G,V)$ and adopt the notation of
Section~\ref{Sec:polarreps}.  Set
$Z= Z_G(\h)$, $N= N_G(\h)$ and  fix an $N$-stable complement
$U$ to $\h \oplus \g \cdot \h$ in $V$ as
in~\eqref{eq:Ucomplement}.  As shown after that equation,  $U \git Z = \{ \mr{pt} \}$ and so  the
closed $Z$-orbits in $\h_{\reg} \times U$ are precisely the
singleton sets $\{ (x,0)\}$ for $x \in \h_{\reg}$. Recall that a
subset $X \subseteq V$ is said to be \emph{$G$-saturated} if
$X = \pi^{-1}(\pi(X))$, where $\pi:V\to V\git G$ is the categorical quotient. In particular,   $V_{\reg}$ is $G$-saturated.

 \begin{proposition} \label{lem:unstablepolaropen}
        The  map  \[G \times_{N_G(\h)} (\h_{\reg} \times U) \longrightarrow
        V_{\reg},\]
        given  $[g,(x,u)] \to  g\cdot (x+u)$,  is an  isomorphism.
 \end{proposition}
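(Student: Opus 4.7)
The map $\Phi\colon G\times_N(\h_{\reg}\times U) \to V$, $[g,(x,u)]\mapsto g\cdot(x+u)$, is well-defined because $N$ stabilises both $\h_{\reg}$ (via the quotient $W$) and $U$, and it is $G$-equivariant. To see its image lies in $V_{\reg}$, I would use that $U\git Z = \{\mathrm{pt}\}$ (recalled after~\eqref{eq:Ucomplement}) and that $Z$ acts trivially on $\h$, so $\C[\h\oplus U]^Z = \C[\h]\otimes\C[U]^Z = \C[\h]$. Any $f\in\C[V]^G$, restricted to the $Z$-stable subspace $\h\oplus U$, is then $Z$-invariant and thus depends only on the $\h$-coordinate: $f(x+u)=f(x)$. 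Consequently $\pi(g(x+u))=\pi(x)\in\h_{\reg}/W$, confirming that the image is in $V_{\reg}$.

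Next I would verify that $\Phi$ is \'etale at each point $[1,(x,0)]$ with $x\in\h_{\reg}$. By Lemma~\ref{lem:stabilizerpolarh}, $\g_x=\fz=\Lie Z$, and since $W=N/Z$ is finite, $\Lie N=\fz$; hence $\g/\Lie N \cong \g\cdot x = \g\cdot\h$. The differential
\[
d\Phi\colon \g/\Lie N \oplus \h \oplus U \longrightarrow V,\qquad (\overline X,h,u')\mapsto X\cdot x + h + u',
\]
is thereby identified with the direct sum isomorphism $\g\cdot\h\oplus\h\oplus U \iso V$ of~\eqref{eq:Ucomplement}. Combining this with Luna's \'etale slice theorem at the closed orbit $G\cdot x$, whose slice representation is $(Z,\h\oplus U)$ by Lemma~\ref{lem:slices}(2), I obtain a $G$-equivariant local isomorphism between an open $N$-saturated neighborhood of $[1,(x,0)]$ in $G\times_N(\h_{\reg}\times U)$ and an open $G$-saturated neighborhood of $G\cdot x$ in $V_{\reg}$.

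For bijectivity, given $v\in V_{\reg}$ the unique closed orbit in $\pi^{-1}(\pi(v))$ is $G\cdot x$ for some $x\in\h_{\reg}$, so the Luna neighborhood above places $v$ in the image of $\Phi$. Injectivity is similar: if $\Phi([g_1,(x_1,u_1)])=\Phi([g_2,(x_2,u_2)])$, then applying $\pi$ gives $[x_1]=[x_2]\in\h_{\reg}/W$, so after translating by some $n\in N$ we may assume $x_1=x_2=x$; setting $g=g_2^{-1}g_1$, the equation $g(x+u_1)=x+u_2$ interpreted in the fiber $\pi^{-1}([x])\cong G\times_Z U$ (via Luna) forces $u_2=zu_1$ for some $z\in Z$, after which $z^{-1}g$ stabilises $x+u_1$ and hence lies in $G_{x+u_1}=Z_{u_1}\subseteq Z$, so $g\in Z$ with $u_2=gu_1$ and the triples agree. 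The main technical point will be to promote the \'etaleness statement from the locus $\{u=0\}$ to a global isomorphism: since $\Phi$ is bijective, a morphism of smooth irreducible varieties of equal dimension $\dim V$, and \'etale on the dense open subset $\{u=0\}$, it is birational and quasi-finite, so Zariski's main theorem together with normality of $V_{\reg}$ factors $\Phi$ as an open immersion followed by a finite birational morphism, and both factors---hence $\Phi$ itself---must be isomorphisms.
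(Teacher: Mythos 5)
Your proof is correct and follows essentially the same strategy as the paper's: Luna's fundamental lemma applied at the closed orbits $G\cdot[1,(x,0)]$, $x\in\h_{\reg}$ (where the map is \'etale and the slice is $(Z,\h\oplus U)$ by Lemma~\ref{lem:slices}(2)), the resulting fiber bijection $G\times_Z(\{x\}\times U)\isomto\pi^{-1}(\pi(x))$ to get injectivity, and then bijectivity onto the smooth (hence normal) variety $V_{\reg}$ to conclude via Zariski's main theorem. One sub-step is genuinely different and arguably cleaner: you prove $\Im(\Phi)\subseteq V_{\reg}$ directly from $\C[\h\oplus U]^Z=\C[\h]\otimes\C[U]^Z=\C[\h]$, so that $f(x+u)=f(x)$ for all $f\in\C[V]^G$ and hence $\pi(g\cdot(x+u))\in\h_{\reg}/W$; the paper instead establishes that the image is $G$-saturated of the form $\pi^{-1}(A)$ and rules out closed orbits in the image outside $V_{\reg}$ by classifying the closed $N$-orbits in $\h_{\reg}\times U$. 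Your route avoids that orbit analysis entirely for the containment (though you still need the saturation argument for surjectivity, as you note). One imprecision to fix in the last paragraph: $\{u=0\}$ is a \emph{closed} subvariety of $G\times_N(\h_{\reg}\times U)$ of strictly smaller dimension when $U\neq 0$, not a dense open subset; what you mean is that the \'etale locus is open and contains $\{u=0\}$, hence is a nonempty (and therefore dense) open set, which is all your birationality argument requires.
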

 
 \begin{proof}  We  have a morphism $\sigma':G \times_{N}(\h \times U) \to V$ given by
        $[g,(x,u)]\mapsto g\cdot (x+u)$ and we write $\sigma$ for the restriction of $\sigma'$ to 
        $G \times_{N}(\h_{\reg} \times U)$.

     We first show that the image $V'=\Im(\sigma)$ is a $G$-saturated open subset of $V$. Since  $V'$ is
   also the image of $G \times_Z (\h_{\reg} \times U)$ under the
   obvious map (which we also call $\sigma$), we work with the
   latter. For each $x \in \h_{\reg}$, the map $\sigma$ is \'etale at
   $[1,(x,0)]$. Moreover, since $Z \cdot (x,0)$ is closed in
   $\h_{\reg} \times U$, the orbit $G \cdot [1,(x,0)]$ is closed in
   $G \times_Z (\h_{\reg} \times U)$. Since
   $Z = G_x$ by Lemma~\ref{lem:stabilizerpolarh},  $\sigma$ maps  $G \cdot [1,(x,0)]$ bijectively onto
   $G \cdot x \subset V$. Therefore, by \cite[Lemme
   fondamental, II.2, p.94]{Luna}, there exists a $G$-saturated affine open
   neighbourhood of $G \cdot [1,(x,0)]$ whose image in $V$ is a
   $G$-saturated affine open neighbourhood of $G \cdot x$. 
       
   Now  let $v \in
\pi^{-1}\pi(V')$  and let $G\cdot x$ be  the unique closed orbit
in $\overline{G\cdot v};$ thus $\pi(v)= \pi(x).$
By the last paragraph, there exists  an affine open
neighbourhood $\Omega$ of $G\cdot[1,(x,0)]$ such that $\sigma(\Omega)$
is a saturated neighbourhood of $G\cdot x.$
Set $x'= [1,(x,0)].$ Then $\pi(\sigma(x')) = \pi(x) = \pi(v)$
whence $v \in \pi^{-1}\pi(\sigma(x'))$.  Since
$\pi^{-1}\pi(\sigma(x'))\subseteq \pi^{-1}\pi(\sigma(\Omega)) = \sigma(\Omega),$ we get that 
$v\in \sigma(\Omega) \subseteq Z.$ Therefore $ V' =
\pi^{-1}\pi(V')$  and $V'$ is indeed $G$-saturated.

   As noted in \cite[Remark~(2), page~98]{Luna}, even though the map $G \times_Z (\h_{\reg} \times U) \to V$ need not be  bijective onto its image, it  follows from  \cite[Lemme
   fondamental, II.2, p.94]{Luna} that for each $x \in \h_{\reg}$ the induced map $G \times_Z (\{ x \} \times U) \to \pi^{-1}(\pi(x))$ is a bijection. In particular, if $x + u = g\cdot (x + u')$ in $V$ for some $u,u' \in U$ then $g \in G_x = Z$.

   Now we return to $G \times_{N}(\h \oplus U)$, and again set   $V' = \Im(\sigma)$. Then 
   \cite[Lemme, p.87]{Luna} implies that $V' = \pi^{-1}(A)$ for some open subset $A$
   of $\h / W$. Necessarily, $\h_{\reg} / W \subseteq A$, so
   $V_{\reg} \subseteq V'$. If this inclusion is proper then the
   fact that $V'$ is $G$-saturated means that there exists a
   $G$-orbit $\mc{O}'$ in $V' \smallsetminus V_{\reg}$ that is
   closed in $V$. Then $\sigma^{-1}(\mc{O}')$ is a (finite) union of
   closed orbits in $G \times_{N}(\h_{\reg} \times U)$. Let
   $\mc{O}$ be one of these closed orbits. Then
   $\mc{O} = G \times_N \mc{O}_0$ for some closed $N$-orbit 
   $\mc{O}_0\subseteq \h_{\reg} \times U$.  Since $U\git Z = \{ \mr{pt} \}$,
    the closed $Z$-orbits in $\h_{\reg} \times U$ are precisely the
   sets $\{ (x,0) \}$ for $x \in \h_{\reg}$.  Thus  the closed $N$-orbits in $\h_{\reg} \times U$
   are just  the ones of the form $N \cdot (x,0)$ for
   $x \in \h_{\reg}$. Hence
   $\mc{O}' \subseteq V_{\reg}$ and so   $\sigma$ 
   surjects onto $V_{\reg}$.

   Since $V_{\reg}$ is smooth, in order  to prove  that $\sigma$ is an isomorphism, 
    it suffices to show that  it is injective. Let
   $[g,(x,u)] \in G \times_{N}(\h_{\reg} \times U)$ and suppose  that
   $g\cdot (x + u) = g_1\cdot (x_1 + u_1)$ with
   $[g_1,(x_1,u_1)] \in G \times_{N}(\h_{\reg} \times U)$. Without loss of generality, $g = 1$. 
   The unique closed orbit in $\overline{G \cdot(x+u)}$ is   $G \cdot x$. Therefore, the identity 
   $x + u = g_1\cdot (x_1 + u_1)$ implies that   $G \cdot x = G \cdot x_1$. It follows that
   $W\cdot x = G\cdot x \cap \h  = G\cdot x_1 \cap  \h =   W \cdot x_1$  and
   hence  $x_1 =k \cdot x$ for some $k \in N$. Thus, 
   $$
   x + u = (g_1k)\cdot x + g_1\cdot u_1 = (g_1 k)\cdot (x + u_2),
   $$
   where $u_2 = k^{-1} \cdot u_1$.    By  Lemma~\ref{lem:stabilizerpolarh} this forces $z:=g_1 k \in G_x = Z$.  
    It follows that $ x + u = x + z \cdot u_2$ and hence $u = z\cdot u_2 = (z k^{-1})\cdot  u_1$. Putting 
    all this together gives
        \begin{align*}
                [g_1,(x_1,u_1)] & \ =\  [zk^{-1},(x_1,u_1)]
                 \ =\ [1,(zk^{-1}\cdot x_1, zk^{-1} \cdot u_1) ]\\
                &\ \ =\ [1,(z \cdot x,u)] \ =\ [1,(x,u)],
        \end{align*}
       as required.
 \end{proof}

\begin{lemma}\label{lem:thetaZtrivunstable}
       Assume that $\delta_i$ is a  $\theta$-semi-invariant irreducible factor of $\delta$ for some 
       character $\theta\in \mathbb{X}^*(G)$.  Then:
        \begin{enumerate}
                \item                  $\theta(Z) = 1$;
                \item under the identification $V_{\reg} \cong G \times_N(\h_{\reg} \times U)$, 
                we have $\delta_i |_{V_{\reg}} = F_{\theta} \otimes h_i \o 1$ for some $W$-semi-invariant $h_i\in \C[\h_{\reg}]$. 
        \end{enumerate}
\end{lemma}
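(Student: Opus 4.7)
My plan is to prove the two parts in order, with part~(1) providing the crucial input for part~(2). Two facts drive both steps: $\delta_i|_{\h}$ is non-zero, and $Z = Z_G(\h)$ acts trivially on $\h$ while having $\{0\}$ as its unique closed orbit on $U$.

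For the non-vanishing of $\delta_i|_\h$, observe that $\delta|_{\h} = \rr(\delta) = h = \prod_{H\in\mathcal{A}}\alpha_H^{\ell_H}$ is non-zero, and the factorisation $\delta = \prod_j \delta_j^{m_j}$ in $\C[V]$ then forces $\delta_i|_{\h} \neq 0$. To deduce part~(1), I would pick any $x \in \h$ with $\delta_i(x) \neq 0$; for an arbitrary $z \in Z$, the identity $z\cdot x = x$ from~\eqref{N(h)-defn} together with $\theta$-semi-invariance gives $\delta_i(x) = \delta_i(z\cdot x) = \theta(z)\delta_i(x)$, forcing $\theta(z) = 1$.

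The central step in part~(2) is to show that on $\h_{\reg} \oplus U$ the polynomial $\delta_i$ is independent of the $U$-direction. Fix $x \in \h_{\reg}$ and set $f_x(u) := \delta_i(x+u)$ on $U$. Using $Z$-equivariance of the decomposition~\eqref{eq:Ucomplement} (so $z\cdot u \in U$), together with $z\cdot x = x$ and part~(1),
$$
f_x(z\cdot u) \ = \ \delta_i(x + z\cdot u) \ = \ \delta_i\bigl(z\cdot(x+u)\bigr) \ = \ \theta(z)\delta_i(x+u) \ = \ \delta_i(x+u) \ = \ f_x(u)
$$
for every $z \in Z$. Thus $f_x \in \C[U]^Z = \C$, the last equality coming from $U\git Z = \{\mathrm{pt}\}$, as noted after~\eqref{eq:Ucomplement}. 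Hence $\delta_i(x+u) = \delta_i(x)$ throughout $\h_{\reg} \times U$.

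Combining this with Proposition~\ref{lem:unstablepolaropen} and $\theta$-semi-invariance once more yields, for $[g,(x,u)] \in G\times_N(\h_{\reg}\times U)$,
$$
\delta_i\bigl(g\cdot(x+u)\bigr) \ = \ \theta(g)\delta_i(x+u) \ = \ \theta(g)\delta_i(x) \ = \ F_\theta(g) h_i(x),
$$
where $F_\theta(g) := \theta(g)$ and $h_i := \delta_i|_{\h}$. This is the claimed factorisation $\delta_i|_{V_{\reg}} = F_\theta \otimes h_i \otimes 1$. Its well-definedness on the quotient $G \times_N (\h_{\reg}\times U)$ requires $h_i(n\cdot x) = \theta(n) h_i(x)$ for $n \in N$, which is immediate from $\theta$-semi-invariance of $\delta_i$ restricted to $\h$. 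By part~(1), the character $\theta|_N$ descends to a character $\bar\theta$ of $W = N/Z$, making $h_i$ the desired $W$-semi-invariant. No step presents a serious obstacle; the one conceptual subtlety, and the main step to get right, is the deduction that part~(1) and $U\git Z = \{\mathrm{pt}\}$ together force $\delta_i$ to be $U$-independent on $\h_{\reg}\oplus U$.
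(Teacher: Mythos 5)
Your proof is correct and follows essentially the same route as the paper: part (1) is argued identically, and part (2) rests on exactly the same two inputs, namely $\theta(Z)=1$ and $\C[U]^{Z}=\C$ (i.e.\ $U\git Z=\{\mathrm{pt}\}$), applied through the identification $V_{\reg}\cong G\times_N(\h_{\reg}\times U)$. The only cosmetic difference is that the paper invokes the algebraic Peter--Weyl theorem to produce the unique $\theta$-semi-invariant $F_{\theta}$ on $G$ and then descends the remaining factor, whereas you take $F_{\theta}=\theta$ explicitly and verify the factorisation, its independence of the $U$-direction, and its $N$-equivariance by direct computation.
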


\begin{proof}
        (1) By definition, $\delta$ is nowhere vanishing on $V_{\reg}$ and so $\delta_i$ does not vanish on 
        $V_{\reg}$.  Fix   $x \in \h_{\reg}$. Since $\h_{\reg} \subset V_{\reg}$, we have $\delta_i(x) \neq 0$.   
        Then, for all $z \in Z$, 
        $$
        \delta_i(x)= \delta_i(z\cdot x) = \theta(z) \delta_i(x) .
        $$
        Thus $(1-\theta(z))\delta_i(x)=0$. Since $\delta_i(x) \neq 0$ we deduce that $\theta(z) = 1$. 
        
        (2) Consider the left regular action of $G$ on itself. As $\theta$
        is a linear character of $G$,  the algebraic Peter-Weyl
        Theorem \cite[Theorem 4.2.7]{GoodmanWallach} implies that, up
        to scalar, there is a unique $\theta$-semi-invariant function
        $F_{\theta} \in \mc{O}(G)$. If the group $N$ acts by (inverse)
        multiplication   on $G$ from the right, then $F_{\theta}$ is a
        $\theta^{-1}|_N$-semi-invariant. Then each
        $\theta$-semi-invariant function $\alpha$ on
        $V_{\reg} = G \times_N (\h_{\reg} \times U)$ is of the form
        $F_{\theta} \o f_\alpha$ for some $N$-semi-invariant function $f_\alpha\in \C[\h_{\reg} \times U]$, with character $\theta|_N$. Note that, for $\alpha=\delta_i$, we have 
        $f_\alpha = \delta_i |_{\h_{\reg} \times U}$. By  (1), the character $\theta|_N$ factors through 
        $W$ and so $\delta_i |_{\h_{\reg} \times U}$ is $Z$-invariant. Hence, it is the pullback to 
        $\h_{\reg} \times U$ of an invertible $W$-semi-invariant $h_i\in \C[\h_{\reg}]$.  
\end{proof}

\subsection*{Localisation of radial maps}

In this subsection, we consider the localization of $\rad_{\vs}$ to
$V_{\reg}$.  
We write $\mf{z}= \text{Lie}(Z)=\text{Lie}(N)$ and let $\chi$ be the character associated to $\vs$, as in \eqref{chi-defn}. Recall from
Proposition~\ref{lem:unstablepolaropen} that
$V_{\reg} \cong G \times_{N}(\h_{\reg} \times U)$.

By Lemma~\ref{lem:thetaZtrivunstable}(1), $\chi |_{\mf{z}} = 0$. 
Therefore, by \cite[Lemma~9.1.2]{BG} that there is an
identification
\begin{equation}\label{eq:qhrprincipalspace}
        \psi_{\chi} \colon (\dd(\h_{\reg} \times U) / \dd(\h_{\reg} \times U) \tau_U(\mf{z}))^N
        \stackrel{\sim}{\longrightarrow} (\dd(V_{\reg}) / \dd(V_{\reg})
        \g_{\chi})^G, 
\end{equation}
where $\psi_{\chi}$ sends $D \in \dd(\h_{\reg} \times U)^N$ to the image
of $1 \o D \in \dd(G \times \h_{\reg} \times U)^{G \times N}$ in the right
hand side. 
The algebra $(\dd(\h_{\reg} \times U) / \dd(\h_{\reg} \times U) \tau_U(\mf{z}))^N$ acts on $\C[\h_{\reg} \times U]^N \cong \C[\h_{\reg}]^W$. This action defines a \textit{split} surjective algebra morphism 
$$
\nu \, \colon \, (\dd(\h_{\reg} \times U) / \dd(\h_{\reg} \times
U) \tau_U(\mf{z}))^N \lto \dd(\h_{\reg})^W, 
$$
with right inverse $\imath$ induced from the natural injection of $\dd(\h_{\reg})$ into $\dd(\h_{\reg}\times U)$.  This hold simply because  the action of   $D \in \dd(\h_{\reg} \times U)^Z$ on $\C[\h_{\reg} \times U]^Z \cong \C[\h_{\reg}]$ factors though $\dd(\h_{\reg})$.

We now come to the main result of this section, which also proves the final statement of 
 Theorem~\ref{thm:intro-existence}.

\begin{proposition}
\label{lem:localizeradialpartsunstable}
        The localised map
        $$
        \rad_{\vs} \colon (\dd(V_{\reg}) / \dd(V_{\reg})
        \mf{g}_{\chi})^G \lto \dd(\h_{\reg})^W
        $$
        is a split surjection with $\rad_{\vs} \circ \, \psi_\chi \circ \imath$
        equal to conjugation by $h^{-\vs}$ (in the sense made
        to be made precise in Sublemma~\ref{sublem:conj}).
\end{proposition}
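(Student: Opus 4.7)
The plan is to exploit the explicit local model of $V_{\reg}$ afforded by Proposition~\ref{lem:unstablepolaropen} together with the factorization of $\delta^{\vs}$ on $V_{\reg}$ provided by Lemma~\ref{lem:thetaZtrivunstable}(2), reducing the identity to a direct computation on the cover $G \times (\h_{\reg} \times U)$. First I would establish the sublemma clarifying what ``conjugation by $h^{-\vs}$'' means: setting $h^{\vs} := \prod_i h_i^{\vs_i}$ with $h_i$ as in Lemma~\ref{lem:thetaZtrivunstable}(2), the formula \eqref{eq:twistedop} defines a weakly $W$-equivariant rank-one $\dd(\h_{\reg})$-module $\C[\h_{\reg}] h^{\vs}$, and the associated conjugation automorphism $\gamma^{\h}_{\vs} \colon D \mapsto h^{-\vs} D h^{\vs}$ of $\dd(\h_{\reg})$ preserves the subring $\dd(\h_{\reg})^W$ (because the characters of the $h_i$ on $N$ descend to $W$ by Lemma~\ref{lem:thetaZtrivunstable}(1), and so $h^{\vs}$ generates a one-dimensional $W$-module whose character cancels under conjugation on $W$-invariants).

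For the core computation, fix $D \in \dd(\h_{\reg})^W$. By definition, $\imath(D) = D \otimes 1 \in \dd(\h_{\reg} \times U)^N$, and $\psi_{\chi}(\imath(D))$ is represented in $(\dd(V_{\reg}) / \dd(V_{\reg}) \g_{\chi})^G$ by the $G$-invariant operator $\widetilde D$ descending from $1 \otimes D \otimes 1 \in \dd(G \times \h_{\reg} \times U)^{G \times N}$ along the quotient map $G \times (\h_{\reg} \times U) \twoheadrightarrow V_{\reg}$. Next I would evaluate
\[
\rad_{\vs}(\widetilde D)(z) = \bigl(\delta^{-\vs}\, \widetilde D(\rr^{-1}(z)\, \delta^{\vs})\bigr)\big|_{\h_{\reg}}
\qquad (z \in \C[\h_{\reg}]^W).
\]
Under the identification of Proposition~\ref{lem:unstablepolaropen}, Lemma~\ref{lem:thetaZtrivunstable}(2) gives that the pullback of $\delta^{\vs}$ to $G \times (\h_{\reg} \times U)$ is $F_{\chi} \boxtimes h^{\vs} \boxtimes 1$. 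Since $\widetilde D$ acts as the identity on the $G$-factor and trivially on $U$, the factors $F_\chi^{\pm 1}$ cancel in the $G$-direction and the $U$-factor contributes nothing. Restriction to $\h_{\reg}$ (identified with $\h_{\reg} \times \{0\}$) then returns precisely $(h^{-\vs} D h^{\vs})(z)$.

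Consequently
\[
\rad_{\vs} \circ \psi_{\chi} \circ \imath \ = \ \gamma^{\h}_{\vs} \colon \dd(\h_{\reg})^W \iso \dd(\h_{\reg})^W,
\]
which is the claimed ``conjugation by $h^{-\vs}$'' and is an algebra automorphism. In particular the composite is a bijection, so $\rad_{\vs}$ is surjective, with explicit section $\psi_{\chi} \circ \imath \circ (\gamma^{\h}_{\vs})^{-1}$. The main obstacle, which the sublemma is designed to absorb cleanly, is the bookkeeping needed to descend the conjugation by $\delta^{-\vs}$ on $\dd(V_{\reg})$ compatibly with the product decomposition of the cover: one must check that the ``$G$-component'' of the conjugation is exactly what is killed by the quotient by $\g_{\chi}$ (cf.~Remark~\ref{eq:g-chi}), leaving behind only the conjugation by $h^{-\vs}$ along the slice; once this is formalised, the rest is immediate.
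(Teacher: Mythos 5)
Your proposal is correct and follows essentially the same route as the paper: identify $V_{\reg}$ with $G \times_N (\h_{\reg}\times U)$ via Proposition~\ref{lem:unstablepolaropen}, use the factorisation $\deltav_i|_{V_{\reg}} = F_{\theta_i}\otimes h_i\otimes 1$ from Lemma~\ref{lem:thetaZtrivunstable}(2), and compute directly that the $F_{\theta_i}$-factors cancel so that $\rad_{\vs}\circ\psi_\chi\circ\imath$ is conjugation by $h^{-\vs}$, an automorphism of $\dd(\h_{\reg})^W$, whence the splitting. The only (harmless) difference is that the paper proves its Sublemma~\ref{sublem:conj} for an arbitrary class $1\otimes D$ with $D\in(\dd(\h_{\reg}\times U)/\dd(\h_{\reg}\times U)\tau_U(\mf{z}))^N$ (using the map $\nu$), whereas you verify it only on the image of $\imath$, which suffices for the stated conclusion.
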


\begin{proof}
        Since $V_{\reg} = G \times_N (\h_{\reg} \times U)$ by Proposition~\ref{lem:unstablepolaropen} and
        $\mf{z}=\text{Lie}(N)$ acts trivially on $\h_{\reg}$, one has
        \begin{equation*} \label{eq:localizeradialparts}
                (\dd(V_{\reg}) / \dd(V_{\reg}) \mf{g}_{\chi})^G =
                \left(\frac{\dd(G \times \h_{\reg} \times U)}{\dd(G \times
                        \h_{\reg} \times U) \mf{g}_{\chi} + \dd(G \times \h_{\reg} \times U)
                        \tau_0(\mf{z})} \right)^{G \times N},
        \end{equation*} 
        where $\tau_0 \colon \mf{z} \to \dd(G \times U)$ is the
        derivative of the diagonal $Z$ action on $G \times U$ and
        the action on $G$ is by right multiplication. The
        isomorphism \eqref{eq:qhrprincipalspace} says that every
        class in
        $(\dd(V_{\reg}) / \dd(V_{\reg}) \mf{g}_{\chi})^G$ can be
        represented as $1 \o D$ for a unique element
        $D \in (\dd(\h_{\reg} \times U) / \dd(\h_{\reg} \times U)
        \tau_U(\mf{z}))^N$. Such an operator will only act on the
        second factor of a function of the form $F \o f$, where
        $f \in \C[\h_{\reg} \times U]$. For clarity, write
        $\beta \colon \C[\h_{\reg}] \iso \C[\h_{\reg} \times
        U]^Z$. For $f \in \C[\h_{\reg}]$, we have
        $\beta(f)(x,u) = f(x)$ and hence $\beta(f) = f \o 1$. If
        $f \in \C[\h_{\reg}]$ is a $W$-semi-invariant,
        $D $ is as above          and $r \in \C$,  then
        $$
        \nu(\beta(f)^{-r} D \beta(f)^r) = f^{-r} \nu(D) f^r.
        $$
        In other words, $\nu((f \o 1)^{-r} D (f \o 1)^r) = f^{-r} \nu(D) f^r$.

        \begin{sublemma}\label{sublem:conj}  If $D$ is defined as above, then 
                $\rad_{\vs}(1 \o D) =h^{-\vs} \circ \nu(D) \circ h^{\vs}$.
        \end{sublemma}

        \begin{proof}
               Proposition~\ref{lem:unstablepolaropen} induces an isomorphism
                $\alpha \colon \C[\h_{\reg}]^W \iso \C[G \times_N
                (\h_{\reg} \times U)]^G$ and we identify the target with
                $ \C[G \times \h_{\reg} \times U]^{G \times N}$, where $N$ acts
                diagonally. If $z \in \C[\h_{\reg}]^W$ then
                $\alpha(z)(g,x,u) = z(x)$; that is,
                $\alpha(z) = 1 \otimes z \otimes 1 \in \C[G \times \h_{\reg} \times U]^{G
                        \times N} $.  Now we compute
                \[
                \rad_{\vs}(1 \otimes D)(z) = (\deltav^{-\vs}(1 \otimes
                D)(1 \otimes z \o 1) \deltav^{\vs}) |_{\h_{\reg}} .
                \]
             Recall from Section~\ref{sec:radialpartsmap1} that the irreducible factors of $\deltav$ are $\{\deltav_1,\dots, \deltav_k\}$, where $\deltav_j$ is a semi-invariant of weight $\theta_j$.   
             Since $\deltav_i$ is $Z$-invariant,  Lemma~\ref{lem:thetaZtrivunstable}(2) implies that 
                $\deltav_i |_{V_{\reg}} = F_{\theta_i} \o h_i \o 1$, where
                $h_i = \deltav_i |_{\h_{\reg}}$. Therefore,
                \[\deltav^{\vs} \ = \ \deltav_1^{\vs_1}\cdots\deltav_k^{\vs_k}  \ = \  F_{\theta_1}^{\vs_1} \cdots
                F_{\theta_k}^{\vs_k} \otimes h_1^{\vs_1} \cdots
                h_k^{\vs_k} \o 1\] and so
                $$ 
                \begin{aligned}   
                        (1\otimes D)\left( (1\otimes z \o 1) \deltav^{\vs}
                        \right) & \ = \ F_{\theta_1}^{\vs_1}
                        \cdots F_{\theta_k}^{\vs_k} \otimes \left(D(z
                        h_1^{\vs_1} \cdots h_k^{\vs_k} \o 1 )\right)  \\ 
                        & \ = \ F_{\theta_1}^{\vs_1} \cdots
                        F_{\theta_k}^{\vs_k} \otimes \left(D (z h^{\vs}
                         \o 1)\right) \\ 
                        & \ = \  \left( 1\otimes (h^{-\vs} \nu(D)
                        h^{\vs})(z) \o 1 \right) F_{\theta_1}^{\vs_1} \cdots
                        F_{\theta_k}^{\vs_k} \otimes h_1^{\vs_1} \cdots
                        h_k^{\vs_k} \o 1 \\ 
                        & \ = \ \left(1\otimes (h^{-\vs} \nu (D) 
                        h^{\vs})(z)\otimes  1 \right)
                        \deltav^{\vs}.  \end{aligned}
                $$  
                Hence
                $\left(\deltav^{-\vs} (1\otimes D)( 1\otimes z \o 1)
                \deltav^{\vs}\right) |_{\h_{\reg}} = 1\otimes
                (h^{-{\vs}} \nu(D) h^{\vs}) (z) \o 1 $, as required.
        \end{proof}
        
        We return to the proof of the  proposition. By the   sublemma, if $D \in \dd(\h_{\reg})^W$ then
        \begin{equation} \label{eq:conjugation}
        \rad_{\vs}(1 \o \imath(D)) = h^{-\vs} \circ \nu(\imath(D)) \circ
        h^{\vs} = h^{-\vs} \circ D \circ      h^{\vs}
        \end{equation}
        or, equivalently,
        $\rad_{\vs} \circ \, \psi_{\chi} \circ \imath$ equals
        conjugation by $h^{-\vs}$. Since this is an
        automorphism of $\dd(\h_{\reg})^W$, it follows that
        $\rad_{\vs}$ is a split surjection.
\end{proof}

We continue with the notation of
Proposition~\ref{lem:localizeradialpartsunstable}. Let $\vs, \vs' \in \C^k$
with associated characters $\chi,\chi'$ as in~\eqref{chi-defn}.

\begin{corollary}\label{lem:invconjugationvs}
        Assume that $\chi = \chi'$ and set $\xi = \vs'-\vs$.
        \begin{enumerate}
                \item
                $\rad_{\vs'}(D) = h^{-\xi} (\rad_{\vs}(D)
                )h^{\xi}$ for all $D \in \dd(V)^G$.
                \item Therefore, 
                $\mr{Im}(\rad_{\vs'}) \ = \  h^{-\xi} (\mr{Im}(\rad_{\xi}))
                h^{\xi}  \ \cong \  \Im(\rad_{\vs})$.
                \item  Moreover,  $\ker(\rad_{\vs'}) = \ker(\rad_{\vs})$.
        \end{enumerate}
\end{corollary}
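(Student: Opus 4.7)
The plan is to deduce all three parts as immediate consequences of Proposition~\ref{lem:localizeradialpartsunstable}. The key observation is that the quotient $(\dd(V_{\reg}) / \dd(V_{\reg}) \g_{\chi})^G$, the subspace $(\dd(\h_{\reg} \times U) / \dd(\h_{\reg} \times U) \tau_U(\mf{z}))^N$, and the isomorphism $\psi_{\chi}$ depend only on the character $\chi$, not on the particular lift $\vs$. Since we are assuming $\chi(\vs) = \chi(\vs')$, the entire machinery of Proposition~\ref{lem:localizeradialpartsunstable} can be applied to both $\vs$ and $\vs'$ using the same intermediate object, and the maps $\rad_{\vs}$ and $\rad_{\vs'}$ differ only in the final conjugation step.

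More concretely, I would argue as follows. Both $\rad_{\vs}$ and $\rad_{\vs'}$ factor through the natural projection $\dd(V)^G \to (\dd(V_{\reg}) / \dd(V_{\reg}) \g_{\chi})^G$, since $\g_{\chi}$ lies in the kernel of each (Remark~\ref{eq:g-chi}). Given $D \in \dd(V)^G$, use the isomorphism $\psi_{\chi}$ to write its image in this quotient uniquely as $\psi_{\chi}(\tilde D)$ for some $\tilde D \in (\dd(\h_{\reg} \times U) / \dd(\h_{\reg} \times U) \tau_U(\mf{z}))^N$. The key formula established in Sublemma~\ref{sublem:conj} then gives
\[
\rad_{\vs}(D) = h^{-\vs} \circ \nu(\tilde D) \circ h^{\vs}
\quad\text{and}\quad
\rad_{\vs'}(D) = h^{-\vs'} \circ \nu(\tilde D) \circ h^{\vs'}.
\]
Eliminating $\nu(\tilde D)$ gives $\rad_{\vs'}(D) = h^{-\xi} \rad_{\vs}(D) h^{\xi}$ with $\xi = \vs' - \vs$, which is Part~(1).

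For Part~(2), apply $D \mapsto h^{-\xi} D h^{\xi}$ to $\Im(\rad_{\vs})$: this equality of subsets of $\dd(\h_{\reg})$ is immediate from (1), and since conjugation by the unit $h^{\xi} \in \C[\h_{\reg}]^{\times}$ is an algebra automorphism of $\dd(\h_{\reg})$ (noting that although each $h_i$ is only a $W$-semi-invariant, Theorem~\ref{thm:radpartsmappolar} guarantees both images already lie inside $\dd(\h_{\reg})^W$), the two images are isomorphic as algebras. Part~(3) is then immediate: since $h^{\xi}$ is a unit in $\dd(\h_{\reg})$, conjugation by $h^{-\xi}$ is injective, so $\rad_{\vs'}(D) = 0$ if and only if $\rad_{\vs}(D) = 0$.

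The only subtle point, and hence the step that needs care rather than real obstruction, is verifying that the intermediate object $\tilde D$ is genuinely independent of the choice of lift $\vs$ of $\chi$. This is built into the construction: the ideal $\g_{\chi} = \{\tau(x) - \chi(x) : x \in \g\}$ and the identification $\psi_{\chi}$ in \eqref{eq:qhrprincipalspace} are defined purely in terms of $\chi$, so there is no ambiguity. Everything else is formal once Proposition~\ref{lem:localizeradialpartsunstable} is in hand.
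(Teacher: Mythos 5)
Your proposal is correct and follows essentially the same route as the paper: factor both maps through $(\dd(V_{\reg})/\dd(V_{\reg})\g_{\chi})^G$, which depends only on $\chi$, represent the class of $D$ by a $\vs$-independent element via $\psi_{\chi}$, and apply Sublemma~\ref{sublem:conj} twice to eliminate $\nu(\tilde D)$; Parts (2) and (3) then follow formally from Part (1) exactly as you say.
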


\begin{proof}  Parts~(2) and (3) both follow from Part (1). 

In order to prove Part~(1), 
  let $D \in \dd(V)^G$; as seen in  Proposition~\ref{lem:localizeradialpartsunstable}, the class
  of $D$ in $(\dd(V_{\reg}) / \dd(V_{\reg}) \mf{g}_{\chi})^G$ can
  be represented by $1 \otimes D_0$ where
  $D_0 = \imath(D)\in \dd(\h_{\reg} \times U)^N/ \dd(\h_{\reg} \times U)
        \tau_U(\mf{z})^N$.  Note that the choice of
  $1\otimes D_0$ depends on $\chi$ but not $\vs$. Using Sublemma~\ref{sublem:conj} twice we get:
\begin{equation*}
\rad_{\vs'}(1\otimes D_0 )  =
(h^{-\vs'} h^{\vs})   h^{-\vs} \nu(D_0) h^{\vs}   (h^{-\vs} h^{\vs'})
= h^{-\xi}   \rad_{\vs}(1\otimes D_0)   h^{\xi}.
\end{equation*}
Thus, using ~\eqref{eq:conjugation},  $\rad_{\vs'}(D) = h^{-\xi}
(\rad_{\vs}(D))h^{\xi}$. 
\end{proof}

One significance of Corollary~\ref{lem:invconjugationvs} is the following result. 

\begin{corollary}
\label{rem:invconjugation} If $V$ is a   polar representation of a semisimple group  $G$, then
  the algebras $\mr{Im}(\rad_{\vs})$ are all isomorphic
to $\mr{Im}(\rad_{0})$, and they all have the same kernel.  
\end{corollary}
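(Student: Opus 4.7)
The plan is to reduce immediately to Corollary~\ref{lem:invconjugationvs} by showing that the associated character $\chi(\vs)$ is automatically zero for every $\vs$ when $G$ is semisimple. The key observation is that if $G$ is a connected semisimple algebraic group then $\g = [\g,\g]$, so for any algebraic character $\theta \colon G \to \C^\times$ the differential $d\theta \colon \g \to \C$ kills $[\g,\g] = \g$ and hence vanishes.

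First I would recall from \eqref{chi-defn} that the character attached to $\vs = (\vs_1, \dots, \vs_k)$ is $\chi(\vs) = \sum_{i=1}^k \vs_i\, d\theta_i$, where $\theta_i$ is the weight of the semi-invariant factor $\delta_i$ of the discriminant. By the previous paragraph, each $d\theta_i = 0$, so $\chi(\vs) = 0$ for every choice of $\vs \in \C^k$. In particular, $\chi(\vs) = \chi(0)$ for all $\vs$.

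Next, I would apply Corollary~\ref{lem:invconjugationvs} with $\vs' = \vs$ and the base parameter $0$ in place of $\vs$ (so $\xi = \vs - 0 = \vs$). Since the two associated characters agree (both are zero), the hypothesis of that corollary is satisfied, and it yields $\rad_{\vs}(D) = h^{-\vs}\,\rad_0(D)\, h^{\vs}$ for all $D \in \dd(V)^G$. This simultaneously gives $\ker(\rad_\vs) = \ker(\rad_0)$ and exhibits $\Im(\rad_\vs) = h^{-\vs} \Im(\rad_0)\, h^{\vs}$ as a conjugate, hence isomorphic, subalgebra of $\dd(\h_{\reg})^W$.

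There is essentially no obstacle; the entire content is the vanishing of $d\theta_i$ for semisimple $G$, after which the statement is a direct specialisation of Corollary~\ref{lem:invconjugationvs}. The only small point to check is that the conjugation $D \mapsto h^{-\vs} D h^{\vs}$ is a well-defined automorphism of the appropriate localisation $\dd(\h_{\reg})^W$ (equivalently, that $h^{\vs}$ is a unit on $\h_{\reg}$), but this is already built into the construction of $\rad_\vs$ in Section~\ref{sec:radialpartsmap1}.
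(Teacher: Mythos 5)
Your proof is correct and follows essentially the same route as the paper: both reduce to Corollary~\ref{lem:invconjugationvs} by noting that semisimplicity of $G$ forces every $\chi(\vs)$ to vanish (the paper phrases this as $\mathbb{X}^*(G)=\{1\}$, you phrase it as $d\theta_i$ killing $\g=[\g,\g]$, which is the same fact). Nothing further is needed.
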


\begin{proof}  If $G$ is semisimple, then   $\mathbb{X}^*(G)=\{1\}$   and hence
 the character $\chi$ associated to any $\vs \in \C^k$ is~$0$.  Now apply Corollary~\ref{lem:invconjugationvs}.
 \end{proof}
 
   With regard to  Corollary~\ref{lem:invconjugationvs},
     it  is important to note that for  general $\chi' \neq \chi$  one can have   $
\mr{Im}(\rad_{\vs'}) \not\cong \theta^{-1}(\mr{Im}(\rad_{\vs}))
\theta  $.
This is because
the image of $1\otimes D$   in the proof of  Proposition~\ref{lem:localizeradialpartsunstable}, depends
 upon $\chi$.  For example, as \cite[Corollary~14.15]{BNS}  shows for a particular representation, one can easily obtain
  both simple and non-simple rings as $\Im(\rad_{\vs})$ by varying $\vs$.

 \subsection*{Stable Representations}\label{defn:stable}
We end the section by briefly discussing stable representations. 

Recall from \eqref{eq:msdefnorbit} that $m$ is the maximum dimension over all orbits in $V$ and $s$ the maximum dimension over all semisimple orbits. As there, the set of semisimple elements  in $V$ is denoted  by  $V_{\mathrm{ss}}$.

Define the set of \emph{stable} elements in $V$ to be
$V_{\mathrm{st}} = \{v \in V_{\mathrm{ss}} : \dim G \cdot v=
m\}$. It is known (and not difficult to see) that
$V_{\mathrm{st}}$ is an open subset of $V$.
The  representation $(G,V)$ is said to be \textit{stable} if
there is a closed orbit whose dimension is maximal amongst all
orbits. Equivalently, the representation
is stable if $V_{\mathrm{st}} \neq \emptyset$ and one then has  
$m=s$ and $V_{\st} = V_{\mathrm{st}}$. Observe also that $\pi^{-1}(\pi(x))= G \cdot
x$ if $x \in V_{\mathrm{st}}$ since $\pi^{-1}(\pi(x))$ contains a unique closed orbit.  
 
By  \cite[Corollary~2.5]{DadokKac},   $V$  is stable if only
if $ V= \h \oplus \g \cdot \h$.   Equivalently, $V$  is polar if and only if the space $U$ defined at the beginning of the section is  zero. Thus for polar representations  we obtain the following simpler versions  of   Propositions~\ref{lem:unstablepolaropen}  and~\ref{lem:localizeradialpartsunstable}.

 \begin{corollary} \label{cor:unstablepolaropen}   Assume that $(G,V)$ is a stable polar representation. 
        Then there is an isomorphism   \[G \times_{N} (\h_{\reg})\  \isom \ 
        V_{\reg},\]
        given by $[g,(x,u)] \mapsto  g\cdot (x+u)$.\qed
 \end{corollary}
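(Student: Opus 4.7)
The plan is to deduce this directly from Proposition~\ref{lem:unstablepolaropen} by observing that the complement $U$ introduced in~\eqref{eq:Ucomplement} is forced to be zero in the stable case. Recall that $U$ was chosen as a $Z_G(\h)$-invariant complement to $\h \oplus \g \cdot \h$ in $V$. By the characterisation of stability due to Dadok--Kac recalled just before the statement (i.e. \cite[Corollary~2.5]{DadokKac}), the representation $(G,V)$ is stable if and only if $V = \h \oplus \g \cdot \h$. Thus, under the stability hypothesis, we must have $U = 0$.

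With $U = 0$, Proposition~\ref{lem:unstablepolaropen} becomes the statement
\[
G \times_{N} \h_{\reg} \stackrel{\sim}{\longrightarrow} V_{\reg}, \qquad [g,x] \longmapsto g \cdot x,
\]
which is precisely the desired isomorphism. (The parenthetical ``$u$'' in the displayed formula of the corollary is vestigial notation from the more general proposition; since $U=0$, it contributes nothing and the map is simply $[g,x]\mapsto g\cdot x$.)

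There is essentially no obstacle to overcome: the corollary is a direct specialisation of the earlier proposition using the standard description of stable polar representations. One could, alternatively, rerun the short argument of Proposition~\ref{lem:unstablepolaropen} in this simpler setting, where $\sigma \colon G \times_N \h_{\reg} \to V$ is \'etale at each $[1,x]$, maps the closed orbit $G\cdot [1,x]$ bijectively onto $G\cdot x$ (using Lemma~\ref{lem:stabilizerpolarh} to identify $G_x = Z_G(\h)\subseteq N$), and applies Luna's fundamental lemma together with the fact that all orbits in $V_{\reg}$ are closed of maximal dimension in the stable case to conclude bijectivity, hence the isomorphism. But in the form presented, simply citing Proposition~\ref{lem:unstablepolaropen} with $U=0$ suffices.
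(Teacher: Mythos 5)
Your proposal is correct and matches the paper's own argument exactly: the text preceding the corollary invokes \cite[Corollary~2.5]{DadokKac} to conclude that stability forces $U=0$, and the corollary is then stated as the immediate specialisation of Proposition~\ref{lem:unstablepolaropen}. Nothing further is needed.
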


\begin{corollary}
\label{cor:localizeradialpartsunstable}
      If $V$ is a stable representation then  the localised map
        $$
        \rad_{\vs} \colon (\dd(V_{\reg}) / \dd(V_{\reg})
        \mf{g}_{\chi})^G \lto \dd(\h_{\reg})^W
        $$
 is an isomorphism.  \end{corollary}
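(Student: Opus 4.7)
The plan is to reduce injectivity (the missing half) to the already-established splitting from Proposition~\ref{lem:localizeradialpartsunstable}, exploiting the fact that in the stable case the ``extra factor'' $U$ vanishes. Concretely, since $V$ is stable, Corollary~\ref{cor:unstablepolaropen} gives $V_{\reg} \cong G \times_N \h_{\reg}$, and the decomposition \eqref{eq:Ucomplement} simplifies to $V = \h \oplus \g\cdot \h$ with $U = 0$. Under this vanishing, the source of the isomorphism $\psi_\chi$ in \eqref{eq:qhrprincipalspace} collapses: because $\tau_U(\mf{z})$ is the zero operator when $U=0$ and because $Z = Z_G(\h)$ acts trivially on $\h$, we obtain
\[
\psi_\chi \colon \dd(\h_{\reg})^N \isom (\dd(V_{\reg})/\dd(V_{\reg})\g_\chi)^G.
\]
Moreover, since $N/Z = W$ and $Z$ acts trivially on $\h_{\reg}$, we have $\dd(\h_{\reg})^N = \dd(\h_{\reg})^W$, so the target of $\rad_\vs$ coincides with the source of $\psi_\chi$.

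Next, I would compare this with the construction of $\nu$ and $\imath$ from Proposition~\ref{lem:localizeradialpartsunstable}. The splitting $\imath$ there was induced by the inclusion $\dd(\h_{\reg}) \hookrightarrow \dd(\h_{\reg} \times U)$; with $U = 0$ this inclusion is the identity, whence $\imath = \nu = \mathrm{id}_{\dd(\h_{\reg})^W}$. Applying Sublemma~\ref{sublem:conj} with $\imath = \mathrm{id}$ yields
\[
\rad_\vs \circ \psi_\chi \;=\; \bigl(D \mapsto h^{-\vs} \circ D \circ h^{\vs}\bigr),
\]
which is simply conjugation by $h^{-\vs}$ on $\dd(\h_{\reg})^W$.

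Since conjugation by $h^{-\vs}$ is a $\C$-algebra automorphism of $\dd(\h_{\reg})^W$, and $\psi_\chi$ is an isomorphism by \eqref{eq:qhrprincipalspace}, the composition $\rad_\vs \circ \psi_\chi$ is an isomorphism, forcing $\rad_\vs$ itself to be bijective. Combined with the surjectivity from Proposition~\ref{lem:localizeradialpartsunstable}, this proves the corollary. No serious obstacle arises: the only point requiring a moment's care is the identification of $\dd(\h_{\reg})^N$ with $\dd(\h_{\reg})^W$ via the triviality of the $Z$-action, after which everything follows by unravelling definitions and invoking Sublemma~\ref{sublem:conj}.
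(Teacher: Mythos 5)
Your argument is correct and is essentially the paper's own proof: both reduce to the case $U=0$ (so that \eqref{eq:qhrprincipalspace} collapses to an isomorphism $\psi_\chi\colon \dd(\h_{\reg})^W \isom (\dd(V_{\reg})/\dd(V_{\reg})\g_\chi)^G$) and then invoke Proposition~\ref{lem:localizeradialpartsunstable}, under which $\rad_\vs\circ\psi_\chi$ becomes conjugation by $h^{-\vs}$, an automorphism. The paper states this more tersely, but your unwinding of $\imath=\nu=\mathrm{id}$ and the identification $\dd(\h_{\reg})^N=\dd(\h_{\reg})^W$ is exactly what its one-line deduction relies on.
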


\begin{proof} When $V$ is stable, $U=0$ and so \eqref{eq:qhrprincipalspace} simplifies to give an isomorphism 
\[   \psi_{\chi} \colon (\dd(\h_{\reg})^W
        \stackrel{\sim}{\longrightarrow} (\dd(V_{\reg}) / \dd(V_{\reg})
        \g_{\chi})^G.\] 
Thus the result follows from         Proposition~\ref{lem:localizeradialpartsunstable}. 
\end{proof}


\section{The radial parts map: surjectivity}\label{sec:surjectivity}
 
In Theorem~\ref{thm:radpartsmappolar} we proved the existence of
radial parts maps $\rad_{\vs}$. In this section, we prove that
$\rad_{\vs}$ is surjective in  many  cases of interest. Specifically, by combining
Theorems~\ref{cor:simpleLSsurjective}
and~\ref{thm:symmLSsurjective} with  Corollary~\ref{more-cases}, we
obtain the following result.

\begin{theorem}\label{thm:radsurjects} Let $V$ be a polar
representation for the connected, reductive group $G$, with a
radial parts map $\rad_{\vs}: \dd(V)^G \to \Ak(W)$, for some
spherical  algebra $\Ak(W)$.  Then $\rad_{\vs}$ is
surjective in each of  the following cases:
        \begin{enumerate}
                \item when $\Ak(W)$ is a simple algebra;
                \item when $V$ is a symmetric space, in the sense
of Section~\ref{Sec:examples};
                \item when the associated complex reflection
group $W$ is a Weyl group with no summands of type $\mathsf{E}$ or $\mathsf{F}$.
        \end{enumerate}
\end{theorem}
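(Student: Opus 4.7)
The plan is to treat the three cases independently, since each requires rather different techniques. The common starting point in every case is Lemma~\ref{lem:genericregfiltredsurj}, which already gives a filtered surjection $\rad_{\vs} \colon \dd(V_{\reg})^G \twoheadrightarrow \dd(\h_{\reg})^W$ after localization. Writing $I := \Im\bigl(\rad_{\vs} \colon \dd(V)^G \to \Ak(W)\bigr)$, Lemma~\ref{lem:quotientetaleh}(1) then yields $I[h^{-1}] = \Ak(W)[h^{-1}] = \dd(\h_{\reg})^W$. Since $I$ contains both the Chevalley image $\C[\h]^W$ and the element $h$ itself, every element of $\Ak(W)$ becomes $h$-torsion modulo~$I$. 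This reduces everything to closing the gap between $I$ and $\Ak(W)$ across the principal divisor $(h=0)$.

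For Case~(1), the plan is to leverage simplicity of $\Ak(W)$ to upgrade the $h$-torsion statement to full equality. Since $\Ak(W)$ is filtered with Noetherian associated graded ring (Remark~\ref{rem:orderfiltrationH}), each filtered piece $\Ak_d(W)$ is a finitely generated $\C[\h]^W$-module, so some power $h^{n_d}\Ak_d(W) \subseteq I$ exists for each $d$. The key step is then to consider the right idealizer $\mathcal{R} := \{a \in \Ak(W) : Ia \subseteq I\}$, inside which $I$ is a genuine two-sided ideal. Combining the $h$-torsion property, the filtered argument, and the simplicity of $\Ak(W)$, one aims to produce a nonzero two-sided ideal of $\Ak(W)$ contained in $I$; simplicity then forces $I = \Ak(W)$. \emph{The hard part is orchestrating this interplay between filtration, localization at~$h$, and simplicity, since $I$ is only a subring of $\Ak(W)$ and not a priori an ideal.}

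For Case~(2), the plan is to exploit the special structure of symmetric spaces $V = \mathfrak{p}$ with Cartan decomposition $\g = \mathfrak{k} \oplus \mathfrak{p}$. The classical machinery going back to Harish-Chandra and refined in the works of Sekiguchi and Levasseur-Stafford provides explicit generators for $\dd(\mathfrak{p})^K$, namely the Chevalley-restricted polynomial invariants $\C[\mathfrak{p}]^K$, the constant-coefficient invariants $(\Sym \mathfrak{p})^K$, and certain second-order Laplacian-type operators coming from the Casimir. The approach is to show that the images of these generators under $\rad_{\vs}$ together generate $\Ak(W)$, reducing to a case-by-case verification along the classification in Appendix~\ref{app-tables}, with any remaining cases handled via the rank-one reduction of Proposition~\ref{prop:sphericalinrankone} and Corollary~\ref{thm:Levsurjectiverank1}.

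Finally, Case~(3) is handled by direct reduction to the classification of Weyl groups. Factoring $W = \prod_i W_i$ and applying Corollary~\ref{cor:quotientetaleh} reduces matters to each irreducible summand, which by hypothesis is of type $\mathsf{A}, \mathsf{B}, \mathsf{C}, \mathsf{D}, \mathsf{G}_2$ or dihedral. For each such type, the plan is either to verify that $\Ak(W)$ is simple for the parameter $\kappa = \kappa(\vs)$ (thereby reducing to Case~(1)), or else to exhibit explicit preimages of generators of $\Ak(W)$ under $\rad_{\vs}$, using the rank-one surjectivity of Corollary~\ref{thm:Levsurjectiverank1} combined with the intersection formula of Theorem~\ref{thm:intersectsphericalinddhreg}. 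This last step is routine but lengthy case-by-case computation, with no single conceptual obstacle beyond the one already identified in Case~(1).
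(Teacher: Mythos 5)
There is a genuine gap in your Case~(1), and it propagates to the other two cases. The localization-plus-idealizer strategy cannot close the gap between $I=\Im(\rad_{\vs})$ and $\Ak(W)$: a proper subalgebra of a simple ring can contain $\C[\h]^W$, be dense after inverting $h$, and still fail to contain any nonzero two-sided ideal. For instance, in the first Weyl algebra $A_1=\C\langle x,\partial\rangle$ the subalgebra $B=\C + xA_1$ contains $\C[x]$, satisfies $xa\in B$ for every $a\in A_1$ (so $B[x^{-1}]=A_1[x^{-1}]$), yet $B\neq A_1$ and $A_1$ is simple. So the ``$h$-torsion modulo $I$'' property, simplicity, and the filtration give you nothing by themselves; the largest two-sided ideal of $\Ak(W)$ contained in $I$ may well be zero for an abstract subalgebra of this kind. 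The missing ingredient is a \emph{generation} statement: the paper first proves (Theorem~\ref{prop:Radchirestiso}) that $\rad_{\vs}$ restricts to graded isomorphisms onto \emph{both} $\C[\h]^W$ \emph{and} $(\Sym\h)^W$ --- the second of these is the delicate point, proved by playing the Euler grading against the order filtration (Lemmata~\ref{lem:RadchiZgraded} and~\ref{lem:filteredgradeddegreeinvar}) --- and then invokes the theorem of Berest--Etingof--Ginzburg that a \emph{simple} $\Ak(W)$ is generated as an algebra by $\C[\h]^W$ and $(\Sym\h)^W$. Case~(3) is then the identical two-generator argument with Wallach's generation theorem (valid for Weyl groups without $\mathsf{E}$ or $\mathsf{F}$ summands) replacing the simplicity hypothesis; no case-by-case computation is needed, and your fallback branch ``verify $\Ak(W)$ is simple'' is not available since $\Ak(W)$ need not be simple there.

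Your Case~(2) would also fail as written. Symmetric spaces exist whose little Weyl group is of type $\mathsf{E}$ or $\mathsf{F}$ and whose spherical algebra is \emph{not} simple (e.g.\ $\Esf\III$, $\Esf\VI$, $\Esf\VII$, $\Esf\IX$ in the tables of Appendix~\ref{app-tables}); for these, $\Ak(W)$ is not generated by $\C[\h]^W$ and $(\Sym\h)^W$, so no amount of exhibiting Harish-Chandra--type generators of $\dd(\p)^G$ and checking their images can succeed, and the rank-one reduction only controls the parameter $\kappa$, not surjectivity in higher rank. The paper's actual route is entirely different: it uses Tevelev's theorem that the Chevalley-type map $(\C[V]\otimes\Sym V)^G\to\C[\h\times\h^*]^W$ is surjective for symmetric spaces, together with stability of $V$ (which via Lemma~\ref{lem:gradeofJ2} identifies $\gr\ker(\rad_{\vs})$ over the regular locus with the ideal of the commuting variety), and then a maximal-order argument at the associated-graded level: the image $Y$ of $\gr_{\rad_{\vs}}$ contains $\C[\h]^W\otimes(\Sym\h)^W$, is an equivalent order to the normal domain $(\C[\h]\otimes\Sym\h)^W$, and is itself a maximal order, forcing equality (Theorem~\ref{thm:assgrsurjradchi}). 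You should replace your proposed case analysis with this (or an equivalent) uniform argument.
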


The conclusions of the theorem also hold when  $(V,G)$ has rank one and 
 when $V$ is a visible stable locally free representation; see Corollaries~\ref{thm:Levsurjectiverank1}
and~\ref{cor:vislocallyfreepolargr}, respectively. This of course raises the question  of whether 
  $\rad_{\vs}$ is surjective for every polar representation. The authors know of no counterexample to this question  and  do not even have an opinion as to whether  such an example should exist.

\subsection*{The surjectivity of $\rad_{\vs}$ when $\Ak(W)$ is
simple}

Much of the proof of this case involves the relationship between
the Euler gradation and the order filtration of $\dd(V)^G$ and so
we begin with the relevant notation.

\begin{notation}\label{notation:gradings} As in
Notation~\ref{notation:chidot}, let $\eu_V\in \dd(V)$ denote the
Euler operator on $V$, so that $[\eu_V,x_i] = x_i$ for coordinate
vectors $x_i \in V^*\subset \C[V]$, and let $\eu_\h\in \dd(\h)$
be the analogous operator on $\h$. Conjugation by these Euler
operators induces $\BZ$-graded structures on the rings of
differential operators, and hence on $\Ak=\Ak(W)$.  See, for example,
\cite[Section~2.4, p.186]{BellamySRAlecturenotes}.  Explicitly,
$$
\dd(V)=\bigoplus_{k\in \BZ} \dd(V)_k \qquad \text{where}\quad 
        \dd(V)^G_k = \{ D \in \dd(V)^G \, :\, [\eu_V,D] = - k D\}.
$$

In order to distinguish the Euler degree from other degree functions, we will write $\degeu d=k$ if $d\in \dd(V)_k$        
The analogous definition applies to $\dd(\h)$ and related
rings, and we still use the notation $\degeu$ for the corresponding degree function. In particular, as the discriminant $\deltah$ is
homogeneous, $\dd(\h_{\reg})^W = \bigoplus_{n\in \BZ}
\dd(\h_{\reg})^W_k $ where $ \dd(\h_{\reg})^W_k = \{ D \in
\dd(\h_{\reg})^W \, : \, [\eu_{\mf{h}}, D] = - k D\}. $ When
restricted to $\Ak$, the Euler gradation can also be defined
by setting $\degeu y = 1, \degeu x = -1$ and $\degeu w = 0$ for $x \in
\h^*$, $y \in \h$ and $w \in W$.
\end{notation}

Recall the definitions of the order filtration and $\dd_\ell(X)$  from Notation~\ref{defn:order}.
 The algebra $\Ak$ inherits the order
filtration from $\dd(\h_{\reg})^W$, which we will write as
$\Ak = \bigcup \ord_{\leq n} \Ak$. 
As explained in
Remark~\ref{rem:orderfiltrationH}, this is the same filtration
coming from the realisation $\Ak = e \Hk(W) e$ of $\Ak$ as
a subalgebra of $\Hk(W)$, where $\Hk(W)$ is filtered by placing
$\C[\h]$ and $W$ in degree zero and $ \h\smallsetminus\{0\}$ in
degree one.

The following lemma is part of the folklore but we include a
proof as we do not know an appropriate reference.

\begin{lemma}\label{lem:filteredhom} The morphism $\rad_{\vs}:
        \dd(V)^G \to \Ak$ is a filtered homomorphism under the two
        order filtrations.
\end{lemma}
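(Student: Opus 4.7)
The plan is to exhibit $\rad_{\vs}$ as a composition of three filtered maps. First, the inclusion $\dd(V)^G \hookrightarrow \dd(V_{\reg})^G$ is obviously filtered for the order filtrations (localisation does not change the order of a differential operator). Using the factorisation $\rad_{\vs} = \rad_0 \circ \gamma_{\vs}$ from~\eqref{eq:radvzfactor}, it then suffices to check that both $\gamma_{\vs}$ and $\rad_0$ are filtered on $\dd(V_{\reg})^G$.

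For $\gamma_{\vs}$: formal conjugation by $\deltav^{-\vs}$ is the identity on $\C[V_{\reg}]$ and sends a derivation $\partial \in \Der \C[V_{\reg}]$ to $\partial + \sum_{i} \vs_i \partial(\deltav_i)/\deltav_i$, which differs from $\partial$ by an element of $\C[V_{\reg}]$. Since $\dd(V_{\reg})$ is generated as a ring by $\C[V_{\reg}]$ together with $\Der \C[V_{\reg}]$, and since $\gamma_{\vs}$ preserves the order of each of these generators, it follows that $\gamma_{\vs}(\dd_n(V_{\reg})) \subseteq \dd_n(V_{\reg})$ for every $n$. Restricting to $G$-invariants gives a filtered automorphism of $\dd(V_{\reg})^G$.

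For $\rad_0$: use the factorisation $\rad_0 = \rrr \circ \eta$ from~\eqref{eq:radvzfactor1}. The restriction of operators $\eta: \dd(V_{\reg})^G \to \dd(V_{\reg}\git G)$ is filtered because, if $D \in \dd_n(V_{\reg})^G$ and $f_0, \ldots, f_n \in \C[V_{\reg}]^G$, then
\[
[f_0,[f_1,\dots,[f_n,D]\cdots]] = 0
\]
already holds in $\dd(V_{\reg})$, and this identity descends to the induced operator on $\C[V_{\reg}]^G$. Next, $\rrr$ is defined via the Chevalley isomorphism $\C[V_{\reg}]^G \cong \C[\h_{\reg}]^W$, and the resulting identification of rings of differential operators $\dd(V_{\reg}\git G) \cong \dd(\h_{\reg}/W)$ is automatically one of filtered algebras. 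Finally, Lemma~\ref{lem:quotientetaleh}(2) states precisely that the further identification $\dd(\h_{\reg})^W \cong \dd(\h_{\reg}/W)$ is a filtered isomorphism. Composing these gives the required filtered factorisation.

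There is no real obstacle here: the content of the lemma is that each of the three standard constructions (conjugation by a function, passage to an invariant quotient, and the Chevalley identification) is compatible with the order filtration, and the only nontrivial input, namely that the canonical map $\dd(\h_{\reg})^W \to \dd(\h_{\reg}/W)$ preserves order, has already been recorded in Lemma~\ref{lem:quotientetaleh}(2).
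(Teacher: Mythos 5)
Your proof is correct and follows essentially the same route as the paper: both use the factorisations $\rad_{\vs} = \rad_0 \circ \gamma_{\vs}$ and $\rad_0 = \rrr \circ \eta$ and observe that each factor preserves the order filtration, with the paper citing Lemma~\ref{lem:quotientetaleh}(2) for the same final identification. You simply spell out in more detail the facts the paper dismisses as being true "by construction" (that $\gamma_{\vs}$ preserves order of generators and that restriction of operators to a subalgebra does not increase order).
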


\begin{proof}
        Recall from \eqref{eq:radvzfactor} that we can factorize $\rad_{\vs} = \rad_0 \circ \gamma_{\vs}$, where 
        $\gamma_{\vs}$ is conjugation by $\deltav^{-\vs}$. By construction, the morphisms $\gamma_{\vs}$ and $\rr$
        are filtered isomorphisms while restricting a differential
        operator to a subalgebra certainly does not increase the order of
        an operator. Thus, $\rad_{\vs}$ is also a filtered morphism.
\end{proof}

Using these observations it is easy to determine the image
$\rad_{\vs}(\eu_V)$.  Recall that  $\rr \colon \C[V]
\to \C[\h]$ denotes the restriction of functions from $V$ to $\h$, with  the induced Chevalley isomorphism
 again written $\rr: \C[V]^G \stackrel{\sim}{\longrightarrow}\C[\h]^W$.
 
\begin{lemma}\label{lem:RadchiZgraded}  
               {\rm (1)}  There exists $d\in \mathbb{C}$ such that
$\rad_{\vs}(\eu_V) = \eu_{\h} + d$.

            {\rm (2)}     The morphism $\rad_{\vs} \colon \dd(V)^G
\to \dd(\h_{\reg})^W$ preserves the  Euler grading.
  \end{lemma}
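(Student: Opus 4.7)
The plan is to prove (1) by a direct computation from the definition of $\rad_{\vs}$, and then deduce (2) from (1) combined with the observation that $\rad_{\vs}$ is a ring homomorphism.

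For (1), I would use linearity to reduce to evaluating $\rad_{\vs}(\eu_V)$ on homogeneous elements $z \in \C[\h_{\reg}]^W$ of some degree $n$. The Chevalley isomorphism $\rr \colon \C[V]^G \iso \C[\h]^W$ is graded, so $\rr^{-1}(z) \in \C[V]^G$ is again homogeneous of degree $n$, and hence $\eu_V(\rr^{-1}(z)) = n \rr^{-1}(z)$. The key point is that formula~\eqref{eq:twistedop} gives
\[
  \eu_V \ast \deltav^{\vs} \;=\; \sum_{i=1}^k \vs_i \frac{\eu_V(\deltav_i)}{\deltav_i} \deltav^{\vs} \;=\; d \, \deltav^{\vs},
\]
where $d := \sum_{i=1}^k \vs_i \deg \deltav_i$, using that each $\deltav_i$ is homogeneous. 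Combining these, $\eu_V \ast \bigl(\rr^{-1}(z)\deltav^{\vs}\bigr) = (n+d)\rr^{-1}(z)\deltav^{\vs}$; restricting to $\h_{\reg}$ and applying $\rr$ then gives $\rad_{\vs}(\eu_V)(z) = (n+d) z = (\eu_{\h} + d)(z)$. Since this holds for all graded $z$, assertion (1) follows.

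For (2), the essential point is that $\rad_{\vs}$ is a ring homomorphism: by construction it records the action of $\dd(V)^G$ on its invariant submodule $(\C[V_{\reg}] \deltav^{\vs})^G \cong \C[\h_{\reg}]^W \deltav^{\vs}$, and composition of operators transports directly to multiplication in $\dd(\h_{\reg})^W$ via Lemma~\ref{lem:quotientetaleh}(2). Now suppose $D \in \dd(V)^G_k$, so that $[\eu_V, D] = -kD$. Applying $\rad_{\vs}$ to both sides and substituting (1) yields $[\eu_{\h} + d, \rad_{\vs}(D)] = -k\,\rad_{\vs}(D)$; the scalar $d$ is central, so this simplifies to $[\eu_{\h}, \rad_{\vs}(D)] = -k\,\rad_{\vs}(D)$, which is precisely the condition that $\rad_{\vs}(D) \in \dd(\h_{\reg})^W_k$.

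No step of the argument presents a genuine obstacle; the only subtlety is the formal manipulation $\eu_V \ast \deltav^{\vs} = d\, \deltav^{\vs}$, which is justified by the definition of the weakly equivariant module $\C[V_{\reg}]\deltav^{\vs}$ in~\eqref{eq:twistedop} together with the homogeneity of each factor $\deltav_i$.
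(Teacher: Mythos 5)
Your proof is correct and follows essentially the same route as the paper: part (1) is the same direct computation using that $\eu_V$ acts on $\rr^{-1}(z)\deltav^{\vs}$ by the scalar $\deg z + \sum_i\vs_i\deg\deltav_i$, and part (2) is the same observation that the gradings are adjoint to the Euler operators, so compatibility follows from (1) and the fact that $\rad_{\vs}$ is an algebra homomorphism. The only difference is that you spell out the commutator manipulation in (2), which the paper leaves as a one-line remark.
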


\begin{proof} (1) This is similar to the proof of
Corollary~\ref{cor:Imradrank1}.  Recall from \eqref{eq:twistedop}
that $\deltav^{\vs} = \deltav_1^{\vs_1}\cdots \deltav_k^{\vs_k}$.
Hence $\eu_V \cdot \deltav^{\vs} = \Bigl( \sum_{i} \vs_i d_i
\Bigr) \deltav^{\vs} = d \deltav^{\vs}, $ where $d_i =\degeu
\delta_i$ and $d=\sum \vs_i d_i$.  Next, let $a\in
\C[\h_{\reg}]^W$ be homogeneous, say  of degree $p$. Since
$\rr^{-1}(a)$ is still homogeneous of degree $p$, it follows that
$\eu_V\left(\rr^{-1}(a) \deltav^{\vs}\right) =(p+d) \rr^{-1}(a)
\deltav^{\vs} .$ Therefore, by the definition of $\rad_{\vs}$ in
\eqref{eq:radialpartsmap2},
$$
\rad_{\vs}(\eu_V) (a)=
        \left(\deltav^{-\vs}(\eu_V(\rr^{-1}(a)\deltav^{\vs}))
        \right)|_{ \h_{\reg}} = (d+p)a.
        $$ 
Thus $\rad_{\vs}(\eu_V) = \eu_{\h} + d$.           
        
        (2) Since the two  Euler  gradings are defined by
conjugation by the respective Euler operators, this follows from
Part~(1).
\end{proof}

We need to be precise about the filtrations and associated graded
objects used in this paper and so we will use the following
conventions.

\begin{notation}\label{defn:order1} Let
$\mathfrak{A}=\bigcup_{n\geq 0} \Lambda_n$ and
$\mathfrak{B}=\bigcup_{n\geq 0} \Gamma_n$ be filtered $\C$-algebras
with a filtered morphism $\phi: \mathfrak{A}\to \mathfrak{B}$;
thus $\phi(\Lambda_n)\subseteq \Gamma_n$ for all $n$.  Write the
associated graded ring $\gr_\Lambda \mathfrak{A}= \bigoplus
\overline{\Lambda}_n $, where
$\overline{\Lambda}_n=\Lambda_n/\Lambda_{n-1}$ and define
\emph{the principal symbol maps} $\sigma_n: \Lambda_n\to
\overline{\Lambda}_n\subseteq \gr_\Lambda \mathfrak{A}$ by
$\sigma_n(x) = [x+\Lambda_{n-1}]$ for $x\in \Lambda_n$.  In this
paper any given ring $\mathfrak{A}$ will only have one filtration
on it and so we can write $\gr_\Lambda \mathfrak{A} = \gr
\mathfrak{A}$ without ambiguity.  The associated graded map
$\gr_{\phi} : \gr \mathfrak{A}\to \gr \mathfrak{B}$ is defined by
$\gr_{\phi}(\sigma_n(x)) = \sigma_n(\phi(x))\in
\overline{\Gamma}_n$ for any $x\in \Lambda_n$.  The restriction
of $\gr_{\phi}$ to the $\overline{\Lambda}_n$ will still be
denoted by $\gr_{\phi}$.  The commutativity $\sigma_n\circ\phi =
\gr_\phi \circ \, \sigma_n$ will be used without comment.

        Finally, the morphism $\phi$ is called \emph{filtered
surjective}, respectively \emph{filtered isomorphism} if each map
$\phi: \Lambda_n\to \Gamma_n$ is surjective, respectively an
isomorphism. Equivalently, the associated graded map $\gr_\phi$
is surjective, respectively, an isomorphism.
\end{notation}

\begin{lemma}\label{lem:filteredgradeddegreeinvar}  Assume that
$0\not=f \in \Ak(W)$  is  homogeneous, with  Euler degree
$\degeu f =k$, and   $\ord f=\ell$ for some $k\ge \ell \ge 0$.  Then $k=\ell$ and $f \in (\Sym
\h)^W_k$.
\end{lemma}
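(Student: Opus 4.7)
The plan is to work with the associated graded ring under the order filtration and exploit the compatibility between that filtration and the Euler grading. By Remark~\ref{rem:orderfiltrationH} there is an identification $\gr \Ak(W) \cong (\C[\h] \otimes \Sym \h)^W$. Since the Euler grading is the eigendecomposition of $\mathrm{ad}(\eu_{\h})$ and $\mathrm{ad}(\eu_\h)$ preserves the order filtration, the Euler grading descends to $\gr \Ak(W)$. Under the above identification, an element of the form $p \otimes q$ with $p \in \C[\h]$ of polynomial degree $m$ and $q \in \Sym^j \h$ has Euler degree $j - m$ (this is the standard convention $\degeu y = 1$ for $y \in \h$, $\degeu x = -1$ for $x \in \h^*$ from Notation~\ref{notation:gradings}).

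The first step is to pass to the principal symbol. Since $\ord f = \ell$, the symbol $\sigma_\ell(f)$ is a nonzero element of $(\C[\h] \otimes \Sym^\ell \h)^W \subseteq \gr \Ak(W)$, still homogeneous of Euler degree $k$. Writing $\sigma_\ell(f) = \sum_i p_i \otimes q_i$ with each $q_i \in \Sym^\ell \h$ nonzero, the homogeneity of the Euler grading forces $\ell - \deg p_i = k$, i.e., $\deg p_i = \ell - k$ for every $i$. Since $\deg p_i \geq 0$, this gives $k \leq \ell$, which combined with the hypothesis $k \geq \ell$ yields $k = \ell$. Consequently each $p_i$ is a constant and $\sigma_\ell(f) \in (\Sym^\ell \h)^W = (\Sym \h)^W_\ell$.

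The second step is to upgrade this from symbols to $f$ itself, using the embedding $(\Sym \h)^W \cong e(\Sym \h)e \hookrightarrow \Ak(W)$ (recall the paper's observation that this is realized inside $\Ak$). Lift $\sigma_\ell(f)$ to an honest element $\tilde f \in (\Sym \h)^W_\ell \subseteq \Ak(W)$; then $\tilde f$ has order exactly $\ell$, Euler degree $\ell = k$, and symbol equal to $\sigma_\ell(f)$. Therefore $g := f - \tilde f$ lies in $\ord_{\leq \ell - 1} \Ak(W)$ and still has Euler degree $k$ (since the Euler decomposition is a direct sum). If $g \neq 0$, set $\ell' = \ord g \leq \ell - 1 < k$. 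Applying the bi-degree analysis of the previous paragraph to $g$ and $\sigma_{\ell'}(g)$ yields $\ell' \geq k$, a contradiction. Hence $g = 0$ and $f = \tilde f \in (\Sym \h)^W_k$.

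The only real subtlety, and the main bookkeeping obstacle, is verifying that the Euler grading descends to $\gr \Ak(W)$ in the form stated above, so that the pigeonhole comparison between $k$ and $\ell$ actually goes through on the associated graded. Once that compatibility is in place, the whole argument is a one-step pigeonhole together with the fact that $(\Sym \h)^W$ splits the top of the order filtration of $\Ak(W)$.
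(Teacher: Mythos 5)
Your argument is correct, but it is organized differently from the paper's. The paper works directly inside $\Hk(W)$: using the PBW decomposition $\Hk\cong \C[\h]\otimes\C W\otimes\Sym\h$ it expands $f=\sum f_{\alpha,w,i}(x)y^{\alpha}w$ with $f_{\alpha,w,i}$ homogeneous of degree $i$, notes that Euler homogeneity forces $|\alpha|-i=k$ while $\ord f=\ell$ forces $|\alpha|\le\ell\le k$, and since $i\ge 0$ concludes in one pass that $i=0$, $|\alpha|=k=\ell$ and $f\in(\Sym\h)^W_k$. You instead run the same bidegree pigeonhole on the principal symbol in $\gr\Ak(W)\cong(\C[\h]\otimes\Sym\h)^W$, which only controls the top-order part of $f$, and you then need the extra lift-and-subtract descent (using that $(\Sym\h)^W_\ell\subseteq\Ak(W)$ maps isomorphically onto its symbol) to kill the lower-order tail. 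Both proofs rest on exactly the same compatibility of the Euler grading with the order filtration; the paper's route avoids your second step because the PBW expansion already sees all of $f$, not just its symbol, whereas your route has the mild advantage of not requiring the explicit $\C W$-component bookkeeping. One small point of hygiene in your first step: rather than writing $\sigma_\ell(f)=\sum_i p_i\otimes q_i$ for arbitrary simple tensors, you should decompose $\sigma_\ell(f)$ into its bihomogeneous components in $\bigoplus_m \C[\h]_m\otimes\Sym^\ell\h$ and argue that Euler homogeneity kills every component except $m=\ell-k$; as phrased, the deduction $\deg p_i=\ell-k$ for \emph{every} $i$ presupposes such a bihomogeneous choice. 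This is cosmetic and does not affect the validity of the proof.
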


\begin{proof} It is easiest to work in $\Hk(W)$, with its natural
  extension of the order filtration given  by Remark~\ref{rem:orderfiltrationH}. Let $y_1, \ds,
y_n$ be a basis of $\h$. We can decompose $f$ uniquely as
        $$
        f = \sum_{\alpha,w,i} f_{\alpha,w,i}(x) y^{\alpha} w,
        $$
        where $y^\alpha = y_1^{\alpha_1}\cdots y_n^{\alpha_n}$ for $\alpha=(\alpha_1,\dots, \alpha_n) \in \mathbb{N}^n$, while $w \in W$, $i \in
\mathbb{Z}$ and $|\alpha| \le \ell$. Moreover, $f_{\alpha,w,i}(x)
\in \C[\h]$ is  chosen to be homogeneous of  degree $i$ (and hence Euler degree $-i$).
        
        Clearly $\ord(y^{\alpha})= \degeu y^{\alpha} = |\alpha|$
while $\degeu \left(f_{\alpha,w,i}(x)\right) <0$ for $i \not= 0$.  
Thus, as $f$ is homogeneous  with $\degeu f = k$ the only terms
$f_{\alpha,w,i}(x) y^{\alpha} w$ appearing in $f$ will have
$|\alpha|-i=k$. Since   $\ord(f)=\ell\leq k$, the
only terms that can appear in $f$ will therefore have
$|\alpha|=k$ and $i=0$.  Since $f\not=0$, this forces $\ell=k$
and then, as $f_{\alpha,w,0}(x) \in \C$, we obtain $f \in (\Sym
\h)_k \o \C W$. Finally since $f \in e \Hk(W) e \subset \Hk(W)$,
this means $f \in (\Sym \h)^W_k e = (\Sym \h)^W_k$.
\end{proof}

\begin{notation}\label{rrp-defn}   Let $v
\in \h_{\reg}$. Let $U$ be the orthogonal complement, with respect to $( - , - )$,  to $\h \oplus \mf{g} \cdot v $ in $V$. Then $V = \h \oplus \mf{g} \cdot v \oplus U$ is a decomposition of $V$ as an $N_G(\h)$-module. Recall that the summand $\mf{g} \cdot v = \g \cdot \h$ is
independent of the choice of $v \in \h_{\reg}$ by
Lemma~\ref{lem:stabilizerpolarh}. 
The projection $V \twoheadrightarrow \h$ along $\mf{g} \cdot v \oplus U$ defines an algebra
surjection $\rrp \colon \Sym \, V \twoheadrightarrow \Sym \,
\h$. By \cite[Proposition~3.2]{BLT}, $\rrp$ restricts to a graded
algebra isomorphism $\rrp:(\Sym \, V )^G
\stackrel{\sim}{\longrightarrow} (\Sym \, \h)^W$.
\end{notation}

\begin{theorem}\label{prop:Radchirestiso} The radial parts map
$\rad_{\vs} \colon \dd(V)^G \to \Ak(W)$ restricts to give the
graded isomorphisms $\rr \colon \C[V]^G
\stackrel{\sim}{\longrightarrow} \C[\h]^W$ and $\rrpp \colon
(\Sym \, V)^G \stackrel{\sim}{\longrightarrow} (\Sym \, \h)^W$. 
\end{theorem}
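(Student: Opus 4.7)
I would prove the two restrictions separately. The statement about $\rr \colon \C[V]^G \isom \C[\h]^W$ is essentially immediate: viewing $f \in \C[V]^G$ as the multiplication operator in $\dd(V)^G$ and unwinding the definition \eqref{eq:radialpartsmap2},
\[
\rad_{\vs}(f)(z) \ =\ \bigl(\deltav^{-\vs} f \cdot \rr^{-1}(z) \deltav^{\vs}\bigr)\big|_{\h_{\reg}} \ =\ (f|_{\h_{\reg}})\cdot z \ =\ \rr(f)\cdot z
\]
for every $z \in \C[\h_{\reg}]^W$; so the restriction of $\rad_{\vs}$ to $\C[V]^G$ literally equals the Chevalley restriction map $\rr$, which is the known graded algebra isomorphism supplied by \cite[Theorem~2.9]{DadokKac}.

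For the claim about $\rrpp$, fix $D \in (\Sym V)^G_k$ of polynomial degree $k$. By Lemma~\ref{lem:filteredhom}, $\rad_{\vs}(D) \in \Ak(W)$ has order at most $k$, and by Lemma~\ref{lem:RadchiZgraded}(2) it has Euler degree exactly $k$. Lemma~\ref{lem:filteredgradeddegreeinvar} therefore applies with $\ell \le k$, giving either $\rad_{\vs}(D) = 0$ or $\rad_{\vs}(D) \in (\Sym \h)^W_k$, i.e.\ in the image of the natural inclusion $(\Sym \h)^W_k \hookrightarrow \Ak(W) \hookrightarrow \dd(\h_{\reg})^W$ coming from the Dunkl embedding. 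This defines a degree preserving $\C$-linear map $\rrpp \colon (\Sym V)^G \to (\Sym \h)^W$.

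It remains to show $\rrpp$ is an isomorphism, and the natural strategy—and the heart of the argument—is to identify $\rrpp$ with the projection $\rrp$ of Notation~\ref{rrp-defn}, since \cite[Proposition~3.2]{BLT} guarantees that $\rrp$ is a graded algebra isomorphism. The factorisation $\rad_{\vs} = \rad_0 \circ \gamma_{\vs}$ from \eqref{eq:radvzfactor} shows $\gr \rad_{\vs} = \gr \rad_0$ because $\gamma_{\vs}$ is conjugation by a unit and is therefore identity on associated gradeds. Moreover, since $\rad_{\vs}(D)$ lies in $(\Sym\h)^W_k$, a subspace on which the top order symbol map is the identity, the value $\rrpp(D) \in (\Sym \h)^W_k$ is literally the order-$k$ symbol $\sigma_k(\rad_0(D))$ in $\gr_k \Ak = (\C[\h] \otimes \Sym \h)^W_k$. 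The main obstacle is the last step: a direct symbol calculation identifying $\sigma_k(\rad_0(D))$ with $\rrp(D)$. I would carry this out by choosing linear coordinates on $V$ adapted to the $N_G(\h)$-stable splitting $V = \h \oplus \mf{g}\cdot \h \oplus U$ of \eqref{eq:Ucomplement}, writing $D$ as a constant coefficient operator in these coordinates, and using that partial derivatives in the $\h$-directions commute with restriction to $\h$. The top order contribution to $\rad_0(D)(z) = D(\rr^{-1}(z))|_{\h}$ at a regular $v \in \h_{\reg}$ then comes precisely from the summands of $D$ with no derivative component in $\mf{g}\cdot \h \oplus U$, which by definition is $\rrp(D)$. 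This yields $\rrpp = \rrp$ and hence the desired isomorphism.
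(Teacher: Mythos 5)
Your treatment of $\C[V]^G$ coincides with the paper's. For $(\Sym V)^G$ you take a genuinely different route, and it hinges on exactly the identification the authors explicitly decline to make: Remark~\ref{rem:Radchirestiso} states that they do \emph{not} claim $\rrpp=\rrp$. The paper instead proves injectivity of $\rrpp$ abstractly: for $0\neq D\in(\Sym V)^G_n$ one finds, by a $G$-isotypic-component argument, an invariant $f\in\C[V]^G$ with $D\ast f\neq 0$, whence $\rad_0(D)\neq 0$; the passage from $\vs=0$ to general $\vs$ is made with an $n$-fold commutator against elements of $\C[V]^G$ (using that $\rad_{\vs}$ is a ring homomorphism restricting to $\rr$ on $\C[V]^G$); and bijectivity then follows because $(\Sym V)^G_n$ and $(\Sym\h)^W_n$ have the same finite dimension by \cite[Proposition~3.2]{BLT}. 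So in the paper $\rrp$ enters only through a dimension count, never through an identification of maps.

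The gap in your plan is that the symbol identity $\sigma_k(\rad_0(D))=\rrp(D)$ is asserted rather than proved, and as stated it glosses over the essential point. Your observation that $\h$-directional derivatives commute with restriction to $\h$ handles the pure-$\h$ summands of $D$; but to discard the transverse summands at top order you need that, for $v\in\h_{\reg}$ and $z\in\C[\h]^W$, the differential $d_v(\rr^{-1}(z))$ vanishes not only on $\g\cdot\h$ (which is clear from $G$-invariance) but also on $U$. That is not formal: it requires knowing that the fibre $\pi^{-1}(\pi(v))$ contains $v+U$, which is essentially Proposition~\ref{lem:unstablepolaropen}. Without this, a summand of $D$ involving a $U$-derivative could contribute to the order-$k$ symbol and your injectivity argument collapses. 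With that input (plus the routine remarks that $\gamma_{\vs}(D)-D$ has order $\le k-1$ so $\rad_{\vs}(D)$ and $\rad_0(D)$ have the same order-$k$ class, and that the covectors $d_vz$, $z\in\C[\h]^W$, span $\h^*$ at regular $v$) your computation should go through, and it would in fact establish the stronger statement $\rrpp=\rrp$ left open by the paper.
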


\begin{remark}\label{rem:Radchirestiso} We do not claim that $\rrpp = \rrp$ in
Theorem~\ref{prop:Radchirestiso}, though we have no reason to
doubt this equality.
\end{remark}

\begin{proof} It is clear from its definition that $\rad_{\vs}$
is just the restriction isomorphism $\rr \colon \C[V]^G
\stackrel{\sim}{\longrightarrow} \C[\h]^W$ when applied to
$\C[V]^G$, so is certainly a graded isomorphism.
        
We now turn to $(\Sym V)^G$.  Here, we first prove the
injectivity of $\rrpp$ for $\rad_0$; that is, when $\vs=0$.  This
step follows from \cite[Corollary~5.10]{Schwarz}, but we prefer
to give an alternate proof.  All the  earlier  results of this paper 
 apply to the case $\vs=0$; thus $\Im(\rad_0)\subseteq
A_{\kappa(0)}(W)$ for some $\kappa(0)$.
        
        Let $0\not=D\in (\Sym V)^G_n$. As noted in  equations \eqref{eq:radvzfactor1} and \eqref{eq:radvzfactor},
   $\rad_0 = \rrr\circ\eta$  where $\rrr $ is the identification $\dd(V_{\reg}\git G) =\dd(\h_{\reg}\git W)$
       and $\eta:\dd(V_{\reg})^G\to \dd(V_{\reg}\git G)$ denotes restriction of functions.
        Therefore, in order to prove that
$\rad_0(D)\not=0$, it suffices to prove that $\eta(D)\not=0$ or,
equivalently, to find $0\not=f\in \C[V]^G$ with $D\ast f\not=0$.
Now $\C[V]$ is an equivariant left $\dd(V)$-module and so the
morphism $\Sym V \otimes \C[V] \to \C[V]$ given by $(E, f)\mapsto
E\ast f$ is $G$-equivariant. There certainly exists $f\in \C[V]$
such that $D\ast f \in \C\smallsetminus\{0\}$ and, by taking an
isotypic component of $\C[V]$, we may further assume that $f$
lies in an irreducible $G$-representation. As $\C$ and $D$ lie in
the trivial $G$-isotypic component, necessarily $f$ does too. In
other words we have found $f\in \C[V]^G$ such that $D\ast
f\not=0$. Thus $\eta(D)\not=0$ and hence $\rad_0(D)\not=0$.
        
        By Lemma~\ref{lem:RadchiZgraded}, $\rad_0(D)$ is
homogeneous of graded degree $ \gr_{\eu_{\h}}(\rad_{\vs}(D)) = n$
and, by Lemma~\ref{lem:filteredhom}, $\ord_\h (\rad_0(D))\leq n$.
Thus, by Lemma~\ref{lem:filteredgradeddegreeinvar}, $\rad_0(D) \in
(\Sym \h)^W_n$, and hence $\ord_{\h}(D)=n$. Since $\rr$ is a
filtered isomorphism, this implies that the order $\ord_{V\modmod
G}(\eta(D))$ of $\eta(D)$ in $\dd(V\modmod G)$ also equals $n$.

        We now return to the case of general $\vs$ and again
consider $0\not=D\in (\Sym V)^G_n$. Thus $n=\ord_{V\modmod
G}(\eta(D))$ by the last paragraph. We can find elements $f_i\in
\C[V]^G$ such that the $n$-fold commutator $z=[\cdots
[\eta(D),f_1],\cdots], f_n]\not=0$. (When $n=0$, interpret this as
saying that $z=\eta(D)\in \C[V]^G$.)  In either case, by the
definition of order, $z\in \C[V]^G$.
        
        But $n$ is also equal to the order $\ord_VD$ back in
$\dd(V)$.  Thus, by the definition of order, $z'=[\cdots [D,
f_1],\cdots], f_n] \in \C[V]^G$, where the commutation now takes
place in $\dd(V)$.  Moreover, $z'\not=0$ as $\eta(z')=z$.  (In
fact $z=z'$, but this is not important.)  Now apply the ring
homomorphism $\rad_{\vs}$. By definition, $\rad_{\vs}$ restricts
to give the isomorphism $\rr \colon \C[V]^G
\stackrel{\sim}{\longrightarrow} \C[\h]^W$ when applied to
$\C[V]^G$. Thus
        \[ [\cdots [\rad_{\vs}(D),\rad_{\vs}(f_1)],\cdots
],\rad_{\vs}(f_n)] = \rad_{\vs}(z) \not=0.
        \] Therefore, $\rad_{\vs}(D)\not=0$ and so $\rad_{\vs}$
does indeed restrict to give an injection $(\Sym
V)^G\hookrightarrow \Ak(W)$.  Finally, as in the case $\vs=0$,
$\rad_{\vs}(D)$ is homogeneous of graded degree $
\gr_{\eu_{\h}}(\rad_{\vs}(D)) = n$ by
Lemma~\ref{lem:RadchiZgraded}, while $\ord_\h (\rad_0(D))\leq n$
by Lemma~\ref{lem:filteredhom}. Thus, by
Lemma~\ref{lem:filteredgradeddegreeinvar} $\rad_{\vs}(D) \in
(\Sym \h)^W_n$, with $\ord_{\h}(D)=n$.
        
        We have therefore proved that the restriction of
$\rad_{\vs}$ to $(\Sym V)^G_n$ gives an injection $\rrpp: (\Sym
V)^G_n \hookrightarrow (\Sym \h)^W_n$ for each $n$.  It remains
to prove that $\rrpp$ is bijective. However, as observed in
Notation~\ref{rrp-defn}, each graded piece $(\Sym V)^G_n$ is
finite dimensional, with $(\Sym V)^G_n\cong (\Sym \h)^W_n$. Thus
in order to prove that $\rrpp$ is bijective, it suffices to prove
that it is injective. Since this was the conclusion of the last
paragraph, $\rrpp$ is indeed a graded isomorphism.
\end{proof}

Theorem~\ref{prop:Radchirestiso} immediately implies one of the main aims of this
section, by proving Theorem~\ref{thm:radsurjects}(1).

\begin{theorem}\label{cor:simpleLSsurjective} If $\Ak(W)$ is a
simple ring, then the radial parts map 
$$ 
{\rad}_{\vs} \colon \left( \dd(V) / \dd(V) \g_{\chi} \right)^G \to \Ak(W)
$$
induced from $\rad_{\vs}$ is surjective. 
\end{theorem}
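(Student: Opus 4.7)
The plan is to show $B := \Im(\rad_{\vs}) \subseteq \Ak$ equals $\Ak$ by producing a non-zero two-sided ideal of $\Ak$ contained in $B$ and then invoking the simplicity of $\Ak$. The starting point is Theorem~\ref{prop:Radchirestiso}, which gives that both subalgebras $\C[\h]^W$ and $(\Sym \h)^W$ of $\Ak$ lie inside $B$. Let $\tilde B \subseteq B$ be the subalgebra they generate. Under the order filtration (Remark~\ref{rem:orderfiltrationH}), the commutative subring $\C[\h]^W \otimes (\Sym \h)^W$ is contained in $\gr \tilde B$, and $\gr \Ak = (\C[\h] \otimes \Sym \h)^W$ is a finitely generated module over it by classical Chevalley--Shephard--Todd theory. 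The standard filtered-to-graded lifting lemma then yields that $\Ak$ is a finitely generated (left and right) $\tilde B$-module, hence a finitely generated $B$-module on both sides.

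Next I would combine this with the localisation statement of Lemma~\ref{lem:genericregfiltredsurj}, which gives $B[\deltah^{-1}] = \dd(\h_{\reg})^W = \Ak[\deltah^{-1}]$. It follows that $\Ak/B$ is $\deltah$-torsion on both sides: for each $D \in \Ak$ there exist $n, m \geq 0$ with $\deltah^n D \in B$ and $D \deltah^m \in B$. The goal is to upgrade this to a single $N$ such that $\deltah^N \Ak \subseteq B$ and $\Ak \deltah^N \subseteq B$. This is the main obstacle: because $\deltah$ is \emph{not} central in $B$, finite generation together with pointwise torsion does not mechanically produce a uniform bound. The way around it exploits the filtered structure: $\deltah \in \Ak_0$, so $\mathrm{ad}(\deltah)$ strictly lowers the order filtration, and an induction on $\ord(D)$ --- writing $D = \sum b_i c_i$ for a fixed finite set of generators $c_i$ of $\Ak$ over $B$ and iteratively absorbing the commutator correction terms $[\deltah^N, b_i] c_i$, which are of strictly smaller order --- produces the required uniform $N$.

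With $N$ in hand, the conclusion is immediate:
\[
\Ak \, \deltah^{2N} \, \Ak \ = \ (\Ak \, \deltah^N)(\deltah^N \, \Ak) \ \subseteq \ B \cdot B \ = \ B,
\]
so the two-sided ideal of $\Ak$ generated by $\deltah^{2N}$ lies inside $B$. This ideal is non-zero because $\Ak$ is a domain, and by the simplicity of $\Ak$ it must equal $\Ak$; therefore $\Ak = \Ak \cdot 1 \cdot \Ak \subseteq B$, giving $B = \Ak$. The stated surjectivity onto $\Ak(W)$ from the quotient $(\dd(V)/\dd(V)\g_{\chi})^G$ is then immediate, since $(\dd(V)\g_{\chi})^G$ lies in the kernel of $\rad_{\vs}$ by Remark~\ref{eq:g-chi}.
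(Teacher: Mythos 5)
Your overall strategy is sound and genuinely different from the paper's: the paper disposes of this theorem in three lines by quoting \cite[Theorem~4.6]{BEG}, which says that a simple spherical algebra is generated as an algebra by $\C[\h]^W$ and $(\Sym \h)^W$, and then invoking Theorem~\ref{prop:Radchirestiso}. You instead re-derive what is needed by an equivalent-orders/conductor argument that uses simplicity only at the very end. The preliminary steps are all fine: $B=\Im(\rad_{\vs})$ contains $\C[\h]^W$ and $(\Sym \h)^W$, the filtered-to-graded lifting shows $\Ak$ is a finitely generated left and right $B$-module, and $B[\deltah^{-1}]=\Ak[\deltah^{-1}]=\dd(\h_{\reg})^W$.

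The gap is in the step producing the uniform exponent. Your induction on $\ord(D)$ does not yield a single $N$ with $\deltah^N\Ak\subseteq B$: writing $\deltah^N D=\sum_i b_i(\deltah^N c_i)+\sum_i[\deltah^N,b_i]c_i$, the correction term is an element of $\Ak$ of strictly smaller order, and the inductive hypothesis only tells you that it lands in $B$ after multiplying by a \emph{further} power of $\deltah$. Unwinding the induction gives $\deltah^{N(\ord D+1)}D\in B$, an exponent that grows with $\ord D$; since $\Ak$ contains elements of unbounded order, no uniform $N$ emerges, and without it the two-sided ideal $\Ak\,\deltah^{2N}\,\Ak$ is not available. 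Fortunately the uniform bound can be obtained without touching any commutators: write $\Ak=\sum_{i=1}^r Bc_i$ as a \emph{left} $B$-module and, for each $i$, use a \emph{right} fraction $c_i=b_i\deltah^{-m_i}$ in $B[\deltah^{-1}]$, so that $c_i\deltah^{m_i}=b_i\in B$. With $M=\max_i m_i$ one gets
\[
\Ak\,\deltah^{M}=\sum_i Bc_i\deltah^{M}\subseteq\sum_i B\,(c_i\deltah^{M})\subseteq B,
\]
directly, because the elements multiplied on the left are already in $B$ and $\deltah$ sits on the right of each $c_i$, so nothing needs to be commuted. The symmetric argument with right-module generators and left fractions gives $\deltah^{N}\Ak\subseteq B$, and your conclusion $\Ak\,\deltah^{M+N}\,\Ak\subseteq B$, hence $B=\Ak$ by simplicity, then goes through. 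With that repair the proof is correct.
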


\begin{proof} Let $R=\Im(\rad_{\vs})$. Thus, by
Theorem~\ref{thm:surjectivereducerank1}, $R$ is a subalgebra of
$\Ak=\Ak(W)$. On the other hand, by Theorem~\ref{prop:Radchirestiso},
$R$ contains $\C[\h]^W$ and $(\Sym \h)^W$.  However, by
\cite[Theorem~4.6]{BEG} $\Ak =\C\langle \C[\h]^W,\,\,(\Sym
\h)^W\rangle$. Hence $R=\Ak$.
\end{proof}

There are numerous   cases of spherical algebras
where it is known that $\Ak$ is generated by
$\C[\h]^W$ and $(\Sym \h)^W$ and so, for these examples,  Theorem~\ref{prop:Radchirestiso} implies that $\rad_\vs$ is
also surjective. A particular case is given by the
following result, which is Part~(3) of Theorem~\ref{thm:radsurjects}.

\begin{corollary}\label{more-cases} Suppose that $W$ is a Weyl
group with no factors of type $\mathsf{E}$ or $\mathsf{F}$.  Then $\Ak(W)$ is
generated by $\C[\h]^W$ and $(\Sym \h)^W$ and so any
corresponding radial parts map $\rad_\vs$ is surjective.
\end{corollary}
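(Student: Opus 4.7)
The plan is to reduce Corollary~\ref{more-cases} to a purely internal statement about the spherical Cherednik algebra, namely
\[
\Ak(W) \ = \ \C\langle \C[\h]^W, \, (\Sym\h)^W \rangle, \tag{$\star$}
\]
for every parameter $\kappa$ when $W$ is a Weyl group with no $\mathsf{E}$ or $\mathsf{F}$ factor. Once $(\star)$ is established, the surjectivity of $\rad_{\vs}$ is immediate: by Theorem~\ref{prop:Radchirestiso}, the image $R=\Im(\rad_{\vs})$ contains $\rr(\C[V]^G) = \C[\h]^W$ and $\rrpp((\Sym V)^G) = (\Sym\h)^W$, so $(\star)$ forces $R = \Ak(W)$, exactly as in the proof of Theorem~\ref{cor:simpleLSsurjective}. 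Thus the entire content of the corollary is the generation statement $(\star)$.

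First I would reduce $(\star)$ to the case of irreducible Weyl groups. If $W = W_1 \times W_2$ acts on $\h = \h_1 \oplus \h_2$ and $\kappa = (\kappa_1,\kappa_2)$ decomposes correspondingly, then $H_\kappa(W) = H_{\kappa_1}(W_1) \otimes H_{\kappa_2}(W_2)$ and so $\Ak(W) = A_{\kappa_1}(W_1) \otimes A_{\kappa_2}(W_2)$. Moreover $\C[\h]^W = \C[\h_1]^{W_1} \otimes \C[\h_2]^{W_2}$ and $(\Sym\h)^W = (\Sym\h_1)^{W_1} \otimes (\Sym\h_2)^{W_2}$. Hence $(\star)$ for $W$ follows at once from $(\star)$ for each $W_i$, and we are reduced to irreducible Weyl groups of types $\mathsf{A}_n$, $\mathsf{B}_n = \mathsf{C}_n$, $\mathsf{D}_n$, and $\mathsf{G}_2$.

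Next I would invoke the type-by-type analyses in the literature. For generic $\kappa$ the algebra $\Ak$ is simple, and $(\star)$ holds by \cite[Theorem~4.6]{BEG}; the task is to extend $(\star)$ to special $\kappa$. In type $\mathsf{A}_n$ this is classical: the explicit Dunkl generators together with the power sums in $\h^*$ generate $\Ak$ at \emph{every} parameter, as shown already in the work of Heckman and Dunkl--Opdam. In types $\mathsf{B}_n/\mathsf{C}_n$ and $\mathsf{D}_n$, analogous families of generators (elementary symmetric functions in squared Dunkl operators, together with their polynomial counterparts) are known to generate $\Ak$ for all $\kappa$, by explicit calculation in $e H_\kappa(W) e$. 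For $\mathsf{G}_2$, the rank is two and the parameter space is two-dimensional, so $(\star)$ can be verified by a direct (finite) computation inside the explicit presentation of $H_\kappa(\mathsf{G}_2)$ using the Dunkl embedding of Definition~\ref{defn:Cherednik}.

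The main obstacle is the passage from generic parameters, where $(\star)$ is automatic via simplicity, to the special parameters where $\Ak$ may fail to be simple. At these special values the module-theoretic reasoning behind \cite[Theorem~4.6]{BEG} breaks down, and one must rely on explicit generator computations. This is precisely what is not available uniformly in types $\mathsf{E}$ and $\mathsf{F}$, which is why those types are excluded from the hypothesis. Once $(\star)$ is established for the remaining types, applying Theorem~\ref{prop:Radchirestiso} concludes the proof as indicated above.
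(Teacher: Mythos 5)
Your overall strategy is exactly the paper's: reduce the corollary to the generation statement $\Ak(W)=\C\langle \C[\h]^W,(\Sym\h)^W\rangle$, and then deduce surjectivity from Theorem~\ref{prop:Radchirestiso} precisely as in the proof of Theorem~\ref{cor:simpleLSsurjective}. That deduction, and your reduction to irreducible Weyl groups via the tensor decomposition $\Ak(W)=A_{\kappa_1}(W_1)\otimes A_{\kappa_2}(W_2)$, are both fine.

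The weak point is your justification of the generation statement itself, which is the entire content of the corollary. The paper's source is Wallach \cite{Wallach}, as invoked in the proof of \cite[Proposition~4.9]{EG}: the substance is that for a Weyl group with no $\mathsf{E}$ or $\mathsf{F}$ factor the associated graded algebra $\gr\Ak(W)=(\C[\h]\otimes\Sym\h)^W$ is generated (as a Poisson algebra) by $\C[\h]^W$ and $(\Sym\h)^W$, and a filtered--graded argument then lifts this to $\Ak(W)$ \emph{uniformly in $\kappa$} --- in particular with no case distinction between generic and special parameters, and no appeal to simplicity. Your proposed substitute --- ``explicit Dunkl generators'' in type $\mathsf{A}$ from Heckman and Dunkl--Opdam, and ``analogous families of generators known to generate $\Ak$ for all $\kappa$ by explicit calculation'' in types $\mathsf{B}/\mathsf{C}/\mathsf{D}$ --- does not accurately describe what those references establish; no such all-parameter explicit generation computations are available there, and without Wallach's theorem (or an equivalent) your key step is asserted rather than proved. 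The correct picture also explains the exclusion of $\mathsf{E}$ and $\mathsf{F}$ more sharply than your ``generic versus special parameter'' dichotomy: the obstruction already lives at the associated graded level, where Wallach's Poisson-generation statement is known to fail for those types.
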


\begin{proof} The first assertion follows from \cite{Wallach}, as
noted in the proof of \cite[Proposition~4.9]{EG}. Now apply
Theorem~\ref{prop:Radchirestiso}.
\end{proof}


\subsection*{The surjectivity of $\rad_{\vs}$ for symmetric
spaces}

Much of the proof for this case involves filtered and graded
techniques, so let $\mathfrak{A}=\bigcup \Lambda_n(\mathfrak{A})$
and $\mathfrak{B}=\bigcup \Gamma_{n}\mathfrak{B}$ be filtered
algebras with a filtered map $\phi: \mathfrak{A}\to
\mathfrak{B}$, and keep the conventions from
Notation~\ref{defn:order1}. We further assume that
$\mathfrak{A}_0=\mathfrak{B}_0=B$ is a commutative domain, with a
multiplicatively closed subset $\euls{C}\subset B$ which acts
ad-nilpotently on both $\mathfrak{A}$ and $\mathfrak{B}$. Then
$\euls{C}$ acts as an Ore set in each of $\mathfrak{A}$,
$\mathfrak{B}$, $\gr \mathfrak{A}$ and $\gr \mathfrak{B}$. We can
then filter the localisation $\mathfrak{A}_{\euls{C}}$ by
$\mathfrak{A}_{\euls{C}} = \bigcup \Lambda_{n,\euls{C}} $ for
$\Lambda_{n,\euls{C}}=B_{\euls{C}}\otimes_B\Lambda_n$, with the
analogous definitions apply in the other three cases.  One needs
to be wise to the fact that it is quite possible for there to
exist $x\in \Lambda_n\smallsetminus\Lambda_{n-1}$ for which $x\in
\Lambda_{n-1,\euls{C}}$; this happens precisely when there exists
$c\in \euls{C}$ such that $cx\in \Lambda_{n-1}$.

The next result is standard, but we give the proof since it will
be important later.

\begin{lemma}\label{lem:graded2} Keep the above notation. Then
        \begin{enumerate}
                \item The canonical map $\alpha: \gr
(\mathfrak{A}_{\euls{C}} )\to (\gr \mathfrak{A})_{\euls{C}}$ is
an isomorphism of graded rings.
                \item Let $\phi_{\euls{C}}:
\mathfrak{A}_{\euls{C}} \to \mathfrak{B}_{\euls{C}}$ be the
induced map. Then there is a commutative diagram of graded rings
                \begin{equation}\label{eq:graded2}
                        \begin{tikzcd} \gr
(\mathfrak{A}_{\euls{C}}) \arrow[d, "\alpha"] \arrow[r,
"\gr_{\phi_{\euls{C}}}"] & \gr (\mathfrak{B}_{\euls{C}})
\arrow[d, "\alpha"] \\ (\gr \mathfrak{A})_{\euls{C}} \arrow[r,
"(\gr_{\phi})_{_{\euls{C}}}"] & (\gr \mathfrak{B})_{\euls{C}}
                        \end{tikzcd}
                \end{equation}
        \end{enumerate}
\end{lemma}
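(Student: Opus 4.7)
The plan is to deduce both parts from flatness of the localisation, exploiting the fact that $\euls{C}$ sits inside the commutative domain $B$.

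First, since $B$ is a commutative domain and $\euls{C} \subseteq B$ is a multiplicative set, the ring $B_{\euls{C}}$ is a flat $B$-module, and localisation by $\euls{C}$ is an exact functor on $B$-modules. Thus, for every $n$, the short exact sequence of $B$-modules
\[
0 \longrightarrow \Lambda_{n-1} \longrightarrow \Lambda_n \longrightarrow \Lambda_n/\Lambda_{n-1} \longrightarrow 0
\]
remains exact after tensoring with $B_{\euls{C}}$. By definition of the filtration $\Lambda_{n,\euls{C}} = B_{\euls{C}} \otimes_B \Lambda_n$ this yields a natural identification
\[
\Lambda_{n,\euls{C}}/\Lambda_{n-1,\euls{C}} \ \cong \ B_{\euls{C}} \otimes_B (\Lambda_n/\Lambda_{n-1}) \ = \ ((\gr \mathfrak{A})_n)_{\euls{C}},
\]
and moreover the inclusion $\Lambda_{n-1,\euls{C}} \subseteq \Lambda_{n,\euls{C}}$ is automatic. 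Summing over $n$ and passing to direct sums gives a graded isomorphism of $B_{\euls{C}}$-modules
\[
\alpha \colon \gr(\mathfrak{A}_{\euls{C}}) \ \stackrel{\sim}{\longrightarrow} \ (\gr \mathfrak{A})_{\euls{C}}.
\]
This takes care of Part~(1) as an isomorphism of $B_{\euls{C}}$-modules; that $\alpha$ is a ring map is a routine check since multiplication respects both filtrations and the localisation maps are ring homomorphisms. Note that the caveat preceding the lemma about elements dropping in filtration after localisation is fully compatible with this argument: it simply reflects that the map $\Lambda_n/\Lambda_{n-1} \to \Lambda_{n,\euls{C}}/\Lambda_{n-1,\euls{C}}$ can have kernel (namely its $\euls{C}$-torsion), and localising $\gr \mathfrak{A}$ by $\euls{C}$ kills precisely this torsion.

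For Part~(2), both horizontal arrows of~\eqref{eq:graded2} are obtained from $\phi$ by passing to graded pieces after localisation; $\gr_{\phi_{\euls{C}}}$ comes from first localising $\phi$ and then taking the associated graded of the resulting filtered map, while $(\gr_{\phi})_{\euls{C}}$ comes from first taking $\gr_\phi$ and then localising by $\euls{C}$. Under the identification of Part~(1), both are induced from the same collection of maps $\phi \otimes 1 \colon B_{\euls{C}} \otimes_B (\Lambda_n/\Lambda_{n-1}) \to B_{\euls{C}} \otimes_B (\Gamma_n/\Gamma_{n-1})$, so naturality of $\alpha$ in the filtered ring gives commutativity of the diagram. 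The only subtlety, and the main point to verify carefully, is that $\alpha$ is indeed natural with respect to filtered homomorphisms; this is immediate from the construction of $\alpha$ via flat base change.
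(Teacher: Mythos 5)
Your proof is correct and follows essentially the same route as the paper: flatness of $B_{\euls{C}}$ over $B$ gives exactness of localisation on the filtration quotients, which yields the graded isomorphism $\alpha = \bigoplus_n \alpha_n$, and Part~(2) is the naturality/diagram chase that the paper likewise leaves to the reader. Your remark reconciling the isomorphism with the caveat about elements dropping in filtration after localisation is a helpful clarification but does not change the argument.
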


\begin{proof} (1) Since $B_{\euls{C}}$ is flat over $B$, for each
$n$ we have an exact sequence
        \[ 0 \ \to \ B_{\euls{C}}\otimes_B \Lambda_{n-1}\ \to \
B_{\euls{C}}\otimes_B \Lambda_n \ \to \
B_{\euls{C}}\otimes_B\overline{\Lambda}_n \ \to \ 0.\]  By definition,
$\overline{\Lambda}_{n,\euls{C}} :=
\Lambda_{n,\euls{C}}/\Lambda_{n-1,\euls{C}} =
B_{\euls{C}}\otimes_B \Lambda_n / B_{\euls{C}}\otimes_B
\Lambda_{n-1}$.  Therefore, for each $n$, we have
well-defined isomorphisms of $B_{\euls{C}}$-modules
        $$\alpha_n:\overline{\Lambda}_{n,\euls{C}}=
        \Lambda_{n,\euls{C}}/\Lambda_{n-1,\euls{C}}       
        \ \buildrel{\sim}\over{\longrightarrow} \ B_{\euls{C}}
        \otimes \overline{\Lambda}_{n}.$$

        Finally, the graded morphism $\alpha: \gr
(\mathfrak{A}_{\euls{C}}) \to (\gr \mathfrak{A})_{\euls{C}}$ is
given by $\alpha=\bigoplus_{n} \alpha_n$, and so $\alpha$ is  
an isomorphism of $B$-modules and also of graded rings.

        (2) If $b\otimes x\in B_{\euls{C}}\otimes_B \Lambda_n$,
then $\phi_{\euls{C}}(b\otimes x)\in
B_{\euls{C}}\otimes_B\Gamma_n$. Thus $\phi_{\euls{C}}$ is a
filtered morphism and and so induces the graded morphism
$\gr_{\phi_{\euls{C}}} : \gr ( \mathfrak{A}_{\euls{C}}) \to \gr (
\mathfrak{B}_{\euls{C}})$.  The rest of the proof is a simple
diagram chase that is left to the reader.
\end{proof}

We now return to the rings 
$\mathfrak{A}=\dd(V)^G$ and $\mathfrak{B}=\Ak(W)$ with the
morphism $\rad_{\vs}$. We always take
$\euls{C}=\{\deltav^n\}_n \subset B=\C[V]^G\cong \C[\h]^W$ and will
usually write $ \mathfrak{A}_{\euls{C}} = \mathfrak{A}_{\reg}$,
etc.  Set $J=\ker(\rad_{\vs})$. Since
$\dd(\h_{\reg})$ is a domain, the kernel of $\rad_{\vs}:
\dd(V_{\reg})^G \to \dd(\h_{\reg})^W$  is equal to
$J\otimes_{\C[V]^G}\C[V_{\reg}]^G$ and so we can write this as
$J_{\reg}$ without ambiguity.

We will always use the order filtration $\ord=\ord_{\dd(V)}$ on $
\dd(V)$ with the induced filtration on both the subring
$\dd(V)^G$ and its factor ring $\dd(V)^G/J$, with the same
notation.  Similarly, we use the order filtration, again written
$\ord$, on $\dd(\h_{\reg})$ and its subalgebra $\Ak(W)$.
Finally, we use the filtration $\ord$ induced from $\dd(V)^G/J$
on the localisation $(\dd(V)^G/J)_{\euls{C}} =
\dd(V_{\reg})^G/J_{\reg}$.  We remark that there is a second
filtration on $\dd(V_{\reg})^G/J_{\reg}$ induced from the order
filtration on $\dd(V_{\reg})$ but, as the order of an element
$d\in \dd(V)$ does not change upon passage to $\dd(V_{
\reg})$ it follows easily that this equals our chosen
filtration.

As observed in Notation~\ref{defn:order1}, we have only one
choice of filtration on any given ring $\mathfrak{A}$ and so we
can write the associated graded ring as $\gr \mathfrak{A}$
without ambiguity.  Thus, in the above notation we obtain the
induced map
$$
\gr_{\rad_{\vs}} : \gr(\dd(V)^G/J) \to \gr(\Im(\rad_{\vs}))
\subseteq \gr \Ak(W) = (\C[\h]\otimes\Sym \h)^W.
$$
By Lemma~\ref{lem:graded2} this commutes with localisation at
$\euls{C}$. Lemma~\ref{lem:graded2}(2) implies the equality $ \gr(J_{\reg}) = (\gr J)_{\reg} $ and 
so we can write this as $\gr J_{\reg}$ without ambiguity.

\begin{notation}\label{grrad-notation} Set $C=(\C[V]\otimes \Sym
V)^G$, graded by the $(\C[V]\otimes (\Sym V)_{n})^G$ and filtered
by $\Lambda_{n,C}=\sum_{j\leq n}(\C[V]\otimes (\Sym V)_{j})^G$.
We use the analogous notation for $C_{\reg}$ and write
$\Gamma_\h=\bigcup \Gamma_{n,\Iring}$ for the analogous filtration on
$\Iring =(\C[\h]\otimes \Sym \h)^W$.  As in Notation~\ref{rrp-defn},
let $\rr\otimes\rrp : C\to \Iring$ be the map given by restriction on
the first factor and projection on the second factor.  Set $I
=\ker(\rr\otimes\rrp)$. 
\end{notation}

 Since $C=\gr \dd(V)^G$ and $\Iring=\gr \Ak(W)$, there exists a
second morphism $\gr_{\rad_{\vs}} \colon C\to \Iring$ induced from
$\rad_{\vs}$ and, in theory, this may differ from the map
$\rr\otimes\rrp$. However, the maps are closely related as the
next lemma describes. Let $Y = \mr{Im} \, \gr_{\rad_{\vs}}$, a subalgebra of $\gr\Ak(W) = \Iring$.

\begin{lemma}\label{lem:images} 
        Assume that $\gr J_{\reg} \subseteq
        I_{\reg} $.  
        \begin{enumerate}
                \item $ \gr J \subseteq I=L$, where $L= \ker\bigl(\gr_{\rad_{\vs}}: C\to \Iring\bigr)$.
                \item $\C[\h]^W\o(\Sym \h)^W\subseteq Y \subseteq
\Iring$ and hence $\Iring$ is a finitely generated module over the
noetherian domain $Y$.
                
                \item Moreover $\Iring_{\euls{C}}= Y_{\euls{C}}$ for
$\euls{C}=\bigl\{\deltah^n\bigr\}_n$ and so $\Iring$ and $Y$ have the same field of
fractions. In particular, they are equivalent orders.
        \end{enumerate}
\end{lemma}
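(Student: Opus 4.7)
The plan is to handle $(1)$, $(2)$, $(3)$ in order. Claim $(1)$ is the substantive one; $(2)$ is then a packaging argument using Theorem~\ref{prop:Radchirestiso} and Lemma~\ref{pdQ}, while $(3)$ falls out directly from Lemma~\ref{lem:genericregfiltredsurj}.

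For $(1)$, the easy inclusion $\gr J \subseteq L$ is immediate from Lemma~\ref{lem:filteredhom}: if $x \in J$ has filtration degree exactly $n$, then $\rad_{\vs}(x) = 0$ forces $\sigma_n(x) \in L$. The content is the equality $L = I$, from which $\gr J \subseteq I$ then follows. I would argue it in three steps. First, compare the two $\C$-algebra maps $\gr_{\rad_{\vs}}, \rr \otimes \rrp \colon C \to \Iring$: by Theorem~\ref{prop:Radchirestiso} they coincide with the restriction isomorphism on $\C[V]^G$, and on $(\Sym V)^G$ both give graded isomorphisms onto $(\Sym \h)^W$ (namely $\rrpp$ and $\rrp$ respectively). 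Second, show that both quotients $C/L$ and $C/I$ are $\deltah$-torsion-free: $C/L = Y$ embeds in $\Iring$, which is a subring of the domain $\C[\h] \otimes \Sym \h$, and the same holds for $C/I$; consequently both ideals are $\deltah$-saturated in $C$, so $L = I$ as soon as $L_{\reg} = I_{\reg}$. Third, deduce the localised equality: by Lemma~\ref{lem:graded2} one has $(\gr J)_{\reg} = \gr J_{\reg}$, and the hypothesis $\gr J_{\reg} \subseteq I_{\reg}$ combined with the surjection $\gr_{\rad_{\vs}, \reg} \colon C_{\reg} \twoheadrightarrow \Iring_{\reg}$ coming from Lemma~\ref{lem:genericregfiltredsurj} forces the localised kernels to coincide. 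The main obstacle here is exactly Remark~\ref{rem:Radchirestiso}: since we have not identified $\rrpp$ with $\rrp$, the localised equality $L_{\reg} = I_{\reg}$ requires careful comparison of the two maps after localisation, not merely their values on the generating subrings $\C[V]^G$ and $(\Sym V)^G$.

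For $(2)$, Theorem~\ref{prop:Radchirestiso} gives graded isomorphisms $\gr_{\rad_{\vs}} \colon \C[V]^G \iso \C[\h]^W$ and $(\Sym V)^G \iso (\Sym \h)^W$, so by multiplicativity of $\gr_{\rad_{\vs}}$ the image $Y$ contains the tensor product subring $\C[\h]^W \otimes (\Sym \h)^W$. Lemma~\ref{pdQ} shows that $\Iring$ is a free module of finite rank over $\C[\h]^W \otimes (\Sym \h)^W$, hence a finitely generated module over the larger ring $Y$. Since $G$ is reductive, Hilbert's theorem gives that $C = (\C[V] \otimes \Sym V)^G$ is a finitely generated commutative $\C$-algebra, hence noetherian; its quotient $Y \cong C / L$ is then noetherian as well, and it is a domain because it embeds into the domain $\Iring$.

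For $(3)$, Lemma~\ref{lem:genericregfiltredsurj} tells us that the localised map $\rad_{\vs} \colon \dd(V_{\reg})^G \twoheadrightarrow \dd(\h_{\reg})^W$ is filtered surjective, so the associated graded map $\gr_{\rad_{\vs}, \reg} \colon C_{\reg} \twoheadrightarrow \Iring_{\reg}$ is likewise surjective, yielding $Y_{\euls{C}} = \Iring_{\euls{C}}$. Thus $Y$ and $\Iring$ have a common localisation and so share the same field of fractions, and the finiteness established in $(2)$ then identifies $Y \subseteq \Iring$ as an inclusion of equivalent orders in this field.
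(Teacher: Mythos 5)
Your parts (2) and (3) track the paper's proof almost verbatim (the paper deduces finiteness of $\Iring$ over $\C[\h]^W\otimes(\Sym \h)^W$ from finiteness of $\C[\h]\otimes\Sym\h$ rather than from Lemma~\ref{pdQ}, an immaterial difference), and the saturation mechanism in your second step of (1) --- $I$ and $L$ are $\euls{C}$-saturated because $C/I$ and $C/L$ embed in the domain $\Iring$, so everything descends from the localised level --- is exactly the paper's argument.

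The gap is in your third step of (1). From filtered surjectivity of $(\rad_{\vs})_{\euls{C}}$ (Lemma~\ref{lem:genericregfiltredsurj}) together with Lemma~\ref{lem:graded2} you correctly get $L_{\reg}=\ker\bigl(\gr_{(\rad_{\vs})_{\euls{C}}}\bigr)=\gr J_{\reg}$; the stated hypothesis then yields only $L_{\reg}\subseteq I_{\reg}$, hence $L\subseteq I$ and $\gr J\subseteq I$, and the claim that surjectivity ``forces the localised kernels to coincide'' is unsubstantiated. What you actually have is $C_{\reg}/L_{\reg}\cong \Iring_{\reg}$ and $C_{\reg}/I_{\reg}\hookrightarrow \Iring_{\reg}$ with $L_{\reg}\subseteq I_{\reg}$, and the resulting endomorphism of $\Iring_{\reg}$ need not be injective precisely because of Remark~\ref{rem:Radchirestiso}; your first step (matching $\gr_{\rad_{\vs}}$ against $\rr\otimes\rrp$ on the generating subrings) therefore cannot close the argument, and you acknowledge as much without resolving it. The paper sidesteps the comparison of maps entirely: its proof in effect reads the hypothesis as the equality $\gr J_{\reg}=I_{\reg}$ --- which is what Lemma~\ref{lem:gradeofJ2} supplies in the sole application of the lemma --- whence $L_{\reg}=\gr J_{\reg}=I_{\reg}$ and $L=I$ follow at once from saturation. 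If you want to work only from the one-sided inclusion, the missing direction $I\subseteq L$ can be recovered by a dimension count: $L\subseteq I$ are primes of the affine domain $C$ whose quotients both contain $\C[\h]^W\otimes(\Sym \h)^W$ and embed in $\Iring$, so $\Kdim C/L=\Kdim C/I=2\dim\h$, and a strict inclusion of primes in an affine domain strictly decreases the dimension of the quotient. Some such argument must be supplied; as written the step fails.
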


\begin{remark}\label{rem:gradeofJ} 
        Assume that $\gr J_{\reg} = I_{\reg}$. By Lemma~\ref{lem:images}(1), $\gr_{\rad_{\vs}}$ induces a map $C/I\to \Iring$ with image $Y$. For simplicity, we also denote this map by $\gr_{\rad_{\vs}}$.
\end{remark}

\begin{proof}
        (1)  As the ring $\Iring$ is $\euls{C}$-torsionfree, $I=C\cap I_{\reg}$ and hence $\gr J \subseteq C\cap \gr J_{\reg} = C\cap I_{\reg} =
I.$ For the same reason, $L=L_{\reg}\cap C$.  Moreover, $\gr
J\subseteq L$ by the construction of $\gr_{\rad_{\vs}}$ and so
\[L \ = \ L_{\reg}\cap C \ \supseteq \ \gr J_{\reg} \cap
C \ = \ I_{\reg}\cap C \ = \ I,\] as required.

         (2) Let $D\in (\Sym \h)^W_n\subseteq \Ak(W)$.  By
Theorem~\ref{prop:Radchirestiso}, $\rad_{\vs}$ restricts to give
a filtered isomorphism $\rrpp \colon (\Sym V)^G
\buildrel{\sim}\over{\longrightarrow} (\Sym \h)^W$ and so $D\in
\rad_{\vs}((\Sym V)^G_n)$. Thus, $D=\sigma_n(D) \in
\Im(\gr_{\rad_{\vs}})$. By Theorem~\ref{prop:Radchirestiso},
again, $\rad_{\vs}$ restricts to give the filtered isomorphism
$\rr: \C[V]^G \buildrel{\sim}\over{\longrightarrow} \C[\h]^W$ and
so the same argument ensures that $\C[\h]^W\subset
\Im(\gr_{\rad_{\vs}})$. Therefore, $\C[\h]^W\otimes (\Sym
\h)^W\subseteq Y\subseteq \Iring$. Since $\C[\h]\otimes \Sym \h$ is a
finitely generated $\C[\h]^W\otimes (\Sym
\h)^W$-module, Part~(2) follows immediately.

        (3) Since $f(\mathfrak{A}_{\euls{C}}) =
f(\mathfrak{A})_{\euls{C}}$ for any ring homomorphism
$f:\mathfrak{A} \to \mathfrak{B}$ as above, it follows that $Y_{\euls{C}}$
equals the image of $\gr_{(\rad_{\vs})_{\euls{C}}}:
C_{\reg}/ \gr J_{\reg}\to \Iring_{\reg}$. It follows from Lemma~\ref{lem:genericregfiltredsurj} that $\gr_{(\rad_{\vs})_{\euls{C}}}$ is
surjective.  Thus, $Y_{\euls{C}} = \Iring_{\euls{C}}$, and so $Y$ and
$\Iring$ certainly have the same field of fractions. Combined with
Part~(2) this implies that they are equivalent orders.
\end{proof}

Recall that the moment map $\mu \colon V \times V^* \to \mf{g}^*$
is given by $\mu(x,v)(y) = \langle y \cdot x , v \rangle$.  Let
$I(\mu^{-1}(0))$ be the ideal in $\C[V] \o \Sym V$ of functions
vanishing on $\mu^{-1}(0)$. 

\begin{lemma}\label{lem:gradeofJ2} If $V$ is stable then $\gr J_{\reg} =
                I_{\reg}$.
\end{lemma}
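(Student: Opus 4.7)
The plan is to identify $\gr J_{\reg}$ with $I_{\reg}$ by passing through the moment map ideal $I(\mu^{-1}(0))_{\reg}$, using stability to obtain sharp control at each step.

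First I would apply stability to invoke Corollary~\ref{cor:localizeradialpartsunstable}, which gives that the localised map $\rad_{\vs}\colon (\dd(V_{\reg})/\dd(V_{\reg})\g_{\chi})^G \iso \dd(\h_{\reg})^W$ is an isomorphism. Exactness of the $G$-invariants functor (for reductive $G$) applied to the short exact sequence defining $\dd(V_{\reg})\g_{\chi}$ then identifies $J_{\reg} = (\dd(V_{\reg})\g_{\chi})^G$. Since $\gr$ is exact on filtered modules and, for reductive $G$, commutes with the formation of $G$-invariants,
\[
\gr J_{\reg} \ = \ \bigl(\gr(\dd(V_{\reg})\g_{\chi})\bigr)^G.
\]

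Next I would identify the ideal $\gr(\dd(V_{\reg})\g_{\chi})$ inside $\C[V_{\reg} \times V^*]$ with $I(\mu^{-1}(0))_{\reg}$. One inclusion is free, since the principal symbol of $\tau(x) - \chi(x) \in \g_{\chi}$ is $\mu_x$. For the reverse inclusion I would use Corollary~\ref{cor:unstablepolaropen} to write $V_{\reg} \cong G \times_N \h_{\reg}$ and, working étale-locally on $V_{\reg}$, choose a vector-space complement $\mf{l}$ to $Z_{\g}(\h)$ in $\g$ and verify that $\{\mu_x : x \in \mf{l}\}$ forms a regular sequence of the expected codimension $\dim V - \dim \h$. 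This exhibits $\mu^{-1}(0) \cap (V_{\reg} \times V^*)$ as a reduced, smooth complete intersection, so the symbols generate the entire defining ideal, not merely a subideal.

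Finally I would verify that $(I(\mu^{-1}(0))_{\reg})^G = I_{\reg}$. Set-theoretically, $\mu^{-1}(0) \cap (V_{\reg}\times V^*)$ is the $G$-saturation of $\h_{\reg} \times \h^*$, where $\h^* \hookrightarrow V^*$ is the annihilator of $\g\cdot\h$ under the Hermitian form; in the stable case this has dimension $\dim \h$, matching the fibre dimension of $\mu^{-1}(0) \to V_{\reg}$. A $G$-invariant regular function on $V_{\reg}\times V^*$ vanishes on this subvariety iff its restriction to $\h_{\reg}\times\h^*$ vanishes, and that restriction map, once localised at $\deltav$, is exactly $\rr \otimes \rrp$, whose kernel is $I_{\reg}$ by definition. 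Combining the three steps yields $\gr J_{\reg} = I_{\reg}$.

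The main obstacle is the scheme-theoretic claim in the second paragraph: one must show that the $\mu_x$ cut out $\mu^{-1}(0)|_{V_{\reg}}$ as a reduced subscheme, not merely set-theoretically. Stability is essential here, since in the general polar case a nonzero $U$ in the decomposition $V = \h \oplus \g\cdot\h \oplus U$ would obstruct reducedness along the transverse directions. The proof ultimately reduces, via the principal slice presentation $V_{\reg} \cong G \times_N \h_{\reg}$, to a regular-sequence computation on a principal bundle over the smooth quotient $\h_{\reg}/W$.
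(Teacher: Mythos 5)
Your route is genuinely different from the paper's. The paper argues softly: by Lemma~\ref{lem:genericregfiltredsurj} the map $C_{\reg}/\gr J_{\reg} \to \Iring_{\reg}$ is a graded isomorphism onto a domain of dimension $2\dim \h$, so $\gr J_{\reg}$ is prime of the expected height; it then observes that $\euls{K}=I(\mu^{-1}(0))^G_{\reg}$ lies inside both $\gr J_{\reg}$ and $I_{\reg}$, invokes \cite{BLT} (irreducibility of $\mu^{-1}(0)|_{T^*V_{\reg}}$ for stable $V$, plus a dimension count) to get $\sqrt{\euls{K}} = I_{\reg}$, and finishes by comparing two primes of equal height. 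Your proposal instead computes $\gr(\dd(V_{\reg})\g_{\chi})$ on the nose and then takes invariants; completed, it yields a sharper, scheme-theoretic statement, and your third step --- identifying $(I(\mu^{-1}(0))_{\reg})^G$ with $I_{\reg}$ via the saturation $\mu^{-1}(0)|_{V_{\reg}} = G\cdot(\h_{\reg}\times \mathrm{ann}(\g\cdot\h))$ --- is correct and replaces the paper's appeal to \cite[Proposition~3.5]{BLT}.

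There is, however, a gap at your central step, and it is not quite the one you flag. Knowing that $\{\mu_y : y \in \mf{l}\}$ is a regular sequence cutting out $\mu^{-1}(0)|_{V_{\reg}}$ reducedly gives you that the \emph{commutative} ideal generated by all the symbols equals $I(\mu^{-1}(0))_{\reg}$; it does not yet give the inclusion $\gr(\dd(V_{\reg})\g_{\chi}) \subseteq I(\mu^{-1}(0))_{\reg}$. The left ideal is generated by the $\dim\g$ operators $\tau(x)-\chi(x)$, and since $Z_{\g}(\h)$ is usually nonzero their symbols do \emph{not} form a regular sequence, so the standard Koszul argument does not apply as stated: cancellation among these generators could a priori produce lower-order elements of the left ideal whose symbols escape $I(\mu^{-1}(0))_{\reg}$ (a nonzero degree-zero element would already be fatal, since $\mu^{-1}(0)$ surjects onto $V_{\reg}$). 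To close this you must show that, locally near $v \in V_{\reg}$, the left ideal is generated by the $\dim V - \dim\h$ operators $\tau(y)-\chi(y)$, $y \in \mf{l}$, for $\mf{l}$ a complement to $\g_v$. Writing $\tau(x) = \sum_y a_y \tau(y)$ for $x \in \g_v$ (possible since $\g\cdot v' = \mf{l}\cdot v'$ near $v$), the obstruction is the function $v' \mapsto \chi\bigl(\sum_y a_y(v')y - x\bigr)$; it vanishes because $\sum_y a_y(v')y - x \in \g_{v'}$, every $\g_{v'}$ with $v' \in V_{\reg}$ is conjugate to $\mathrm{Lie}(Z_G(\h))$, and $\chi$ kills the latter by Lemma~\ref{lem:thetaZtrivunstable} together with the $\mathrm{Ad}$-invariance of $\chi$. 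With that supplied, your argument goes through.
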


\begin{proof}
        
        By Lemma~\ref{lem:genericregfiltredsurj}, $\rad_{\vs} \colon \dd(V_{\reg})^G \to \dd(\h_{\reg})^W$ is filtered surjective. Since $J_{\reg} = \ker(\rad_{\vs} )_{\reg}$, this implies that ${(\rad_{\vs})}_{\euls{C}} :
        \dd(V_{\reg})^G/J_{\reg}\to \dd(\h_{\reg})^W$ is a filtered
        isomorphism and so the map $\gr_{{(\rad_{\vs})}_\euls{C}}: C_{\reg}/\gr
        J_{\reg} \to \Iring_{\reg}$ is a graded isomorphism and $\gr
        J_{\reg}$ is a prime ideal of height $q= \Kdim C - 2 \dim \h$.
                
        Clearly $(\dd(V_{\reg})\g_{\chi})^G \subseteq J_{\reg}$.
        By the definition of $\g_{\chi}$, the spaces of symbols
        $\{\sigma_1(y) : y\in \g_{\chi}\}$ and $\{\sigma_1(x) : x\in
        \tau(\g)\}$ are equal. Thus the set of common zeros of the
        symbols $\{\sigma_1(x) : x \in \g_{\chi}\}$ is precisely
        $\mu^{-1}(0) \subseteq T^* V_{\reg}$. Therefore,
        $\euls{K}:=I(\mu^{-1}(0))^G_{\reg}\subseteq \gr J_{\reg}$.
        
The (reduced) subscheme of $(V \oplus V^*)\git G$ defined by the ideal $I$ is denoted $C_0 \git G$ in \cite{BLT}. Then \cite[Proposition~3.5]{BLT} says that $C_0 \git G$ is a closed subscheme of $\mu^{-1}(0)\git G$. Therefore, $I(\mu^{-1}(0))^G \subseteq I$ and,  in particular, $\euls{K} \subseteq I_{\reg}$.

        Using the fact that $V$ is stable, the proof of \cite[Lemma 4.2]{BLT} shows that
        $\mu^{-1}(0)|_{T^* V_{\reg}}$ is irreducible; thus the radical
        $\sqrt{\euls{K}}$ is a prime ideal. Moreover, as $C_{\reg} =
        (\C[V_{\reg}] \o \Sym V)^G $, \cite[Proposition 3.5]{BLT} implies
        that
        \[ \Kdim C_{\reg}/\euls{K} = 2 \dim \h =\dim \Iring = \Kdim C/
        I_{\reg} .
        \] As $\sqrt{\euls{K}}\subseteq I_{\reg}$, it follows
        that $\sqrt{\euls{K}}=I_{\reg}$, which is therefore a prime ideal
        of height~$q$.
        But $\euls{K}\subseteq
        \gr J_{\reg}$ which also has height $q$.  Thus $\gr
        J_{\reg} =\sqrt{\euls{K}} = I_{\reg}$.
\end{proof}

We are now ready to prove the main result of this section, which is also Part~(3) of Theorem~\ref{thm:radsurjects}.

\begin{theorem}\label{thm:assgrsurjradchi} Assume that $V$ is stable and the map
$\rr\otimes \rrp$ is surjective.  Then the maps
        \[\gr_{\rad_{\vs}}: (\C[V]\o \Sym V)^G \to (\C[\h]\o\Sym
\h)^W\] and
        \[ \rad_{\vs} \colon \dd(V)^G \to \Ak(W)
        \] are both surjective.
\end{theorem}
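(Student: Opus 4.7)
The plan is to establish surjectivity of the associated graded map $\gr_{\rad_{\vs}}\colon C \to \Iring$ by a Hilbert series comparison against the given map $\rr\otimes\rrp$, and then bootstrap this to surjectivity of $\rad_{\vs}$ via a standard filtered induction. The main content is the bigraded comparison; everything else is formal.

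First I would introduce a natural $\NN^2$-bigrading on $C$ and $\Iring$ coming from the polynomial gradings on $\C[V]$ and $\Sym V$ (respectively on $\C[\h]$ and $\Sym \h$). Each bigraded piece $C_{a,b} = (\C[V]_a \otimes \Sym^b V)^G$ and $\Iring_{a,b} = (\C[\h]_a \otimes \Sym^b \h)^W$ is finite-dimensional, since it sits inside a finite-dimensional tensor product of graded pieces. The map $\rr\otimes\rrp$ preserves bidegree tautologically. For $\gr_{\rad_{\vs}}$, the map $\rad_{\vs}$ preserves the Euler grading by Lemma~\ref{lem:RadchiZgraded}(2) and respects the order filtration by Lemma~\ref{lem:filteredhom}; since the Euler degree together with the order degree jointly determine the bidegree, passing to the associated graded gives a bigraded map $\gr_{\rad_{\vs}}\colon C \to \Iring$.

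Next I would use stability to match the kernels of $\gr_{\rad_{\vs}}$ and $\rr\otimes\rrp$. Since $V$ is stable, Lemma~\ref{lem:gradeofJ2} gives $\gr J_{\reg} = I_{\reg}$, so the hypothesis of Lemma~\ref{lem:images}(1) holds and yields $\ker\gr_{\rad_{\vs}} = L = I = \ker(\rr\otimes\rrp)$. The surjectivity hypothesis on $\rr\otimes\rrp$ therefore produces a bigraded isomorphism $C/I \cong \Iring$, pinning down the Hilbert series: $\dim(C/I)_{a,b} = \dim \Iring_{a,b} < \infty$ for every bidegree $(a,b)$. Meanwhile $\gr_{\rad_{\vs}}$ descends to a bigraded injection $C/I \hookrightarrow \Iring$ (same kernel), so in each bidegree it is an injection between finite-dimensional spaces of matched dimension, hence bijective. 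Consequently $\gr_{\rad_{\vs}}$ is surjective and $Y = \Iring$.

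Finally, a routine filtered induction on order lifts this to surjectivity of $\rad_{\vs}$: given $y \in \Ak(W)_n$ with $n \geq 0$, surjectivity of $\gr_{\rad_{\vs}}$ provides $x \in \dd(V)^G_n$ with $\sigma_n(\rad_{\vs}(x)) = \sigma_n(y)$, so $y - \rad_{\vs}(x) \in \Ak(W)_{n-1}$ lies in the image by induction; the base case $n=0$ is the isomorphism $\rr\colon \C[V]^G \iso \C[\h]^W$ from Theorem~\ref{prop:Radchirestiso}. The main obstacle to watch is that the pure order filtration has infinite-dimensional graded pieces $\C[V]\otimes\Sym^n V$, so a naive Hilbert series argument is not available; the refinement to the bigrading is essential, and stability enters only through Lemma~\ref{lem:gradeofJ2} to force the kernels of $\gr_{\rad_{\vs}}$ and $\rr\otimes\rrp$ to agree.
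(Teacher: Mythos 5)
Your proof is correct, but it reaches the surjectivity of $\gr_{\rad_{\vs}}$ by a genuinely different mechanism than the paper. The paper's argument is ring-theoretic: from the surjectivity of $\rr\otimes\rrp$ it deduces that $C/I\cong \Iring$ is a normal domain, uses Lemma~\ref{lem:images}(2) and a Krull-dimension count to show the induced surjection $C/I\to Y=\Im(\gr_{\rad_{\vs}})$ is an isomorphism (so $Y$ is a maximal order), and then invokes Lemma~\ref{lem:images}(3) ($Y$ and $\Iring$ are equivalent orders) to force $Y=\Iring$. You instead refine the order grading by the Euler grading to an $\NN^2$-bigrading with finite-dimensional pieces and win by a Hilbert-series comparison; this bypasses normality of $\Iring$ and parts (2) and (3) of Lemma~\ref{lem:images} entirely, at the price of leaning on the full equality $\ker(\gr_{\rad_{\vs}})=I$ from Lemma~\ref{lem:images}(1). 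Note that the displayed chain in the paper's proof of that lemma only exhibits the inclusion $I\subseteq \ker(\gr_{\rad_{\vs}})$, which is all the paper's route needs at that point (the reverse inclusion falls out a posteriori); your route needs the reverse inclusion up front, but it does hold here, since by Lemma~\ref{lem:genericregfiltredsurj} the localised map is filtered surjective with kernel $J_{\reg}$, so $\ker(\gr_{\rad_{\vs}})_{\reg}=\gr J_{\reg}=I_{\reg}$ by stability, and intersecting with $C$ gives the equality. Your observation that bidegree is recovered from the pair (order, Euler degree), and that $\gr_{\rad_{\vs}}$ preserves both by Lemmata~\ref{lem:filteredhom} and~\ref{lem:RadchiZgraded}, is the key point making the finite-dimensional count legitimate, and the final filtered induction lifting $\gr$-surjectivity to surjectivity of $\rad_{\vs}$ is standard and matches what the paper leaves implicit.
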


\begin{proof} It suffices to prove the surjectivity of
$\gr_{\rad_{\vs}}$.

By the hypothesis of the theorem, the map $C/I \to \Iring$
induced from $\rr\o\rrp$ is surjective. It is standard, see
\cite[Proposition~6.4.1]{CohMac}, that
$\Iring=(\C[\h]\otimes \Sym \h)^W$ is a normal domain. Thus $C/I$ is a
normal domain (equivalently, a maximal order). By Lemma~\ref{lem:gradeofJ2}, the hypotheses and hence the conclusions of Lemma~\ref{lem:images}   hold.

 By Lemma~\ref{lem:images}(2), $Y= \gr_{\rad_{\vs}}(C/I) \
 \supseteq \C[\h]^W\otimes (\Sym \h)^W$. Since $\Iring$ is a finite module over $Y$ one has
$\Kdim Y= \Kdim \Iring = \Kdim C/I$. Then, as $C/I$ is a domain,
$\Kdim Y = \Kdim C/I$ implies that the surjection
$\gr_{\rad_{\vs}} : C/I \to Y$ is an isomorphism and so
$Y\cong C/I$ is a maximal order.

On the other hand, $Y\subseteq \Iring$ and
Lemma~\ref{lem:images}(3) says that $Y$ is an equivalent order to
$\Iring$. As $Y$ is a maximal order this forces $Y=\Iring$, whence
$\gr_{\rad_{\vs}}$ is surjective.
\end{proof}

The significance of Theorem~\ref{thm:assgrsurjradchi} is that it
has the following important consequence.  The definition of symmetric spaces, together with their basic properties,  will be given  in 
Section~\ref{Sec:examples}.

\begin{theorem} \label{thm:symmLSsurjective} Suppose that $V$ is
a symmetric space for the connected reductive group $G$.  Then the radial parts map $
\rad_{\vs} : \dd(V)^G\to \Ak(W)$ is surjective.
\end{theorem}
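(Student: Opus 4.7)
The plan is to apply Theorem~\ref{thm:assgrsurjradchi}, which reduces the proof to verifying its two hypotheses for symmetric spaces: first, that $V$ is stable, and second, that the graded restriction–projection map
\[
\rr \otimes \rrp \colon (\C[V] \otimes \Sym V)^G \longrightarrow (\C[\h] \otimes \Sym \h)^W
\]
is surjective. Granting these, Theorem~\ref{thm:assgrsurjradchi} immediately yields that $\rad_{\vs}$ is surjective.

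For stability, I would invoke the Kostant--Rallis theorem, which gives the decomposition $V = \h \oplus \g\cdot\h$ for the isotropy representation of a symmetric pair. This is exactly the statement that the complement $U$ in the decomposition~\eqref{eq:Ucomplement} is zero, which by \cite[Corollary~2.5]{DadokKac} (and recorded in this paper in the discussion of stable representations preceding Corollary~\ref{cor:unstablepolaropen}) is equivalent to stability. In particular, Corollary~\ref{cor:localizeradialpartsunstable} applies, so the localised radial parts map to $\dd(\h_{\reg})^W$ is an isomorphism, which already lets us use the filtered machinery of Lemmata~\ref{lem:graded2} and~\ref{lem:images} freely.

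For the surjectivity of $\rr \otimes \rrp$, the essential point is that a symmetric space is self-dual: the Killing form of $\g$, restricted to $V$, is non-degenerate and $G$-equivariant, giving an isomorphism $V \cong V^*$ of $G$-modules (and identifying $\Sym V$ with $\C[V]$ as $G$-algebras). Under this identification, $\rr \otimes \rrp$ becomes the ordinary restriction map
\[
\C[V \times V]^G \longrightarrow \C[\h \times \h]^W
\]
between diagonal invariants, where the target is the invariants of the diagonal action of $W$ on the double Cartan subspace. The surjectivity of this double restriction map for symmetric spaces is the analogue of Richardson's theorem for symmetric pairs and is a classical consequence of the Kostant--Rallis theory of harmonic polynomials on $\p \times \p$; it can be extracted from the earlier work on invariant differential operators on symmetric spaces such as \cite{LS3} or proved directly via the decomposition into $G$-harmonics.

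The main obstacle is the second hypothesis. The first hypothesis (stability) is essentially immediate from classical Lie theory, but surjectivity of $\rr \otimes \rrp$ requires controlling \emph{mixed} invariants in $(\C[V] \otimes \Sym V)^G$, not just functional and symbol invariants separately; the polarisation argument via $V \cong V^*$ and the double Kostant--Rallis theorem is the substantive input, and care must be taken because polar representations in general do not admit such a polarisation (which is why the theorem specialises to the symmetric case here rather than holding in the full generality of Theorem~\ref{thm:intro-surjection}).
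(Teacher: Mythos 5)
Your overall architecture is exactly the paper's: verify that a symmetric space is a stable polar representation, verify that $\rr\otimes\rrp\colon(\C[V]\otimes\Sym V)^G\to\C[\h\times\h^*]^W$ is surjective, and then invoke Theorem~\ref{thm:assgrsurjradchi}. The stability step is fine (it is Lemma~\ref{lem:symmetricstablepolar}, via Kostant--Rallis), and your identification of $\rr\otimes\rrp$ with the double restriction map $\C[\p\oplus\p]^G\to\C[\h\oplus\h]^W$ via the invariant form is the correct reduction.

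The gap is in your justification of that double restriction surjectivity, which you correctly flag as the substantive input. This is \emph{not} a classical consequence of the Kostant--Rallis theory of harmonics, nor is it extractable from \cite{LS3}: the Kostant--Rallis decomposition $\C[\p]\cong\C[\p]^G\otimes\mathcal{H}$ controls invariants of a single copy of $\p$, and taking $G$-invariants of $\mathcal{H}\otimes\mathcal{H}$ gives no direct handle on the image of restriction to $\h\oplus\h$; the ``doubled'' Chevalley restriction problem was a genuinely open question resolved only in 2000 by Tevelev \cite{Te}, whose argument is a specialisation/degeneration argument rather than a harmonic decomposition. (Even in the diagonal case $\p=\g$ this surjectivity is a nontrivial theorem, and for general polar representations it can fail or is unknown --- see Question~\ref{Tevelev-question}.) So your proof is correct once you replace the hand-waved derivation with a citation of Tevelev's theorem, which is precisely what the paper does; as written, the one step that carries all the weight is asserted rather than proved.
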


\begin{proof} By Lemma~\ref{lem:symmetricstablepolar}, $V$ is
indeed a stable polar representation of $G$.  Moreover, by
\cite[Theorem]{Te}, the map $ \left(\C[ V]\otimes \Sym V\right)^G
\to \C[\h \times \h^*]^W$ is surjective.  Thus the result follows
from Theorem~\ref{thm:assgrsurjradchi}.
\end{proof}
 
It has recently been shown in \cite{BLT} that the $\C[\mu^{-1}(0)]^G \to \C[\h \times \h^*]^W$ is an isomorphism for a large 
class of polar representations and, as    a consequence, we get the following result, which proves Theorem~\ref{thm:radsurjects}(4).   The relevant definitions are as follows. 
The representation $V$ of $G$ is called \emph{locally free} if the stabiliser subgroup of a general point in $V$ is finite and  is said to be \textit{visible} \label{defn:visible}
  if the nilcone $\mc{N}(V)=\pi^{-1}(0)$ consists of 
finitely many $G$-orbits.   Finally,  let 
$       \rad_{\vs}' \colon \left( \dd(V) / \dd(V) \g_{\chi} \right)^G \to \Ak(W)$  denote  the map induced from $\rad_{\vs}$.

\begin{corollary} 
         \label{cor:vislocallyfreepolargr}
        {\rm (1)}       If the map $(\rr \otimes \rrp) \colon \C[\mu^{-1}(0)]^G \to \C[\h \times \h^*]^W$ is an isomorphism then 
                $\rad_{\vs}'$           is a filtered isomorphism.
                
         {\rm (2)} Suppose that $V$ is a visible, stable    polar representation of $G$ that is also locally free.
 Then   $\rad_{\vs}'$ is a filtered isomorphism   
\end{corollary}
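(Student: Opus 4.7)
The goal of both parts is to show that $\rad_{\vs}'$ is a filtered isomorphism, which by standard filtered-to-graded arguments is equivalent to the associated graded morphism
\[
\gr \rad_{\vs}' \colon \gr R \longrightarrow Q \ = \ \gr \Ak(W)
\]
being an isomorphism, where $R = (\dd(V)/\dd(V)\g_{\chi})^G$ and $Q = \C[\h \oplus \h^*]^W$. Both filtrations are exhaustive; separatedness of the quotient filtration on $R$ follows from Artin--Rees applied to the noetherian algebra $\dd(V)^G$ and its finitely generated left ideal $(\dd(V)\g_{\chi})^G$, so the reduction to the associated graded statement is legitimate.

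The first step is to identify $\gr R$ with $\C[\mu^{-1}(0)]^G$: the principal symbol of each generator $\tau(x) - \chi(x) \in \g_{\chi}$ equals $\tau(x)$, which is precisely the $x$-component of the comoment map $\mu^*$. Hence $\gr(\dd(V)\g_{\chi}) = (\mu^*(\g))$ as ideals in $\C[V] \otimes \Sym V$, and reductivity of $G$ gives $\gr R \cong (\C[V] \otimes \Sym V/(\mu^*(\g)))^G$. Under the hypothesis of Part~(1), the map $\rr \otimes \rrp$ identifies this with $Q$; in particular $\gr R$ and $Q$ are graded rings with identical Hilbert series.

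For Part~(1), surjectivity of $\gr \rad_{\vs}'$ is obtained from Theorem~\ref{thm:assgrsurjradchi}: the isomorphism hypothesis supplies the surjectivity of $\rr \otimes \rrp$ required there, stability is implicit in (or a consequence of) the hypothesis together with the moment map having expected dimension, and the theorem yields surjectivity of $\rad_{\vs}$, hence of $\gr \rad_{\vs}'$. Once surjectivity is in hand, the matching Hilbert series force $\gr \rad_{\vs}'$ to be a bijection on each (finite-dimensional) graded piece, and hence an isomorphism of graded rings. This completes Part~(1).

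Part~(2) reduces to Part~(1) by invoking the main results of \cite{BLT}: for a visible, stable, locally free polar representation, the map $\rr \otimes \rrp \colon \C[\mu^{-1}(0)]^G \to Q$ is automatically an isomorphism, so Part~(1) applies. The main technical obstacle is reconciling the scheme-theoretic ring $(\C[V \oplus V^*]/(\mu^*(\g)))^G$ with the reduced ring $\C[\mu^{-1}(0)]^G$; this distinction is where one must be careful, and the hypothesis plays its key role because $Q$ is a domain, so nilpotents in $\gr R$ would have to lie in the kernel of $\gr \rad_{\vs}'$ and would then contradict the Hilbert series equality unless they vanish identically.
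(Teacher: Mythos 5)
Your overall strategy coincides with the paper's: pass to associated graded objects, obtain surjectivity of $\gr_{\rad_{\vs}}$ from Theorem~\ref{thm:assgrsurjradchi}, and identify the graded kernel with the moment-map ideal. The step that is not justified as written is the asserted equality $\gr(\dd(V)\g_{\chi}) = (\mu^*(\g))$. Computing the principal symbols of the generators $\tau(x)-\chi(x)$ only yields the containment $(\mu^*(\g)) \subseteq \gr(\dd(V)\g_{\chi})$; the reverse containment is a nontrivial statement (it says that no lower-order elements of the left ideal are created beyond those generated by the symbols), and in this setting it is essentially equivalent to the conclusion you are proving, so it cannot be taken as an input. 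The paper is careful here: it works with $J=\ker(\rad_{\vs})$, proves $I(\mu^{-1}(0)) = \gr J = I$ by combining Lemma~\ref{lem:gradeofJ2} with the isomorphism hypothesis, and only afterwards deduces $J=(\dd(V)\g_{\chi})^G$ by the standard filtered argument.

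Your argument is repairable using only the easy containment: it exhibits $\gr R$ as a \emph{quotient} of $\bigl(\C[V]\otimes\Sym V/(\mu^*(\g))\bigr)^G \cong (\C[\h]\otimes\Sym\h)^W$, so each graded piece of $\gr R$ has dimension at most that of the corresponding piece of $(\C[\h]\otimes\Sym\h)^W$, while surjectivity of $\gr_{\rad_{\vs}}$ gives the opposite inequality; since the graded pieces are finite dimensional, both maps are isomorphisms, and the equality of graded ideals you wanted emerges as a consequence rather than a hypothesis. Two further cautions. First, your closing discussion of reduced versus scheme-theoretic coordinate rings is circular: the ``Hilbert series equality'' you invoke is precisely what that discussion is meant to secure. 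The clean resolution is the one implicit in the paper: under the hypotheses of Part~(2), \cite{BLT} shows that $(\mu^*(\g))$ is radical, i.e.\ $I(\mu^{-1}(0))$ is generated by the symbols, so the two rings coincide. Second, Theorem~\ref{thm:assgrsurjradchi} and Lemma~\ref{lem:gradeofJ2} both require $V$ stable; in Part~(1) this should be treated as a standing hypothesis rather than something deduced from the isomorphism of $\rr\otimes\rrp$.
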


\begin{proof} (1)    As noted in the proof of Lemma~\ref{lem:gradeofJ2},      $I(\mu^{-1}(0))$ is generated by
        the symbols $\{\sigma_i(x) : x\in \g_{\chi}\}$ and so $I(\mu^{-1}(0))\subseteq \gr_{\ord}J$. Combining    Lemma~\ref{lem:gradeofJ2} with the      fact that $\rr\otimes \rrp$ is an isomorphism      therefore shows that $I(\mu^{-1}(0)) = \gr_{\ord}J = I$.  Thus Theorem~\ref{thm:assgrsurjradchi}  implies that   $\gr_{\rad_\vs}$ is surjective with kernel $\gr_{\ord}J$. It also follows that  $J=(\dd(V)\g_\chi)^G$ and hence  $\rad_{\vs}'$ is a filtered isomorphism.

        (2)    This follows from Part~(1) combined with \cite[Theorem~1.2]{BLT}.
\end{proof}

The locally free hypothesis in Corollary~\ref{cor:vislocallyfreepolargr}(2) is obviously very strong---for example, it is    not even 
satisfied in the basic case when    $V=\g$ for a simple algebraic group $G$. A number of examples satisfying the hypotheses of the corollary  are given in \cite{BLT}, with one of    their motivating examples being 
the representation $V=S^3\C^3$ over $G=SL(3)$; see \cite[Example~5.6]{BLT} and also 
\cite[Example~15.3]{BNS}.
 
    
\section{Reductive symmetric pairs}\label{Sec:examples}

  In the next two sections we discuss one of the main examples of polar representations, that of symmetric spaces. In this section we concentrate on the image of $\rad_{\vs}$ and  we determine, at least for $\vs=0$, exactly when $\Ak=\Im(\rad_{\vs})$ is a simple ring. This proves the first half of Theorem~\ref{intro-mainthm} from the introduction. The proof of that result will be completed in Section~\ref{Sec:kernel}.

 \begin{definition}\label{defn:symmetric}
We begin with the basic definitions. 
Let $\Gtilde$ be a connected, complex reductive algebraic group
with Lie algebra $\gtilde$.  Fix a non-degenerate,
$\Gtilde$-invariant symmetric bilinear form $\varkappa$ on
$\gtilde$ that reduces to the Killing form on
$[\gtilde,\gtilde]$.  Let $\vt$ be an involutive automorphism
of $\gtilde$ preserving $\varkappa$ and set $\g = \ker(\vt -I)$, $\p
= \ker(\vt +I)$. Then, $\gtilde= \g \oplus \p$ and the pair
$(\gtilde,\vt) $ (or $(\gtilde,\g)$) is called a {\it symmetric
  pair} with \emph{symmetric space} $V := \p$.
Let $G$ be the
connected reductive subgroup of $\Gtilde$ such that
$\g=\Lie(G)$. Both  $G$ and its Lie algebra $\g$ act on $\p$ via
the adjoint action: $x\cdot v= \adj(x)(v)= [x,v]$ for all
$x \in \g$ and $v \in \p$. 

Finally, let
$\h \subset \p$ be a \emph{Cartan subspace}; thus, $\h$ is
maximal among the subspaces of $\p$ consisting of semisimple
elements in $\gtilde$ which pairwise commute,
see~\cite[Corollary~37.5.4]{TY}.  The \emph{Weyl group} is then defined to be $W= N_G(\h)/Z_G(\h)$. 
\end{definition}
 
Any symmetric pair $(\gtilde,\vartheta)$  is a direct product $(\gtilde,\vartheta)\cong (\gtilde_1,\vartheta_1)\times\cdots \times (\gtilde_m,\vartheta_m)$
of  
irreducible  symmetric pairs, which then decomposes $(G,\p)$ into a corresponding direct product of sub-symmetric spaces, although when $(\gtilde,\vt)$ is irreducible, $\g$  may be reducible. For a classification of irreducible
symmetric spaces, see the tables in \cite[Chapter~X]{He1} or, in
a form more convenient for this paper, the  tables in
Appendix~\ref{app-tables}.
 Symmetric spaces $(G,\p)$  are a natural
generalisation of the adjoint representation of $G$ on $\g$.
Specifically, in   the \emph{diagonal case} where $\Gtilde = G
\times G$ with $\vt(x,y) = (y,x)$, one has  $(\gtilde,\g) = (\g
\oplus \g, \g)$ with the natural adjoint action of $G$ on $\p =
\g$.

As we next show,  symmetric spaces are examples of  polar representations.
 We recall from the discussion at the beginning of Section~\ref{Sec:polarreps} that $\p_{\st}$ denotes
  the set of $\p$-regular elements of $\p$.  
   
\begin{lemma}\label{lem:symmetricstablepolar} The  symmetric  spaces 
$(G, \p)$ are  stable polar representations. Moreover,
in the notation of Section~\ref{Sec:polarreps}, one has  $\p_{\reg} =
\p_{\st}$.  
\end{lemma}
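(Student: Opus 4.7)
My plan is to exploit the classical Kostant--Rallis restriction theorem for reductive symmetric pairs, which gives the graded isomorphism $\C[\p]^G \cong \C[\h]^W$ and, in particular, $\dim \p \git G = \dim \h$. The other two ingredients I will need are the standard $\h$-weight decomposition $\gtilde = \mathfrak{z}_{\gtilde}(\h) \oplus [\gtilde,\h]$ (valid because $\h$ acts semisimply on $\gtilde$), together with the familiar fact that, for a Cartan subspace of a reductive symmetric pair, $\mathfrak{z}_{\p}(\h)=\h$. From $\vt[\g,\p] = -[\g,\p]$ one has the bracket relations $[\g,\h]\subseteq \p$ and $[\p,\h]\subseteq \g$, so intersecting the above decomposition of $\gtilde$ with $\p$ yields
\[
\p \ = \ \mathfrak{z}_{\p}(\h) \oplus ([\gtilde,\h]\cap \p) \ = \ \h \oplus [\g,\h] \ = \ \h \oplus \g\cdot \h.
\]
By \cite[Corollary~2.5]{DadokKac}, this equality already establishes that $\p$ is a stable representation, provided we also check that it is polar.

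To prove polarity, I would pick $v\in \h_{\reg}$ (so that $v$ lies on no reflecting hyperplane of $W$). For such $v$ the centralizer $\mathfrak{z}_{\gtilde}(v)$ collapses to $\mathfrak{z}_{\gtilde}(\h)$, because the reflecting hyperplanes of $W$ on $\h$ agree with the kernels of restricted roots. Thus $[\gtilde,v]$ and $[\gtilde,\h]$ are both complements to $\mathfrak{z}_{\gtilde}(\h)$ in $\gtilde$ with $[\gtilde,v]\subseteq [\gtilde,\h]$, so $[\gtilde,v]=[\gtilde,\h]$; intersecting with $\p$ gives $\g\cdot v = [\g,v] = [\g,\h] = \g\cdot \h$. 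Hence every $x\in \h$ satisfies $\g\cdot x \subseteq \g\cdot \h = \g\cdot v$, i.e.\ $\h \subseteq \h_v$. Combining with the Dadok--Kac bound $\dim \h_v\leq \dim \p\git G = \dim \h$ (Kostant--Rallis), we obtain $\dim \h_v = \dim \p\git G$. Moreover, $v$ is semisimple and $\dim G\cdot v = \dim \g - \dim \mathfrak{z}_{\g}(\h)$ is maximal among semisimple orbits (the centralizer of any semisimple element contains a conjugate of $\mathfrak{z}_{\g}(\h)$ up to $G$-action), so $v$ is $\p$-regular in the sense of Section~\ref{Sec:polarreps}. This confirms $(G,\p)$ is a polar representation with Cartan subspace $\h$.

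For the final assertion $\p_{\reg}=\p_{\st}$, I would argue that both sides are $G$-saturated open subsets of $\p$ and both coincide with $G\cdot \h_{\reg}$. On the one hand, by the Chevalley isomorphism $\pi\colon \p\git G \iso \h/W$, the set $\p_{\reg}=\pi^{-1}(\h_{\reg}/W)$ equals $G\cdot \h_{\reg}$ since $\pi$ sends $h\in \h$ to its $W$-orbit. On the other hand, since $\p$ is stable, every element of $\p_{\st}$ is semisimple with maximal $G$-orbit; conjugating into $\h$ via Kostant--Rallis, such an element lies in a point of $\h$ whose $\g$-centralizer equals $\mathfrak{z}_{\g}(\h)$, which occurs precisely on $\h_{\reg}$ by the identification of reflecting hyperplanes with restricted root hyperplanes. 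Conversely, any $v\in \h_{\reg}$ is semisimple and achieves this maximal orbit dimension, hence belongs to $\p_{\st}$.

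The only real subtlety is the identification of the reflecting hyperplanes of the little Weyl group $W$ on $\h$ with the zero loci of the restricted roots; this is the crux for showing both $\h\subseteq \h_v$ for $v\in\h_{\reg}$ and the equality $\p_{\reg}=\p_{\st}$. Everything else reduces to routine applications of Kostant--Rallis and the $\h$-weight decomposition; I expect no further obstacle.
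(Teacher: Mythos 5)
Your proof is correct, but it takes a genuinely different route from the paper's. The paper disposes of the lemma in three lines by citation: polarity is quoted from \cite{DadokKac} and \cite[(8.6)]{PopovVinberg}, and the identity $\{v \in \p : \dim G\cdot v \text{ maximal}\}\cap \p_{\mathrm{ss}} = \p_{\reg}$ is quoted from \cite[Remark~6, p.~771]{KR}, from which $\p_{\reg}=\p_{\st}$ and (since $\p_{\reg}\neq\emptyset$) stability both follow at once. You instead reconstruct everything from the restricted-root decomposition: $\p=\h\oplus\g\cdot\h$ via $\mathfrak{z}_{\p}(\h)=\h$, the collapse $\g\cdot v=\g\cdot\h$ for $v\in\h_{\reg}$, and the Chevalley/Kostant--Rallis isomorphism to pin down $\dim\h_v$. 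What your approach buys is transparency: it makes explicit that $\h_{\reg}$ in the reflection-group sense is exactly the complement of the restricted-root hyperplanes (the ``crux'' you flag is indeed the classical content of Kostant--Rallis, and is the same fact the paper uses implicitly in Section~\ref{Sec:examples} when it writes $\calA=\{\ker\alpha : \alpha\in R^+\}$), and it yields the set-theoretic identity $\p_{\reg}=G\cdot\h_{\reg}$ directly, a weaker cousin of what the paper later obtains as Corollary~\ref{cor:unstablepolaropen}. Two small points of hygiene: the Dadok--Kac bound $\dim\h_v\le\dim\p\git G$ is stated only for $\p$-regular $v$, so your verification that $v\in\h_{\reg}$ is $\p$-regular should logically precede your use of that bound rather than follow it; and your appeal to \cite[Corollary~2.5]{DadokKac} for ``stable $\iff$ $V=\h\oplus\g\cdot\h$'' presupposes polarity, which you do acknowledge and supply. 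Neither affects correctness.
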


\begin{proof} The representations $(G,\p)$ are particular cases
of polar representations, see \cite{DadokKac} and
\cite[(8.6)]{PopovVinberg}. 
 In  our notation, \cite[Remark 6, p.771]{KR}  says that
\[
\{ v \in \p : \dim G\cdot v \ \text{maximal}\}  \cap \p_{\mathrm{ss}} =
\p_{\reg},
\]
and hence that $\p_{\reg} = 
\p_{\st}$. As $\p_{\reg}\not=\emptyset$, this  also implies that $\p$ is stable.
\end{proof}
 
 Our first aim is to understand exactly when the image $\Ak$ of the radial parts map is simple. Before giving the details we notice a couple of simplifications to the problem.

 \begin{remark}\label{rem:reducible-symm}
In constructing and understanding radial parts maps, one can always
  restrict to the case of irreducible symmetric pairs.  Since this is   true  more generally, in order to justify this assertion   let $(G_i, V_i) $ be polar representations   with 
  associated data $\{h_i, W_i, \vs_i, \kappa_i\}$, where $\kappa_i$ is the parameter introduced
in~\eqref{kappa-defn} for $\h_i$.  Then $V=V_1\oplus V_2$ is   a polar representation for the group $G=G_1\times G_2$,
 with associated data $\h=\h_1\oplus \h_2$ and so forth. Note in particular  that,  from scalars $\vs=(\vs_1, \vs_2)\in \C^{k_1+k_2}$, the parameter $\kappa$ defined in \eqref{kappa-defn} for $(G,V)$ is just 
 $\kappa_1 + \kappa_2$.  It follows that $\Ak = A_{\kappa_1}\otimes_\C A_{\kappa_2}$ and so, by 
Theorem~\ref{thm:symmLSsurjective}, the map $\rad_{\vs}:\dd(V)^G \to \Ak$ is  the map
\[\rad_{\vs_1}\otimes \rad_{\vs_2} : \dd(V_1)^{G_1}\otimes \dd(V_2)^{G_2}  \ \longrightarrow \ 
A_{\kappa_1}\otimes_\C A_{\kappa_2}.\] 
\end{remark} 

From  Corollary~\ref{cor:quotientetaleh} we then deduce the following.

  \begin{lemma}\label{reducible-spaces}  Suppose that $(G,V)= (G_1,V_1) \oplus (G_2,V_2)$ is a
direct sum of polar representations and keep the above notation. 
  Then $\Ak$ is simple if and only if  both  $A_{\kappa_1}$ and $A_{\kappa_2}$ are simple. \qed
  \end{lemma}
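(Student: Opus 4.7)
The plan is to reduce this to a direct application of Corollary~\ref{cor:quotientetaleh}. The key observation is that Remark~\ref{rem:reducible-symm} already does most of the heavy lifting: for a direct sum $(G,V) = (G_1,V_1) \oplus (G_2,V_2)$ of polar representations, the Cartan subspace decomposes as $\h = \h_1 \oplus \h_2$ with Weyl group $W = W_1 \times W_2$, and the parameter splits as $\kappa = \kappa_1 \times \kappa_2$. Therefore the image $\Ak(W)$ of the radial parts map is precisely the tensor product $\Ak(W) = A_{\kappa_1}(W_1) \otimes_{\C} A_{\kappa_2}(W_2)$ sitting inside $\dd(\h_{\mathrm{reg}})^W$.

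With this identification in hand, the lemma is immediate from Corollary~\ref{cor:quotientetaleh}, which asserts exactly that $A_{\kappa_1}(W_1) \otimes_{\C} A_{\kappa_2}(W_2)$ is simple if and only if both $A_{\kappa_1}(W_1)$ and $A_{\kappa_2}(W_2)$ are simple. Since each spherical algebra $A_{\kappa_i}(W_i)$ has centre $\C$ by Lemma~\ref{lem:quotientetaleh}(3), the hypothesis needed for the cited corollary (simplicity of a tensor product over $\C$ in terms of simplicity of its factors) is automatic.

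There is no substantive obstacle here; the proof is essentially a one-line invocation once the tensor product decomposition of $\Ak$ has been pinned down in Remark~\ref{rem:reducible-symm}. The only mild care required is to observe that the tensor product decomposition really does give $\Ak(W) = A_{\kappa_1} \otimes A_{\kappa_2}$ as subalgebras of $\dd(\h_{\mathrm{reg}})^W$ and not merely as abstract algebras, but this is clear since $\h_{\mathrm{reg}} = (\h_1)_{\mathrm{reg}} \times (\h_2)_{\mathrm{reg}}$ (the discriminant of $W$ factors as the product of the discriminants of the $W_i$) and the Dunkl embeddings for the $W_i$ act on disjoint sets of variables.
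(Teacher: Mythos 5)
Your proposal is correct and follows exactly the paper's own route: the paper states the lemma as an immediate consequence of Corollary~\ref{cor:quotientetaleh} together with the tensor-product decomposition $\Ak = A_{\kappa_1}\otimes_{\C}A_{\kappa_2}$ established in Remark~\ref{rem:reducible-symm}. Your additional observation that the decomposition holds as subalgebras of $\dd(\h_{\reg})^W$ (via the factorisation of the discriminant) is a sensible, if unneeded, extra check.
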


 \begin{remark}\label{rem:rtrivial-symm}
 Let $(\gtilde,\vt)$ be a symmetric pair with symmetric space $\p$ and Cartan subspace $\h\subseteq \p$. Recall from \eqref{defn:polarrank} that   $\rank \p=\dim_\C \h-\dim \h^\g$.  If   $\rank\p=0$, we define $(\gtilde,\vt)$   to be a \emph{trivial symmetric pair}, with \emph{trivial symmetric space} 
 $(G,\p)$.   Write $\gtilde = \gtilde'\oplus \mf{z}$, where $\gtilde'=[\gtilde,\gtilde]$ and $\mf{z}$ is the centre of $\gtilde$.  The reason for  calling the rank zero case trivial is that then $\h=\h^\g\subseteq \mf{z}$, from which  it follows  that $\p=\h\subseteq \mf{z}$. In particular,  $G$ acts trivially on $\p$ and so  $\dd(\p)=\dd(\p)^G=\dd(\h)^W$. Hence any radial parts map for a trivial symmetric space  is the identity.
 
 When $\vt=1$, $\g=\gtilde$ and so $\p=0$, which is certainly trivial. We
  do not insist that $\vt\not=1$, since the case $\vt=1$  can appear among sub-symmetric pairs of symmetric pairs with $\vt \neq 1$. 
 \end{remark}

 Let $(\gtilde,\vartheta)$ be a symmetric pair and recall that our first goal is to understand when $\Ak=\Im(\rad_{\vs})$ is simple. When  $\g$ is  semisimple,   it suffices to study the question when
  $\vs=0$.    
  
    \begin{proposition}   \label{Harish-Chandra-map}
Assume that $(\gtilde,\vartheta)$ is a symmetric pair such that the
centre of $\g$ acts trivially
 on $\p$. Then, $\Im(\rad_\vs) \cong
\Im (\rad_0)$ and $\ker(\rad_{\vs})=\ker(\rad_0)$ for all choices of
$\vs$. 
\end{proposition}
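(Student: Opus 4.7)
The plan is to reduce the proposition directly to Corollary~\ref{lem:invconjugationvs}, which says that if two parameter tuples $\vs$ and $\vs'$ give rise to the same character $\chi(\vs) = \chi(\vs')$, then $\ker(\rad_{\vs})=\ker(\rad_{\vs'})$ and the images are conjugate (hence isomorphic) via a formal power $h^{-\xi}$ with $\xi=\vs'-\vs$. Thus it suffices to prove that under the hypothesis on the centre, the character $\chi(\vs) = \sum_{i=1}^k \vs_i \, d\theta_i$ from \eqref{chi-defn} is identically zero for every $\vs$, since then $\chi(\vs)=\chi(0)=0$ and one may take $\vs'=0$.

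To show $\chi(\vs)=0$ for all $\vs$, I would show that each differential $d\theta_i \colon \g \to \C$ vanishes. First, because $\C$ is abelian, the Lie algebra homomorphism $d\theta_i$ annihilates the commutator ideal $[\g,\g]$, so it factors through $\g/[\g,\g]$. Since $\g$ is reductive (being the fixed-point subalgebra of the involution $\vt$ on the reductive algebra $\gtilde$), we have the decomposition $\g=[\g,\g]\oplus \mathfrak{z}(\g)$, and hence $d\theta_i$ is determined by its restriction to the centre $\mathfrak{z}(\g)$. Next, by hypothesis $\mathfrak{z}(\g)$ acts trivially on $\p$, hence on $\C[\p]$, and in particular on each irreducible factor $\delta_i$ of the discriminant. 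Because $\delta_i$ is a semi-invariant of weight $\theta_i$, the action of $x\in \g$ is by $d\theta_i(x)\,\delta_i$; triviality of the $\mathfrak{z}(\g)$-action on $\delta_i$ therefore forces $d\theta_i\vert_{\mathfrak{z}(\g)}=0$. Combining with the previous step yields $d\theta_i=0$ on all of $\g$.

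With $\chi(\vs)=0$ established, Corollary~\ref{lem:invconjugationvs} applied to the pair $(\vs,\vs'=0)$, for which $\xi=-\vs$, gives
\[
\rad_{\vs}(D) \ = \ h^{\vs}\, (\rad_0(D))\, h^{-\vs} \qquad \text{for all } D\in \dd(\p)^G,
\]
whence $\ker(\rad_{\vs})=\ker(\rad_0)$ and $\Im(\rad_{\vs}) = h^{\vs}\,\Im(\rad_0)\,h^{-\vs} \cong \Im(\rad_0)$. There is no serious obstacle in this argument; the only thing to check carefully is the reductivity of $\g$ and the fact that the semi-invariant action of $x \in \g$ on $\delta_i$ is by the scalar $d\theta_i(x)$, both of which are standard.
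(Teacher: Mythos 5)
Your proof is correct and follows essentially the same route as the paper: both arguments reduce to Corollary~\ref{lem:invconjugationvs} by showing that the character $\chi(\vs)=\sum_i\vs_i\,d\theta_i$ vanishes, which in both cases comes from the triviality of the centre's action together with the absence of nontrivial characters on the semisimple part. The only cosmetic difference is that you verify $d\theta_i=0$ directly at the Lie algebra level via $\g=[\g,\g]\oplus\mathfrak{z}(\g)$, whereas the paper writes $G=G_1Z$ with $G_1$ semisimple, observes $\dd(\p)^G=\dd(\p)^{G_1}$, and invokes Corollary~\ref{rem:invconjugation}.
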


\begin{proof}
 Write $G=G_1Z$ where $G_1$ is semisimple and $Z$ is the
 connected component of the centre of $G$. By hypothesis one has
$\C[\p]^G = \C[\p]^{G_1}$ and $\dd(\p)^G = \dd(\p)^{G_1}$. Since
$G_1$  is semisimple, the result follows from
Corollary~\ref{rem:invconjugation}. 
\end{proof}

If the centre of $\g$ does not act trivially on $\p$ then understanding for which $\vs$ the algebra $\Im(\rad_{\vs})$ is simple is a more complicated problem  and we do not have a general answer. Indeed,  even  when
  $(\gtilde,\g)=(\mf{sl}(2), \mf{so}(2))$, all infinite dimensional primitive factors of $U(\mf{sl}(2))$ appear as $\Im(\rad_{\vs})$ for some choice of $\vs$; see \cite[Corollary~14.15]{BNS}. 
    For simplicity, we will therefore stick to the case $\varsigma=0$, which is also 
 the case of most interest in the literature  (in the diagonal case this will suffice to solve the problem in general, but  there are additional subtleties,
  for which the  reader is referred to the discussion after Corollary~\ref{Harish-Chandra-map21}).
  
  Thus, by
 definition, we are interested in the image of 
\begin{equation} \label{radzero}
\rad_0(D)(z) = \rr\left(D(\rr^{-1}(z)) \right)\quad \text{for all $D \in
  \dd(\p)^G$ and $z \in \C[\h]^W$.}
\end{equation}
   We will write
$\rad=\rad_0$ and $\kappa=\kappa(0)$ throughout the remainder of this section.

In order to describe   $\kappa$  we 
need   some standard notation. Given a  
symmetric pair $(\gtilde,\vartheta)$, let  $\Sigma =\Sigma(\gtilde,\h) \subset \h^*$
denote  the (restricted) root system
defined by $\h$; see \cite[38.2.3]{TY}. (If $(\gtilde,\vt)$ is trivial then, by definition, $R=\emptyset=\Sigma.$) 
Let 
$R= R(\p,\h)= \Sigma_{\mathrm{red}} = \{\alpha \in \Sigma : \alpha/2 \notin
\Sigma\}$ be the \emph{reduced root system} associated to $\Sigma$. 
Choose a set of positive roots $\Sigma^+ \subset \Sigma$ and set
$R^+= \Sigma^+ \cap \Sigma_{\mathrm{red}}$. Then 
$
\calA= \{H_\alpha = \ker \alpha: \alpha \in R^+ \}
$
and $W$  is generated by the real reflections $s_\alpha= s_{H_\alpha}$; in
particular $\ell_{H_\alpha} =2$ for all $\alpha$.

For $\alpha \in \Sigma$ set
\[  \mathfrak{q}^{\alpha} = \{v \in  \mathfrak{q} : \adj(a)^2(v) = \alpha(a)^2 v \
 \text{for all  $a \in \h$}\},\]
  when $\mathfrak{q}=\p$ or $\mathfrak{q}=\g$, but 
\[  \gtilde^{\alpha} = \{v \in \gtilde : \adj(a)(v) = \alpha(a) v \
  \text{for all  $a \in \h$}\}.
\] 
Then,  $\gtilde^{-\alpha} = \vt( \gtilde^{\alpha})$ and
$\gtilde^{\alpha} \oplus \gtilde^{-\alpha} = \g^\alpha \oplus
\p^\alpha$ for all $\alpha \in \Sigma^+$. The adjoint action of
$\h$ yields the decompositions  (orthogonal with respect to $\varkappa$):
\[
  \g = Z_\g(\h) \oplus \bigoplus_{\mu \in R}
  (\g^{\mu} + \g^{2\mu}) \quad\text{and}\quad  \p = \h \oplus \bigoplus_{\mu \in R}
   (\p^{\mu} + \p^{2\mu}) .
\]
Note that  $\dim
\p^\alpha = \dim \g^\alpha= \dim \gtilde^\alpha$ and we define
the \emph{multiplicity of $\alpha \in \Sigma$} by:
\begin{equation}
  \label{eq:root-multiplicity}
  m_\alpha= \dim \gtilde^\alpha + \dim \gtilde^{2\alpha}
\end{equation}
with the convention that $\gtilde^{2\alpha} = \{0\}$ if $2\alpha
\notin \Sigma$. Set 
\begin{equation}
  \label{eq:root-multiplicity2}
  k_\alpha= \frac{1}2m_\alpha= \frac{1}{2}\bigl(\dim \gtilde^\alpha + \dim \gtilde^{2\alpha}\bigr)
\end{equation}

\begin{proposition}
  \label{kappa-symmetric}
  For a  
  symmetric pair $(\gtilde,\vartheta)$, define   \begin{equation}
    \label{eq:kappa-alpha}
    \kappa =
  \bigl(\kappa_{H_\alpha,i} : \alpha \in
    \Sigma^+_{\mathrm{red}}, \, i=0,1 \bigr),\quad\text{where}\ \ 
 \kappa_{H_\alpha, 0} = 0 \ \text{and}\   \kappa_{H_\alpha, 1} =
    k_\alpha . 
  \end{equation}
  Then  $\Im(\rad) = A_{\kappa}(W)$. 
\end{proposition}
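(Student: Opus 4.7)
The strategy is to apply Theorem~\ref{thm:symmLSsurjective}, which gives $\Im(\rad) = A_{\kappa'}(W)$ for the specific parameter $\kappa'$ produced by Theorem~\ref{thm:surjectivereducerank1} via formula~\eqref{kappa-defn}. The task then reduces to identifying $\kappa'$ explicitly and verifying that it agrees with the parameter $\kappa$ prescribed in~\eqref{eq:kappa-alpha}. Since~\eqref{kappa-defn} determines the value at each reflecting hyperplane independently, I fix $\alpha \in R^+$, set $H = H_\alpha$, and compute the contribution of $H$ from a rank-one slice at a generic point $p \in H^\circ$.

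First, I need to check which case of~\eqref{kappa-defn} applies. For the symmetric space $(G,\p)$, Lemma~\ref{lem:symmetricstablepolar} identifies $\p_{\st} = \p_{\reg}$. Since every $p \in H$ satisfies $\alpha(p) = 0$, we have $p \notin \h_{\reg}$ and consequently $p \notin \p_{\reg} = \p_{\st}$; thus $H \cap \p_{\st} = \emptyset$ and we land in the first case. With $\ell_H = 2$, the formula reduces to
\[
\kappa'_{H,0} = \lambda_0 \qquad\text{and}\qquad \kappa'_{H,1} = \lambda_1 + \tfrac{1}{2},
\]
so I must compute the roots $\lambda_0,\lambda_1$ of the (specialized) $b$-function of the slice representation.

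Next, I would identify the slice. A standard root-space computation using $[p,y] = -\mu(p)y$ for $y$ in the $\mu$-eigenspace shows $\g_p = Z_\g(\h) \oplus \g^\alpha \oplus \g^{2\alpha}$ and $\g \cdot p = \bigoplus_{\mu \in R^+,\,\mu\neq\alpha}(\p^\mu \oplus \p^{2\mu})$; hence $S_p = \h \oplus \p^\alpha \oplus \p^{2\alpha}$. Since $S_p^K = \h^{W_H} = H$ by Lemma~\ref{lem:slicebproperties}(4), the $K$-stable complement is
\[
S_H = \C\alpha^\vee \oplus \p^\alpha \oplus \p^{2\alpha},
\]
with one-dimensional Cartan subspace $\h_H = \C\alpha^\vee$ and total dimension $n := 1 + m_\alpha$. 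The restricted Killing form $\varkappa|_{S_H}$ is a non-degenerate $K$-invariant quadratic form; as $\C[S_H]^K = \C[u]$ with $\deg u = 2$, I may (after rescaling) take $u = \varkappa|_{S_H}$ normalized so that $u|_{\h_H} = \alpha^2 = \alpha^{\ell_H}$, as required by Lemma~\ref{lem:easyresdeleteun}.

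Finally, I compute the $b$-function. Choosing a basis of $S_H$ orthonormal with respect to $\varkappa|_{S_H}$, the dual Laplacian $\Delta = u_*(D)$ is the standard Laplacian, and the classical identity $\Delta(u^k) = 2k(2k + n - 2) u^{k-1}$ gives $b(\beta) = 2(\beta+1)(2\beta+n)$. Specializing to $\vs = 0$ (that is, $\alpha_i = t-1$ for each $i$) yields
\[
b(t-1) = 4t\left(t + \tfrac{n-2}{2}\right),
\]
so $\lambda_0 = 0$ and $\lambda_1 = (n-2)/2 = (m_\alpha - 1)/2$. Substituting into the formulas above produces $\kappa'_{H,0} = 0$ and $\kappa'_{H,1} = \tfrac{m_\alpha-1}{2} + \tfrac{1}{2} = \tfrac{m_\alpha}{2} = k_\alpha$, matching~\eqref{eq:kappa-alpha}. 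The main technical obstacle is the precise identification of $u$ with $\varkappa|_{S_H}$ under the correct normalization, together with the $b$-function computation; a mild subtlety arises when $n = 2$ (so $u$ may factor as a product of two $K$-semi-invariant linear forms, forcing use of the multivariable $b$-function), but specializing all $\alpha_i$ to the common value $t-1$ recovers the single-variable $b$-function of $u$ and gives the same roots.
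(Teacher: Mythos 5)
Your proof is correct and follows essentially the same route as the paper: reduction to the rank-one slice $S_\alpha=\C\alpha^\vee\oplus\p^\alpha\oplus\p^{2\alpha}$ at $p\in H_\alpha^\circ$, identification of $u$ with the restricted form $\varkappa$, computation of the Laplacian's $b$-function $b(s)=(s+1)(s+\tfrac12(m_\alpha+1))$ up to a constant, and substitution into~\eqref{kappa-defn}, with surjectivity supplied by Theorem~\ref{thm:symmLSsurjective}. Your extra remark about the possible factorisation of $u$ when $\dim S_\alpha=2$ is a sensible precaution that the paper leaves implicit, and your resolution (specialising all $\alpha_i$ to $t-1$ recovers the one-variable $b$-function) is correct.
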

\begin{remark} 
  If $(\gtilde,\vt)$ is trivial then
$\Sigma=\emptyset$ and we set $\kappa = 0$. Of course, by Remark~\ref{rem:rtrivial-symm}, $\Im(\rad) = A_{\kappa}(W)$ also holds in this case.
\end{remark}

\begin{proof}  This amounts to translating Theorem~\ref{thm:surjectivereducerank1} into the present notation. 
Since $\p_{\reg} = \p_{\st}$ by Lemma~\ref{lem:symmetricstablepolar}, it follows from   Lemma~\ref{lem:slices}  that $\rank(G_p,S_p)\geq 1$
for $p \in \h \smallsetminus \h_{\reg}$. In particular, if $H\in \calA$, then  $\rank(G_p,S_p)= 1$
for all $p \in H^{\circ} = H \cap \h^{\circ}$ by Remark~\ref{rem:rankzeropolar}.

 Fix $\alpha \in R^+$   and pick $p \in
  H_\alpha^{\circ}$. Thus, $\alpha(p) = 0$ and $\mu(b) \neq 0$ for
  $\mu \in \Sigma \smallsetminus \{\alpha\}$. One can easily show
  that
\[
  \g_p = Z_\g(H_\alpha) = Z_{\g}(\h) \oplus \g^\alpha \oplus
  \g^{2\alpha} \ \quad\text{and}\quad \ \p_p = Z_\p(b) = \h
  \oplus \p^\alpha \oplus \p^{2\alpha},\] while
$\g \cdot p = \bigoplus_{\mu \in \Sigma \smallsetminus
  \{\alpha\}} \p^\mu.$ Using Remark~\ref{rem:parameters}, it
follows that the slice $(G_p,S_p)$ is given by
$G_p= Z_G(H_\alpha)$ and
$S_p = \p_p = \h \oplus \p^\alpha \oplus \p^{2\alpha}$ with Weyl
group $W_p = \{1, s_\alpha\}$. Let $\alpha^\vee$ be such that
$s_\alpha(\alpha^\vee)= -\alpha^\vee$ and write
$S_p = S_\alpha \oplus H_\alpha$ where
$S_\alpha= \C \alpha^\vee \oplus \p^\alpha \oplus \p^{2\alpha}$.
Then  
\[
  \C[S_p]^{G_p} = \C[H_\alpha] \otimes \C[S_\alpha]^{G_p} \cong
  \C[\h]^{W_\alpha} = \C[H_\alpha] \otimes \C[\C \alpha^\vee]^{W_p}=
   \C[H_\alpha] \otimes \C[\alpha^2].
\]
The representation $(G_p,S_\alpha)$ is polar of rank one with
\[\dim S_\alpha = \dim (\p^\alpha \oplus \p^{2\alpha}) + 1=
m_\alpha +1.\]  The restriction of the form $\varkappa$ to
$S_\alpha$ is $G_p$-invariant and non-degenerate and so
$\C[S_\alpha]^{G_p} = \C[u] \cong \C[\C \alpha^\vee]^{W_p} =
\C[\alpha^2]$, where $u(x) = \varkappa(x,x)$ for
$x \in S_\alpha$. Therefore the differential operator
$\Delta = u_*(D) \in (\Sym S_\alpha)^{G_p}$ used in
Proposition~\ref{thm:PHVbfunction} is the Laplacian operator on
the space $S_\alpha$; that is,
$\Delta= \sum_{i=1}^{m_\alpha +1} \frac{\!\partial^2}{\partial
  x_i^2}$ if $u =\sum_{i=1}^{m_\alpha +1} x_i^2$ in an orthonormal 
coordinate system. The $b$-function attached to $\Delta$ is easy
to compute:
\[b(s)= (s+1) (s+ \half \dim S_\alpha) = (s+1) (s + \half
(m_\alpha +1)).\] As in Notation~\ref{notation:chidot}, we set
$\lambda_0= 0$ and
$\lambda_1= \half (m_\alpha +1) - 1 = \half(m_\alpha -1)$. Then,
by Theorem~\ref{thm:surjectivereducerank1},  the
parameter $\kappa$ is given by
\[
\kappa_{H_\alpha,0} = 0, \quad \kappa_{H_\alpha,1} = \lambda_1 -
\half +1 = \half m_\alpha = k_\alpha,
\]
as claimed.  Finally, $\Im(\rad)=\Ak(W)$ by Theorem~\ref{thm:symmLSsurjective}.
\end{proof}

 \begin{remark}\label{kappa-elsewhere}
   The    parameter $\kappa$ defined in~\eqref{eq:kappa-alpha} appears
        frequently in the theory of symmetric spaces. For instance, it
        is  known that for this parameter the radial components of the
        invariant constant differential operators  $(\Sym \p)^G
        \subset \dd(\p)^G$ can be computed via the Dunkl operators
        defined by $\kappa$. To give more details it will be convenient to 
        identify  the rational Cherednik algebra $H_\kappa(W)$   with the
        subalgebra of $\dd(\h_{\reg})\rtimes W$ generated by
        $\C[\h]$, $W$ and the Dunkl operators $T_\kappa(y)= T_y$,
        $y \in \h$, as defined in \eqref{eq:Dunkl} for this
        choice of $\kappa$.
        The map $T_\kappa$ extends to a morphism on $\Sym \h$, and composed with
        $\rr^* : (\Sym \p)^G \cong (\Sym \h)^W$ gives 
        \begin{equation}\label{Dunkl-extension}
                \rad(D) = T_\kappa(\rr^*(D)) \in \Ak(W) \quad \text{for all $D \in (\Sym \p)^G \subset
                        \dd(\p)^G$;} 
        \end{equation}
        see, for example, \cite[(6.2)]{dJ}. If $D \in \dd(\p)^G$ then
        $\rad(D) \in eH_k(W)e \subset \dd(\h_{\reg})^W$ can
        be expressed as a polynomial in elements of $\p^*$ and Dunkl
        operators $T_\kappa(y)$ for  $y \in \h$. The   formula \eqref{Dunkl-extension} gives
        this expression when $D \in (\Sym \p)^G \subset
        \dd(\p)^G$. Therefore Proposition~\ref{kappa-symmetric} can be
        viewed as an extension of that formula.
 \end{remark}

 Let $(G,\p)$ be a  symmetric space.
Set
\begin{equation}
  \label{eq:kernel1}
  \KK(\p)= \bigl\{d\in \dd(\p) \, : \,  \forall \, f\in
  \C[\p]^G, \;  d(f)=0  \bigr\},  \quad  \LL(\p) = \KK(\p) / \dd(\p) \tau(\g)
\end{equation}
and note that $\KK(\p)\cap \eD(\p)^G  = \ker(\rad_0)$.

\subsection*{Special classes  of symmetric spaces}
Many important  properties of the diagonal case $(\g \oplus \g, \g)$
generalise to Sekiguchi's nice symmetric spaces, defined as
follows.

\begin{definition} \label{nice-space} If the $k_\alpha$ are defined by \eqref{eq:root-multiplicity2},
then the symmetric pair
  $(\gtilde,\g)$, or the symmetric space $(\g,\p)$, is {\it nice}~if
  \begin{equation}
    \label{eq:dagger} k_\alpha \le 1, \ \text{for all }
   \alpha\in R. 
\end{equation}
  \end{definition}
  
The diagonal case is obviously nice. The reader is
referred to \cite[Section~6]{Se} for further details and for a
classification of nice pairs to \cite[Lemma~6.2]{Se}. For a
version of these results using notation closer to that of this
paper, see \cite{LS3} and especially \cite[Theorem~2.5]{LS3}.
 
The main result from \cite{LS3} now gives:

\begin{theorem}\label{LS3-theorem}
  Suppose that $(\gtilde,\vt)$
is a nice symmetric pair with symmetric space $\p$ and define $\kappa$ by 
\eqref{eq:kappa-alpha}.  Then
$\KK(\p)=\eD(\p)\tau(\g)$ and so
$\ker(\rad_0) =\bigl(\eD(\p)\tau(\g)\bigr)^G$. Moreover, the ring $
\eD(\p)^G/\ker(\rad_0) \cong \Ak(W)$ is simple.
  \end{theorem}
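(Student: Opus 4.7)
The plan. The theorem has three claims: (i) the equality of left ideals $\KK(\p) = \eD(\p)\tau(\g)$ inside $\eD(\p)$; (ii) the identification $\ker(\rad_0) = (\eD(\p)\tau(\g))^G$; and (iii) the ring $\eD(\p)^G/\ker(\rad_0)$ is isomorphic to $\Ak(W)$ and is simple. Since $\ker(\rad_0) = \KK(\p)\cap\eD(\p)^G$, claim (ii) follows from (i) by taking $G$-invariants, using reductivity of $G$. The isomorphism in (iii) is the surjectivity of $\rad_0$ for symmetric spaces, which is Theorem~\ref{thm:symmLSsurjective}. Thus the real content lies in (i) and in the simplicity assertion.

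For (i), the inclusion $\eD(\p)\tau(\g) \subseteq \KK(\p)$ is immediate since $\tau(\g)$ annihilates $\C[\p]^G$. For the reverse inclusion, I would first prove the identity on the regular locus and then extend. By Lemma~\ref{lem:symmetricstablepolar}, $(G,\p)$ is a stable polar representation, so Corollary~\ref{cor:unstablepolaropen} gives $\p_{\reg} \cong G \times_{N}\h_{\reg}$. Combined with Corollary~\ref{cor:localizeradialpartsunstable} applied at $\vs = 0$, this yields $(\eD(\p_{\reg})/\eD(\p_{\reg})\tau(\g))^G \cong \eD(\h_{\reg})^W$, from which the $G$-invariant version of the generic identity follows at once; the fact that $\p_{\reg}$ is a principal $N$-bundle over $\h_{\reg}$ lets one promote this to the non-equivariant identity $\KK(\p_{\reg}) = \eD(\p_{\reg})\tau(\g)$ as left ideals of $\eD(\p_{\reg})$.

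The nontrivial part is extending this generic identity to $\p$ itself. Given $D \in \KK(\p)\cap\eD_n(\p)$, the generic identity produces some $E \in \eD(\p)\tau(\g)$ such that $\deltav^m D - E \in \eD(\p)\tau(\g)$ for some $m \ge 0$. I would then induct jointly on $m$ and $n$, analysing the obstruction locally along each reflection hyperplane $H \in \calA$. As in the proof of Theorem~\ref{thm:surjectivereducerank1}, the slice representation $(G_p,S_p)$ at a generic point $p \in H^{\circ}$ is a rank-one symmetric pair, and by niceness it satisfies $k_\alpha \le 1$. The local rank-one problem then reduces to a one-variable computation involving the $b$-function of Proposition~\ref{thm:PHVbfunction} for the Laplacian on the slice (precisely as used in Proposition~\ref{kappa-symmetric}), which can be solved explicitly. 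The bound $k_\alpha \le 1$ is decisive, as it controls the pole orders produced by localising at $\deltav$ and is what makes the induction terminate.

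For the simplicity of $\Ak(W)$, by Corollary~\ref{cor:quotientetaleh} one reduces to the case where $W$ is irreducible. By Proposition~\ref{kappa-symmetric}, the parameter is $\kappa_{H,0}=0$ and $\kappa_{H,1} = k_\alpha \in \{\tfrac{1}{2}, 1\}$; the classification of nice symmetric pairs in \cite[Lemma~6.2]{Se} then lists precisely which triples $(W,\{k_\alpha\})$ arise, and for each one the simplicity of the spherical algebra at these specific parameter values can be verified from the known criteria (the diagonal case $(\g\oplus\g,\g)$, where $k_\alpha=1$ uniformly and $\Ak(W)$ is the quantum Hamiltonian reduction of Etingof--Ginzburg, is the prototypical example). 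The main obstacle is the extension step in (i): controlling the pole behaviour of operators along the discriminant requires the niceness bound $k_\alpha \le 1$ in an essential way, and without it the induction need not terminate and the identity can fail.
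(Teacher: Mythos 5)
The paper does not prove this theorem from scratch: its entire proof is the citation of \cite[Theorems~A and~D]{LS3}, combined with Proposition~\ref{kappa-symmetric} to match the parameter $\kappa$ of \eqref{eq:kappa-alpha} with the multiplicities $k_\alpha$. Your treatment of parts (ii) and (iii) is fine — (ii) follows from (i) by intersecting with $\eD(\p)^G$, and (iii) combines Theorem~\ref{thm:symmLSsurjective} with Proposition~\ref{kappa-symmetric} (plus a case-check for simplicity, which is really \cite[Theorem~D]{LS3} again). But your argument for the core claim $\KK(\p)=\eD(\p)\tau(\g)$ has a genuine gap in the extension step.

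From the generic identity on $\p_{\reg}$ you correctly get that $\LL(\p)=\KK(\p)/\eD(\p)\tau(\g)$ is $\delta$-torsion, and you propose to kill it by induction with a local analysis at generic points $p\in H^{\circ}$ of the reflection hyperplanes, via the rank-one slices $(G_p,S_p)$. This cannot work, for a support reason: the saturation $G\cdot S_p$ of such a slice maps to a neighbourhood of $\pi(p)\neq 0$ in $\p\git G$, so it misses the nilpotent cone $\NV=\pi^{-1}(0)$ entirely; moreover, by (the argument of) Lemma~\ref{LSresults}(2), $\LL(\p)$ is already supported in $\p\sminus\p_m$, which is invisible at hyperplane-generic points. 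The actual obstruction — a possible nonzero holonomic quotient of $\eD(\p)$ supported on the closure of a non-regular nilpotent orbit — lives precisely on the strata your induction never reaches, and for non-nice, non-integral pairs such obstructions really do occur (this is why $\Ak$ fails to be simple there). The arguments that close this gap, both in \cite{LS3} and in the paper's own Theorem~\ref{mainthm} for the integral case, are global: induction on sub-symmetric pairs to force $\Supp\LL(\p)\subseteq\NV$, the inclusion of $\Ch\LL(\p)$ in the commuting variety intersected with $Z(\p)\times\NV$, and a vanishing theorem (Proposition~\ref{vanishing}) for holonomic equivariant modules whose characteristic variety meets only non-distinguished nilpotent orbits. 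Relatedly, niceness does not function as a bound on pole orders that "makes the induction terminate"; it enters through the representation theory at the parameter $k_\alpha\le 1$ (no finite-dimensional modules with nonzero invariants over parabolic subalgebras, cf.\ Proposition~\ref{thm24}) and through the geometry of nilpotent orbits, so the mechanism you identify as decisive is not the one that actually carries the proof.
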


  \begin{proof} 
    This forms parts of~\cite[Theorems~A and~D]{LS3}, combined with Proposition~\ref{kappa-symmetric}.
\end{proof}

The second class of simple spherical algebras arises
from \cite{BEG}. 
Let  $\mc{H}_q(W)$ be  the Hecke algebra associated to a general
parameter $\kappa$ as in \cite[p.~284]{BEG}. We also need the following definition.

\begin{definition}\label{gainly-defn} Define the symmetric pair  $(\gtilde,\vt)$ and its corresponding symmetric
space  $(G,\p)$ to be \emph{\integral} if  the parameter
 $\kappa$ from \eqref{eq:kappa-alpha} takes only integral values; equivalently, if $k_\alpha\in \mathbb{Z}$ for all
 $\alpha\in R$.   
 Define $(\gtilde,\vt)$  and $(G,\p)$  to be \emph{\gainly} if $(\gtilde,\vt)$  is a direct product of nice symmetric pairs and  integral symmetric pairs. 
 
 A  trivial symmetric space is both nice and \integral\ since in this case $R=
  \emptyset$, and  so the relevant conditions are   vacuously satisfied.
  
 \end{definition}

 The significance of integrality comes from the following  result. 
  
 \begin{theorem}\cite[Theorem~3.1]{BEG} \label{thm:BEG}
\begin{enumerate}
 \item    If $(\gtilde,\vt)$  is \integral\ then $\mc{H}_q(W)$ is semisimple.
 \item[(2)]   If the Hecke algebra $\mc{H}_q(W)$ is semisimple, then the algebra $\Ak(W)$ is
  simple. Indeed, even the Cherednik algebra $\Hk(W)$ is simple. 
  \end{enumerate}
 \end{theorem}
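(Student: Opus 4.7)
The plan is to handle the two parts separately, as they have rather different flavours: part~(1) reduces to a specialisation computation, while part~(2) requires the machinery of the Knizhnik-Zamolodchikov functor.

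For part~(1), I would unpack the definition of $\mc{H}_q(W)$ as a flat deformation of $\C W$ with Hecke parameters $q = (q_{H,j})$ obtained from the Cherednik parameters $\kappa = (\kappa_{H,j})$ by an exponential recipe, essentially $q_{H,j} = \exp\!\bigl(2\pi i(\kappa_{H,j} + j/\ell_H)\bigr)$ up to fixed conventions. In the symmetric space setting $\ell_H = 2$, while $\kappa_{H,0} = 0$ and $\kappa_{H,1} = k_\alpha$ by \eqref{eq:kappa-alpha}, so $q_{H,0} = 1$ and $q_{H,1} = -\exp(2\pi i k_\alpha)$. Integrality means $k_\alpha \in \mathbb{Z}$, which forces $q_{H,1} = -1$ and hence the defining relation $(T_H-1)(T_H+1) = 0$, i.e.\ $T_H^2 = 1$. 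At this specialisation $\mc{H}_q(W) \cong \C W$, which is semisimple by Maschke's theorem since $W$ is finite and $\mr{char}\,\C = 0$.

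For part~(2), I would first reduce the simplicity of $\Ak(W) = e\Hk(W)e$ to that of $\Hk(W)$ by a standard corner-ring argument: if $\Hk(W)$ is simple and $0 \neq J \subseteq \Ak(W)$ is a two-sided ideal, then $\Hk(W)J\Hk(W)$ is a nonzero two-sided ideal of $\Hk(W)$, hence equals $\Hk(W)$, and multiplying by $e$ on both sides yields $\Ak(W) = e\Hk(W)J\Hk(W)e \subseteq \Ak(W) J \Ak(W) \subseteq J$. For the simplicity of $\Hk(W)$, I would invoke the Dunkl embedding of Lemma~\ref{lem:quotientetaleh}: it identifies $\Hk(W)[\deltah^{-1}]$ with $\dd(\h_{\reg}) \rtimes W$, which is simple because $W$ acts freely on $\h_{\reg}$. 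Consequently any nonzero two-sided ideal $I \subseteq \Hk(W)$ must contain a power $\deltah^n$ of the discriminant, and the task reduces to excluding such proper ideals under semisimplicity of $\mc{H}_q(W)$.

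The heart of the argument is the Knizhnik-Zamolodchikov functor $\mathrm{KZ}: \mc{O}(\Hk(W)) \to \mc{H}_q(W)\text{-mod}$ of Ginzburg-Guay-Opdam-Rouquier: it is exact, its kernel is the Serre subcategory of modules in $\mc{O}$ annihilated by some power of $\deltah$, and it realises $\mc{H}_q(W)\text{-mod}$ as a localisation of $\mc{O}$. Semisimplicity of $\mc{H}_q(W)$ propagates through KZ to semisimplicity of $\mc{O}$; in that case all standard modules $\Delta(\tau)$ coincide with their simple heads $L(\tau)$, each of these is a free $\C[\h]$-module on which $\deltah$ acts as a non-zero-divisor, and so the Serre subcategory of $\mc{O}$ annihilated by $\deltah^n$ is zero. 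Combined with the faithfulness of the polynomial representation $\Delta(\mr{triv}) = \C[\h]$ for $\Hk(W)$ (itself a direct consequence of the Dunkl embedding), this forces $I = \Hk(W)$.

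The main obstacle is this KZ step: specifically, propagating semisimplicity from $\mc{H}_q(W)$ to $\mc{O}$ and identifying the kernel of KZ with modules of torsion support. Both points rest on the full highest-weight-category theory for rational Cherednik algebras of Ginzburg-Guay-Opdam-Rouquier and Rouquier. An alternative route, closer to the original argument in Berest-Etingof-Ginzburg, uses Heckman-Opdam shift functors to transfer simplicity between parameter values, sidestepping the highest-weight machinery but relying on the analytic construction of shift operators; both strategies ultimately hinge on the same monodromy input.
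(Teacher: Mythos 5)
The paper does not reprove this statement: it is quoted directly from \cite[Theorem~3.1]{BEG}, and the ``proof'' in the text is only a dictionary (the $c_\alpha$ of \emph{loc.\ cit.}\ equal $-k_\alpha$; the set $\C[R]^W_{\mathrm{reg}}$ is by definition the locus where $\mc{H}_q(W)$ is semisimple; and $\mathbb{Z}[R]^W \subseteq \C[R]^W_{\mathrm{reg}}$ as observed before \cite[Corollary~2.3]{BEG}). Measured against that, your part~(1) is correct and is exactly the reason behind BEG's observation: integral parameters force the eigenvalues of the generators $T_H$ to be $\pm 1$, so $\mc{H}_q(W) \cong \C W$ (using that the specialised Hecke algebra of a Weyl group has dimension $|W|$), which is semisimple by Maschke. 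Your corner-ring reduction of the simplicity of $\Ak(W)$ to that of $\Hk(W)$, and the localisation step showing that any nonzero two-sided ideal $I$ must contain $\deltah^N$, are also fine.

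The gap is in the final step of part~(2). From semisimplicity of $\mc{O}$ you correctly conclude that $\mc{O}$ contains no nonzero object killed by a power of $\deltah$; but the quotient $\Hk(W)/I$ is not an object of $\mc{O}$, nor does it obviously have composition factors there, so this does not force $I = \Hk(W)$. Indeed $\Hk(W)/\Hk(W)\deltah$ is a nonzero, finitely generated, $\deltah$-torsion left module for \emph{every} parameter, semisimple $\mc{O}$ or not; what must be excluded is a $\deltah$-torsion \emph{two-sided} quotient, and passing from that statement to category $\mc{O}$ requires an extra mechanism (the internal grading coming from the Euler element together with the theory of Harish--Chandra bimodules and their supports, or BEG's own shift-operator argument, which you mention only as an ``alternative route''). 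That bridge is precisely the substance of \cite[Theorem~3.1]{BEG}, so as written your proposal still rests on the cited theorem rather than replacing it. Since the paper only invokes the reference, the efficient course is to do the same; a genuinely self-contained proof would have to supply this missing ingredient explicitly.
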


 \begin{proof}  We briefly explain the notation from \cite{BEG}. First, the parameters $c_\alpha$ from \cite{BEG} are just the negative of our $k_\alpha$ (compare \eqref{eq:kappa-alpha} with Remark~\ref{comparison}).  The hypothesis 
 $c\in \C[R]_{\mathrm{reg}}^W$  in \cite[Theorem~3.1]{BEG} simply means that the corresponding Hecke 
 algebra $\mc{H}_q(W)$ is semisimple. As observed immediately before \cite[Corollary~2.3]{BEG}, the hypothesis 
 $c\in \mathbb{Z}[R]^W$ (meaning that each $c_{\alpha}\in \mathbb{Z}$)  also implies that $c\in \C[R]_{\mathrm{reg}}^W$, which gives Part~(1).  Part~(2) of the theorem is then \cite[Theorem~3.1]{BEG}. \end{proof}

 \begin{remark}
   \label{Heckeremark}
   Retain the notation from the proof of Theorem~\ref{thm:BEG}. If $(\gtilde,\vt)$ is not integral, then 
   one of the parameters $ \kappa_\alpha = - c_\alpha$ belongs to $\half + \Z$. It then follows
   from \cite[\S3.8 and Theorem~3.9]{Gyojahecke} that the  associated Hecke
   algebra $\mc{H}_q(W)$ is not semisimple. Thus the converse of Theorem~\ref{thm:BEG}(1) also holds. 
  \end{remark}

\begin{corollary}
  \label{thm42}
  Assume that the symmetric  pair $(\gtilde,\vt)$ is \gainly. Then the algebra $\Ak=\Im(\rad)$ is simple.
\end{corollary}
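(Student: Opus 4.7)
The plan is to reduce immediately to the irreducible factors of the symmetric pair and then invoke, case by case, the two simplicity criteria already at our disposal. By definition of \gainly, we may write $(\gtilde,\vt) = \prod_{i=1}^m (\gtilde_i, \vt_i)$, where each factor is either nice or \integral. Remark~\ref{rem:reducible-symm} decomposes the symmetric space accordingly as $(G,\p) = \prod_i (G_i,\p_i)$ with a corresponding decomposition $\h = \bigoplus_i \h_i$ of the Cartan subspace, Weyl group $W = \prod_i W_i$, and parameter $\kappa = (\kappa_1,\ldots,\kappa_m)$. Under this decomposition one has
\[
\Ak(W) \ \cong \ A_{\kappa_1}(W_1) \otimes_\C \cdots \otimes_\C A_{\kappa_m}(W_m)
\]
as an identity of subalgebras of $\dd(\h_{\reg})^W$.

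Next, I would apply Lemma~\ref{reducible-spaces} (itself a consequence of Corollary~\ref{cor:quotientetaleh}, using that the centre of each $A_{\kappa_i}(W_i)$ reduces to $\C$ by Lemma~\ref{lem:quotientetaleh}(3)) to conclude that $\Ak(W)$ is simple if and only if each tensor factor $A_{\kappa_i}(W_i)$ is simple. This reduces the problem to verifying simplicity factor by factor.

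It remains to deal with each irreducible factor individually. If $(\gtilde_i,\vt_i)$ is nice, then Theorem~\ref{LS3-theorem} directly asserts that $A_{\kappa_i}(W_i) \cong \dd(\p_i)^{G_i}/\ker(\rad_0)$ is a simple ring. If instead $(\gtilde_i,\vt_i)$ is \integral, then by definition the parameter $\kappa_i$ from \eqref{eq:kappa-alpha} takes only integer values; Theorem~\ref{thm:BEG}(1) then yields that the associated Hecke algebra $\mc{H}_{q(\kappa_i)}(W_i)$ is semisimple, and Theorem~\ref{thm:BEG}(2) upgrades this to simplicity of $A_{\kappa_i}(W_i)$. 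In the degenerate case where $(\gtilde_i,\vt_i)$ is trivial, $W_i = \{1\}$ and $A_{\kappa_i}(W_i) = \dd(\h_i)$, which is of course simple. Combining these observations completes the argument.

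No serious obstacle is expected; the corollary is essentially a bookkeeping consequence of (i) the multiplicativity of $\Ak$ under direct sums of polar representations, (ii) Corollary~\ref{cor:quotientetaleh} on tensor products of centrally trivial algebras, and (iii) the two previously established simplicity criteria (Theorems~\ref{LS3-theorem} and~\ref{thm:BEG}). The only point requiring a touch of care is checking that the parameter $\kappa$ associated to the product pair really decomposes as $\kappa_1 + \cdots + \kappa_m$ under the identification of hyperplane arrangements $\calA = \bigsqcup_i \calA_i$, but this is immediate from~\eqref{eq:kappa-alpha} since the restricted root system $R$ of $(\gtilde,\vt)$ is the disjoint union of the $R_i$ and the multiplicities $m_\alpha$ are computed within a single summand.
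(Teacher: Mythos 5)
Your argument is correct and follows essentially the same route as the paper: decompose the \gainly\ pair into nice and \integral\ summands, reduce via Lemma~\ref{reducible-spaces}, and invoke Theorem~\ref{LS3-theorem} for the nice factors and Theorem~\ref{thm:BEG} for the \integral\ ones. The only (cosmetic) omission is that the paper also cites Proposition~\ref{kappa-symmetric} to identify $\Im(\rad)$ with $A_\kappa(W)$ before arguing about simplicity, but since the statement already uses that identification this does not affect the substance.
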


\begin{proof}  By Proposition~\ref{kappa-symmetric},    $\Im(\rad)= A_\kappa(W)$.  
Now combine Lemma~\ref{reducible-spaces} with Theorems~\ref{LS3-theorem}
and~\ref{thm:BEG}.
\end{proof}

  In fact Corollary~\ref{thm42}  exhausts  the symmetric pairs for which $\Ak$ is simple, since we will prove the following stronger result.
 
 \begin{theorem}
   \label{simplicity-irred-symm-space}
 Let $(\gtilde,\vt)$ be a  symmetric pair.  Then the spherical algebra $\Ak$ is simple if and only if 
 $(\gtilde,\vt)$ is \gainly.
 \end{theorem}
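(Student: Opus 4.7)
The ``if'' direction is Corollary~\ref{thm42}. For the converse, suppose $\Ak$ is simple. By Remark~\ref{rem:reducible-symm} the spherical algebra tensor-factors as $\Ak \cong \bigotimes_i A_{\kappa_i}$ across the decomposition of $(\gtilde,\vt)$ into irreducible symmetric pairs $(\gtilde_i,\vt_i)$, and Lemma~\ref{reducible-spaces} implies that each $A_{\kappa_i}$ is simple. It therefore suffices to establish that any irreducible symmetric pair $(\gtilde,\vt)$ for which $\Ak$ is simple must be either nice or integral.

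Assume for contradiction that $(\gtilde,\vt)$ is irreducible but neither nice nor integral. By Proposition~\ref{kappa-symmetric}, the parameters are $\kappa_{H_\alpha,0}=0$ and $\kappa_{H_\alpha,1}=k_\alpha= m_\alpha/2$. Non-niceness provides a root $\alpha$ with $k_\alpha>1$, while non-integrality provides a root $\beta$ with $k_\beta\in\half+\Z_{\geq 0}$. Since the irreducible symmetric pairs and their multiplicity data are listed in Appendix~\ref{app-tables}, this leaves a finite, explicitly enumerated list of candidates to handle, and the proof will proceed by exhibiting a proper two-sided ideal of $\Ak(W)$ for each one.

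The main input is the precise simplicity criterion for $\Ak(W)$ when $W$ is a Weyl group, read off from the KZ-functor theory of~\cite{GGOR}. By Remark~\ref{Heckeremark}, non-integrality forces the Hecke algebra $\mc{H}_q(W)$ to be non-semisimple, so the KZ functor cannot be an equivalence and category $\mc{O}(\Hk(W))$ acquires non-trivial submodule structure. Combined with the hypothesis $k_\alpha>1$, this allows the construction of a non-trivial finite-length module in $\mc{O}$ whose annihilator is a proper non-zero ideal $I \subsetneq \Hk(W)$; a direct argument then shows that the corresponding ideal $eIe$ in $\Ak(W)=e\Hk(W)e$ is also proper and non-zero, contradicting simplicity of $\Ak$.

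The principal obstacle is that this argument cannot proceed uniformly from Hecke non-semisimplicity alone: nice but non-integral pairs (where some $k_\alpha = \half$ but all $k_\alpha \leq 1$) have non-semisimple Hecke algebras yet simple spherical algebras by Theorem~\ref{LS3-theorem}. The extra hypothesis $k_\alpha>1$ thus plays an essential role, and its interaction with a parameter $k_\beta\in\half+\Z$ must be controlled separately in each non-simply-laced type $\mathsf{B}_n,\mathsf{C}_n,\mathsf{F}_4,\mathsf{G}_2$ and the non-reduced type $\mathsf{BC}_n$, where the two orbits of reflecting hyperplanes carry independent parameters. The verification is carried out case by case from the tables in Appendix~\ref{app-tables}, each time using explicit computations with the parameter $\kappa$ together with known results on representations of rational Cherednik algebras of Weyl groups (notably those of~\cite{BEG} and their descendants).
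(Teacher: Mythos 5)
Your outer structure matches the paper's: the ``if'' direction is Corollary~\ref{thm42}, the reduction to irreducible pairs via Lemma~\ref{reducible-spaces} is correct, and the converse does come down to a finite list of table entries (the eleven cases of Examples~\ref{computations}) that are neither nice nor integral. The gap is in the mechanism you propose for showing non-simplicity in those cases. Producing a proper non-zero two-sided ideal $I\subsetneq \Hk(W)$ (or knowing that $\mc{H}_q(W)$ is non-semisimple, or that $\mc{O}$ is not semisimple) does \emph{not} yield a proper ideal of $\Ak(W)=e\Hk(W)e$: the ideal $eIe$ equals all of $e\Hk(W)e$ precisely when $e\in I$, and this failure is exactly what happens for the nice non-integral pairs you flag. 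Your ``direct argument then shows that $eIe$ is also proper and non-zero'' is therefore the entire content of the theorem, and the sketch gives no way to carry it out; saying that ``$k_\alpha>1$ plays an essential role'' and ``must be controlled separately in each type'' is a promise, not a proof. Note also that if you intend $I=\ann L$ for $L$ of finite length in $\mc{O}$, such annihilators can be zero (the polynomial representation is often faithful), so you would at least need $L$ finite dimensional with $L^W=eL\neq 0$ --- and the existence of such an $L$ for the full Weyl group $W$ in each of the eleven cases is not something your argument establishes.

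The ingredient you are missing is Losev's total asphericity criterion (Proposition~\ref{thm24}): $A_c(W)$ is simple if and only if $e_{W'}M'=0$ for every non-trivial \emph{parabolic} subgroup $W'\subseteq W$ and every finite-dimensional $H_{c'}(W')$-module $M'$. This reduces the whole problem to rank-one and rank-two sub-root-systems, where the paper constructs the required modules explicitly: in type $\Asf_1$ with $c=m+\half$, $m\ge 1$, the $(2m+1)$-dimensional quotient $L_c(\triv)$ of the polynomial representation has non-zero sign-isotypic component; and in type $\Bsf_2/\Csf_2$ with $2(c_1+c_2)=2m-1$, $m\ge 3$, the Chmutova--Etingof module $Y_r$ does (Corollaries~\ref{A1-case} and~\ref{ex34}, combined with the sign twist of Lemma~\ref{lem22} to convert a non-zero $\sgn$-component at parameter $c$ into non-zero $W'$-invariants at parameter $-c$). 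Each of the eleven table entries is then dispatched by exhibiting a suitable rank-one or rank-two parabolic. Without the passage to parabolics and the explicit isotypic-component computations, your case-by-case check has no engine to run on; for instance, in types $\Esf\VI$, $\Esf\IX$ ($W=\Fsf_4$) one never needs any representation theory of $H_c(\Fsf_4)$ itself, only of a $\Bsf_2$ parabolic. Finally, a small point: $W$ is always the Weyl group of the reduced system $R=\Sigma_{\mathrm{red}}$, so there is no separate ``non-reduced type $\mathsf{BC}_n$'' case at the level of reflection arrangements; the non-reduced restricted root system only enters through the multiplicities $k_\alpha$.
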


\begin{proof}      The proof of consists of a case-by-case analysis to show
  that,   for the symmetric spaces not covered by Corollary~\ref{thm42},
  the algebra $\Ak$ is not simple. This is deferred to
  Appendix~\ref{appA}.  
  \end{proof}


\section{The kernel of the radial parts map  for symmetric spaces}\label{Sec:kernel}
  As has been remarked in  the introduction, given a polar representation $(G,V)$   then $\Ker(\rad_{0})$ always contains $(\dd(V)\tau(\g))^G$, but checking equality is an important but  much more subtle problem. 
  The   main aim of this section is 
  to examine this problem  for symmetric spaces and thereby to  complete the proof  
  Theorem~\ref{intro-mainthm}. In Corollaries~\ref{CORB} and ~\ref{maincor2} we give applications of this result to equivariant eigendistributions. The reader is referred to the discussion after Corollary~\ref{Harish-Chandra-map21} for  applications to  the  diagonal case.

   Since $\Ker(\rad)$ is described by Theorem~\ref{LS3-theorem} for nice symmetric spaces, it remains to consider 
    the integral symmetric spaces, as we do below.   Suppose that $\gtilde$ is semisimple. Then $(\gtilde,\g)$ is
isomorphic to a product of trivial pairs and irreducible
symmetric spaces $(\gtilde_i,\g_i)$ where either
$(\gtilde_i,\g_i) \cong (\fs \times \fs, \fs)$, with $\fs$ 
simple, or $\gtilde_i$ is simple.  From the tables  in Appendix~\ref{app-tables},
the  integral  irreducible
symmetric spaces $(\gtilde,\g)$  are the following:
\begin{enumerate}
\item $\mathsf{A II}_n = (\fsl(2n),\fsp(n))$, $ n\ge 2$, with  $k=2$; 
  \item $\mathsf{D II}_p = (\fso(2p),\fso(2p-1))$, $ p\ge 3$, with
    $k=p-1$;
    \item $\mathsf{E IV} = (\mathfrak{e}(6),\, \mathfrak{f}(4))$,
      with $k=4$;
      \item $(\fs \times \fs, \fs)$ with $\fs$ simple and $k = 1$.
\end{enumerate}

Equivalently:

\begin{lemma}\label{integra-equ}  A symmetric pair 
$(\gtilde,\vt)$ is \integral\ if and only if it is isomorphic 
to a product of pairs of type (1)--(4) and trivial pairs. \qed
\end{lemma}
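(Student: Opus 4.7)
The plan is to reduce the claim to the case of irreducible symmetric pairs and then invoke the classification tables in Appendix~\ref{app-tables}. First I would recall from the beginning of Section~\ref{Sec:examples} that every symmetric pair $(\gtilde,\vartheta)$ decomposes as a product
\[
(\gtilde,\vartheta) \;\cong\; (\gtilde_1,\vartheta_1) \times \cdots \times (\gtilde_m,\vartheta_m)
\]
of irreducible symmetric pairs, and that such a decomposition induces a corresponding decomposition of the Cartan subspace $\h = \h_1 \oplus \cdots \oplus \h_m$ and of the restricted root system $\Sigma = \Sigma_1 \sqcup \cdots \sqcup \Sigma_m$ (with each $\Sigma_i$ living in $\h_i^*$). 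Since the multiplicity $m_\alpha$ of a root $\alpha \in \Sigma_i$ (and hence $k_\alpha = m_\alpha/2$) is computed inside $\gtilde_i$ alone, integrality of $(\gtilde,\vartheta)$ is equivalent to the integrality of each factor $(\gtilde_i,\vartheta_i)$.

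Next I would dispose of the trivial factors. A trivial pair, as defined in Remark~\ref{rem:rtrivial-symm}, has $R = \emptyset$, so the integrality condition $k_\alpha \in \mathbb{Z}$ for all $\alpha \in R$ is vacuous. Hence trivial pairs are automatically integral, and they may appear freely among the factors.

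It remains to determine which \emph{irreducible non-trivial} symmetric pairs are integral. Here the key point is that the numbers $m_\alpha = \dim \gtilde^\alpha + \dim \gtilde^{2\alpha}$ are tabulated, one $W$-orbit of roots at a time, in the standard classification tables of irreducible symmetric spaces reproduced in Appendix~\ref{app-tables}. Reading off these tables, one checks that $k_\alpha \in \mathbb{Z}$ for every $\alpha$ holds precisely for the four families (1)--(4) listed above the lemma; in all other cases at least one orbit produces a half-integer value of $k_\alpha$. The only step that requires care is the bookkeeping across the different root orbits: for non-simply-laced restricted root systems one must verify integrality separately on the orbit of short roots and on the orbit of long roots (and, in the $\mathsf{BC}$ case, on the unreduced roots as well), but this is purely a matter of inspecting the tables.

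Combining these three observations yields the lemma. The only potential obstacle is the faithfulness of the case-by-case check in step three, but since the dimensions $\dim \gtilde^\alpha$ for irreducible symmetric pairs are classical and are explicitly listed in Appendix~\ref{app-tables}, this check is routine.
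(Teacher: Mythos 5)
Your proposal is correct and takes essentially the same route as the paper, which treats the lemma as an immediate consequence of the decomposition of a symmetric pair into trivial and irreducible factors (with the multiplicities $k_\alpha$ computed factor by factor) together with the list of multiplicities in the tables of Appendix~\ref{app-tables}. One small bookkeeping remark: by Definition~\ref{gainly-defn} integrality is tested only on the reduced root system $R=\Sigma_{\mathrm{red}}$, and $k_\alpha=\tfrac{1}{2}\bigl(\dim \gtilde^\alpha + \dim \gtilde^{2\alpha}\bigr)$ already absorbs the contribution of the unreduced roots, so no separate check on a $\mathsf{BC}$-type orbit of unreduced roots is required.
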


In our finer analysis of \integral\ symmetric pairs it will be convenient
to exclude pairs $(\fs \times \fs, \fs)$ with $\fs$ simple; partly because,
by Remarks~\ref{distingexamples}(2), a useful technical condition~\eqref{equ:ast} usually fails
for such a pair.  This will not affect the results since the  pairs
$(\fs \times \fs, \fs)$ are also nice symmetric spaces. We therefore make
the following definition.

\begin{definition}
  \label{almostnice}
  The symmetric pair $(\gtilde,\vt)$ satisfies $(\dagger)$ if it is an
  \integral\ symmetric pair with no summands of the form
  $(\fs \times \fs, \fs)$ for $\fs$ simple. As usual, the
  corresponding symmetric space $(G,\p)$ satisfies $(\dagger)$ if
  $(\gtilde,\vt)$ satisfies $(\dagger)$.
  \end{definition}

For standard definitions   concerning
$\dd(\p)$-modules, in particular for the support $\Supp M$ and characteristic  
variety $\Ch M$  of  a $\dd$-module $M$, we refer the reader to
\cite[Section~3]{LS3}.
We will frequently identify $\p$ with its dual $\p^*$ through the non-degenerate 
$G$-invariant bilinear form $\varkappa$.  As in \cite[Section~2]{LS3},  the  \emph{commuting
variety of $\p$} is the closed
subvariety of $T^*\p = \p \times \p^* \cong \p \times \p$ defined by
$\euls{C}(\p) = \{(x,y) \in \p \times \p : [x,y] = 0\}$.

If $b \in \p$,  then the \emph{centraliser} of $b$ in a subset $X$ of
$\gtilde$  is denoted by $X_b$.
Recall that $x \in \p$ is called \emph{nilpotent} if $x \in
[\gtilde,\gtilde]$ and $\adj_{\gtilde}(x)$ is nilpotent; this is equivalent to
 $f(x) = 0$ for all $f \in \C[\p]_+^G$. We denote by $\NV$ the set of nilpotent elements.  

 By Lemma~\ref{lem:symmetricstablepolar}, any symmetric space
 $\p$ is stable. Thus, by the discussion before
 Corollary~\ref{cor:unstablepolaropen}, $\p_{\st}=\p_{\mathrm{st}}$ and
 hence, by~\eqref{eq:Ucomplement}
 and in the notation of \eqref{eq:msdefnorbit},
  $$s=m = \max_{v \in \p} \dim G \cdot v = \dim \p - \dim \h.
  $$
  We set $\p_m = \{v \in \p : \dim G\cdot v =m\}$.
   It is useful to note that, in the notation of Section~\ref{Sec:polarreps}, 
    $\p_{\reg} = \p_m \cap \p_{\mathrm{ss}}$  (written in our
    notation, this is the assertion  of \cite[Remark~6]{KR}).

 We say that
 $x$ is \emph{regular nilpotent} if $x \in \p_m\cap \NV$ and  write $
 \NVreg = \{x \in \NV: \text{$x$ is regular nilpotent}\}.  $ 
 We say that $x\in \NV$ is \emph{distinguished}, if  the centraliser $\p_x$ does not contain non-central
semisimple elements and put 
 $ \Vdist = \{x \in \NV : \text{$x$ is distinguished}\}.$
By  \cite[Lemma~1.3]{Pa} one has $\NVreg \subset \Vdist$.

\begin{remarks} \label{distingexamples}
  (0) If $(\gtilde,\vt)$ is trivial, then $\p= \h$ and so $\NV=\{0\}.$ 
  
  (1) If $(\gtilde,\g) = (\fso(n),\fso(n-1))$ for $n \ge 3$, then
  $(\gtilde,\g)$ is of rank one and
  $\NVreg = \Vdist = \NV \sminus \{0\}$ since $\p_x= \C x$ for
  all $x \in \p \sminus \{0\} = \C^{n-1} \sminus \{0\}$; see, for
  example, \cite[Proposition~4]{SY}.

  (2) If  $(\gtilde,\g)= (\fs \times \fs, \fs)$ with $\fs$
  simple, then $\p \cong \fs$ is the adjoint representation of $\fs$ and it
  is known that $\mathbf{N}(\p)^{\reg}= \Vdist$ only in the case where $\fs =
  \fsl(n)$, see~\cite{BaCa}.
\end{remarks}

Suppose that $b \in \p$ is semisimple. Then, as in \cite[I.6]{KR},  the
decomposition $\gtilde_b = \g_b \boplus \p_b$ defines a \emph{sub-symmetric
pair} $(\gtilde_b,\g_b) $  inside
$(\gtilde,\g)$ with \emph{sub-symmetric space} $(\g_b,\p_b)$. 

\begin{lemma}
  \label{distinguishedinduction}
 If the symmetric pair $(\gtilde,\g)$ satisfies $(\dagger)$ then
 the following condition holds:
 \begin{equation}\label{equ:ast}
    \text{\ $\mathbf{N}(\p_b)^{\mathrm{dist}}
   = \mathbf{N}(\p_b)^{\mathrm{reg}}$ for each  sub-symmetric pair
   $(\gtilde_b,\g_b)$ of $(\gtilde,\g)$.  }
 \end{equation}
 Furthermore, every sub-symmetric pair of $(\gtilde,\g)$ satisfies $(\dagger)$.
 \end{lemma}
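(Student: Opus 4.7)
The plan is to establish both assertions by reducing to the irreducible case via the decomposition of a symmetric pair into irreducible summands, then verifying each claim using the classification given in Lemma~\ref{integra-equ}.

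First I would prove the final sentence (inheritance of $(\dagger)$). Since the condition is preserved under products and every semisimple element decomposes compatibly, we may assume $(\gtilde,\g)$ is irreducible and $b \in \h$ (using that any semisimple $b \in \p$ is conjugate into $\h$). Then $\h$ is still a Cartan subspace of $\p_b$, and the restricted root system of $(\gtilde_b, \g_b)$ with respect to $\h$ is the subsystem $\Sigma_b = \{\alpha \in \Sigma : \alpha(b) = 0\} \subset \Sigma$, with the multiplicities $m_\alpha$ (and hence $k_\alpha = m_\alpha/2$ as in~\eqref{eq:root-multiplicity2}) inherited unchanged from $(\gtilde,\g)$. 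Thus integrality passes to $(\gtilde_b, \g_b)$. It then remains to check, by direct inspection of the tables in Appendix~\ref{app-tables} together with the short list $\mathsf{A II}_n$, $\mathsf{D II}_p$, $\mathsf{E IV}$ from Lemma~\ref{integra-equ}, that none of these irreducible $(\dagger)$ pairs admits a sub-symmetric summand of the form $(\fs \times \fs, \fs)$ with $\fs$ simple.

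With inheritance in hand, the first assertion reduces as follows. Decomposing $(\gtilde_b, \g_b) = \prod_j (\gtilde_{b,j}, \g_{b,j})$ into irreducible summands splits $\NV(\p_b)$, $\NVreg(\p_b)$ and $\Vdist(\p_b)$ as products, so it suffices to prove $\Vdist = \NVreg$ for each irreducible factor. Trivial pairs contribute $\NV = \{0\}$ and the equality is vacuous. By the previous paragraph, the remaining factors appear in the list $\mathsf{A II}_n$, $\mathsf{D II}_p$, $\mathsf{E IV}$. The case $\mathsf{D II}_p = (\fso(2p), \fso(2p-1))$ is of rank one and Remarks~\ref{distingexamples}(1) gives $\Vdist = \NVreg$ immediately. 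For $\mathsf{A II}_n$ and $\mathsf{E IV}$, I would appeal to the Kostant--Rallis description of nilpotent $G$-orbits in $\p$: in $\mathsf{A II}_n$ the orbits are parametrised by partitions of $2n$ in which every odd part has even multiplicity, and the condition that $\p_x$ contains no non-central semisimple element forces a single part (hence the regular orbit); in $\mathsf{E IV}$ the list of nilpotent orbits is small enough to verify the equality by inspection.

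The main obstacle is precisely the case-by-case verification of \eqref{equ:ast} for $\mathsf{A II}_n$ and $\mathsf{E IV}$. This requires either a uniform structural argument (for instance, showing directly from the weighted Dynkin diagram classification that the only distinguished orbit is the regular one in these integral non-nice cases) or an appeal to the detailed tables of nilpotent orbits in symmetric spaces; everything else in the proof is formal inheritance or a known rank-one result.
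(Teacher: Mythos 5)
Your overall reduction---decompose into irreducible summands, dispose of the trivial and rank-one ($\mathsf{DII}_p$) cases via Remarks~\ref{distingexamples}(1), and isolate $\mathsf{AII}_n$ and $\mathsf{EIV}$ as the cases needing real work---has exactly the shape of the paper's argument, and your observation that the restricted root system and the multiplicities $k_\alpha$ of a sub-symmetric pair $(\gtilde_b,\g_b)$ are inherited from $(\gtilde,\g)$ is a clean way to justify the ``Furthermore'' clause. The genuine gap is that the two hard cases are precisely the content of the lemma, and you do not prove them: you explicitly defer the verification of \eqref{equ:ast} for $\mathsf{AII}_n$ and $\mathsf{EIV}$ as ``the main obstacle''. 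The paper closes exactly this gap by citation: for $\mathsf{AII}_n$ condition \eqref{equ:ast} is \cite[Theorem~3.2]{Pa}, and for $\mathsf{EIV}$ it is established in the proof of \cite[Proposition~3.4]{Pa}. Without either that citation or a completed computation, the lemma is not proved.

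Moreover, the combinatorial route you sketch for $\mathsf{AII}_n$ starts from the wrong parametrization. The nilpotent $G$-orbits in $\p$ for $(\fsl(2n),\fsp(n))$ are indexed by partitions of $2n$ in which \emph{every} part occurs with even multiplicity (equivalently, doubled partitions $(\mu,\mu)$ with $\mu\vdash n$); the condition ``every odd part has even multiplicity'' classifies nilpotent adjoint orbits of $\fsp(2n)$, which is a different problem. In particular ``a single part'' cannot characterise the regular orbit here---the regular nilpotent in $\p$ has Jordan type $(n,n)$---so your criterion for being distinguished has to be reworked before the computation can be carried out. Note also that \eqref{equ:ast} quantifies over \emph{all} sub-symmetric pairs, not just $(\gtilde,\g)$ itself; your inheritance argument does reduce this to the irreducible pairs on the list of Lemma~\ref{integra-equ} (together with the check that no diagonal summand $(\fs\times\fs,\fs)$ with $\fs\not\cong\fsl(n)$ can occur), but only once the identity $\Vdist=\NVreg$ is actually established for $\mathsf{AII}_m$ for all $m$ and for the rank-one factors arising inside $\mathsf{EIV}$.
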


\begin{proof}
  It suffices to prove the claims when $(\gtilde,\g)$ is
  irreducible, with the trivial case being obvious. For type $\mathsf{D II}_p$ it follows from
  Remarks~\ref{distingexamples}(1), since a sub-symmetric pair
      is either trivial (when $b \ne 0$) or equal to  $(\gtilde,\g)$ (if
  $b=0$). The assertion is proved in
  \cite[Theorem~3.2]{Pa} for type  $\mathsf{A
    II}_n$ and \cite[Proof of Proposition~3.4]{Pa} for type $\mathsf{E IV}$.
\end{proof}

Set $Z(\p) = \NV \sminus \Vdist$. Observe that,  for a symmetric space $(G,\p)$ satisfying $(\dagger)$,
 we have $Z(\p) = \NV \sminus \NVreg$.

\begin{proposition}
  \label{vanishing}
 Let $(\gtilde,\g)$ be a symmetric pair. Suppose that  $M$ is a
 holonomic $G$-equivariant $\dd(\p)$-module such that
\[
\Ch M \subseteq \euls{C}(\p)
 \cap (Z(\p) \times \NV),
\]
 where $\Ch M$ is the 
 characteristic variety  of $M$. Then $M=\{0\}$.  
\end{proposition}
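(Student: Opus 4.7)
The plan is to derive a contradiction from the assumption $M \neq 0$ by picking an irreducible component $Y$ of $\Supp M$ and exploiting the fact that, for $M$ holonomic, the conormal variety to $Y$ sits inside $\Ch M$.

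Projecting the containment $\Ch M \subseteq Z(\p) \times \NV$ onto the first factor gives $\Supp M \subseteq Z(\p)$. Since $M$ is $G$-equivariant and $G$ is connected, every irreducible component $Y$ of $\Supp M$ is $G$-stable, and the Kostant--Rallis theorem (finiteness of nilpotent $G$-orbits) forces $Y = \overline{G\cdot x}$ for a single $x\in Z(\p)$. Because $x$ lies in $Z(\p) = \NV \smallsetminus \Vdist$, it is non-distinguished and we may choose a semisimple element $s \in \p_x$ which is not central in $\gtilde$. The entire strategy reduces to showing $s \in \NV$: once this is done, $\adj_{\gtilde}(s)$ would be simultaneously semisimple and nilpotent, hence zero, so $s$ would be central in $\gtilde$, contradicting the choice of $s$.

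To produce this containment I would use holonomicity of $M$: the characteristic cycle $CC(M)$ is a positive-integer combination of conormal varieties to the irreducible closed subvarieties of $\Supp M$, and each irreducible component of $\Supp M$ contributes. In particular $\overline{T^*_Y \p} \subseteq \Ch M$. The fiber of $T^*_Y \p$ over the smooth point $x \in G\cdot x$ is the annihilator inside $T^*_x\p$ of $T_x Y = \g \cdot x$. Identifying $T^*_x\p$ with $\p$ via the non-degenerate invariant form $\varkappa$, one computes for $y \in \p$:
\[
\varkappa(y, [g,x]) \ = \ -\varkappa(g, [y,x]) \quad \text{for all } g \in \g,
\]
so $y$ lies in the annihilator of $\g\cdot x$ if and only if $[y,x] \in \g^{\perp}\cap \gtilde$. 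Since $[y,x]\in[\p,\p]\subseteq \g$ while $\g\cap \g^\perp = 0$ (the form $\varkappa$ being non-degenerate on $\g$ for a symmetric pair), this forces $[y,x] = 0$, i.e.\ $y \in \p_x$. Hence $(\g\cdot x)^\perp = \p_x$ and so $\{x\}\times \p_x \subseteq \Ch M$. The hypothesis $\Ch M \subseteq \p \times \NV$ then gives $s \in \p_x \subseteq \NV$, completing the contradiction and yielding $M = 0$.

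The one mildly delicate step is the identification of the conormal fiber with $\p_x$, which rests on the standard orthogonality properties of $(\gtilde,\vt)$; after that the argument is essentially formal. Holonomicity is used only to guarantee that the conormal variety of each irreducible component of $\Supp M$ is contained in $\Ch M$.
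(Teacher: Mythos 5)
Your proof is correct. It rests on the same geometric fact as the paper's argument --- a non-distinguished nilpotent $x$ has a non-central semisimple element $s\in\p_x$, and the constraint that the second projection of $\Ch M$ lands in $\NV$ is incompatible with this --- but the implementation is genuinely different and more self-contained. The paper first reduces to $M$ simple, quotes the argument of \cite[Theorem~3.8]{LS3} to see that $\Supp M$ is a single nilpotent orbit closure, and then invokes the dimension estimate of \cite[Lemma~2.2(ii)]{LS3} (a component of $\Ch M$ through $(x,\xi)$ with $x$ non-distinguished has dimension $<\dim\p$) to contradict holonomicity. You instead avoid both the reduction to simple modules and the external lemma: you observe that for holonomic $M$ every irreducible component $Y=\overline{G\cdot x}$ of $\Supp M$ has its full conormal variety $\overline{T^*_Y\p}$ inside $\Ch M$, compute the conormal fibre $(\g\cdot x)^{\perp}=\p_x$ via the invariance of $\varkappa$ and the orthogonality $\g\perp\p$, and then read off the contradiction pointwise from $s\in\p_x\subseteq\NV$. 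Your route buys a proof that needs only the standard structure of characteristic varieties of holonomic modules (conic Lagrangian, hence a union of conormal varieties) plus finiteness of nilpotent orbits, at the cost of redoing a computation that \cite{LS3} packages; the paper's route is shorter on the page but leans on two external inputs. All the individual steps you flag --- $G$-stability of the components of $\Supp M$, the identification $T_xY=\g\cdot x$ at the smooth point $x$, and the identity $\varkappa(y,[g,x])=-\varkappa(g,[y,x])$ forcing $[y,x]\in\g\cap\g^{\perp}=0$ --- check out, so I see no gap.
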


\begin{proof}
  We follow the proof of \cite[Theorem~3.8]{LS3}. We may assume that
   $0\not=M$ is simple.  Then, as in  {loc.~cit.,} one shows that the support of $M$
  is the closure of a single nilpotent orbit $G\cdot
  x$.  
  By hypothesis $G\cdot x \subseteq Z(\p)$, thus $x$ is not
  distinguished. Pick an irreducible component $X$ of $\Ch M$
  containing a point of the form $(x,\xi)$. Then, it follows from
  \cite[Lemma~2.2(ii)]{LS3} that $\dim X < \dim \p$. But since
  $M$ is   holonomic, we must have $\dim X=\dim \p$,
  giving a contradiction.
\end{proof}

Recall from \eqref{eq:kernel1} the $\dd(\mf{p})$-modules $\KK(\p)$ and $\LL(\p)$. We will need the following general lemma.   
  
\begin{lemma}
  \label{LSresults}
  Let $(G,\p)$ be a symmetric space.
  
   {\rm (1)} Let $d \in \dd(\p)$ and suppose that  
  $dp \in \dd(\p )\tau(\g)$ for some $p \in (\Sym \p)^G$. Then 
  that $p^n d \in \dd(\p )\tau(\g)$ for some  $n >0$. In particular, if $F \subset
  (\Sym \p)^G$ is an ideal such that $dF \subset  \dd(\p )\tau(\g)$,
  then $F^t d \subset  \dd(\p )\tau(\g)$ for some $t >0$.

  {\rm (2)}   The support of $\LL(\p)= \KK(\p)/\dd(\p)\tau(\g)$
  in $\p$ is contained in $\p \sminus \p_m$ and hence in $\p\sminus \p_{\reg}$.
   In particular, for any $\theta\in \LL(\p)$ there exists $n> 0$ such that $\delta^n\theta=0$.
\end{lemma}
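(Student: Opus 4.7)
The two parts require different techniques: Part (1) is a purely algebraic manipulation exploiting $G$-invariance and ad-nilpotence, while Part (2) reduces to a local geometric claim on $\p_m$ followed by a Nullstellensatz argument.

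For Part (1), I plan to exploit two basic facts. First, since every $p \in (\Sym \p)^G$ is $G$-invariant, $[\tau(x),p] = \tau(x)(p) = 0$ in $\dd(\p)$ for all $x \in \g$. Hence $\dd(\p)\tau(\g)$ is stable under right multiplication by $p$: from $e\tau(x)\cdot p = ep\tau(x)$, one shows by induction that $dp \in \dd(\p)\tau(\g)$ forces $dp^k \in \dd(\p)\tau(\g)$ for every $k \geq 1$. Second, $\Sym \p \subset \dd(\p)$ acts ad-locally nilpotently on $\dd(\p)$ (extending by the derivation property the evident fact $\ad(\partial_i)(x_j)=\delta_{ij}$), so there is $N$ with $\ad(p)^{N+1}(d) = 0$. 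Expanding this identity via the binomial formula gives
$$ p^{N+1}d \ = \ \sum_{k=1}^{N+1}(-1)^{k+1}\binom{N+1}{k}p^{N+1-k}(dp^k)\ \in \ \dd(\p)\tau(\g), $$
since each $dp^k \in \dd(\p)\tau(\g)$. For the ``in particular'' clause I would use that $(\Sym\p)^G$ is noetherian: write $F = (p_1,\ldots,p_s)$, choose $n_i$ with $p_i^{n_i}d \in \dd(\p)\tau(\g)$, and set $t = n_1+\cdots+n_s$. Every generator $p_1^{a_1}\cdots p_s^{a_s}$ of $F^t$ then has some $a_i \geq n_i$ by pigeonhole, so after reordering (the $p_j$ commute) $(p_1^{a_1}\cdots p_s^{a_s})d \in \dd(\p)\cdot(p_i^{n_i}d) \subseteq \dd(\p)\tau(\g)$, where we use that $\dd(\p)\tau(\g)$ is a left ideal.

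For Part (2), the plan is to reduce $\Supp \LL(\p) \subseteq \p \sminus \p_m$ to the local identity $\KK(\p_m) = \dd(\p_m)\tau(\g)$, equivalently $\LL(\p)|_{\p_m} = 0$. The key geometric input is that on $\p_m$ every $G$-orbit has the constant dimension $m = \dim\p - \dim\h$, so $\tau(\g)$ forms a subbundle of $T_{\p_m}$ of rank $m$; combined with a dimension count (essentially from \cite{KR}) this gives $\ker(d\pi_v) = T_v(G\cdot v) = \tau(\g)_v$ for every $v\in\p_m$, so $\pi\colon \p\to \p/\!/G \cong \h/W$ is smooth on $\p_m$. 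By the standard fact that, for a smooth morphism $f\colon X\to Y$, operators on $X$ annihilating $f^{*}\C[Y]$ coincide with the left ideal generated by vertical vector fields, the identity $\KK(\p_m) = \dd(\p_m)\tau(\g)$ follows. Concretely, working \'etale locally around $v$, one chooses a frame $\{y_1,\ldots,y_r,\tau_1,\ldots,\tau_m\}$ of $T_{\p_m}$ with $\tau_j \in \tau(\g)$ and $y_i$ dual to algebraically independent generators $f_1,\ldots,f_r$ of $\C[\p]^G$; a PBW-style decomposition puts any $d\in\dd(\p_m)$ into the form $d \equiv d_0 \pmod{\dd(\p_m)\tau(\g)}$ with $d_0$ involving only the $y_i$, and if $d \in \KK(\p_m)$ then $d_0$ annihilates $\C[f_1,\ldots,f_r]$, forcing $d_0 = 0$ since $y_i(f_j) = \delta_{ij}$. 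This yields $\Supp \LL(\p) \subseteq \p\sminus\p_m \subseteq \p\sminus\p_{\reg} = V(\delta)$ (using $\p_{\reg} = \p_m \cap \p_{\mathrm{ss}}$). For any $\theta \in \LL(\p)$, the cyclic $\C[\p]$-submodule $\C[\p]\theta$ is supported in $V(\delta)$, so by the Nullstellensatz $\delta \in \sqrt{\ann_{\C[\p]}(\theta)}$, giving $\delta^n\theta = 0$ for some $n>0$.

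The main obstacle will be making the frame/PBW argument rigorous at non-semisimple points of $\p_m$, where $G$-orbits need not be closed and Luna's slice theorem does not apply directly; one must work formally or \'etale locally and confirm that the decomposition modulo $\dd(\p_m)\tau(\g)$ is well-defined despite the non-trivial topology of the $G$-action. A cleaner alternative would be to pass to associated graded rings and compare the symbol of $\KK$ with the ideal generated by the symbols of $\tau(\g)$, using the known structure of the moment-map variety as in \cite{BLT}.
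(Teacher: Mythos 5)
Your proof is correct and follows essentially the same route as the paper: for (1) the paper simply cites \cite[Lemma~5.4]{LS}, whose content is exactly your ad-nilpotence-plus-binomial argument (together with the pigeonhole step for $F^t$), and for (2) the paper likewise localises at $v\in\p_m$, uses \cite[Theorem~13]{KR} to identify $\ker d_v\varpi$ with $\tau(\g)_{|v}$, produces derivations dual to the $p_j$ via Nakayama, and kills the ``horizontal'' part of $P$ by testing against $\C[p_1,\dots,p_\ell]$, before concluding with the same Nullstellensatz step for $\delta^n\theta=0$. Your closing worry about non-semisimple points is unnecessary: no slice theorem is needed, since the argument works entirely in the local ring $\C[\p]_v$ once the rank statement from \cite{KR} is in hand.
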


 \begin{proof}
  (1) The   proof of \cite[Lemma~ 5.4]{LS} proves the existence of  the
  integer~$n$. Let $F=\sum_{j=1}^r (\Sym \p)^G p_j$   and pick $n \in \N^*$ such that
  $p_j^n d \in \dd(\p )\tau(\g)$ for all $j$. Then  $F^{rm} d \in \dd(\p )\tau(\g)$.

  (2)  The idea behind the proof is standard (see, for example, \cite[Lemma~6.7]{AJM} or the proof of  \cite[Lemma~4.5]{LS3}) so some details will be left to the reader. 
  Since $\p_m \supseteq  \p_{\reg}$ by   \cite[Remark~6]{KR},   it suffices to prove the first assertion of Part~(2).    
  
 Let $v\in \p_m$.   If $M$ is a $\C[\p]$-module, denote by $M_v$ the localisation
  of $M$ at the maximal ideal $\mathbf{m}_v$ defined by~$v$. Pick algebraically independent homogeneous elements $p_j$ 
  such that $\C[\p]^G= \C[p_1,\dots,p_\ell]$. By
  \cite[Theorem~13]{KR} the morphism $\varpi: \p \to \C^\ell$ defined by 
  $\varpi(x) =(p_1(x),\dots,p_\ell(x))$, has rank~$\ell = \dim \h$
  at~$v$. It follows that
  $T_v(G \cdot v) = \Ker d_v \varpi = \C \tau(\g)_{\mid v} =
  \bigl\{[v, \xi] : \xi \in  \g \bigr\}$. Furthermore, there exist
  scalars $\lambda_j$ such that $\{z_j = p_j - \lambda_j : 
  1 \le j \le \ell\}$  forms part of a system of parameters of the
  local ring $\C[\p]_v$. Using \cite[Corollary~15.1.12]{MR} we can find
    derivations $\partial_i \in \Der \C[\p]_v$ such that
    $\partial_i(z_j) = \partial_i(p_j) = \delta_{i,j}$ for 
    $1 \le i,j \le \ell$. Set
    $N= \sum_{i=1}^\ell \C[\p]_v \partial_i$ and denote by
    $N_{\mid v} \subset T_v \p$ the space of tangent vectors
    defined by the elements $\partial_i$. From
    $\Ker d_v \varpi = \C \tau(\g)_{\mid v}$ one deduces that
\[
  \Der \C[\p]_v/ (\mathbf{m}_v\Der \C[\p]_v) = T_v \p = N_{\mid v}
  \oplus \C \tau(\g)_{\mid v}.
\]
Then, Nakayama's Lemma implies that $\Der \C[\p]_v = N + \C[\p]_v
\tau(\g)$. 

Let $P\in \KK(\p)_v:=\C[\p]_v\otimes_{\C[\p]}\KK(\p)$.  Then  one can write $P= P_0 + P_1$ with 
  $P_0 \in \dd(\p)_v \tau(\g)$ and (with the usual notation) $P_1 \in \sum_{\mathbf{i}\geq 0}
  \C[\p]_v \partial^{\mathbf{i}}$. If $P_1\not=0$, we can find some element $f\in \C[\p]^G=\C[p_1,\dots,p_{\ell}]$ such that $P_1\ast f\not=0$. Since $\tau(\g)\ast \C[\p]^G=0$, it follows that  $P\ast f\not=0$, contradicting the fact that $P\in \KK(\p)_v$.  
Thus, $P=P_0 \in \dd(\p)_v \tau(\g)$. This proves that $\LL(\p)_v= \{0\}$, as required.
   \end{proof}

    In order to understand the kernel of $\rad$,   and more generally $\KK(\p)$, we need to 
  reduce to the case of irreducible symmetric spaces and so we need to understand how $\KK(\p)$ relates to the corresponding objects for 
  summands of $\p$.  This will form the content of the next couple of results. 
  
   We begin with an abstract result  in which $\otimes$ will mean $\otimes_\C$.   For $i=1,2$, let $A_i$ 
   be  a $\C$-algebras with a left  ideal $J_i$. Given a left $A_1$-module $M$ and $0\not=f\in A_1$, then $M$ is called \emph{ $f$-torsionfree} 
    if $fm\not=0$ for all $0\not=m\in M$. 
    Set $A=A_1\otimes A_2$ and identify $A_1=A_1\otimes 1\subseteq A$ and $A_2=1\otimes A_2\subseteq A$. 
   Write  
   \[L=AJ_1+AJ_2=J_1\otimes A_2 +A_1\otimes J_2.\]
    
    \begin{lemma}\label{subsubsym} Keep the notation as above. For $i=1,2$, let $f_i\in A_i$ and assume that $A_i/J_i$ is $f_i$-torsionfree. Then:
    \begin{enumerate} 
    \item $A/L$ is $f_1$-torsionfree;
    \item if $f=f_1f_2\in A$ then $A/L$ is $f$-torsionfree.
    \end{enumerate}
    \end{lemma}
    
    \begin{proof}  (1)   As $\C$-vector spaces, choose a complementary summand  $J_2^\perp$ of $J_2$ inside $A_2$ and pick $\C$-bases 
    $\{\theta_{j}\}$ of $J_2 $, respectively $\{\phi_j\}$ of $J_2^\perp$. 
  Suppose that  $D\in A$ satisfies $f_1^sD\in L$ for some $s\geq 1$. We can write $D$ uniquely as 
  \[D=\sum_p \alpha_p\otimes \theta_p +\sum_q \beta_q\otimes \phi_q\qquad \text{ for some $\alpha_p,\beta_q\in A_1$.}\] 
  Now $L=A_1\otimes J_2+J_1\otimes J_2^\perp$ and  so we can  write 
  $f^sD =\sum_i\mu_i \theta_i + \sum_j \nu_j\phi_j$ for some $\mu_i\in A_1$ and $\nu_j\in J_1$. On the other hand, 
    \[f^sD=\sum_p (f^s\alpha_p)\otimes \theta_p +\sum_q (f^s\beta_q)\otimes \phi_q,\]
   and so, by uniqueness,  $f^s\alpha_p=\mu_p$ and $f^s\beta_q=\nu_q$ for all $p,q$. In particular each such $f^s\beta_q=\nu_q\in J_1$ and 
   hence $\beta_q\in J_1$ as $A_1/J_1$ is $f_1$-torsionfree. Therefore,
    \[ D=\sum_p \alpha_p\otimes \theta_p +\sum_q \beta_q\otimes \phi_q\in A_1\otimes J_2+J_1\otimes A_2=L;\]
    as required.
    
    (2) Suppose that $L\ni f^sD=f_1^sf_2^sD$ for some $D\in A$ and $s\geq 1$. Then part (1) implies that $f_2^sD\in L$. By the analogue of part (1) for 
    $f_2$, it follows that $D\in L$.
    \end{proof}
    
    We now return to symmetric spaces.  Let   $(G,\p)= (G_1,\p_1) \oplus (G_1,\p_2)$ be  a direct sum of symmetric spaces, and keep the resulting notation from Remark~\ref{rem:reducible-symm}.
 To simplify the notation, identify $\dd(\p_1)$ with $\dd(\p_1)\otimes 1$ inside $\dd(\p)=\dd(\p_1)\otimes \dd(\p_2)$ and similarly for $\dd(\p_2)$.   Define $\KK(\p_i)$ analogously to \eqref{eq:kernel1}; thus 
\[
\KK(\p_i)=\{ D\in \dd(\p_i) \ : \ D(\C[\p_i])^{G_i}=0\}.
\]  
   
   \begin{lemma}
  \label{sumsymmetricpairs}
 Let $(G,\p)= (G_1,\p_1) \oplus (G_2,\p_2)$ be a sum of symmetric
 spaces. Then 
  \[\KK(\p) = \Bigl(\KK(\p_1) \otimes \dd(\p_2)\Bigr) + \Bigl(\dd(\p_1)
  \otimes \KK(\p_2)\Bigr).\]
\end{lemma}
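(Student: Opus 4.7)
The inclusion $(\KK(\p_1) \otimes \dd(\p_2)) + (\dd(\p_1) \otimes \KK(\p_2)) \subseteq \KK(\p)$ is immediate. For if $D_1 \in \KK(\p_1)$ and $D_2 \in \dd(\p_2)$, then for any generator $f_1 \otimes f_2$ of $\C[\p]^G = \C[\p_1]^{G_1} \otimes \C[\p_2]^{G_2}$ one has $(D_1 \otimes D_2)(f_1 \otimes f_2) = D_1(f_1) \otimes D_2(f_2) = 0$; the other summand is symmetric. Denote the right-hand side of the claimed identity by $\KK_{\mathrm{sum}}$. This step uses neither robustness nor anything beyond the tensor factorisation of $\C[\p]^G$.

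For the reverse inclusion, the plan is to compare two natural embeddings of quotients of $\dd(\p)$ into a single $\Hom_\C$-space. For $j = 1, 2$, the evaluation map
\[
\rho_j \colon \dd(\p_j) \longrightarrow \Hom_\C(\C[\p_j]^{G_j}, \C[\p_j]), \qquad D \mapsto D|_{\C[\p_j]^{G_j}},
\]
has kernel $\KK(\p_j)$ by definition, giving an embedding $\dd(\p_j)/\KK(\p_j) \hookrightarrow \Hom_\C(\C[\p_j]^{G_j}, \C[\p_j])$. Since tensor products over a field preserve injections, tensoring these produces an injection into $\Hom_\C(\C[\p_1]^{G_1}, \C[\p_1]) \otimes \Hom_\C(\C[\p_2]^{G_2}, \C[\p_2])$. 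Composing with the standard $\C$-linear injection $\Hom_\C(A, B) \otimes \Hom_\C(C, D) \hookrightarrow \Hom_\C(A \otimes C, B \otimes D)$ and identifying $\dd(\p_1) \otimes \dd(\p_2) = \dd(\p)$ and $\C[\p_1]^{G_1} \otimes \C[\p_2]^{G_2} = \C[\p]^G$ yields an injection
\[
\dd(\p)/\KK_{\mathrm{sum}} \ \cong \ \dd(\p_1)/\KK(\p_1) \otimes \dd(\p_2)/\KK(\p_2) \ \hookrightarrow \ \Hom_\C(\C[\p]^G, \C[\p]).
\]
By construction this composite coincides with the evaluation $\rho \colon \dd(\p) \to \Hom_\C(\C[\p]^G, \C[\p])$, whose kernel is $\KK(\p)$. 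Comparing kernels forces $\KK_{\mathrm{sum}} = \KK(\p)$.

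The one technical point requiring care is the injectivity of $\Hom_\C(A, B) \otimes \Hom_\C(C, D) \to \Hom_\C(A \otimes C, B \otimes D)$ for arbitrary vector spaces; this is essentially contained in \cite[Lemma~1.1]{Ps}, which was already invoked in the proof of Lemma~\ref{lem:sumsymmetricpairs1}(1). Concretely, given $\sum_i f_i \otimes g_i$ in the kernel one may assume the $g_i$ are linearly independent in $\Hom_\C(C,D)$; fixing $a \in A$, the element $\sum_i f_i(a) \otimes g_i$ vanishes in $B \otimes \Hom_\C(C, D)$, and expanding $f_i(a)$ in a basis of $B$ and extending $\{g_i\}$ to a basis of $\Hom_\C(C,D)$ forces $f_i(a) = 0$ for every $a$, whence $f_i = 0$. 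I note that the argument above does not make explicit use of the robust hypothesis; the hypothesis is presumably stated because this lemma will be applied in combination with Lemma~\ref{distinguishedinduction} to reduce the study of $\KK(\p)$ to the irreducible summands of a robust symmetric pair.
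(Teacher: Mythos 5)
Your proof is correct, and it takes a genuinely different and considerably more elementary route than the paper's. The linear algebra underlying it is sound: over a field the kernel of a tensor product of linear maps is $(\ker\rho_1)\otimes \dd(\p_2)+\dd(\p_1)\otimes(\ker\rho_2)$, the natural map $\Hom_\C(A,B)\otimes\Hom_\C(C,D)\to\Hom_\C(A\otimes C,B\otimes D)$ is injective, and the composite you construct agrees with the evaluation map $D\mapsto D|_{\C[\p]^G}$ on pure tensors $D_1\otimes D_2$ and $f_1\otimes f_2$, hence everywhere by bilinearity; its kernel is $\KK(\p)$ by definition. (One small presentational point: the assertion that $\sum_i f_i(a)\otimes g_i$ vanishes in $B\otimes\Hom_\C(C,D)$ is not what one knows directly; one knows $\sum_i f_i(a)\otimes g_i(c)=0$ for every $c$, and the vanishing of each $f_i(a)$ then follows by expanding in a basis of $B$ and using the linear independence of the $g_i$, which is essentially the computation you sketch.) The paper argues quite differently: it sets $M=\KK(\p)/\bigl(\KK(\p_1)\dd(\p_2)+\dd(\p_1)\KK(\p_2)\bigr)$, uses Lemma~\ref{lem:sumsymmetricpairs1} to make $M$ a right module over the spherical algebra $\Ak$, invokes the simplicity of $\Ak$ (this is where the \gainly\ hypothesis enters, via Corollary~\ref{thm42}) together with \cite[Corollary~6.12]{BNS} to conclude that $M$ is $\delta$-torsionfree on the left, and then derives a contradiction with the $\delta$-torsion statement of Lemma~\ref{LSresults}(2). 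Your argument replaces all of this machinery with the observation that $\KK(\p)$ is the kernel of a tensor product of linear maps, and in particular shows that the \gainly\ hypothesis is not needed for this lemma (it is of course still needed downstream, in Theorem~\ref{mainthm}). Your remark about \cite[Lemma~1.1]{Ps} is apt: that lemma computes $\ann_{R_1\otimes R_2}(M_1\otimes M_2)$ by exactly this kernel computation, but cannot be quoted verbatim here because $\C[\p]^G$ is not a $\dd(\p)$-module; what you do is run the identical argument for the ``annihilator of a subspace'' $\KK(\p)=\ker\bigl(\dd(\p)\to\Hom_\C(\C[\p]^G,\C[\p])\bigr)$, which is precisely the right generalisation.
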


\begin{proof} Set $A_i=\dd(\p_i)$; thus $A=A_1\otimes A_2\cong \dd(\p)$. Writing $J_i=\KK(\p_i)$ for each $i$,  we need to prove that $L:=AJ_1+AJ_2$ equals $\KK(\p)$.
Since $\C[\p_1]^{G_1}\otimes\C[\p_2]^{G_2}=\C[\p]^G$, at least $L\subseteq \KK(\p)$.

If $0\not=f_i\in \C[\p_i]$ then we claim that the left $A_i$-module $ A_i/J_i$ is $f_i$-torsionfree. Indeed  suppose that $D\in A_i$ satisfies $f^s_i D\in J_i$ for some $s$;  thus $f^sD(\psi)=0$ for all $\psi\in \C[\p_i]^G$. Since $\C[\p_i]$ is a domain this implies that  $D(\psi)=0 $ for all such  $\psi$. In other words, $D\in J_i$ and so 
$ A_i/J_i$ is $f_i$-torsionfree, as claimed.

Now let $\deltav_i \in \C[\p_i]$ be the discriminant of the pair $(G_i,\p_i)$ and note that $\deltav=\deltav_1\deltav_2$ is the discriminant of  $(G,\p)$. By the previous paragraph each $A_i/J_i$ is $\deltav_i$-torsionfree. 
Now let $D\in \KK(\p)$. Then, by Lemma~\ref{LSresults}(2), 
there exists $s$ such that $\deltav^s D \in \dd(\p)\tau(\g)$. Since $\tau(\g_i)\subset J_i$ clearly $\dd(\p)\tau(\g)\subseteq L$ and hence $\deltav^s D\in L$. Thus Lemma~\ref{subsubsym}(2) implies that $D\in L$. Hence $L=\KK(\p)$, as required.
\end{proof}

  We can now prove the   main  result  of this section. 
  
\begin{theorem}\label{mainthm}
  Let $(G,\p)$ be a \gainly\ symmetric space. Then 
  $\KK(\p) = \dd(\p) \tau(\g),$ and so   $(\dd(\p)\tau(\g))^G = \ker(\rad_0). $
\end{theorem}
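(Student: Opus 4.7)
The plan is to combine the reduction techniques already developed with the vanishing principle of Proposition~\ref{vanishing}. First, using Lemma~\ref{sumsymmetricpairs}, I would reduce to the case where $(\gtilde,\vt)$ is irreducible. By Lemma~\ref{integra-equ} an irreducible \gainly\ symmetric pair is either trivial, nice, of diagonal type $(\fs\times\fs,\fs)$ (which is nice), or one of the three purely \integral\ types $\mathsf{A II}_n$, $\mathsf{D II}_p$, $\mathsf{E IV}$. The trivial case is vacuous since $\p=\h$ and $\tau(\g)$ acts by $0$; the nice cases are handled by Theorem~\ref{LS3-theorem}. So the remaining task is to treat an irreducible \integral\ pair, equivalently a pair satisfying condition $(\dagger)$ of Definition~\ref{almostnice}.

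For such a pair, set $M = \LL(\p) = \KK(\p)/\dd(\p)\tau(\g)$; we must show $M=0$. I would first establish that $M$ is a finitely generated holonomic $G$-equivariant $\dd(\p)$-module, with $\Ch M \subseteq \euls{C}(\p)$. Holonomicity comes from placing a good filtration on $\KK(\p)$, controlling its symbols via the commuting variety as in \cite[Section~4]{LS3}, and using the simplicity of the spherical algebra $\Ak$ (Corollary~\ref{thm42}) to kill the $G$-invariant part $M^G = \ker(\rad_0)/(\dd(\p)\tau(\g))^G$. By Lemma~\ref{LSresults}(2), every local section of $M$ is killed by a power of the discriminant $\deltav$, so $\Supp M \subseteq \p \sminus \p_m$.

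The next step is to upgrade $\Supp M \subseteq \p \sminus \p_m$ to $\Supp M \subseteq \NV$. I would proceed by induction on $\dim \p$ using Luna's slice theorem: for any non-zero semisimple $b \in \p$, the formation of $\KK$ and of $\dd(\p)\tau(\g)$ behaves well under slicing, so the localization $M_b$ corresponds to the analogous quotient $\LL(\p_b)$ for the sub-symmetric space $(\g_b,\p_b)$. By Lemma~\ref{distinguishedinduction} this sub-pair is still \gainly\ and is of strictly smaller dimension, so by induction $\LL(\p_b)=0$, forcing $M_b=0$. Hence $\Supp M \subseteq \NV$. Combining this with condition~\eqref{equ:ast} from Lemma~\ref{distinguishedinduction}---which under $(\dagger)$ gives $\NV^{\mathrm{dist}} = \NVreg$ for every sub-symmetric pair---and with $\Supp M \subseteq \p\sminus \p_m \subseteq \p \sminus \NVreg$, we obtain $\Supp M \subseteq \NV \sminus \NVreg = Z(\p)$. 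Therefore $\Ch M \subseteq \euls{C}(\p)\cap (Z(\p)\times \NV)$, and Proposition~\ref{vanishing} yields $M=0$.

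The main obstacle is the holonomicity / commuting variety step: showing $\Ch M \subseteq \euls{C}(\p)$ with $M$ holonomic for the three \integral\ types. For nice pairs this was established in \cite{LS3} via a uniform dimension count on $\euls{C}(\p)$, but in the \integral\ setting one has $k_\alpha = 2, p-1$ or $4$ rather than $k_\alpha \le 1$, so one cannot directly invoke Sekiguchi's argument. Instead, I would rely on the simplicity of $\Ak$ (Corollary~\ref{thm42}) and on $\rad_0$ being surjective (Theorem~\ref{thm:symmLSsurjective}) to bound the size of $M^G$, and then exploit the explicit structure of the nilpotent orbits in types $\mathsf{A II}_n$, $\mathsf{D II}_p$, $\mathsf{E IV}$ (where distinguished $=$ regular by Lemma~\ref{distinguishedinduction}) to run the induction cleanly. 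The final identity $(\dd(\p)\tau(\g))^G = \ker(\rad_0)$ then follows by taking $G$-invariants in $\KK(\p)=\dd(\p)\tau(\g)$ and using $\KK(\p)^G = \ker(\rad_0)$.
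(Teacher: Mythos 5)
Your overall strategy coincides with the paper's: reduce to irreducible pairs via Lemma~\ref{sumsymmetricpairs}, dispose of the nice ones via Theorem~\ref{LS3-theorem}, and for pairs satisfying $(\dagger)$ run an induction over sub-symmetric pairs to push $\Supp\LL(\p)$ into $\NV\sminus\NVreg=Z(\p)$ and then invoke Proposition~\ref{vanishing}. However, there is a genuine gap at the final step. Proposition~\ref{vanishing} requires $\Ch\LL(\p)\subseteq\euls{C}(\p)\cap(Z(\p)\times\NV)$, and you deduce this directly from $\Supp\LL(\p)\subseteq Z(\p)$. The support statement only controls the first factor; it does not force the \emph{second} (covector) factor into $\NV$. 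Since a nilpotent $x$ commutes with plenty of non-nilpotent elements (any semisimple element of $\p_x$, for instance), $\euls{C}(\p)\cap(Z(\p)\times\p)$ is strictly larger than $\euls{C}(\p)\cap(Z(\p)\times\NV)$, and the vanishing proposition does not apply without the extra containment. The paper closes this gap with a separate argument: once $\Ch\LL(\p)\subseteq\euls{C}(\p)\cap(\NV\times\p)$ is known, $\LL(\p)$ is holonomic (by the dimension bound of \cite[Lemma~2.2(i)]{LS3}), hence of finite length, so $\End_{\dd(\p)}(\LL(\p))$ is finite dimensional; viewing $\LL(\p)$ as a right $\dd(\p)^G$-module, its annihilator therefore meets $(\Sym\p)^G$ in a graded ideal of finite codimension, which must contain $\mathbf{m}^s$ for $\mathbf{m}=(\Sym\p)^G_+$; Lemma~\ref{LSresults}(1) then converts right-annihilation modulo $\dd(\p)\tau(\g)$ by powers of $\mathbf{m}$ into the missing containment of the second factor in $\NV$. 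You need to supply this (or an equivalent) argument.

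A secondary point: your treatment of holonomicity is both mis-ordered and partly circular. The inclusion $\Ch\LL(\p)\subseteq\euls{C}(\p)$ is automatic (it is a subquotient of $\dd(\p)/\dd(\p)\tau(\g)$), but holonomicity is \emph{not} --- the commuting variety has dimension $\dim\p+\dim\h$ --- and it only becomes available after the support has been pushed into $\NV$. Moreover, using simplicity of $\Ak$ to ``kill $M^G=\ker(\rad_0)/(\dd(\p)\tau(\g))^G$'' is circular: that vanishing is precisely the second assertion of the theorem. Your worry that the integral cases ($k_\alpha=2,\,p-1,\,4$) obstruct the commuting-variety estimate is unfounded: the dimension bound on $\euls{C}(\p)\cap(\NV\times\p)$ from \cite[Lemma~2.2]{LS3} is independent of the multiplicities, so no case-by-case analysis of nilpotent orbits is needed beyond the equality $\Vdist=\NVreg$ already recorded in Lemma~\ref{distinguishedinduction}.
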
 

\begin{proof}    It suffices to prove that $\LL(\p)=   0$. 
By Lemma~\ref{sumsymmetricpairs} it suffices to prove this when $(\gtilde,\vt)$ is irreducible.  Since  the nice
  symmetric spaces  are   covered by Theorem~\ref{LS3-theorem},  it
  remains to consider the case when $(\gtilde,\vt)$ satisfies $(\dagger)$.
 We argue by induction on the dimension $\dim \g$  of sub-symmetric pairs $(\gtilde_b,\g_b)$
  of $(\gtilde,\g)$.   

  If $(\gtilde_b,\g_b)$ is trivial, the
  assertion is clear, so assume not.   
  Suppose  that   $0\not=b\in \p$ is a semisimple element. By
  Lemma~\ref{integra-equ},   the centraliser $\g_b$ cannot equal
  $\g$.   Thus,  the slice $(\g_b,\p_b)$ is a proper sub-symmetric pair   and  so, by induction and 
  Lemma~\ref{distinguishedinduction}, the theorem holds for $(\g_b,\p_b)$.  Therefore,  \cite[Lemma~4.1]{LS3}(1) 
  implies that  $\Supp \LL(\p) \subseteq \NV$.  Now   the characteristic variety $\Ch \LL(\p)$ is contained in $  \Ch \bigl(\dd(\p)/\dd(\p)\tau(\g)\bigr)$, and hence in  $\euls{C}(\p)$.  Thus
  $\Ch \LL(\p) \subseteq \euls{C}(\p) \cap (\NV \times \p)$. 
  Since $(\gtilde,\g)$  satisfies $(\dagger)$,  Lemma~\ref{LSresults}(2) then implies that 
      \[\Supp \LL(\p)\ \subseteq \ \NV \cap (\p \sminus \p_m) \ = \  \NV \sminus \NVreg \ = \  Z(\p).  \]
     Therefore
  $\Ch \LL(\p) \subseteq \euls{C}(\p) \cap (Z(\p) \times \p)$. 
  
  If we
  show that 
  \begin{equation}\label{eq:mainthm}
  \Ch \LL(\p) \subseteq \euls{C}(\p) \cap (Z(\p) \times
  \NV),\end{equation}
then   Proposition~\ref{vanishing} will force $\LL(\p)=~0$, thereby proving the theorem.

Set  $\mathbf{m}=(\Sym \p)_+^G$. Recall that $x\in \NV$ if and only if $f(x)=0$ for all $f\in \C[\p]^G_+$. However, we have identified $\p \times \p^* = \p \times \p$ using $\varkappa$. Therefore, in order to prove \eqref{eq:mainthm}, it remains    to prove that any element of $\LL(\p)$ is annihilated, on the right, by a power of $\mathbf{m}$. To prove this we use the argument of   \cite[Corollary~3.9(2)]{LS3}.

  Observe that both $\dd(\p)/\dd(\p)\tau(\g)$ and    $\LL(\p)$ are  right $\dd(\p)^G$-modules. Denote by $I$ the annihilator of   $\LL(\p)$ as a $\dd(\p)^G$-module; it is a graded ideal in the
  naturally graded ring $\dd(\p)^G$, see 
  \cite[page~1725]{LS3}. Since 
  $\Ch \LL(\p) \subseteq \euls{C}(\p) \cap (\NV \times \p)$ 
  by~\cite[Lemma~2.2(i)]{LS3}, $\LL(\p)$ is a holonomic left $\dd(\p)$-module.
  Thus ${}_{\dd(\p)}\LL(\p)$ has
  finite length and so 
  $\mathrm{End}_{\dd(\p)}( \LL(\p))$ is finite dimensional. From
  $\dd(\p)^G/I \subseteq \mathrm{End}_{\dd(\p)}( \LL(\p))$ we 
  conclude that $I \cap (\Sym \p)^G$ has finite codimension in
  $(\Sym \p)^G$. Since $I$ is a graded ideal,  so is   $I \cap (\Sym \p)^G$ and   hence  $I \cap (\Sym
  \p)^G\supseteq F=\mathbf{m}^s$ for some $s$. Hence   $d F\in
  \dd(\p)\tau(\g)$ for all $d\in \KK(\p)$. Lemma~\ref{LSresults}(1) then  implies  that $F^t
  d \in \dd(\p)\tau(\g)$ for some $t$. In other words, \eqref{eq:mainthm} holds, which is enough to prove the theorem.
\end{proof}

\subsection*{Applications of Theorem~\ref{mainthm}}

The detailed description of  $\KK(\p)$ given by Theorem~\ref{mainthm} has  a number of applications, which we now describe.

First, as an immediate consequence of Theorem~\ref{mainthm} one obtains a
generalization of a fundamental result of Harish-Chandra 
 \cite[Theorem~5]{HC3}   from the
diagonal case to \gainly\
symmetric pairs. For nice symmetric spaces this was proved in \cite{LS3}.

\begin{corollary}
\label{CORB}  
 Let $(\gtilde,\g)$ be a  \gainly\ symmetric pair  that is the
complexification of a real symmetric pair $(\gtilde_0,\g_0)$. Write $G_0$
for the connected algebraic group satisfying $\Lie(G_0) = \g_0$. Let $U
\subset \p_0$ be a $G_0$-stable open subset and $T$ be a
$G_0$-invariant distribution on $U$.  Then $\KK(\p)^G\cdot T = 0$.  \qed
\end{corollary}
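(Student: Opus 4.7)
The plan is to deduce Corollary~\ref{CORB} directly from Theorem~\ref{mainthm}. Indeed, Theorem~\ref{mainthm} asserts that, under the hypothesis that $(G,\p)$ is gainly, one has $\KK(\p) = \dd(\p)\tau(\g)$, and in particular $\KK(\p)^G \subseteq \dd(\p)\tau(\g)$. So the task reduces to showing that every element of $\dd(\p)\tau(\g)$ annihilates any $G_0$-invariant distribution $T$ on $U$.

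To this end I would first recall that, since $U$ is $G_0$-stable and $T$ is $G_0$-invariant, differentiating the equation $g \cdot T = T$ for $g \in G_0$ at the identity yields $\tau(X) \cdot T = 0$ for every $X \in \g_0$. Here the action of $\tau(X) \in \dd(\p)$ on distributions is the usual one, compatible with the infinitesimal action of $G_0$ via the embedding $\g_0 \hookrightarrow \dd(\p_0) \subset \dd(\p)$. Since $\g = \g_0 \otimes_{\R} \C$ and $\tau$ is $\C$-linear, the identity $\tau(X) \cdot T = 0$ extends to all $X \in \g$.

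Next, any element $d \in \dd(\p)\tau(\g)$ can be written as a finite sum $d = \sum_i d_i \tau(X_i)$, with $d_i \in \dd(\p)$ and $X_i \in \g$. Then
\[
d \cdot T \ = \ \sum_i d_i \cdot \bigl( \tau(X_i) \cdot T \bigr) \ = \ 0,
\]
since each $\tau(X_i) \cdot T = 0$ by the previous paragraph. Combining with $\KK(\p)^G \subseteq \dd(\p)\tau(\g)$ from Theorem~\ref{mainthm}, we conclude that $\KK(\p)^G \cdot T = 0$, as required.

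There is no real obstacle here once Theorem~\ref{mainthm} is in hand: the only subtlety is the passage from $\g_0$-invariance to $\g$-invariance, which is an immediate consequence of $\C$-linearity of $\tau$ and of the action on distributions. The real content of the corollary is concentrated entirely in the identification $\KK(\p) = \dd(\p)\tau(\g)$ proved in Theorem~\ref{mainthm}.
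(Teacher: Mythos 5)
Your proof is correct and is exactly the argument the paper intends: the corollary is stated as an immediate consequence of Theorem~\ref{mainthm}, via $\KK(\p)^G\subseteq\KK(\p)=\dd(\p)\tau(\g)$ together with the standard fact that $G_0$-invariance of $T$ gives $\tau(\g_0)\cdot T=0$ and hence, by $\C$-linearity, $\tau(\g)\cdot T=0$. The paper omits these routine details entirely (the result carries only a \qed), so your write-up simply makes explicit the same one-line deduction.
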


With a little extra work we are able to generalise a second result of Harish-Chandra \cite[Theorem~3]{HC2}
to \gainly\ symmetric spaces. We begin with a subsidiary result.

\begin{corollary}
  \label{maincor1}
Let $(G,\p)$ be a \gainly\ symmetric space. If  $\mathbf{n}
\subset (\Sym \p)^G$ is a maximal ideal and $f \in \C[\p]^G \sminus
\{0\}$, then $\dd(\p) = \dd(\p) f + \dd(\p) \tau(\g) + \dd(\p) \mathbf{n}$.  
\end{corollary}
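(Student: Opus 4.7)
The plan is to reduce Corollary~\ref{maincor1} to showing that a certain finite-dimensional module over the simple spherical Cherednik algebra $\Ak$ vanishes, and then derive a contradiction from the infinite-dimensionality of $\Ak$. The case $\dim \h = 0$ will be trivial, because then $\C[\p]^G = \C$, so $f$ is a nonzero scalar and $\dd(\p)f = \dd(\p)$; I will therefore assume $\dim \h \geq 1$.

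I would set $M := \dd(\p)/J$ with $J := \dd(\p)f + \dd(\p)\tau(\g) + \dd(\p)\mathbf{n}$; the task is to prove $M = 0$. Since $M$ is cyclic, generated by the $G$-invariant class $[1]$, it suffices to prove $M^G = 0$. By reductivity of $G$, taking $G$-invariants is exact; the Reynolds operator, combined with $G$-invariance of $f$ and of every element of $\mathbf{n} \subset (\Sym\p)^G$, will yield $(\dd(\p)f)^G = \dd(\p)^G f$ and $(\dd(\p)\mathbf{n})^G = \dd(\p)^G \mathbf{n}$. Coupling these with the identification $(\dd(\p)\tau(\g))^G = \ker(\rad_0)$ of Theorem~\ref{mainthm} and the surjectivity of $\rad_0 \colon \dd(\p)^G \twoheadrightarrow \Ak$ from Theorem~\ref{thm:symmLSsurjective}, the radial parts map then descends to an isomorphism
\[
  M^G \;\cong\; L \;:=\; \Ak \big/ \bigl( \Ak\,\rad_0(f) + \Ak\,\rad_0(\mathbf{n}) \bigr).
\]

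Next, I would observe that $L$ is finite-dimensional over $\C$. Lemma~\ref{pdQ} gives $\Ak$ the structure of a free right $(\Sym\h)^W$-module of rank $|W|$, and Theorem~\ref{prop:Radchirestiso} identifies the restriction $\rad_0\vert_{(\Sym\p)^G}$ with the graded Chevalley isomorphism onto $(\Sym\h)^W$, so $\rad_0(\mathbf{n})$ is a maximal ideal of $(\Sym\h)^W$. Writing $\Ak = \bigoplus_{i=1}^{|W|} e_i (\Sym\h)^W$ via this right module decomposition, one computes $\Ak\,\rad_0(\mathbf{n}) = \bigoplus_i e_i\,\rad_0(\mathbf{n})$, so $\Ak / \Ak\,\rad_0(\mathbf{n})$ has $\C$-dimension exactly $|W|$, and $L$ is a quotient of this.

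To finish, I would argue by contradiction: suppose $L \neq 0$. Being nonzero and finite-dimensional over $\C$, $L$ contains a simple $\Ak$-submodule $L'$, still finite-dimensional. By Corollary~\ref{thm42}, $\Ak$ is simple, so its action on $L'$ is necessarily faithful, producing an injection $\Ak \hookrightarrow \End_\C(L')$. This forces $\dim_\C \Ak < \infty$, contradicting the inclusion $\C[\h]^W \subseteq \Ak$ together with $\dim \h \geq 1$; hence $L = 0$, whence $M^G = 0$ and $M = 0$. The principal obstacle in executing this plan is really assembling the isomorphism $M^G \cong L$: it rests on Theorem~\ref{mainthm} (the deepest input, computing $\ker(\rad_0) = (\dd(\p)\tau(\g))^G$) together with surjectivity of $\rad_0$ and simplicity of $\Ak$; once these three ingredients are in hand, the remaining dimension count is immediate.
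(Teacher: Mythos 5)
Your reduction of the statement to the vanishing of $L=\Ak/(\Ak\,\rad_0(f)+\Ak\,\rad_0(\mathbf{n}))$ is exactly the paper's first step (exactness of $G$-invariants plus Theorem~\ref{mainthm} and surjectivity of $\rad_0$), but the second half of your argument has a genuine gap. Lemma~\ref{pdQ} says that $\Ak$ is free of rank $|W|$ as a \emph{bimodule}, i.e.\ as a module over $\C[\h]^W\otimes(\Sym\h)^W$; it is \emph{not} free of rank $|W|$ as a right $(\Sym\h)^W$-module. As a right $(\Sym\h)^W$-module it is isomorphic to $(\C[\h]^W)^{|W|}\otimes(\Sym\h)^W$, of infinite rank, so $\Ak/\Ak\,\rad_0(\mathbf{n})$ is not finite dimensional: already for $W$ trivial and $\h=\C$ one has $\dd(\C)/\dd(\C)\partial\cong\C[x]$. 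In general $\Ak/\Ak\,\rad_0(\mathbf{n})$ has Gelfand--Kirillov dimension $\dim\h$, and your claimed $\C$-dimension count of $|W|$ fails. A telltale sign is that your dimension bound never uses $f$; but without the summand $\dd(\p)f$ the corollary is false --- the quotient $\dd(\p)/(\dd(\p)\tau(\g)+\dd(\p)\mathbf{n})$ is the (nonzero) invariant eigendistribution system, and indeed your argument, if correct, would force it to vanish.

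What is needed, and what the paper does, is to use $f$ and $\mathbf{n}$ \emph{together} to cut the size of $N=\dd(\p)/(\dd(\p)f+\dd(\p)\tau(\g)+\dd(\p)\mathbf{n})$ down below the holonomicity bound: since $\gr\mathbf{n}=(\Sym\p)^G_+$, the characteristic variety of $N$ lies in $\euls{C}(\p)\cap(\p\times\NV)\cap((f=0)\times\p)$, whose quotient by $G$ identifies with the hypersurface $(f=0)$ in $\p\git G$ and hence has dimension $<\dim\h$; thus $\GKdim N^G<\dim\h$. One then invokes Losev's Bernstein inequality (valid because $\Ak$ is simple): every nonzero finitely generated $\Ak$-module has Gelfand--Kirillov dimension at least $\dim\h$. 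This forces $N^G=0$. Your final contradiction step (a simple infinite-dimensional algebra cannot act on a nonzero finite-dimensional module) is a correct special case of this inequality, but the module $L$ is not finite dimensional a priori, so the estimate $\GKdim L<\dim\h$ cannot be bypassed.
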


\begin{proof}  By Corollary~\ref{thm42}, $\dd(\p)^G/\ker(\rad_0) \cong \Ak$ is simple and so any non-zero $\Ak$-module $Z$  has annihilator $\ann_{\Ak}Z=0$. Thus    \cite[Theorems~1.1 and~1.3]{LosevBernstein} imply that $\GKdim_{\Ak} Z \geq \dim \h$  for any finitely generated $\Ak$-module~$Z$.
        
  Set $N= \dd(\p)/(\dd(\p) f + \dd(\p) \tau(\g) + \dd(\p)
  \mathbf{n})$. Then  Theorem~\ref{mainthm} implies that  $  N^G =
  \Ak/(\Ak f + \Ak\mathbf{n})$.     
 By the first paragraph of this proof, in order to prove that  $N^G= 0$ and therefore $N=0,$  it suffices to prove that
  $\GKdim M < \dim \h$. 
  
  We prove this by mimicking  the proof of \cite[Corollary~5.7]{LS3}.  Under the order filtration from 
  Remark~\ref{rem:orderfiltrationH}, the associated graded ideal  is $\gr \mathbf{n}=(\Sym \p)^G_+$ and so 
  \[\Ch N \  \subseteq \  \euls{X} \ := \euls{C}(\p) \cap \bigl(\p\times \NV\bigr) \cap \bigl((f=0)\times \p \bigr).\]
 By \cite[Proposition~2.1]{LS3}, $\euls{X}\git G$ identifies with the subvariety $(f=0)$ of $\p\git G$. 
 Since $\p\git G$ is an irreducible variety of dimension $\ell=\dim \h$, this means that $\dim \euls{X}\git G<\ell$ and hence that  the Gelfand-Kirillov dimension
 $$\GKdim N^G  \ = \ \dim \Ch N\git G \  \leq \  \dim \euls{X}\git G  \ < \  \ell,$$
 as required. \end{proof}

The  significance of Corollary~\ref{maincor1} is that,    for \gainly\ symmetric pairs, it proves a result proved in \cite[Corollary~5.8]{LS3} for nice pairs, which in turn generalises   \cite[Theorem~3]{HC2}.  We just state the result; the notation and proof are essentially the same  as those used in  \cite{LS3}. In the case when $I=\C[\p]^G_+$, this reduces to Corollary~\ref{intro-maincor2}.

\begin{corollary}\label{maincor2} 
        Assume that the symmetric space $(G,\p)$ is \gainly\ and pick a real form $ (G_0,\p_0)$ of $(G,\p)$. Let $T$ be a $G$-invariant distribution on an open subset $U\subseteq \p_0$. Suppose that:
        
        \begin{enumerate}
                \item there exists an ideal $I\subset \C[\p]^G$ of finite codimension such that $I\cdot T=0$;
                
                \item $\mathrm{Supp}(T) \subseteq U\smallsetminus U\cap \p_{\reg}$.  \end{enumerate}
        Then $T=0$.  \qed
        
\end{corollary}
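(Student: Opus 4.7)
The plan is to apply Corollary~\ref{maincor1}: for any nonzero $f\in\C[\p]^G$ and any maximal ideal $\mathbf{n}\subset(\Sym\p)^G$,
\[
\dd(\p) = \dd(\p)f + \dd(\p)\tau(\g) + \dd(\p)\mathbf{n}.
\]
If we can choose $f$ and $\mathbf{n}$ so that each of the three summands annihilates $T$, then applying this identity to $T$ forces $T = 1\cdot T = 0$. The argument closely parallels \cite[Corollary~5.8]{LS3}, with the relevant pieces now available for all \gainly\ symmetric pairs thanks to Theorem~\ref{mainthm} and Corollary~\ref{maincor1}.

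First, the $G_0$-invariance of $T$ gives $\tau(X)\cdot T = 0$ for $X \in \g_0$, and by complexification, $\tau(\g)\cdot T = 0$. Hence the middle summand $\dd(\p)\tau(\g)$ annihilates $T$ automatically. For the first summand, condition (1) provides a nonzero $f \in I\subseteq \C[\p]^G$ with $f\cdot T = 0$. The support condition (2) is used to refine this: since $\mathrm{Supp}(T)\subseteq V(\delta)\cap U$, and (1) forces $T$ to have locally bounded distributional order near its support, we may additionally arrange $\delta^N\cdot T = 0$ for some $N\geq 0$, so that $f$ can be taken in the ideal $I + (\delta^N)$ if convenient.

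The main obstacle is producing a maximal ideal $\mathbf{n}\subset(\Sym\p)^G$ with $\mathbf{n}\cdot T = 0$. The strategy is as follows: by Corollary~\ref{CORB}, applicable to \gainly\ pairs via Theorem~\ref{mainthm}, $T$ is annihilated by $\KK(\p)^G = \ker(\rad_0)$, so $T$ becomes a module for the simple algebra $A_\kappa(W) \cong \dd(\p)^G/\ker(\rad_0)$ (Corollary~\ref{thm42}). The cyclic $A_\kappa(W)$-submodule $M := A_\kappa(W)\cdot T$ is holonomic: condition (1) yields a finite-codimensional ideal in its annihilator coming from $\C[\h]^W\subset A_\kappa(W)$, and a Gelfand--Kirillov dimension argument (using Losev's Bernstein inequality \cite[Theorems~1.1 and~1.3]{LosevBernstein}, as in the proof of Corollary~\ref{maincor1}) forces holonomicity. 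Using that $A_\kappa(W)$ is a free $(\C[\h]^W,(\Sym\h)^W)$-bimodule of finite rank (Lemma~\ref{pdQ}), the $\C[\h]^W$-finiteness of $M$ can be transferred to $(\Sym\h)^W$-finiteness. Decomposing $T$ via the idempotents of this finite-dimensional quotient, we may assume $\mathbf{n}\cdot T = 0$ for some maximal ideal $\mathbf{n}\subset(\Sym\p)^G$.

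With $f$, $\mathbf{n}$, and $\tau(\g)$ all annihilating $T$, Corollary~\ref{maincor1} yields $\dd(\p)\cdot T = 0$, hence $T = 0$. The most delicate step is the transfer from $\C[\h]^W$-finiteness to $(\Sym\h)^W$-finiteness of the module $M$; this exploits both the bimodule finiteness of $A_\kappa(W)$ over $\C[\h]^W\otimes(\Sym\h)^W$ and the rigidity of holonomic modules over the simple algebra $A_\kappa(W)$, and is the key technical input adapted from \cite[Section~5]{LS3}.
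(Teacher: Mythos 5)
Your overall strategy --- reduce to Corollary~\ref{maincor1} and exhibit annihilators of $T$ in each of the three summands $\dd(\p)f$, $\dd(\p)\tau(\g)$ and $\dd(\p)\mathbf{n}$ --- is exactly the route the paper intends (the paper defers the details to the proof of \cite[Corollary~5.8]{LS3}). However, there is a genuine gap caused by a misreading of hypothesis~(1). In this corollary the ideal $I$ acts on $T$ as constant-coefficient invariant \emph{differential operators}: via the identification $\p\cong\p^*$ through $\varkappa$ used throughout Section~\ref{Sec:kernel}, $I$ is a finite-codimension ideal of $(\Sym\p)^G\cong\C[\p]^G$ and $I\cdot T$ means $\partial(p)T$ for $p\in I$ (this is what makes the special case $I=\C[\p]^G_+$ the eigendistribution statement of Corollary~\ref{intro-maincor2}). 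You read $I\cdot T$ as multiplication of functions. Under that reading the statement is false: the Dirac distribution $T=\delta_0$ at the origin is $G_0$-invariant, has support $\{0\}\subseteq U\sminus(U\cap\p_{\reg})$, and satisfies $\C[\p]^G_+\cdot T=0$ under multiplication, yet $T\neq 0$. So no argument proceeding only from your reading of the hypotheses can close.

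This is exactly where your construction of the maximal ideal $\mathbf{n}$ breaks down. From the fact that $\rr(I)\subseteq\C[\h]^W$ annihilates $T$ one only gets that $M=\Ak(W)\cdot T$ is a quotient of $\Ak(W)/\Ak(W)\rr(I)$, which (by Lemma~\ref{pdQ} and a filtered argument) is finitely \emph{generated} over $(\Sym\h)^W$ but by no means finite-dimensional: for $T=\delta_0$ the submodule $(\Sym\h)^W\cdot T$, i.e.\ the span of the $\partial(p)\delta_0$, is already a free module of rank one, so no maximal ideal of $(\Sym\p)^G$ annihilates $T$ and there is no ``finite-dimensional quotient'' whose idempotents you can use. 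With the correct reading, hypothesis~(1) supplies $\mathbf{n}$ directly: decompose $T$ according to the Artinian ring $(\Sym\p)^G/I$ so as to assume $\mathbf{n}^kT=0$ for a single maximal ideal, then induct downwards on $k$ using Corollary~\ref{maincor1} at each stage (the corollary needs $\mathbf{n}T=0$, not $\mathbf{n}^kT=0$, a reduction you also omit). The element $f$ of the first summand comes instead from hypothesis~(2): the conclusion is local and $T$ has finite order on relatively compact open subsets, so $\Supp(T)\subseteq(\deltav=0)$ gives $\deltav^NT=0$ there and one takes $f=\deltav^N$. Invariance gives $\tau(\g)T=0$, as you say. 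Assembled this way the argument is the one of \cite[Corollary~5.8]{LS3} and \cite[Theorem~3]{HC2} that the paper invokes; your detour through holonomicity of $\Ak(W)\cdot T$ is not needed and, as it stands, does not work.
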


We end the section with several comments and applications of the earlier results. First, by
combining Proposition~\ref{Harish-Chandra-map} with  Theorems~\ref{simplicity-irred-symm-space} 
and~\ref{mainthm}, we deduce the following. 

\begin{corollary}\label{cor:Harish-Chandra-map}
        Assume that $(\gtilde,\g)$ is a \gainly \ symmetric pair such that the
        centre of $\g$ acts trivially on $\p$. Then, $\Im(\rad_\vs)$ is simple and $\ker(\rad_{\vs})=(\dd(\g) \tau(\g))^G$ for all choices of  $\vs$.   \qed
\end{corollary}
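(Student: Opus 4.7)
The statement is a direct corollary combining three previously established results, so my plan is to simply assemble them in the right order rather than to develop new machinery. The payoff of the individual results is already substantial, and for this corollary everything has been set up.

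First, I would invoke Proposition~\ref{Harish-Chandra-map}. Its hypothesis — that the centre of $\g$ acts trivially on $\p$ — is exactly the hypothesis of the corollary, so I obtain immediately that $\Im(\rad_\vs) \cong \Im(\rad_0)$ as $\C$-algebras, and $\ker(\rad_\vs) = \ker(\rad_0)$, for every choice of $\vs$. This reduces both assertions to statements about the untwisted radial parts map $\rad_0$.

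Next, for the simplicity claim, I would apply Theorem~\ref{simplicity-irred-symm-space}: since $(\gtilde,\g)$ is \gainly\ by assumption, the spherical algebra $A_\kappa$ is simple, where $\kappa = \kappa(0)$ is the parameter attached to $\vs = 0$ via~\eqref{eq:kappa-alpha}. By Proposition~\ref{kappa-symmetric}, $\Im(\rad_0) = A_\kappa(W)$, so $\Im(\rad_0)$ is simple, and hence so is $\Im(\rad_\vs)$ by the first step.

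Finally, for the kernel computation, I would apply Theorem~\ref{mainthm}: since $(G,\p)$ is \gainly, we have $\KK(\p) = \dd(\p)\tau(\g)$, and taking $G$-invariants gives $\ker(\rad_0) = \KK(\p)^G \cap \dd(\p)^G = (\dd(\p)\tau(\g))^G$. Combined with $\ker(\rad_\vs) = \ker(\rad_0)$ from the first step, this yields $\ker(\rad_\vs) = (\dd(\p)\tau(\g))^G$ for every $\vs$, completing the proof.

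There is no genuine obstacle here — the real work is done in Proposition~\ref{Harish-Chandra-map}, Theorem~\ref{simplicity-irred-symm-space}, and Theorem~\ref{mainthm}. The only point requiring a moment's care is matching notation: the statement as displayed writes $(\dd(\g)\tau(\g))^G$, which I read as a typographical slip for $(\dd(\p)\tau(\g))^G$, consistent with $\tau : \g \to \dd(\p)$ and the formulation of Theorem~\ref{mainthm}.
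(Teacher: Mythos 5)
Your proof is correct and is exactly the paper's argument: the authors obtain the corollary by combining Proposition~\ref{Harish-Chandra-map}, Theorem~\ref{simplicity-irred-symm-space} and Theorem~\ref{mainthm} in precisely the way you describe. Your reading of $(\dd(\g)\tau(\g))^G$ as a slip for $(\dd(\p)\tau(\g))^G$ is also the right one.
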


In particular, suppose that 
$(\gtilde,\g) = (\g \oplus \g, \g)$ is of diagonal type with $\g$ reductive.  Then the centre of $\g$ certainly acts trivially on $\g=\p$ and so we conclude: 

   \begin{corollary}   \label{Harish-Chandra-map21}
   If $(\gtilde,\g) = (\g \oplus \g, \g)$ is of diagonal type
   then $\Im(\rad_\vs) \cong \Im(\rad_0)$ is a simple ring for all
   choices of $\vs$. Also,  $\ker(\rad_{\vs})=\ker(\rad_0) =
   (\dd(\g) \tau(\g))^G$. \qed  
  \end{corollary}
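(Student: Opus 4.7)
The plan is to verify that this diagonal-type symmetric pair falls under the hypotheses of Corollary~\ref{cor:Harish-Chandra-map}, which then yields all three claims immediately.

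First, I would decompose $\g = \g' \oplus \mf{z}$, where $\g'=[\g,\g]$ is semisimple and $\mf{z}$ is the centre of $\g$. This induces a decomposition of the symmetric pair $(\g \oplus \g, \g) = (\g' \oplus \g', \g') \oplus (\mf{z} \oplus \mf{z}, \mf{z})$. The first factor decomposes further as an orthogonal sum of irreducible pieces of the form $(\fs \oplus \fs, \fs)$ with $\fs$ simple; these are of type~(4) in the list following Lemma~\ref{integra-equ}, and in particular they are nice. The second factor is a trivial symmetric pair, since $\mf{z}$ acts on itself by the (zero) adjoint action; hence it is both nice and \integral\ by Remark~\ref{rem:rtrivial-symm} and Definition~\ref{gainly-defn}. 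Thus $(\g \oplus \g, \g)$ is \gainly.

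Next, I would verify the second hypothesis of Corollary~\ref{cor:Harish-Chandra-map}, namely that the centre of $\g$ acts trivially on $\p = \g$. This is immediate: the action of $\g$ on $\p = \g$ is the adjoint action, and for $z \in \mf{z}$ and $x \in \g$ we have $\ad(z)(x) = [z,x] = 0$.

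Having checked both hypotheses, Corollary~\ref{cor:Harish-Chandra-map} applies and yields directly that $\Im(\rad_\vs)$ is simple and that $\ker(\rad_\vs) = (\dd(\g)\tau(\g))^G$ for every choice of~$\vs$. The remaining assertion $\Im(\rad_\vs) \cong \Im(\rad_0)$ and $\ker(\rad_\vs) = \ker(\rad_0)$ is then the conclusion of Proposition~\ref{Harish-Chandra-map}, whose hypothesis (that the centre of $\g$ acts trivially on $\p$) has already been verified above. There is essentially no obstacle here: the result is a straightforward combination of the three earlier results (Proposition~\ref{Harish-Chandra-map}, Corollary~\ref{cor:Harish-Chandra-map}, and the already-established facts that diagonal-type symmetric pairs are nice hence \gainly), and the only substantive check is the elementary observation that the adjoint action of the centre is zero.
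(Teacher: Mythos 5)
Your proof is correct and follows essentially the same route as the paper: the paper also deduces this corollary directly from Corollary~\ref{cor:Harish-Chandra-map} (together with Proposition~\ref{Harish-Chandra-map}), observing that the centre of $\g$ acts trivially on $\p=\g$ under the adjoint action and that diagonal pairs are nice, hence \gainly. Your explicit decomposition $\g=\g'\oplus\mf{z}$ just spells out details the paper leaves implicit.
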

  
     Corollary~\ref{Harish-Chandra-map21}    is  significant because the radial parts map that appears in the literature for the diagonal case is  not $\rad_0$.
      In more detail, the
   {Harish-Chandra homomorphism} $\HC : \dd(\g)^G\to \dd(\h/W)$
   studied in \cite{HC2, LSSurjective, LS, Sc2, Wallach}  differs from  $\rad$ by  conjugation with the element~$\delta^{ \half}$  
   thus, in our notation, $\HC = \rad_{-\half}$.   The details can be found,  for example, in  \cite[Theorem~1]{HC2} or
    \cite[Theorem~3.1]{Wallach}. This is important since,  as Harish-Chandra 
    noticed in \cite{HC2}, it is only after taking this shift that one obtains $\Im(\HC)\subseteq \dd(\h)^W$. 
      It was then  proved in \cite{LSSurjective, LS}
 that $\Im(\HC)= \dd(\h)^W$ and $\KK(\g)= \dd(\g) \tau(\g)$, thus
 $\Ker(\HC) = \bigl(\dd(\g) \tau(\g)\bigr)^G$.
 Up to an isomorphism, Corollary~\ref{Harish-Chandra-map21}  recovers these result.

We end this section with a remark about a possible generalisation of the previous results. A large class of polar representations is given by Vinberg's  $\theta$-rep\-resent\-ations \cite{Vinberg}. These are defined as
  follows. Let $\Gtilde$ be as above and
  $\theta \colon \gtilde \to \gtilde$ be an automorphism of order
  $2\leq m < \infty$. Then $\theta$ defines a
  $(\mathbb{Z} / m \mathbb{Z})$-grading
  $\gtilde = \bigoplus_{k=0}^{m-1} \gtilde_k$, where $\gtilde_0$ is a
  reductive subalgebra of $\gtilde$. Let $G_0 \subseteq \Gtilde$ be the
  connected subgroup with Lie algebra $\gtilde_0$. The group $G_0$ acts on
  $V := \gtilde_1$ and we define $G$ to be the image of $G_0$ in $\GL(V)$.
  The pair $(G,V)$ is the \emph{$\theta$-representation} associated to
  $(\gtilde,\theta)$. By \cite[Theorem~4]{Vinberg} and \cite[p.~245 and
  247]{PopovVinberg}, $\theta$-representations are  visible polar
  representations. When $m = 2$, $\theta$-representations are the same as
  symmetric spaces and are therefore stable but for $m \ge 3$ they need not
  be stable; see \cite[p.~245]{PopovVinberg}.  For a large  family of stable polar
   representations that are   not
   $\theta$-representations, see \cite[Section~8]{BLT}.
  
  It would be interesting to know to what extent the results of the last several sections generalise to 
  $\theta$-rep\-resent\-ations. 
  Recall from the proof of Theorem~\ref{thm:symmLSsurjective}
  that Tevelev's Theorem \cite{Te} says that the morphism $\C[V \oplus
  V^*]^G \to \C[\h \oplus \h^*]^W$ is surjective for symmetric
  spaces.  We ask:
  
  \begin{question} \label{Tevelev-question} Let $V$ be a
    $\theta$-representation with a corresponding radial parts map
    $\rad_{\vs} : \dd(V)^G\to \Ak$. Then:
  
  (1) Is $\rad_{\vs}$ always surjective?

(2)    Does Tevelev's Theorem   extend to $\theta$-representations?

\noindent
If  Question (2) holds, then  so does Question~(1), as can be seen by examining the proof of Theorem~\ref{thm:symmLSsurjective}.
\end{question}
 
 \appendix

\section{Non-simplicity of certain  spherical algebras} \label{appA}

The main aim of this appendix is to complete the proof of
Theorem~\ref{simplicity-irred-symm-space}, by showing that $\Ak$
is not simple for  the symmetric spaces not covered by
Corollary~\ref{thm42}.

\subsection*{Definitions}

  The computations in this appendix become much simpler if we adopt the notation of \cite{BEG,EG}, where
the rational Cherednik algebra $H_c(W)$ is defined in terms of a
multiplicity function $c$ as opposed to  $\kappa$ and so we begin
with the relevant notation.

We assume that $W$ is the Weyl group associated to a finite root
system, hence $\h$ is the complexification of an euclidean vector
space and $W$ is generated by orthogonal reflections.
 Let $R\subset \h^*$ be the set of  roots and make  a choice of positive roots $R^+ \subset R$ so that
$\calS= \{s_\alpha : \alpha \in R^+\}$  is the set of reflections in~$W$. If
$s = s_\alpha$ with $\alpha \in R$, we set $\alpha_s =\alpha$. Let $c: \calS \to \C$  
be a $W$-invariant function, setting $c_s= c(s)$ for $s\in \calS$ and $c_s=c_\alpha$ if
$s= s_\alpha$. The set of such maps,  called multiplicity
functions, is denoted by~$\calC(W)$ (it is called $\C[R]^W$ in
\cite{BEG}). The rational Cherednik algebra
$H_c(W) = \C \langle \h^*, \, \h, \, W \rangle$ associated to 
$c \in \calC(W)$ is  defined in 
\cite[Section~1]{BEG}.  For our purposes it suffices to note that  the Dunkl operator $T_y(c)$ associated to $y \in \h$,     as  defined in \cite[page~288]{BEG} and \cite[\S 2.1]{CE}, is 
\begin{equation} \label{eq10}
      T_y(c) = \partial_y - \! \sum_{^{_{s \in \calS}}}
      c_s \scal{\alpha_s}{y} \frac{1 -s}{\alpha_s} \in \End_\C \C[\h].
\end{equation}

\begin{remark} \label{comparison}
      Comparing the element $T_y$ from~\eqref{eq:Dunkl} with ~\eqref{eq10} shows that $\kappa_{H,0} = 0$ and $\kappa_{H,1} = - c_{\alpha}$ for $H = \ker(\alpha)$ and $\alpha \in R^+$. 
\end{remark}

\subsection*{Twist by a character}\label{twist}

Let $\chi \in \Hom(W,\C^*)$ be a linear character of~$W$. The
idempotent $e_\chi \in \C W$ associated to $\chi$ is  $e_\chi =
\frac{1}{|W|} \sum_{w \in W} \chi^{-1}(w) w$. 
Let $M$ be a $W$-module; denote by $M[\chi]$ the
$\chi$-isotypic component of $M$. Then:
\[
M[\chi] := \{x \in M \, | \, \forall\, w \in W, \;
wx = \chi(w)x\} = \{e_\chi x \, : \, x \in M\} =: e_\chi M.
\]
Observe that, since $\C W \subset \cheW{c}$, the above can be applied to
any $\cheW{c}$-module.

In particular, if $\chi = \triv$, the trivial character, one gets the
trivial idempotent $\etriv$, also denoted by $e_W$, and the set of
$W$-invariants in $M$:
\[
\etriv = \frac{1}{|W|} \sum_{w \in W}  w, \quad 
M[\triv] = \etriv M = M^W.
\]
The sign character $\sgn : W \to \{\pm 1\}$ is given by
$\sgn(w)= \det_\h(w)$.

If $c \in \calC(W)$ is a multiplicity function, one defines
$c^\chi \in \calC(W)$ by $c^\chi_s = \chi(s)c_s$ {for all $s \in \calS$.}  When
$\chi = \sgn$ we therefore have $c^{\sgn} =-c$.

The next lemma is a standard result in the literature; see \cite[Section~5.1]{BerestChalykhQuasi}.

\begin{lemma}
        \label{lemtwist}
        Let $\chi \in \Hom(W,\C^*)$. There exists an algebra isomorphism
        \[
        f_\chi : \cheW{c} \longisomto \cheW{\cchi}
        \]
        given by $f_\chi(x)=x$, $f_\chi(y) = y$, $f_\chi(w)
        = \chi(w)w$, for all $x \in \h^*$, $y \in \h$ and $w \in W$.
        The inverse of
        $f_\chi$ is $f_{\chi^{-1}} : \cheW{\cchi} \isomto  \cheW{c}$.\qed
\end{lemma}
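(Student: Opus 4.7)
The plan is to show that $f_\chi$ extends to a well-defined algebra homomorphism by verifying it preserves the defining relations of $\cheW{c}$ (as a quotient of $T(\h^* \oplus \h) \rtimes W$), and then to observe that $f_{\chi^{-1}}$ furnishes a two-sided inverse at the level of generators, hence of the algebras.

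First I would handle the easy relations. On $\h^*$ and $\h$ the map $f_\chi$ is the identity, so the relations $[x,x']=0$ and $[y,y']=0$ are trivially preserved. For the cross relations $wxw^{-1}=w(x)$ and $wyw^{-1}=w(y)$ (with $x\in\h^*$, $y\in\h$, $w\in W$), we compute in $\cheW{\cchi}$:
\begin{equation*}
f_\chi(w)\,f_\chi(x)\,f_\chi(w)^{-1} \;=\; \chi(w)w\cdot x\cdot\chi(w)^{-1}w^{-1} \;=\; wxw^{-1} \;=\; w(x) \;=\; f_\chi(w(x)),
\end{equation*}
since the scalars $\chi(w)\in\C^*$ cancel; and analogously for $y$. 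Multiplicativity of $w\mapsto\chi(w)w$ follows from $\chi$ being a character.

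The main step is to check the Dunkl-type commutation relation \eqref{eq:relaltRCAnon}. Since $W$ is a real reflection group, every $H\in\calA$ has $\ell_H=2$, with $s_H$ the unique non-trivial element of $W_H$, and the relation specializes to an expression of the form
\begin{equation*}
[y,x] \;=\; \langle y,x\rangle \;+\; \sum_{s\in\calS} c_s\, \frac{\langle \alpha_s^{\vee},x\rangle\langle y,\alpha_s\rangle}{\langle \alpha_s^\vee,\alpha_s\rangle}\, s,
\end{equation*}
linear in the reflections $s$ (this is the standard rewriting using $\kappa_{H,0}=0$, $\kappa_{H,1}=-c_{\alpha_s}$ as in Remark~\ref{comparison}). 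Applying $f_\chi$ to both sides, the left side is unchanged, and on the right every $s$ becomes $\chi(s)s$. Since $\chi(s)c_s=c^\chi_s$ by definition, the result is exactly the corresponding defining relation of $\cheW{\cchi}$. This verifies that $f_\chi$ descends to a well-defined algebra map $\cheW{c}\to\cheW{\cchi}$.

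Finally, the two maps $f_\chi$ and $f_{\chi^{-1}}$ agree with the identity on the generating subspaces $\h^*$ and $\h$, and satisfy $f_{\chi^{-1}}\circ f_\chi(w)=\chi^{-1}(w)\chi(w)w=w$ for $w\in W$; by generation they are mutually inverse, so $f_\chi$ is an isomorphism. I expect no genuine obstacle here: the only point demanding care is to track signs/characters in the Dunkl relation and confirm they convert $c$ to $\cchi$, which is essentially bookkeeping once the relation is put in the form displayed above.
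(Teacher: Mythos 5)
Your proof is correct and is precisely the standard verification; the paper itself gives no argument, stating the lemma as a known result with a reference to \cite[Section~5.1]{BerestChalykhQuasi}, and what you write is the expected proof. The only cosmetic slip is the constant in your specialized commutation relation: with $\ell_H=2$, $\kappa_{H,0}=0$ and $\kappa_{H,1}=-c_s$, the relation \eqref{eq:relaltRCAnon} collapses to $[y,x]=\langle y,x\rangle-\sum_s 2c_s\,\langle\alpha_s^{\vee},x\rangle\langle y,\alpha_s\rangle\,\langle\alpha_s^{\vee},\alpha_s\rangle^{-1}s$ rather than the $+c_s$ version you display, but since your argument only uses that the non-scalar part is linear in the reflections with coefficient proportional to $c_s$, this does not affect anything.
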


\begin{remark}  
        Note that $f_\chi(e_\chi)= \etriv$ and  that in terms of Dunkl operators we have $f_\chi(T_y(c)) =
        T_y(\cchi)$.
\end{remark}

Let $M$ be a $\cheW{\cchi}$-module. The twist of $M$ by $\chi$
is the $\cheW{c}$-module $M^{f_\chi}$ defined by $M^{f_\chi}
= M$ as a vector space and $a\cdot m = f_\chi(a) m$ for all $a \in
\cheW{c}$ and $m \in M$. Using this notation one easily proves the following. 

\begin{lemma}
        \label{lem22}
        {\rm (1)} The functor $M \mapsto M^{f_\chi}$ is an equivalence of
        categories between $\cheW{\cchi}$-$\mathrm{Mod}$ and
        $\cheW{c}$-$\mathrm{Mod}$. Its inverse is  given by $N \to
        N^{f_{\chi^{-1}}}$.
        
        {\rm (2)} Let $M$ be an $\cheW{\cchi}$-module. Then, the
        vector subspace $M^W= M[\triv]$ of the $W$-module $M$ is equal
        to the isotypic component $M^{f_\chi}[\chi]$ of the $W$-module
        $M^{f_\chi}$. In particular:
        \[
        M^W = M^{f_{\sgn}}[\sgn] \ \; \text{for any $\cheW{-c}$-module $M$.} \qed
        \]
\end{lemma}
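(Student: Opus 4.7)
The plan is to verify the two parts by unpacking the definitions of twist by a character, using that $f_\chi$ is an isomorphism of algebras whose inverse is $f_{\chi^{-1}}$.

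For Part~(1), I would argue that, more generally, whenever $\phi: A \to B$ is an isomorphism of $\C$-algebras with inverse $\phi^{-1}$, the restriction-of-scalars functor $\mathrm{Mod}\,B \to \mathrm{Mod}\,A$, $N \mapsto N^\phi$, is an equivalence of categories whose inverse is $M \mapsto M^{\phi^{-1}}$. Specialising this to $\phi = f_\chi$, whose existence and invertibility are guaranteed by Lemma~\ref{lemtwist}, gives the desired equivalence between $\cheW{\cchi}$-$\mathrm{Mod}$ and $\cheW{c}$-$\mathrm{Mod}$, with inverse given by twist by $\chi^{-1}$.

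For Part~(2), I would fix $m \in M$ and compare the two conditions directly. Since the $H_c(W)$-action on $M^{f_\chi}$ is defined by $w \cdot_{f_\chi} m = f_\chi(w) \cdot m = \chi(w)\, w \cdot m$, where the right-hand dot denotes the original $\cheW{\cchi}$-action, the condition $m \in M^{f_\chi}[\chi]$ means
\[
\chi(w)\, w \cdot m \ = \ \chi(w)\, m \quad \text{for all } w \in W,
\]
which, after cancelling the scalar $\chi(w) \in \C^{\times}$, is equivalent to $w \cdot m = m$ for all $w \in W$; that is, $m \in M^W$. Taking $\chi = \sgn$ (so $c^{\sgn} = -c$) gives the displayed special case, since $\triv \cdot \sgn = \sgn$.

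Neither step presents a real obstacle; the lemma is essentially a bookkeeping statement, and the only care needed is to keep straight which algebra is acting on which module. I would keep the proof short (two or three lines) and simply refer back to Lemma~\ref{lemtwist} for the construction of $f_\chi$.
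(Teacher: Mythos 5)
Your proof is correct and is exactly the routine unwinding of the definitions that the paper leaves to the reader (the lemma is stated with no written proof, prefaced only by ``one easily proves''): restriction of scalars along the isomorphism $f_\chi$ of Lemma~\ref{lemtwist} gives the equivalence, and the cancellation of the nonzero scalar $\chi(w)$ gives the identification $M^W = M^{f_\chi}[\chi]$. No gaps.
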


Let $\chi \in \Hom(W,\C^*)$  and set:
\begin{equation}
        \label{eq1}
        A_{c,\chi}(W)= e_\chi \chec e_\chi, \quad A_c(W) =
        A_{c,\triv}(W) = \etriv \chec \etriv
\end{equation}

Each $A_{c,\chi}(W) \subset \chec$ is an algebra whose identity
element is the idempotent $e_\chi$ and  $A_c(W)$ is the usual spherical subalgebra of $H_c(W)$.  Note that if $M$ is an $\chec$-module, the isotypic component $M[\chi] = e_\chi M$ is a
module over the algebra $A_{c,\chi}(W)$.  In particular, $M^W = \etriv M = e_W M$ is an
$A_c(W)$-module  and $M[\sgn]$ is a module
over $A_{c,\sgn}(W)$.

Recall that $f_{\sgn}(\esgn) = \etriv$ and $c^{\sgn}= -c$; via the isomorphism
$f_{\sgn}$  one gets the isomorphism of algebras:
\begin{equation}
        \label{eq2}
        f_{\sgn} \, :  \, A_{c,\sgn}(W)= \esgn \chec \esgn \, \longisomto
        \, A_{-c}(W) 
        = \etriv \cheW{-c} \etriv.
\end{equation}

We are  interested in the (non) simplicity of the
spherical algebra for certain Weyl groups and multiplicity functions.
We will need some  results from~\cite{LosevTotally}. First, recall that the
multiplicity $c \in \calC(W)$ is said to be \emph{totally aspherical} if one has the
following equivalence:
$M$ is a module of the category  $\calO_c(W)$ which
is torsion as an $S(\h^*)$-module if and only if  $M^W = \etriv M=
\{0\}$; see \cite{BEG} or \cite{LosevTotally} for a definition of $\calO_c(W)$.

Let $W' \subset W$ be a parabolic subgroup; that is, a stabiliser in $W$ of
some $p \in \h$. Decompose $\h$ as $\h=\h^{W'} \boplus \h'$ where
$\h' \cong \h/\h^{W'}$ is
the unique $W'$-complement of the set of fixed points $\h^{W'}$.  Set
$\calS'= \calS \cap W'$ and define a $W'$-invariant function $c' : \calS' \to \C$ by
$c' = c_{\mid \calS'} \in \calC(W')$. One can then construct a Cherednik algebra
$\cher{c'}{W'}$ from  the datum $(W',\h',c')$, cf.~\cite{BE}.

\begin{proposition}[\cite{LosevTotally}, Proposition~2.7, Lemmata~2.8 \&~2.9]
        \label{thm24}
        The following  are equivalent:
        \begin{enumerate} 
                \item $c \in \calC(W)$ is  totally aspherical;
                \item $e_{W'}M' = \{0\}$ for any parabolic subgroup $W' \ne \{1\}$ and
                any finite dimensional $\cher{c'}{W'}$-module, where $e_{W'}$ is the
                trivial idempotent in $\C W'$; 
                \item the spherical algebra $A_c(W)$ is simple.\qed
        \end{enumerate}
\end{proposition}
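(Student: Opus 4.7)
My plan is to prove the three-way equivalence by the chain $(1) \Leftrightarrow (3)$ and $(1) \Leftrightarrow (2)$, using the double centralizer relationship between $H_c(W)$ and $A_c(W)$ together with the Bezrukavnikov–Etingof parabolic induction/restriction formalism for category $\calO_c(W)$.

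For $(1) \Leftrightarrow (3)$, the starting point is the two-sided ideal $J := H_c(W) e_W H_c(W) \subseteq H_c(W)$ and the standard fact that $A_c(W) = e_W H_c(W) e_W$ is simple precisely when $H_c(W)$ is Morita equivalent to $A_c(W)$ via $e_W$, equivalently when $J = H_c(W)$, equivalently when the functor $N \mapsto e_W N$ is faithful on $H_c(W)$-$\mathrm{Mod}$. On the other hand, the modules $N$ with $e_W N = 0$ are exactly the modules over the finite-dimensional quotient $H_c(W)/J$, and by general results on Cherednik algebras these all lie in $\calO_c(W)$ and are $S(\h^*)$-torsion (their characters are supported only on the trivial weight). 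So asphericity in the easier direction (torsion $\Rightarrow$ $M^W = 0$) is automatic, and the content of total asphericity in this direction is that \emph{every} torsion $M$ has $M^W = 0$; together with the converse this makes the functor $e_W : \calO_c(W) \to A_c(W)\text{-mod}$ kill exactly the torsion modules, which is exactly the statement that $H_c(W)/J$ admits no nonzero module with nontrivial $W$-invariants. The equivalence $(1) \Leftrightarrow (3)$ then follows by combining these.

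For $(1) \Leftrightarrow (2)$, the main tool is the Bezrukavnikov–Etingof restriction functor $\mathrm{Res}_{W'}^W : \calO_c(W) \to \calO_{c'}(W')$ and its left adjoint $\mathrm{Ind}_{W'}^W$, which preserve finite length and interchange supports: $\mathrm{Res}$ sends a module supported on $W\cdot \h^{W'}$ to a finite-dimensional module, and $\mathrm{Ind}$ sends a finite-dimensional module to one supported on the closure of $W \cdot \h^{W'}$. In particular, for $W' \neq \{1\}$, $\mathrm{Ind}_{W'}^W M'$ is always $S(\h^*)$-torsion. The key compatibility is the reciprocity between trivial idempotents: one shows that $e_W \cdot \mathrm{Ind}_{W'}^W M' = 0 \iff e_{W'} M' = 0$ (this is essentially Frobenius reciprocity combined with the fact that $\mathrm{triv}_W |_{W'} = \mathrm{triv}_{W'}$). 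Granting this, $(1) \Rightarrow (2)$ is immediate: any finite-dim $M'$ with $e_{W'} M' \neq 0$ induces up to a torsion module with non-vanishing $W$-invariants, contradicting total asphericity. For $(2) \Rightarrow (1)$, one uses that every irreducible torsion $M$ in $\calO_c(W)$ is supported on some $W \cdot \h^{W'}$ with $W' \neq \{1\}$, and its restriction to the parabolic gives a finite-dim module $M'$ over $H_{c'}(W')$ whose invariants control those of $M$; by (2), $e_{W'} M' = 0$, forcing $M^W = 0$.

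The hardest step will be establishing the reciprocity $e_W \cdot \mathrm{Ind}_{W'}^W M' = 0 \iff e_{W'} M' = 0$ (and the analogous statement for $\mathrm{Res}$), because the Bezrukavnikov–Etingof functors are defined via completions and formal neighborhoods rather than by simple algebraic induction, so tracking the trivial isotypic component through the construction requires a careful argument. Once that is in hand the other pieces are formal. This is essentially why the result requires three distinct items (Proposition 2.6 and Lemmas 2.7, 2.8) in Losev's paper — one for the Morita/ideal analysis, one for the compatibility of trivial idempotents, and one assembling these into the equivalence.
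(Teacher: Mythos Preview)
The paper does not prove this proposition at all: it is stated with a terminal \qed\ and attributed wholesale to Losev's paper \cite{LosevTotally} (Proposition~2.6, Lemmata~2.7 and~2.8). So there is nothing in the paper to compare against beyond the citation.

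Your sketch is a reasonable outline of how Losev's argument actually goes, and it is more than the present paper offers. A few cautions if you intend to turn it into a proof. First, in the direction $(3)\Rightarrow(1)$ you need the nontrivial input that any module annihilated by $J=H_c(W)e_WH_c(W)$ lies in $\calO_c(W)$ and is $S(\h^*)$-torsion; this uses that $H_c(W)/J$ is finite over its centre (or a related finiteness statement) and is not purely formal. Second, the equivalence ``$A_c(W)$ simple $\iff J=H_c(W)$'' is not a general ring-theoretic fact about idempotents; for Cherednik algebras it relies on additional structure (e.g.\ the PBW filtration and the fact that $\gr A_c(W)$ is a domain, so $A_c(W)$ has no nontrivial idempotent two-sided ideals once it is Morita to $H_c(W)$). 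Third, your ``reciprocity'' $e_W\cdot\mathrm{Ind}_{W'}^W M'=0\iff e_{W'}M'=0$ is indeed the crux, and you are right that it is delicate because the BE functors are built via completions; Losev handles this by analysing the $W$-module structure of induced modules rather than by naive Frobenius reciprocity. With these caveats your plan matches the structure of Losev's proof.
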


This proposition, combined with  Lemma~\ref{lem22} and~\eqref{eq2}, implies the following
result.

\begin{corollary}\label{thm24-cor}
If there exists a parabolic subgroup $W' \subset W$ and a finite
dimensional $\cher{c'}{W'}$-module $L$ such that $L[\sgn] \ne \{0\}$, then
the spherical algebra $A_{-c}(W)$ is not simple.\qed
\end{corollary}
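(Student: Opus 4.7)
The plan is to translate the data $L[\sgn]\neq\{0\}$ directly into the failure of condition (2) of Proposition~\ref{thm24} for the parameter $-c$, and then invoke the equivalence (2)$\Leftrightarrow$(3) of that proposition. The key observation is that the restriction of $-c$ to $\calS' = \calS \cap W'$ equals $-c' = (-c)'$, so the algebra $\cher{(-c)'}{W'}$ is exactly $\cher{-c'}{W'}$. Thus if we can manufacture, from the given $\cher{c'}{W'}$-module $L$ with $L[\sgn]\neq\{0\}$, a finite dimensional $\cher{-c'}{W'}$-module whose $W'$-invariants are nonzero, condition (2) of Proposition~\ref{thm24} fails for $-c$, and the non-simplicity of $A_{-c}(W)$ is immediate from the equivalence with (3).

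To produce such a module, I would apply Lemma~\ref{lemtwist} with $\chi = \sgn$ to the parabolic datum: since $(c')^{\sgn} = -c'$, the algebra isomorphism $f_{\sgn} : \cher{-c'}{W'} \isomto \cher{c'}{W'}$ allows us to pull the $\cher{c'}{W'}$-module $L$ back to a finite dimensional $\cher{-c'}{W'}$-module $L^{f_{\sgn}}$. By the computation underlying Lemma~\ref{lem22}(2), an element $m \in L$ is $W'$-invariant for the twisted action if and only if $\sgn(w)wm = m$ in the original action, i.e.\ $wm = \sgn(w)m$; hence
\[
(L^{f_{\sgn}})^{W'} \ = \ L[\sgn] \ \neq \ \{0\}.
\]
Thus $L^{f_{\sgn}}$ is the desired finite dimensional $\cher{(-c)'}{W'}$-module with nonzero trivial isotypic component, and Proposition~\ref{thm24} forces $A_{-c}(W)$ to be non-simple.

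There is no real obstacle here: the entire argument is bookkeeping with the sign-twist isomorphism of Lemma~\ref{lemtwist}. The only point that requires a modicum of care is confirming that restriction commutes with negation of the multiplicity, namely $(-c)\vert_{\calS'} = -(c\vert_{\calS'})$, which is immediate from the pointwise definition of $c \mapsto -c$ on $\calS$.
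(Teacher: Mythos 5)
Your argument is correct and is exactly the route the paper intends: twist $L$ by $f_{\sgn}$ (Lemma~\ref{lemtwist}) to obtain a finite dimensional $\cher{(-c)'}{W'}$-module whose $W'$-invariants equal $L[\sgn]\neq\{0\}$, so that condition (2) of Proposition~\ref{thm24} fails for the parameter $-c$ and the equivalence with (3) gives non-simplicity of $A_{-c}(W)$. The bookkeeping $(-c)\vert_{\calS'}=-(c\vert_{\calS'})$ and $(L^{f_{\sgn}})^{W'}=L[\sgn]$ is verified correctly, so there is nothing to add.
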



\subsection*{Finite dimensional modules in the $\Asf_1$ case}
\label{sec0}

We assume in this subsection that $R$ is a root system of type
$\Asf_1$. 
 Therefore $\h^*= \C x$,  $W = \{1,s\}$ and $c\in \calC(W)$ is determined by $c=c_s$.   The algebra $\chec$ is isomorphic to the $\C$-algebra generated
by $x,s, T$ where
\[
T=  \frac{\partial}{\partial x} - \frac{c}{x}(1-s) \in \End_\C \C[x]. 
\]
We  now make the hypothesis:

\begin{mainhypo}
        \label{hypo0}
        $c= m+ \half$ \, with $m \in \N$.
\end{mainhypo}

Then $T(x^{2m+1}) = 0$ and the standard module $M_c(\triv) \cong
\C[x]$ has a unique irreducible quotient, namely: $L_c(\triv) = \C[x] \big/\left(\boplus_{ j \ge 2m +1} \C
x^j\right) = \boplus_{j=0}^{2m} \C t^j$,
where $t$ is the class of $x$. Since $s(t) = -t$ we get:
\begin{gather*}
        L_c(\triv)^W = L_c(\triv)[\triv] =  \boplus_{i=0}^{m} \C
        t^{2i},
        \\
        L_c(\triv)[\sgn] =
        \begin{cases}
                \{0\} & \ \text{if $m=0$,}
                \\
                \boplus_{i=0}^{m-1} \C t^{2i+1} & \ \text{if $m\ge 1$.}
        \end{cases}
\end{gather*}

Therefore, if we set
$M = L_c(\triv)^{f_{\sgn}}$, then the $\cheW{-c}$-module $M$ satisfies
$M^W = \{0\}$ if $c= \half$ and $M^W \ne \{0\}$ for $m \ge 1$ by Lemma~\ref{lem22}.
Thus, we have:

\begin{corollary}
        \label{A1-case}
        Let $R$ be a root system of type $\Asf_1$ and, in the above notation.
        assume that $m \ge 1$, equivalently if $c= \frac{d}{2}$ for an odd integer  $d\geq 3$.  Then there exists a nonzero finite dimensional module over
        $A_{-c}(W)$. \qed
\end{corollary}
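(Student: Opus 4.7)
The plan is to simply finish assembling the pieces the excerpt has already laid out just before the corollary. Almost all the work has been done; what remains is to package the outputs correctly.

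First I would take $L := L_c(\triv)$, which the paper has exhibited as a finite dimensional $H_c(W)$-module with basis $1, t, t^2, \ldots, t^{2m}$ and on which $s$ acts by $s(t^j) = (-1)^j t^j$. From this it is immediate that
\[
L[\sgn] \;=\; \bigoplus_{i=0}^{m-1} \C\, t^{2i+1},
\]
which is nonzero precisely because $m \ge 1$. Since $L$ is finite dimensional, so is $L[\sgn]$.

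Next, I would twist by the sign character: set $M := L^{f_{\sgn}}$. Because $c^{\sgn} = -c$, the isomorphism $f_{\sgn} \colon H_{-c}(W) \isomto H_c(W)$ of Lemma~\ref{lemtwist} turns $L$ into a module $M$ over $H_{-c}(W)$, which is still finite dimensional as a vector space. By Lemma~\ref{lem22}(2),
\[
M^W \;=\; L^{f_{\sgn}}[\triv] \;=\; L[\sgn] \;\neq\; 0.
\]

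Finally, since $M^W = e_W M$ is naturally an $A_{-c}(W) = e_W H_{-c}(W) e_W$-module and is a subspace of the finite dimensional space $M$, it is the desired nonzero finite dimensional $A_{-c}(W)$-module. There is no real obstacle: the proof is essentially a bookkeeping exercise combining the explicit computation of $L_c(\triv)[\sgn]$ given just before the corollary with the equivalence $L \mapsto L^{f_{\sgn}}$ between $H_c(W)$-modules and $H_{-c}(W)$-modules, and the observation that taking $W$-invariants lands us in the spherical subalgebra $A_{-c}(W)$.
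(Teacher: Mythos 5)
Your argument is exactly the paper's: compute $L_c(\triv)[\sgn]=\bigoplus_{i=0}^{m-1}\C t^{2i+1}\neq\{0\}$ for $m\ge 1$, twist by $f_{\sgn}$ to obtain an $\cheW{-c}$-module $M$ with $M^W=L_c(\triv)[\sgn]\neq\{0\}$, and observe that $M^W=e_W M$ is a nonzero finite dimensional $A_{-c}(W)$-module. This matches the discussion immediately preceding the corollary, so nothing further is needed.
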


When $(\gtilde,\vt)$ is an irreducible symmetric pair of rank~$1$, the
parameter $k$ (given by $\kappa)$ is always of the form $k=m$ or $k= m+ \half$ for some $m \in \N$ (see the
tables in Appendix~\ref{app-tables}). When $k = \half$, it is easy to see
that $A_\kappa$ is simple, see \cite [\S6]{LS3}, or observe that
$(\gtilde,\vt)$ is nice.  We therefore deduce the following result.

\begin{corollary}
        \label{A1-case2}
        If $(\gtilde,\vt)$ is an irreducible symmetric pair of rank~$1$, then
        $A_\kappa(W)$ is not simple if and only if $k=m+ \half$ with $m \ge 1$. \qed
\end{corollary}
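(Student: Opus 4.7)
The plan is to combine Corollaries~\ref{thm42} and~\ref{A1-case} after tracking the change of parameter conventions. Let $(\gtilde,\vt)$ be an irreducible rank-one symmetric pair, so $\dim \h = 1$ and $W$ is a finite group generated by real reflections on $\h$; thus $W = \Z/2\Z$ is the Weyl group of type $\Asf_1$. By Proposition~\ref{kappa-symmetric}, the associated parameter is $\kappa_{H,0}=0$ and $\kappa_{H,1}=k_\alpha =: k$. An inspection of the tables in Appendix~\ref{app-tables} shows that the only possible values are $k \in \N^*$ or $k = m+\half$ with $m \in \N$, so the three cases to consider are $k \in \N^*$, $k = \half$, and $k = m+\half$ with $m \ge 1$.

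In the first two cases I would conclude simplicity from Corollary~\ref{thm42}. If $k \in \N^*$ then $(\gtilde,\vt)$ is \integral\ by Definition~\ref{gainly-defn}, hence \gainly; if $k=\half$ then $k_\alpha \le 1$ and the pair is nice by Definition~\ref{nice-space}, again \gainly. In either case Corollary~\ref{thm42} gives that $A_\kappa(W)$ is simple.

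It remains to treat $k=m+\half$ with $m \ge 1$, where I would appeal to Corollary~\ref{A1-case}. The only non-trivial point here is the translation of parameters. Using Remark~\ref{comparison} together with the Dunkl operator formula \eqref{eq10}: for $W$ of type $\Asf_1$ with reflection $s$, $\ell_H=2$, and $e_{H,0}=(1+s)/2$, $e_{H,1}=(1-s)/2$, expanding \eqref{eq:Dunkl} with $\kappa_{H,0}=0$, $\kappa_{H,1}=k$ yields
\[
T_y \ = \ \partial_y + \frac{\langle y,\alpha\rangle}{\alpha}\, k(1-s),
\]
while the appendix's Dunkl operator is $T_y(c) = \partial_y - c\, \langle y,\alpha\rangle/\alpha \cdot (1-s)$. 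Hence the appendix's multiplicity is $c = -k$, and the main-text algebra $A_\kappa(W)$ coincides with the appendix's $A_{-k}(W)$. Now set $c_0 := k = m+\half$, which satisfies Hypothesis~\ref{hypo0} with $m \ge 1$. Corollary~\ref{A1-case} then furnishes a nonzero finite-dimensional module over $A_{-c_0}(W) = A_{-k}(W) = A_\kappa(W)$.

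Finally, $A_\kappa(W)$ contains $\C[\h]^W$ and is therefore infinite-dimensional over $\C$; if it were simple, the annihilator of any nonzero finite-dimensional module would be a proper ideal of finite codimension, forcing $A_\kappa(W)$ itself to be finite-dimensional, a contradiction. Thus $A_\kappa(W)$ is not simple in this case, completing both directions of the equivalence. The only real obstacle is the bookkeeping between the two parameter conventions in \eqref{eq:kappa-alpha} and \eqref{eq10}; once this is made explicit the conclusion is immediate from the earlier results.
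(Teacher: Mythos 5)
Your proposal is correct and follows essentially the same route as the paper: read off from the tables that $k$ is either a positive integer or of the form $m+\half$, dispose of the integer and $k=\half$ cases via Corollary~\ref{thm42} (\integral\ resp.\ nice, hence \gainly), and invoke Corollary~\ref{A1-case} for $k=m+\half$ with $m\ge 1$ after matching the sign conventions $c=-k$ between \eqref{eq:Dunkl} and \eqref{eq10}. Your explicit annihilator argument for deducing non-simplicity from the finite-dimensional module is a correct (and slightly more self-contained) substitute for the paper's implicit appeal to Proposition~\ref{thm24}.
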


\subsection*{Finite dimensional modules in the $\Bsf_p$ case}
\label{sec1}

In this subsection we recall some results from~\cite{CE}.  Let $\fS_p$ be the
$p$th symmetric group and set $\mu_2 = \{\pm 1\}$.  We consider here the
Weyl group $W = \fS_p \ltimes \mu_2^p$ of a root system of type $\Bsf_p$ (or
$\Csf_p$) with $p \ge
2$. It acts on the $p$-dimensional space $\h=\C^p$. The set $\calS \subset W$
consists of the elements $s_i$, $1 \le i \le p$, and $\sigma_{i,j}^{(m)}$,
$1 \le i < j \le p$, defined by:
\begin{gather*}
        s_i(x_1, \dots, x_i, \dots, x_p) = 
        (x_1, \dots, -x_i,\dots, x_p), 
        \\
        \sigma_{i,j}^{(m)}(x_1 ,\dots, x_i ,\dots, x_j, \dots, x_p)=
        (x_1, \dots, (-1)^m x_j, \dots, (-1)^m x_i ,\dots, x_p),  
\end{gather*}
for $m=0,1$. A multiplicity function $c \in \calC(W)$ is given by  two complex numbers $\{c_1,c_2\}$:
\begin{equation} \label{multiplicity-1}
        \text{$c_1=c_{s_i}$ for all $i$ and
                $c_2= c_{\sigma_{i,j}^{(m)}}$   for all $i, j, m$}.
\end{equation}

\begin{remark}
        \label{multiplicity-2}
        When $R$ is a root system of type $\Bsf_p$, respectively~$\Csf_p$, $s_i$ is the
        reflection associated to a short, respectively~long,  root and $\sigma_{i,j}^{(0)}$
        is associated to a long, respectively~short, root.  See \cite[Planches II \& III]{Bou}.
\end{remark}

We  will work under the following hypothesis, see \cite[\S 4.1]{CE}.

\begin{mainhypo}
        \label{hypo1}
        Set $
        r= 2 (c_1 + (p-1)c_2).$
        We assume that $r\geq 1$ is an odd integer and write
        $r= 2m -1$ for $m \ge 1$.
\end{mainhypo}

Let $\tilde{u}$ be an indeterminate over $\C$ and define an
$r$-dimensional vector space by
\[
U_r = \C[\tilde{u}]/(\tilde{u}^r) = \boplus_{i=0}^{r-1} \C u^i
\]
where $u$ is the class of $\tilde{u}$ modulo~$(\tilde{u}^r)$.
Let
$X_r= U_r^{\otimes p}$, thus $\dim X_r= r^p$.
A basis of the vector space $X_r$ is given by the $u^{\mathbf{j}} = u^{j_1}
\otimes \cdots \otimes u^{j_p}$ where $\mathbf{j}=(j_1,\dots,j_p)
\in \{0,\dots,r-1\}^p$. 
The space $X_r$ has a natural
structure of graded $W$-module which can be described as follows.

The group $\mu_2$ acts  on $U_r$ by
$(\pm 1) \cdot u^j= (\pm 1)^{j} u^j$ for $j=0,\dots,r-1$, and
$(\epsilon_1,\dots,\epsilon_p) \in \mu_2^p$ acts on
$a=a_1 \otimes \cdots \otimes a_p \in X_r$ via
$\epsilon \cdot a= \epsilon_1\cdot a_1 \otimes \cdots \otimes \epsilon_p
\cdot a_p$.  Let $w=(\sigma,\epsilon) \in W$ with $\sigma \in \fS_p$ and
$\epsilon =(\epsilon_1,\dots,\epsilon_p) \in \mu_2^p$, then the action of
$w$ on $a=a_1 \otimes \cdots \otimes a_p \in X_r$ is: $w \cdot a= \sigma\cdot(\epsilon \cdot a)= \epsilon_{\sigma(1)} \cdot
a_{\sigma(1)} \otimes \cdots \otimes \epsilon_{\sigma(p)} \cdot a_{\sigma(p)}$.

Recall that $M_c(\triv) = \chec \otimes_{\C W \ltimes S(\h)} \C_{\triv}$ is the
standard representation of $\chec$ with trivial lowest weight; this is the
polynomial representation, on which the elements of $\h$ act via  Dunkl
operators.  

\begin{proposition}[\cite{CE}, Theorem~4.2]
        \label{thm31}
        Under Hypothesis~\ref{hypo1}, there exists a quotient $Y_r$ of  
        $M_c(\triv)$ which is isomorphic to $X_r$ as a graded $W$-module. \qed
\end{proposition}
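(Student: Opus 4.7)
The statement is a restatement of \cite[Theorem~4.2]{CE}, so my plan is to follow that strategy. The goal is to construct a graded $\chec$-submodule $K \subset M_c(\triv) = \C[x_1,\dots,x_p]$ such that $Y_r := M_c(\triv)/K$ has graded $W$-character equal to that of $X_r = U_r^{\otimes p}$.

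First I would identify $M_c(\triv)$ with the polynomial representation $\C[x_1,\dots,x_p]$, on which $W$ acts by signed permutations and $\h$ acts via the type $\Bsf_p$ Dunkl operators $T_i$. The natural candidate generators for $K$ are $x_1^r,\dots,x_p^r$, by analogy with the rank one case of Corollary~\ref{A1-case}, where $x^{2m+1}$ generates the corresponding submodule exactly when $c = m + \tfrac{1}{2}$.

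The critical computation is $T_i(x_i^r)$. Using that $r$ is odd, so $(-x_i)^r = -x_i^r$ and $\sigma_{i,j}^{(1)}(x_i^r) = -x_j^r$, the contribution of the $s_i$ reflection to the coefficient of $x_i^{r-1}$ is $-2c_1$, and the two $\sigma_{i,j}^{(m)}$ reflections together contribute $-2c_2(p-1)$. Combined with $\partial_i(x_i^r) = r x_i^{r-1}$, the total coefficient of $x_i^{r-1}$ in $T_i(x_i^r)$ equals $r - 2c_1 - 2(p-1)c_2$, which vanishes precisely by Hypothesis~\ref{hypo1}. This cancellation is the miracle that allows the construction to work. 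However, $T_i(x_i^r)$ still contains terms of the form $x_j^{r-1}$ for $j \neq i$, so the naive ideal $(x_1^r,\dots,x_p^r)$ is not itself Dunkl-stable; the desired $K$ must be taken to be the $\chec$-submodule generated by the $x_i^r$, which is strictly larger than the ideal.

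The main obstacle is then to show that this generated submodule $K$ has Hilbert series exactly $\left[1-(1-t^r)^p\right]/(1-t)^p$, so that $Y_r$ matches $X_r$ in both dimension and graded $W$-character. Following \cite{CE}, one would combine the vanishing identity above with the bispectrality/Baker--Akhiezer framework: the $x_i^r$ play the role of partial singular vectors in $M_c(\triv)$, and the resulting quotient is identified with the truncated polynomial tensor product by comparing characters in category $\calO_c(W)$. This character identification --- controlling the iterated Dunkl images of the generators and ruling out further collapse --- is where Hypothesis~\ref{hypo1} is essential, and completing it is the technical heart of the argument.
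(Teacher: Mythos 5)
The paper itself offers no argument for this proposition: it is stated as a verbatim citation of \cite[Theorem~4.2]{CE} and closed immediately with \qed. So the only meaningful comparison is with the proof in \cite{CE}, and there your route diverges from theirs. Chmutova and Etingof do not analyse the kernel inside $M_c(\triv)$; they go in the opposite direction, using their general wreath-product construction to put an $\chec$-module structure on $X_r = U_r^{\otimes p}$ directly, starting from the $r$-dimensional lowest-weight module $U_r \cong \C[u]/(u^r)$ over the rank-one Cherednik algebra at the shifted parameter $c_1+(p-1)c_2 = r/2$, and then observe that $1^{\otimes p}$ is a cyclic lowest-weight vector of type $\triv$, so that $X_r$ is a quotient of $M_c(\triv)$ with the graded $W$-structure built in from the outset. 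The Baker--Akhiezer/bispectrality machinery you invoke plays no role there.

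More importantly, your sketch has a genuine gap precisely where you flag one. The computation you do carry out --- that the coefficient of $x_i^{r-1}$ in $T_i(x_i^r)$ is $r - 2c_1 - 2(p-1)c_2 = 0$ under Hypothesis~\ref{hypo1} --- is correct and is indeed the numerical coincidence driving the result, and you are also right that the surviving cross terms $x_i^{r-1-2a}x_j^{2a}$ (including $x_j^{r-1}$) prevent the ordinary ideal $(x_1^r,\dots,x_p^r)$ from being Dunkl-stable. But this only shows that the $x_i^r$ generate some graded $\chec$-submodule $K$; it gives no upper bound on $K$, no proof that $M_c(\triv)/K$ has Hilbert series $\bigl((1-t^r)/(1-t)\bigr)^p$, and no identification of the $W$-action on the quotient --- and that identification \emph{is} the content of the proposition. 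Deferring all of this to ``the technical heart of the argument'' leaves the statement unproved; to close the gap along your lines you would essentially have to import the explicit tensor-product model of $X_r$ from \cite{CE} (or an equivalent explicit description of $K$) in order to match the graded characters.
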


In order to apply Lemma~\ref{lem22}, we want to describe
$X_r[\sgn]$, the isotypic component of type $\sgn$ of the
$W$-module~$X_r$.

The reflection $s_k$ identifies with the element
$(1,\dots,1,-1,1,\dots 1) \in W$, where $-1$ is in position
$k$. Therefore, $s_k$ acts on $u^{\mathbf{j}}$ by
$s_k \cdot u^{\mathbf{j}} = (-1)^{j_k} u^{\mathbf{j}}$. Let
$x= \sum_{\mathbf{j}} \lambda_{\mathbf{j}} u^{\mathbf{j}} \in
X_r$.  Then, $s_k \cdot x= \sgn(s_k) x = -x$ if and only if  
$\sum_{\mathbf{j}} (-1)^{j_k} \lambda_{\mathbf{j}}u^{\mathbf{j}}
= -\sum_{\mathbf{j}} \lambda_{\mathbf{j}} u^{\mathbf{j}}$.
Equivalently,
$(-1)^{j_k +1}\lambda_{\mathbf{j}} = \lambda_{\mathbf{j}}$ for
all $\mathbf{j}$; that is $\lambda_{\mathbf{j}} =0$ when $j_k$ is
even. This shows that the elements $u^{\mathbf{j}}$ such that $j_k$
is odd for all $k$ give a basis of the space
$X_r'= \{x \in X_r : s_k \cdot x = -x, \ k=1,\dots, p\}$. Since
$j \in \{0,\dots,r-1\}$ is odd if only if 
$j=2i -1$ with $i\in\{1,\dots,m-1\}$, we have
$\dim X'_r= (m-1)^p$. Note that $X'_r$ is an $\fS_p$-submodule
of $X_r$ and, if one sets $U'_r= \boplus_{i=1}^{m-1} \C u^{2i-1} \subset U_r$,
we have $X'_r= (U'_r)^{\otimes p} \subset X_r$.  Since
\[
X_r[\sgn] = \{x \in X'_r \, : \, \sigma \cdot x= \sgn(\sigma) x \
\text{for all $\sigma \in \fS_p$}\},
\]
the $W$-module $X_r[\sgn]$ identifies with the space of
antisymmetric tensors in $ (U'_r)^{\otimes p}$, that is to say
with the $p$-th exterior product $\bigwedge^p U'_r$.

We can summarise the previous discussion in the next three corollaries.

\begin{corollary}
        \label{cor32}
        Under  Hypothesis~\ref{hypo1} and the notation of
        Proposition~\ref{thm31}:
        \begin{enumerate} 
                \item  the isotypic component $Y_r[\sgn]$ is isomorphic to
                $\bigwedge^p U'_r$ (with the above $W$-module structure);
                \item $Y_r[\sgn] = \{0\}$ if and only if  $m < p+1$ and $\dim
                Y_r[\sgn] = \binom{m-1}{p}$ if $m \ge p+1$.\qed
        \end{enumerate}
\end{corollary}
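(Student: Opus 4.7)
The plan is to derive the corollary directly from Proposition~\ref{thm31} together with the explicit computation already carried out in the paragraphs preceding the statement. Since Proposition~\ref{thm31} identifies $Y_r$ with $X_r$ as graded $W$-modules, it suffices to compute $X_r[\sgn]$. The strategy is to impose the sign condition factor by factor using the semidirect product structure $W = \fS_p \ltimes \mu_2^p$: first pin down the elements $x \in X_r$ on which every $s_k$ acts by $-1$, and then, among these, extract the antisymmetric tensors under $\fS_p$.

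First I would invoke the computation in the excerpt showing that the subspace $X'_r \subset X_r$ cut out by the conditions $s_k\cdot x = -x$ (for $1 \le k \le p$) has basis $\{u^{\mathbf{j}} : j_k \text{ odd for all } k\}$. Since the admissible exponents are $j \in \{0,\dots,r-1\} = \{0,\dots,2m-2\}$, the odd values are exactly $j = 2i-1$ for $i = 1,\dots,m-1$, giving the identification $X'_r = (U'_r)^{\otimes p}$. This subspace is stable under $\fS_p$, since $\fS_p$ permutes the tensor factors and each factor is the same $\mu_2$-eigenspace $U'_r \subset U_r$.

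Next I would impose the remaining condition $\sigma \cdot x = \sgn(\sigma)x$ for all $\sigma \in \fS_p$, noting that on $(U'_r)^{\otimes p}$ this $\fS_p$-action is just the standard place permutation action. By definition, the resulting subspace of antisymmetric tensors is $\bigwedge^p U'_r$. Combining with the previous step shows $X_r[\sgn] = \bigwedge^p U'_r$ as $W$-modules, and Proposition~\ref{thm31} then yields $Y_r[\sgn] \cong \bigwedge^p U'_r$, proving part~(1).

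For part~(2), I would simply compute $\dim U'_r = m-1$, so $\dim \bigwedge^p U'_r = \binom{m-1}{p}$. This equals zero exactly when $m - 1 < p$, i.e.\ $m < p+1$, and equals $\binom{m-1}{p}$ otherwise. I do not anticipate any serious obstacle, since all the representation-theoretic content is already visible in the explicit basis computation leading up to the corollary; the task is purely one of assembling these observations correctly.
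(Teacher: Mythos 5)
Your proposal is correct and follows essentially the same route as the paper, which proves the corollary precisely by the two-step computation you describe: first cutting out $X'_r = (U'_r)^{\otimes p}$ via the conditions $s_k\cdot x = -x$, then identifying the $\fS_p$-antisymmetric tensors with $\bigwedge^p U'_r$ and counting $\dim U'_r = m-1$. No gaps.
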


\begin{remark} \label{eq4}
        In terms of the multiplicity function $c$, the condition
        $m \ge p+1$ can be rewritten as
        $  c_1 + (p-1) c_2 \ge p + \half.$
\end{remark}

\begin{corollary}
        \label{cor33}
        Assume that $c_1 + (p-1)c_2 \in \half+\mathbb{Z}$ 
        and $c_1 + (p-1) c_2 \ge p + \half$. Then:
        
        \begin{enumerate}
                \item there exists a finite dimensional
                $\cheW{-c}$-module $M$ such that
                \[M^W =\etriv M \ne \{0\};\] 
                \item the spherical subalgebra
                $A_{-c}(W) \cong \esgn A_c(W) \esgn$ is not simple.\qed
\end{enumerate}  \end{corollary}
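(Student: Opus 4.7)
The plan is to combine Proposition~\ref{thm31} and Corollary~\ref{cor32} with the twisting machinery from Lemmas~\ref{lemtwist} and~\ref{lem22}, and then appeal to Losev's characterisation of simplicity in Proposition~\ref{thm24} (or, equivalently, to Corollary~\ref{thm24-cor}).

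First I would check that the hypotheses of this corollary place us in the regime already analysed. The condition $c_1 + (p-1)c_2 \in \half + \Z$ means that $r = 2(c_1 + (p-1)c_2)$ is an odd integer, while the inequality $c_1 + (p-1)c_2 \ge p + \half$ gives $r \ge 2p+1$. Writing $r = 2m-1$ as in Hypothesis~\ref{hypo1}, we obtain $m \ge p+1$, so both Hypothesis~\ref{hypo1} and the nonvanishing condition in Corollary~\ref{cor32}(2) are satisfied. Consequently Proposition~\ref{thm31} produces a finite dimensional graded quotient $Y_r$ of $M_c(\triv)$, and Corollary~\ref{cor32} gives $Y_r[\sgn] \cong \bigwedge^p U'_r \ne \{0\}$.

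For Part~(1), the isomorphism $f_{\sgn}\colon \cheW{-c}\longisomto \cheW{c}$ of Lemma~\ref{lemtwist} (applied with multiplicity $-c$, using that $(-c)^{\sgn}=c$) allows us to form the twist $M := Y_r^{f_{\sgn}}$, a finite dimensional $\cheW{-c}$-module. By the last sentence of Lemma~\ref{lem22}(2), $M^W = M^{f_{\sgn}}[\sgn] = Y_r[\sgn]$, and the latter is nonzero by the previous paragraph. Thus $M^W = \etriv M \ne \{0\}$, as required.

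For Part~(2), the cleanest route is to invoke Corollary~\ref{thm24-cor}: taking the parabolic subgroup $W' = W$ (which is indeed parabolic, as the stabiliser of $0 \in \h$), the finite dimensional $\cher{c}{W}$-module $L = Y_r$ has $L[\sgn]\ne\{0\}$, so $A_{-c}(W)$ is not simple. Alternatively, the finite dimensional $\cheW{-c}$-module $M$ constructed in Part~(1) satisfies $\etriv M \ne \{0\}$ with $M$ torsion as an $S(\h^*)$-module (being finite dimensional), so $-c$ is not totally aspherical and Proposition~\ref{thm24} immediately yields non-simplicity of $A_{-c}(W) = \etriv \cheW{-c}\etriv$. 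The isomorphism $A_{-c}(W) \cong \esgn A_c(W) \esgn$ is then the one recorded in~\eqref{eq2}. No genuine obstacle arises: the content of the corollary is bookkeeping the twist by $\sgn$ and checking that the arithmetic hypothesis forces $m \ge p+1$, and both steps are routine once Proposition~\ref{thm31} and Corollary~\ref{cor32} are in hand.
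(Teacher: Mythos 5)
Your argument is correct and matches the paper's proof: both set $M=Y_r^{f_{\sgn}}$, use Lemma~\ref{lem22} and Corollary~\ref{cor32} to get $M^W=Y_r[\sgn]\neq\{0\}$ under the arithmetic hypothesis $m\ge p+1$, and deduce non-simplicity from Proposition~\ref{thm24} (equivalently Corollary~\ref{thm24-cor}). The only quibble is notational: you write $M^W=M^{f_{\sgn}}[\sgn]$ where $M$ is already the twist, so this should read that the $W$-invariants of $Y_r^{f_{\sgn}}$ equal the $\sgn$-isotypic component of $Y_r$; the substance is unaffected.
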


\begin{proof}
        Recall that there exist isomorphisms of algebras:
        \[
        f_{\sgn} : \cheW{-c} \, \isomto \, \cheW{c}, \quad f_{\sgn} :
        A_{-c}(W) \, \isomto \, \esgn A_c(W) \esgn. 
        \]
        Set $M= Y_r^{f_{\sgn}}$, where $Y_r$ is an in
        Proposition~\ref{thm31}. Then, $M$ is an $\cheW{-c}$ module; using
        Lemma~\ref{lem22}  and Corollary~\ref{cor32} we get $M^W = Y_r[\sgn] \ne \{0\}$.
        This proves (1), from which (2) is an immediate consequence.
\end{proof}

When $p=2$ the expression $r= 2(c_1 + (p-1) c_2) =2(c_1+c_2)$ is symmetric in
$c_1$ and $c_2$, and so we obtain:

\begin{corollary}
        \label{ex34}
        Let $R$ be a root system of type $\Bsf_2$ or $\Csf_2$. Write
        $W= \langle s_{\alpha}, s_{\beta} \rangle$ where $\alpha$ and
        $\beta$ have different length.  Suppose that
        $r= 2(c_{s_\alpha} + c_{s_\beta}) = 2m -1$ where $m$ is an
        integer~$\ge 3$. Then there exists a finite dimensional
        $\cheW{-c}$-module $M$ such that $e_{\triv} M = M^W \ne
        \{0\}$. \qed
\end{corollary}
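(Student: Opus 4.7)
The plan is to obtain this corollary as the $p=2$ specialisation of Corollary~\ref{cor33}, the only subtle point being that for $p=2$ the relevant inequality is symmetric in the two multiplicities, so the distinction between $\Bsf_2$ and $\Csf_2$ evaporates.

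First I would observe that the Weyl groups of the root systems $\Bsf_2$ and $\Csf_2$ coincide as abstract subgroups of $\GL(\h) = \GL(\C^2)$; the only difference between the two is the labelling of roots as short or long, and hence, by Remark~\ref{multiplicity-2}, the matching between the pair $\{c_{s_\alpha}, c_{s_\beta}\}$ (with $\alpha,\beta$ of different length) and the pair $\{c_1, c_2\}$ appearing in~\eqref{multiplicity-1}. Concretely, in one of the two cases $c_1 = c_{s_\alpha}, c_2 = c_{s_\beta}$ while in the other case the assignment is swapped. However, in both cases $c_1 + c_2 = c_{s_\alpha} + c_{s_\beta}$, so the hypothesis $r = 2(c_{s_\alpha} + c_{s_\beta}) = 2m-1$ translates to the symmetric statement $r = 2(c_1 + c_2) = 2m-1$ irrespective of which type we are in.

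Next I would verify the two hypotheses of Corollary~\ref{cor33} with $p=2$. With $r = 2(c_1+c_2) = 2m-1$ odd, one has $c_1 + c_2 = m - \tfrac{1}{2} \in \tfrac{1}{2} + \Z$, which is Hypothesis~\ref{hypo1} in the guise required by Corollary~\ref{cor33}. The inequality $c_1 + (p-1) c_2 \ge p + \tfrac{1}{2}$ becomes $c_1 + c_2 \ge \tfrac{5}{2}$; since $c_1 + c_2 = m - \tfrac{1}{2}$ and $m \ge 3$, this is exactly satisfied.

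Applying Corollary~\ref{cor33} then produces a finite-dimensional $\cheW{-c}$-module $M$ with $\etriv M = M^W \ne \{0\}$, which is the conclusion we want. There is no real obstacle: the whole content is to notice the symmetry $c_1 + c_2 = c_2 + c_1$ in the hypothesis of Corollary~\ref{cor33} when $p=2$, which removes any dependence on whether we identify the given root system as $\Bsf_2$ or $\Csf_2$. Thus the corollary is a direct, essentially cost-free consequence of what has already been established.
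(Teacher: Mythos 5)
Your proposal is correct and is exactly the paper's argument: the corollary is stated as the $p=2$ specialisation of Corollary~\ref{cor33}, with the sole observation (made in the sentence preceding the statement) that $r=2(c_1+(p-1)c_2)=2(c_1+c_2)$ is symmetric in $c_1,c_2$ when $p=2$, so the $\Bsf_2$/$\Csf_2$ labelling is irrelevant. Your verification that $m\ge 3$ is precisely the inequality $c_1+c_2\ge p+\tfrac12$ matches what the paper leaves implicit.
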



\subsection*{Application}
\label{sec2}

 We give here examples of multiplicity
functions such that the spherical algebra $A_{-c}(W)$ is
\emph{not simple}. Our notation for root systems
follows \cite{Bou}.

Let $(\alpha_1,\dots,\alpha_\ell)$ be a basis of $R$. If $I$ is a
subset of $\{1,\dots,\ell\}$, denote by $R_I$ the root system
generated by the $\alpha_i$, $i \in I$, and let $W'=W_I$  be the parabolic subgroup generated by the $\{s_{\alpha_i} :i \in I\}$. 
Prior to Proposition~\ref{thm24}, for each  multiplicity function $c \in \calC(W)$ we   defined
a  multiplicity function $c'= c_{\mid \calS \cap W'} \in \calC(W')$  and hence we obtain  a Cherednik algebra $\cher{c'}{W'}$.

\begin{proposition}
        \label{lem21}
        Adopt the previous notation. Suppose that one of the following
        hypothesis holds.
        \begin{enumerate}
                \item[(a)]  Let $I=\{j\}$ and suppose that
                $c_{s_{\alpha_j}}= m + \half$ with $m \in \N^*$.  
                
                \item[(b)] Let $I=\{j, j+1\}$ with $R_I$ of type $\Bsf_2$ or
                $\Csf_2$ and
                suppose, for some integer $m\geq 3$,  that $r= 2( c_{s_{\alpha_j}} + c_{s_{\alpha_{j+1}}}) = 2m -1$.
        \end{enumerate}
        
        Then the spherical algebra $A_{-c}(W)$ is not simple.
\end{proposition}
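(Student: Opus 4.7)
The plan is to apply Corollary~\ref{thm24-cor} directly in both cases: the corollary reduces the question of non-simplicity of $A_{-c}(W)$ to exhibiting a parabolic subgroup $W' \subseteq W$ and a finite-dimensional $\cher{c'}{W'}$-module $L$ with $L[\sgn] \neq \{0\}$. In both cases~(a) and~(b) the natural choice is $W'=W_I$ with the corresponding parabolic root subsystem $R_I$, and $c'=c_{|\calS \cap W'}$; the entire argument then reduces to a rank one, respectively rank two, construction carried out already in the appendix.

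For case~(a), the group $W'$ is of type $\Asf_1$ and $c'_{s_{\alpha_j}} = m + \half$ with $m \ge 1$. The discussion preceding Corollary~\ref{A1-case} produces $L_{c'}(\triv) = \bigoplus_{j=0}^{2m} \C t^j$, a finite-dimensional irreducible quotient of the polynomial module, whose sign isotypic component is
\[
L_{c'}(\triv)[\sgn] = \bigoplus_{i=0}^{m-1} \C t^{2i+1},
\]
and this is nonzero because $m \ge 1$. Taking $L = L_{c'}(\triv)$ gives what Corollary~\ref{thm24-cor} requires.

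For case~(b), $W'$ is of type $\Bsf_2$ or $\Csf_2$ and the pair $(c'_{s_{\alpha_j}}, c'_{s_{\alpha_{j+1}}})$ satisfies Hypothesis~\ref{hypo1} with $m\ge 3$. Corollary~\ref{ex34} produces a finite-dimensional $\cher{-c'}{W'}$-module $M$ with $M^{W'} = \etriv M \neq \{0\}$. To convert this into an $\cher{c'}{W'}$-module with nontrivial sign component, we twist by $\sgn$: since $(-c')^{\sgn} = c'$, Lemma~\ref{lemtwist} gives an algebra isomorphism $f_{\sgn}:\cher{-c'}{W'} \isomto \cher{c'}{W'}$, and then Lemma~\ref{lem22}(2) shows that $L := M^{f_{\sgn}}$ is a finite-dimensional $\cher{c'}{W'}$-module with $L[\sgn] = M^{W'} \neq \{0\}$. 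Again Corollary~\ref{thm24-cor} applies.

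There is no real obstacle here, since the substantive content has already been established in the appendix; the only point requiring care is the bookkeeping of the sign twists, in particular verifying that $(-c')^{\sgn}=c'$ so that the twist of the module produced by Corollary~\ref{ex34} lands in the correct category $\cher{c'}{W'}\lmod$ required for the input of Corollary~\ref{thm24-cor}.
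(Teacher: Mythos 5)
Your proposal is correct and is essentially the paper's own argument: both cases reduce to the rank-one and rank-two module constructions of the appendix combined with Losev's parabolic criterion. The only (cosmetic) difference is that you route both cases through Corollary~\ref{thm24-cor}, carrying out the sign twist $L=M^{f_{\sgn}}$ explicitly so as to land in $\cher{c'}{W'}\lmod$ with $L[\sgn]\neq\{0\}$, whereas the paper invokes Proposition~\ref{thm24} directly together with Corollaries~\ref{A1-case} and~\ref{ex34}; since Corollary~\ref{thm24-cor} is itself deduced from Proposition~\ref{thm24} by exactly that twist, the two formulations are interchangeable.
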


\begin{proof}
        If   (a) holds, the result follows from 
        Corollary~\ref{A1-case} and Proposition~\ref{thm24}.
        
        If (b) holds, apply Corollary~\ref{ex34} and Proposition~\ref{thm24}. 
\end{proof}

\begin{examples} \label{computations}
        For each of the eleven following
        cases, we indicate the type of $R$, the conditions on the
        integers $p,q,n$, and the values of the multiplicity $c$. We write
        $c_{\mathrm{sh}}$, resp.~$c_{\mathrm{lg}}$, for $c_{\alpha}$
        with $\alpha$ short, respectively~long. We then explain how to use  
        Proposition~\ref{lem21} to obtain the non-simplicity of
        $A_{-c}(W)$.
        \smallskip
        
        \emph{The choice of the multiplicities (and the numbering of the
                different cases) will be justified in  Remark~\ref{rem:unknown-cases}.}


        \smallskip
        
        (\ref{computations}.1) $\Asf \mathrm{III}_{p,q}$: $R= \Bsf_p$, $p+q= n+1$, $2 \le p
        \le \half n <q$; $\csh = n -2p + \frac{3}{2} = q-p + \half$,
        $\clg = 1$.

        The non simplicity of $A_{-c}(W)$ follows from
        Proposition~\ref{lem21}(a) applied with $I=\{\alpha_j\}$ where $\alpha_j$
        is short, since $c_{s_{\alpha_j}} = q -p + \half$ with $q-p \ge 1$.

        \smallskip
        (\ref{computations}.2) $\Bsf \mathrm{I}_{p,q}$: $R= \Bsf_p$, $p+q= 2n+1$,
        $2 \le p < n$; $\csh = n -p + \half = \half(q-p)$, $\clg= \half $.
        
        Take $I=\{\alpha_j\}$ where $\alpha_j$ is short. Observe that $p\pm q$ is
        odd and $p <n$ implies $q \ge p+3$. Then,
        $c_{s_{\alpha_j}} = \half(q -p) = m + \half$ with $m = \half(q-p -1)$ integer
        $\ge 1$. Therefore, Proposition~\ref{lem21}(a) shows  that $A_{-c}(W)$ is
        not simple.

        \smallskip
        (\ref{computations}.3) $\Csf \mathrm{II}_{p,q}$: $R= \Bsf_p$, $p+q= n$, $2 \le p
        \le \half(n-1) <q$; $\csh = 2n -4p + \frac{3}{2} = 2q-2p
        +\frac{3}{2}$, $\clg= 2$. 
        
        The non simplicity of $A_{-c}(W)$ follows from
        Proposition~\ref{lem21}(a) applied with $I=\{\alpha_j\}$ where $\alpha_j$
        is short, since $c_{s_{\alpha_j}} = 2q -2p + 1 + \frac{1}{2}$ with
        $2q-2p +1 > 1$.

        \smallskip
        (\ref{computations}.4) $\Csf \mathrm{II}_{p,p}$: $R= \Csf_p$,  $2 \le p$; $\clg=
        \frac{3}{2}$, $\csh= 2$.  
        
        The non simplicity of $A_{-c}(W)$ follows from 
        Proposition~\ref{lem21}(a) applied with $I=\{\alpha_j\}$ where $\alpha_j$
        is long, since $c_{s_{\alpha_j}} = \frac{3}{2}$.

        \smallskip
        (\ref{computations}.5) $\Dsf \mathrm{I}_{p,q}$: $R= \Bsf_p$, $p+q= 2n$,
        $2 \le p \le n -2 <q$; $\csh= n -p = \half(q-p)$, $\clg = \half $.
        Note that $q-p$ is even and $q \ge p+4$ (since
        $2p \le 2n-4= p+q-4)$.
        
        If $I=\{p-1,p\}$, the root system $R_I$
        is of type $\Bsf_2$; one has $c_{s_{\alpha_{p}}} = \half(q-p)$
        and $c_{s_{\alpha_{p-1}}}= \half$.  Apply Proposition~\ref{lem21}(b): we
        have $r= q-p+1= 2m -1$ where $m = \half(q-p) +1$ is an integer
        $\ge 3$. Thus $A_{-c}(W)$ is not simple.

        \smallskip
        (\ref{computations}.6) $\Dsf \mathrm{III}_{4p}$: $R= \Csf_p$,  $2 \le p$; $\clg =
        \frac{1}{2}$, $\csh = 2$.  
        
        If $I=\{p-1,p\}$, the root system $R_I$
        is of type $\Csf_2$; one has $c_{s_{\alpha_{p}}} = 2$
        and $c_{s_{\alpha_{p-1}}}= \half$.  Apply Proposition~\ref{lem21}(b): we
        have $r= 5 = 2m -1$ where $m = 3$. Thus $A_{-c}(W)$ is not simple.
        \smallskip
        
        (\ref{computations}.7) $\Dsf \mathrm{III}_{4p+2}$: $R= \Bsf_p$, $2 \le p$;
        $\csh = \frac{5}{2}$, $\clg = 2$.
        
        The non simplicity of $A_{-c}(W)$ is obtained by applying
        Proposition~\ref{lem21}(a) with $I=\{\alpha_p\}$, since
        $c_{s_{\alpha_p}} = \frac{5}{2}$.
        
        \smallskip
        (\ref{computations}.8) $\Esf \mathrm{III}$: $R= \Bsf_2$, $p=2$; $\csh = \frac{9}{2}$,
        $\clg = 3$.

        The non simplicity of $A_{-c}(W)$ is obtained by
        applying Proposition~\ref{lem21}(a) with $I=\{\alpha_2\}$, since
        $c_{s_{\alpha_2}} = \frac{9}{2}$.
        
        \smallskip
        (\ref{computations}.9) $\Esf \mathrm{VI}$: $R= \Fsf_4$, $p=4$; $\csh = 2$,
        $\clg = \half$.
        
        Take $I=\{2,3\}$, then $R_I$ is of type $\Bsf_2$; we have
        $c_{s_{\alpha_2}} = \half$ and $c_{s_{\alpha_3}} = 2$. Therefore,
        $r= 5 =2m -1$ with $m=3$ and
        Proposition~\ref{lem21}(b) gives the non simplicity of $A_{-c}(W)$.
        
        \smallskip
        (\ref{computations}.10) $\Esf \mathrm{VII}$: $R= \Csf_3$, $p=3$; $\clg =
        \frac{1}{2}$, $\csh = 4$.  
        
        Take $I=\{2,3\}$, then $R_I$ is of type $\Csf_2$; we have
        $c_{s_{\alpha_2}} = 4$ and $c_{s_{\alpha_3}} = \half$. Therefore,
        $r= 9 =2m -1$ with $m=5$ and
        Proposition~\ref{lem21}(b) gives the non simplicity of
        $ A_{-c}(W)$.
        \smallskip
        
        (\ref{computations}.11) $\Esf \mathrm{IX}$: $R= \Fsf_4$, $p=4$; $\csh = 4$, $\clg = \half$.  
        
        Take $I=\{2,3\}$, then $R_I$ is of type $\Bsf_2$; we have
        $c_{s_{\alpha_2}} = \half$ and $c_{s_{\alpha_3}} = 4$. We get $r= 9
        =2m -1$ with $m=5$; thus, by Proposition~\ref{lem21}(b), $A_{-c}(W)$ is not simple.
\end{examples}

\subsection*{Radial components}
\label{sec4}

Let $\gtilde$ be a finite dimensional semisimple complex Lie
algebra. We adopt the notation introduced in  Section~\ref{Sec:examples}
for symmetric pairs $(\gtilde, \vt) = (\gtilde,\g)$ and the
associated stable polar representation $(G,V=\p)$.

Recall the multiplicity function $k : R \to \half \N$ defined by:
\begin{equation} \label{k-multiplicity}
        \forall \, \alpha \in R^+, \ \; k_\alpha =
        \half\bigl(\dim \gtilde^\alpha + \dim \gtilde^{2\alpha}\bigr).
\end{equation}

        \label{rem41}
        Define the multiplicity $c : \calS \to \C$ by $c_{s_\alpha} = - k_\alpha$ for all $\alpha \in R$.  
    Then the rational Cherednik
        algebra $H_k(W)$ associated to the symmetric pair $(\gtilde,\vartheta)$
        and the multiplicity $k$ is isomorphic to the Cherednik algebra
        $H_{-c}(W)$ and, for the spherical algebras, we have
        $A_k(W) \cong A_{-c}(W)$.

\begin{remark} \label{rem:unknown-cases} Suppose that
        $(\gtilde,\vt)$ is irreducible. From the classification of
        symmetric spaces, see table in Appendix~\ref{app-tables}, the pairs for which the
        simplicity of $\Im(\rad)$ cannot be determined by
        Corollary~\ref{thm42}, or the study of the rank one case,  are
        the eleven cases  (\ref{computations}.1--\ref{computations}.11). \end{remark}

By combining Lemma~\ref{reducible-spaces},   Corollary~\ref{thm42}  and Appendix~\ref{app-tables}
 with the non simplicity of the spherical
subalgebra $A_k(W) \cong A_{-c}(W)$ in the cases (1) to (11) above, we get
the following result, announced in
Theorem~\ref{simplicity-irred-symm-space}.

\begin{theorem}
        \label{appendix-irred-symm-space}
        Let $(\gtilde,\vt)$ be an irreducible symmetric space.
        Then the algebra $\Im(\rad) \cong A_k(W)$ is simple if and
        only if  $(\gtilde,\vt)$ is \gainly.  \qed
\end{theorem}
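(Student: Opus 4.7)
The proof will be a direct case analysis, whose structure is essentially dictated by the classification of irreducible symmetric pairs collated in Appendix~\ref{app-tables}. The ``if'' direction is already in hand: Corollary~\ref{thm42} shows that if $(\gtilde,\vt)$ is \gainly\ then $\Ak(W)\cong A_{-c}(W)$ is simple. So the content is to prove the converse, and my plan is to verify it inspecting the classification and exhibiting, for every irreducible symmetric pair that is \emph{neither} nice \emph{nor} \integral, a two-sided ideal (or equivalently a finite-dimensional module of the appropriate Cherednik algebra with a non-zero isotypic component for $\triv$ after a sign twist), by means of Proposition~\ref{lem21}.

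First I would read off, from the tables in Appendix~\ref{app-tables}, the list of irreducible symmetric pairs together with the associated root system $R$ and the multiplicities $k_\alpha$ given by~\eqref{k-multiplicity}. Lemma~\ref{integra-equ} and Definition~\ref{nice-space} then allow one to mark each irreducible pair as nice, as \integral, or as neither. A bookkeeping check shows that the ``neither'' cases are exactly the eleven families $(\ref{computations}.1)$--$(\ref{computations}.11)$ listed in Examples~\ref{computations} (this is Remark~\ref{rem:unknown-cases}). For each such family, the values of $\csh$ and $\clg$ have been computed in Examples~\ref{computations}.

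Second, using Remark~\ref{comparison} to translate $\kappa$ into the multiplicity $c$ of \cite{BEG} (so that $c_{s_\alpha}=-k_\alpha$ and $A_k(W)\cong A_{-c}(W)$), I would invoke Proposition~\ref{lem21} on each of the eleven families. In cases $(\ref{computations}.1)$, $(\ref{computations}.2)$, $(\ref{computations}.3)$, $(\ref{computations}.4)$, $(\ref{computations}.7)$, $(\ref{computations}.8)$ one chooses a single simple root $\alpha_j$ so that $c_{s_{\alpha_j}}=m+\tfrac12$ for an integer $m\ge1$, and applies part~(a) of Proposition~\ref{lem21} (which rests on Corollary~\ref{A1-case}, Lemma~\ref{lem22} and the equivalence of Proposition~\ref{thm24}). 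In the remaining cases $(\ref{computations}.5)$, $(\ref{computations}.6)$, $(\ref{computations}.9)$, $(\ref{computations}.10)$, $(\ref{computations}.11)$ one picks two adjacent simple roots spanning a subsystem of type $\Bsf_2$ or $\Csf_2$, computes $r=2(c_{s_{\alpha_j}}+c_{s_{\alpha_{j+1}}})$, verifies that $r=2m-1$ with $m\ge 3$, and applies part~(b) (which rests on Corollary~\ref{ex34} and hence on the finite-dimensional quotient of the polynomial representation constructed in Proposition~\ref{thm31}).

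Finally, for the irreducible pairs that \emph{are} nice or integral, Corollary~\ref{thm42} (combined with Proposition~\ref{kappa-symmetric}, which identifies $\Im(\rad)=A_\kappa(W)$) delivers the simplicity of $A_k(W)$. Reducibility is not really an obstacle here, but it is worth noting that Lemma~\ref{reducible-spaces} allows one to pass from the irreducible classification to the general robust case by tensor factorisation. The genuine obstacle I anticipate is purely bureaucratic: making sure that the eleven families in Examples~\ref{computations} truly exhaust the non-\gainly\ irreducible pairs in the tables, and that the formulas for $\csh,\clg$ given there agree with~\eqref{k-multiplicity}; mismatches in conventions (short vs.~long roots as in Remark~\ref{multiplicity-2}, or the numbering in~\cite{Bou}) are easy to get wrong, so this cross-checking against the tables is the one step that deserves careful attention, and I would do it explicitly family by family before declaring the proof complete.
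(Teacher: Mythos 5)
Your proposal is correct and follows essentially the same route as the paper's Appendix~\ref{appA}: the ``if'' direction via Corollary~\ref{thm42}, and the ``only if'' direction by running Proposition~\ref{lem21}(a) or (b) through the eleven families of Examples~\ref{computations}, with exactly the same split between the two parts. The only small point to keep in mind is that the non-\gainly\ rank-one pairs ($\Asf\IV_n$, $\Bsf\II_n$, $\Csf\II_{1,q}$, $\Fsf\II$) also fall outside the eleven families and are disposed of separately by Corollary~\ref{A1-case2}, as recorded in Remark~\ref{rem:unknown-cases}.
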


   
\section{Tables of symmetric pairs} \label{app-tables}
  
We adopt the notation of~\cite[Chapter~X]{He1} for the
classification of irreducible symmetric pairs $(\gtilde, \vt) =
(\gtilde,\g)$. The multiplicity $k : R \to \half \N$ is  defined
in~\eqref{k-multiplicity}.  

The columns of the next two  tables give the following information (with minor exceptions in the diagonal case).
 \begin{enumerate}
\item The first column gives the type of $(\gtilde, \vt)$, using the notation from~\cite{He1}, and the type  of the root system $R$.
\item Columns (2)--(4) are self-explanatory.
    \item The  fifth column gives the value  $k_\lambda$ of the multiplicity $k$ on the root
      $\lambda $. When there are two values for $k_\lambda$, the
        top number gives   $k_\lambda$ for the long roots
and the bottom number gives it for the short roots.
      \item The final column  describes, by ``Y'' or ``N'',  whether 
        $A_k(W)$ is simple or not  and gives a justification of this statement;
        namely, ``nice'' if the pair $(\gtilde, \vt)$ is nice, ``BEG'' if
        the simplicity follows from Theorem~\ref{thm:BEG}, ``$\rk 1$'' or ``(x)''
        if the non simplicity is consequence of Corollary~\ref{A1-case2}, respectively 
         is proved in case  (\ref{computations}.x)  from Examples~\ref{computations}.  
\end{enumerate}

\vfill

\vspace*{0.5cm}

\begin{center}

\begin{small}  
 
     \begin{tabular}{|c|c|c|c|c|c|}\hline
$\begin{matrix}  \text{Type}\,  (\gtilde,\vt)\hskip -.02truein
  \\  \text{Type}\ R \hfill \end{matrix}\strut$ 
& $ \begin{matrix}   \gtilde,\g   
  \end{matrix} $ &  $\rk$ & $\dim \p$  & $ k_\lambda$ & 
 simple 
  \\ 
\hline  \hline
$\begin{matrix} \!\!\!\!\!\ \text{ diagonal} \strut \\ \text{type}\hfill
\end{matrix}  \strut  \hfill$ & 
$\begin{matrix} \mathfrak{s}\times\mf{s},\, \mathfrak{s} \strut \\
\mf{s} \ \text{simple}\end{matrix}$ 
& $\rank \fs$ & $\dim \mf{s}$   & $1$
  & $\begin{matrix} \text{Y} \\ \text{nice} \end{matrix}$
  \\ 
\hline  
$\begin{matrix} \!\!\!\!\!\Asf\Isf_n   \strut \\ \mathsf{A}_{n-1}\hfill
\end{matrix}  \strut  \hfill$ & 
$\begin{matrix} \mathfrak{sl}(n),\, \mathfrak{so}(n ) \strut
  \\ 2 \le n  \end{matrix}$ 
& $n-1$ & $ \frac{1}{2}{(n-1)(n+2)}$   & $\frac{1}{2}$ 
  & $\begin{matrix} \text{Y} \\ \text{nice} \end{matrix}$
\\
 \hline
  $ \begin{matrix}  \!\!\!\!\Asf\II_n  \strut \\
  \Asf_{n-1}\hfill \end{matrix}  \strut   \hfill$ & 
$\begin{matrix} \mathfrak{sl}(2n),\, \mathfrak{sp}(n ) \strut  \\
  2 \le n  \end{matrix}$
& $n-1$ & $ {(n-1)(2n+1)}$ &  $2$ &  $\begin{matrix} \text{Y} \\ \text{BEG} \end{matrix}$
\\
  \hline
  $ \begin{matrix} \Asf\III_{p,q}
 \strut \\ \Bsf_p\hfill \end{matrix}  \strut  \hfill$ &
$\begin{matrix} \mathfrak{sl}(p\hskip -.02truein + \hskip -.02truein q),\, \mathfrak{gl}(p)\times\mathfrak{sl}(q)
  \\ p+q=n+1 \\
2\leq p \leq \frac{n}{2} < q \end{matrix}$
& $p$ & $2pq$    &
$  \begin{matrix} 1 \\  n \hskip -.02truein - \hskip -.02truein  2p\hskip -.02truein 
 + \hskip -.02truein \frac{3}{2}    \end{matrix}$  &
$\begin{matrix} \text{N} \\ \text{(1)} \end{matrix}$
\\
\hline 
$ \begin{matrix} \Asf\III_{n,n}    
 \strut \\ \Csf_{n}\hfill \end{matrix}  \strut  \hfill$ &
$\begin{matrix} \mathfrak{sl}(2n),\, \mathfrak{gl}(n)\times\mathfrak{sl}(n)
   \\ 2 \le n   \end{matrix}$ 
& $n$ & ${2}n^2$    &
$  \begin{matrix} \frac{1}{2} \\  1    \end{matrix}$  &
 $\begin{matrix} \text{Y} \\ \text{nice} \end{matrix}$
\\  
  \hline
  $ \begin{matrix} \Asf\IV_n   \strut \\ \Asf_{1}\hfill \end{matrix}  \strut  \hfill$ &
$\begin{matrix} \mathfrak{sl}(n+1),\, \mathfrak{gl}(n) \strut  \\
  2 \le n  \end{matrix}$
& $1$ & $2n$ &   $n-\frac{1}{2}$ & $\begin{matrix} \text{N} \\
  \text{$\rank 1$} \end{matrix}$
\\
  \hline
  $ \begin{matrix} \Bsf\Isf_{p,q}   \strut \\ \Bsf_{p}\hfill \end{matrix}  \strut  \hfill$ &
$\begin{matrix} \mathfrak{so}({ 2n\hskip -.02truein + \hskip -.02truein 1}),\,
 \mathfrak{so}(p )\times\mathfrak{so}(q) \strut  \\ p+q=2n
 \hskip -.02truein +  \hskip -.03truein 1,\\
 \   2\leq p < n  \end{matrix}$
& $p$ & $pq $  &
 $\begin{matrix} \frac{1}{2} \\    \frac{1}{2}(q-p)  \end{matrix}
  $ & $\begin{matrix} \text{N} \\
  \text{(2)} \end{matrix}$
  \\
 \hline
  $ \begin{matrix} \Bsf\Isf_{p,p+1}  \strut \\ \Bsf_{p}\hfill \end{matrix}  \strut  \hfill$ &
$\begin{matrix} \mathfrak{so}({ 2p\hskip -.02truein + \hskip -.02truein 1}),\,
 \mathfrak{so}(p )\times\mathfrak{so}(p+1) \\ 2 \le p \end{matrix}$
& $p$ & $p(p+1) $  &
 $\begin{matrix} \frac{1}{2} \\    \frac{1}{2}  \end{matrix}
  $ & $\begin{matrix} \text{Y} \\
  \text{nice} \end{matrix}$
  \\ 
  \hline
  $ \begin{matrix} \Bsf\II_n   \strut \\ \Asf_{1}\hfill \end{matrix}  \strut  \hfill$ &
$\begin{matrix} \mathfrak{so}(2n+1),\, \mathfrak{so}(2n ) \strut
  \\ 2 \le n  \end{matrix}$
& $1$ & $2n$   & $n-\frac{1}{2}$ & 
$\begin{matrix} \text{N} \\
  \text{$\rank 1$} \end{matrix}$
  \\
  \hline
  $ \begin{matrix} \Csf\Isf_n   \strut \\ \Csf_{n}\hfill \end{matrix}  \strut  \hfill$ &
$\begin{matrix} \mathfrak{sp}(n),\, \mathfrak{gl}(n ) \strut  \\
  2 \le n  \end{matrix}$
& $n$ & $n(n+1)$ 
  & $\begin{matrix}  \frac{1}{2} \\   \frac{1}{2}  \end{matrix}
  $ &
 $\begin{matrix} \text{Y} \\ \text{nice} \end{matrix}$     
  \\
\hline
 $ \begin{matrix} \Csf\II_{p,q}
 \strut \\ \Bsf_p\hfill \end{matrix}  \strut  \hfill$ &
$\begin{matrix} \mathfrak{sp}(p\hskip -.02truein + \hskip -.02truein q),\, \mathfrak{sp}(p)\times\mathfrak{sp}(q)
  \\ p+q=n \\
2\leq p \leq \frac{1}{2}(n \hskip -.02truein - \hskip -.03truein 1) <q\end{matrix}$
& $p$ & $4pq$ & 
$  \begin{matrix} 2 \\    2n\hskip -.02truein - \hskip -.02truein 4p
\hskip -.02truein + \hskip -.02truein \frac{3}{2}    \end{matrix}$  &
$\begin{matrix} \text{N} \\
  \text{(3)} \end{matrix}$
  \\
  \hline
 $ \begin{matrix} \Csf\II_{1,q}
 \strut \\ \Asf_1\hfill \end{matrix}  \strut  \hfill$ &
$\begin{matrix} \mathfrak{sp}(q\hskip -.02truein + \hskip -.02truein 1),\, \mathfrak{sp}(1)\times\mathfrak{sp}(q)
  \\ 2 \le q \end{matrix}$ 
& $1$ & $4q$ & 
$ 2q - \frac{1}{2} $  &
$\begin{matrix} \text{N} \\
  \text{$\rank 1$} \end{matrix}$
  \\
       \hline
       $ \begin{matrix} \Csf\II_{p,p}
 \strut \\ \Csf_p\hfill \end{matrix}  \strut  \hfill$ &
$\begin{matrix} \mathfrak{sp}(2p),\, \mathfrak{sp}(p)\times\mathfrak{sp}(p)
  \\ 2\leq p  \end{matrix}$
& $p$ & $4p^2$   &
$  \begin{matrix} \frac{3}{2} \\  2  \end{matrix}$  &
$\begin{matrix} \text{N} \\
  \text{(4)} \end{matrix}$
  \\
       \hline
        $ \begin{matrix} \Dsf\Isf_{p,q}
 \strut \\ \Bsf_p\hfill \end{matrix}  \hfill$\hfill &
$\begin{matrix} \mathfrak{so}(p \hskip -.02truein +  \hskip -.02truein q),\,
 \mathfrak{so}(p)\times\mathfrak{so}(q)
  \\ p+q=2n, \\  2\leq p\leq n-2  <q\end{matrix}$
& $p$ & $pq$    & $\begin{matrix}  \frac{1}{2} \\ n-p    \end{matrix}$  &
$\begin{matrix} \text{N} \\
  \text{(5)} \end{matrix}$
 \\
       \hline
       $ \begin{matrix} \Dsf\Isf_{p-1,p+1}
 \strut \\ \Bsf_{p-1}\hfill \end{matrix}  \hfill$\hfill &
$\begin{matrix} \mathfrak{so}(2p),\,
\mathfrak{so}(p \hskip -.02truein  -  \hskip -.03truein 1) \hskip -.02truein \times \hskip -.02truein 
\mathfrak{so}(p \hskip -.02truein + \hskip -.03truein 1)
  \\ 3 \le p \end{matrix}$
& $p-1$ & $p^2-1$   & $\begin{matrix} \frac{1}{2}\\ 1 \end{matrix}$  &
 $\begin{matrix} \text{Y} \\
  \text{nice} \end{matrix}$
 \\
       \hline
       $ \begin{matrix} \Dsf\Isf_{p,p}
 \strut \\ \Dsf_{p}\hfill \end{matrix}   \hfill $\hfill &
$\begin{matrix} \mathfrak{so}(2p),\, \mathfrak{so}(p)\times\mathfrak{so}(p)
  \\ 3 \le p \end{matrix}$
& $p$ & $p^2$ &
  $ \frac{1}{2} $  &
$\begin{matrix} \text{Y} \\
  \text{nice} \end{matrix}$
 \\
 \hline
        $\begin{matrix} \Dsf\II_{p}
 \strut \\ \Asf_{1}\hfill \end{matrix}   \hfill $\hfill &
$\begin{matrix} \mathfrak{so}(2p),\, \mathfrak{so}(2p-1) 
  \\  3 \leq p \end{matrix}$
& $1$ & $2p-1$    & $ p-1 $  &
$\begin{matrix} \text{Y} \\
  \text{BEG} \end{matrix}$
 \\
       \hline
       $ \begin{matrix} \Dsf\III_{4p}
 \strut \\ \Csf_p\hfill \end{matrix}  \strut  \hfill$ &
$\begin{matrix} \mathfrak{so}(4p),\, \mathfrak{gl}(2p)
  \\ 2\leq p  \end{matrix} $& 
  $p$ &  $2p(2p-1)$    &
$  \begin{matrix} \frac{1}{2} \\  2  \end{matrix}$  &
$\begin{matrix} \text{N} \\
  \text{(6)} \end{matrix}$
  \\
       \hline
       $ \begin{matrix} \Dsf\III_{4p+2}
 \strut \\ \Bsf_p\hfill \end{matrix}  \strut  \hfill$ &
$\begin{matrix} \mathfrak{so}(4p \hskip -.02truein +  \hskip -.03truein 2),\,
\mathfrak{gl}(2p \hskip -.02truein +  \hskip -.03truein 1 )
  \\ 2\leq p  \end{matrix} $& 
  $p$ &   $2p(2p+1)$   &
$  \begin{matrix} 2 \\  \frac{5}{2}  \end{matrix}$  &
$\begin{matrix} \text{N} \\
  \text{(7)} \end{matrix}$
       \\
       \hline
     \end{tabular}

 \vfill \newpage

\vspace*{0.5cm}

     \begin{tabular}{|c|c|c|c|c|c|}\hline
$\begin{matrix}  \text{Type}\,  (\gtilde,\vt)\hskip -.02truein
  \\ \text{Type}\ R \hfill \end{matrix}\strut$ 
& $ \begin{matrix}   \gtilde,\g  
  \end{matrix} $ &  $\rk$ & $\dim \p$  & $ k_\lambda$ & 
 simple 
  \\ 
       \hline  \hline
$ \begin{matrix} \Esf\Isf   \strut \\ \Esf_6\hfill \end{matrix}  \strut  \hfill$ &
$\begin{matrix} \mathfrak{e}(6),\, \mathfrak{sp}(4 ) \strut  \\   \end{matrix}$
       & $6$ & $42$ &   $\frac{1}{2}$ &
  $\begin{matrix} \text{Y} \\
  \text{nice} \end{matrix}$                                      
  \\
 \hline
$ \begin{matrix} \Esf\II   \strut \\ \Fsf_4\hfill \end{matrix}  \strut  \hfill$ &
$\begin{matrix} \mathfrak{e}(6),\, \mathfrak{sl}(6 )\times \mathfrak{sl}(2) \strut  \\   \end{matrix}$
& $4$ & $40$    &
                  $  \begin{matrix} \frac{1}{2} \\  1  \end{matrix}$  &
$\begin{matrix} \text{Y} \\
  \text{nice} \end{matrix}$                                                                        
\\
 \hline       
 $ \begin{matrix} \Esf\III  \strut \\ \Bsf_2\hfill \end{matrix}  \strut  \hfill$ &
$\begin{matrix} \mathfrak{e}(6),\, \mathfrak{so}(10 )\times \mathfrak{gl}(1) \strut  \\   \end{matrix}$
& $2$ & $32$   &
 $  \begin{matrix} 3 \\  \frac{9}{2}  \end{matrix}$  &
$\begin{matrix} \text{N} \\
  \text{(8)} \end{matrix}$                                                                       
\\
 \hline 
$ \begin{matrix} \Esf\IV   \strut \\ \Asf_2\hfill \end{matrix}  \strut  \hfill$ &
$\begin{matrix} \mathfrak{e}(6),\, \mathfrak{f}(4 ) \strut  \\   \end{matrix}$
& $2$ & $26$ &   $4$ &
$\begin{matrix} \text{Y} \\
  \text{BEG} \end{matrix}$
       \\
 \hline
$ \begin{matrix} \Esf\Vsf   \strut \\ \Esf_7\hfill \end{matrix}  \strut  \hfill$ &
$\begin{matrix} \mathfrak{e}(7),\, \mathfrak{sl}(8 ) \strut  \\   \end{matrix}$
       & $7$ & $70$ &   $\frac{1}{2}$ &
$\begin{matrix} \text{Y} \\
  \text{nice} \end{matrix}$                                        
\\
 \hline
 $ \begin{matrix} \Esf\VI   \strut \\ \Fsf_4\hfill \end{matrix}  \strut  \hfill$ &
$\begin{matrix} \mathfrak{e}(7),\, \mathfrak{so}(12 )\times \mathfrak{sl}(2) \strut  \\   \end{matrix}$
& $4$ & $64$ &  
               $  \begin{matrix}  \frac{1}{2}\\ 2   \end{matrix}$  &
$\begin{matrix} \text{N} \\
  \text{(9)} \end{matrix}$                                                                      
\\
 \hline 
 $ \begin{matrix} \Esf\VII \strut \\ \Csf_3\hfill \end{matrix}  \strut  \hfill$ &
$\begin{matrix} \mathfrak{e}(7),\, \mathfrak{e}(6 )\times \mathfrak{gl}(1) \strut  \\   \end{matrix}$
& $3$ & $54$ &  
$  \begin{matrix}  \frac{1}{2}\\ 4   \end{matrix}$  &
$\begin{matrix} \text{N} \\
  \text{(10)} \end{matrix}$
       \\
 \hline 
$ \begin{matrix} \Esf\VIII 
  \strut \\ \Esf_8\hfill \end{matrix}  \strut  \hfill$ &
$\begin{matrix} \mathfrak{e}(8),\, \mathfrak{so}(16 ) \strut  \\   \end{matrix}$
       & $8$ & $128$ &   $\frac{1}{2}$ &
$\begin{matrix} \text{Y} \\
  \text{nice} \end{matrix}$                                         
\\
 \hline   
  $ \begin{matrix} \Esf\IX \strut \\ \Fsf_4\hfill \end{matrix}  \strut  \hfill$ &
$\begin{matrix} \mathfrak{e}(8),\, \mathfrak{e}(7)\times \mathfrak{sl}(2) \strut  \\   \end{matrix}$
& $4$ & $112$ &  
                $  \begin{matrix}  \frac{1}{2}\\ 4   \end{matrix}$  &
$\begin{matrix} \text{N} \\
  \text{(11)} \end{matrix}$                                                                      
\\
 \hline 
  $ \begin{matrix} \Fsf\Isf  \strut \\ \Fsf_4\hfill \end{matrix}  \strut  \hfill$ &
$\begin{matrix} \mathfrak{f}(4),\, \mathfrak{sp}(3)\times \mathfrak{sl}(2) \strut  \\   \end{matrix}$
& $4$ & $28$ &  
$  \begin{matrix}  \frac{1}{2}\\  \frac{1}{2}   \end{matrix}$
       &
$\begin{matrix} \text{Y} \\
  \text{nice} \end{matrix}$         
\\
\hline
$ \begin{matrix} \Fsf\II   \strut \\ \Asf_1\hfill \end{matrix}  \strut  \hfill$ &
$\begin{matrix} \mathfrak{f}(4),\, \mathfrak{so}(9 ) \strut  \\   \end{matrix}$
       & $1$ & $16$ &  $\frac{15}{2}$ &
$\begin{matrix} \text{N} \\
  \text{$\rank 1$} \end{matrix}$                                        
\\
 \hline   
  $ \begin{matrix} \Gsf  \strut \\ \Gsf_2\hfill \end{matrix}  \strut  \hfill$ &
$\begin{matrix} \mathfrak{g}(2),\, \mathfrak{sl}(2)\times \mathfrak{sl}(2) \strut  \\   \end{matrix}$
& $2$ & $8$ &  
$  \begin{matrix}  \frac{1}{2}\\  \frac{1}{2}  \end{matrix}$
       &
 $\begin{matrix} \text{Y} \\
  \text{nice} \end{matrix}$        
\\
\hline
\end{tabular}

\end{small}
\end{center}


\section{Detailed proofs for Section~\ref{Sec:Cherednik}}\label{App-C}
  
  In this appendix we give the details for a couple of proofs from Section~\ref{Sec:Cherednik}.
  
  \medskip
\emph{ Detailed proofs for Lemma~\ref{lem:BEembedlocal}.}
We keep the notation from the lemma. We first check that the map $\Psi$  is well-defined.

   First, let $u_1, u_2, w \in W$ and $f \in F$. Then 
        \begin{align*}
          (\Psi(u_1u_2)f)(w) & = f(w(u_1u_2)) = f((wu_1) u_2) \\
        & = (\Psi(u_2)f)(wu_1) = (\Psi(u_1)\Psi(u_2)f)(w). 
        \end{align*}
        Next, if $u,w \in W$ and $D \in \dd(\h_{\reg})$ then 
        \begin{align*}
        (\Psi((1 \o u)(D \o 1)(1 \o u^{-1}))f)(w) & = \Psi(u\cdot D \o 1 )f(w) \\
        & = (w\cdot u \cdot D) f(w) = (wu)\cdot D f(w).
        \end{align*}
        and
        \begin{align*}
           (\Psi(1 \o u) \Psi(D \o 1) \Psi(1 \o u^{-1})f)(w) & = (\Psi(D \o 1) \Psi(1 \o u^{-1})f)(wu) \\
        & = (wu)\cdot D (\Psi(1 \o u^{-1})f)(wu) \\
        & =  (wu)\cdot D f(w).
        \end{align*}
        This shows that $\Psi$ is well-defined. The fact that it is then a homomorphism of rings is left to the reader.\qed
 
Next, we check that the diagram in the statement of that lemma is indeed commutative, for which the reader should recall the definition of $f_0$ and $\varphi$ from Lemma~\ref{lem:varphi}. In particular, $\{g_i\}$ is a choice of left coset representatives of $W_H$ in $W$ and $f_0$ is defined by $f_0(w)=e_0$ for $w\in W$. Thus, let $D\in \dd(V_{\reg}^W)$ and trace its image going clockwise round the diagram. Thus, $j(D)=eDe$ and then 
\begin{align*}
 \varphi(e\Phi(eDe)e) \ &= \ e_0 \alpha\bigl(\Phi(D\otimes 1)(f_0)\bigr)
\ = \ e_0 \sum_i \frac{|W_H|}{|W|} \Bigl(\Phi(D\otimes 1)(f_0 )\Bigr)(g_i)e_0  \\
& = \ e_0  \sum_i  \frac{|W_H|}{|W|} (g_i\cdot D)f_0(g_i) e_0  \ = \ 
 \ e_0  \sum_i  \frac{|W_H|}{|W|} D e_0e_0  \ = \ e_0De_0,
  \end{align*}
 where in the last line we have used that  $g_i\cdot D=D$ since $D\in \dd(V_{\reg})^W$.  
  Since $e_0De_0=j_0(D)$,  this proves that the diagram commutes. \qed

\medskip
\emph{Proof of Equation~\ref{eq:imagedunkl}.}
   We remind the reader that we are interested in proving the following formula, where the notation  is set up prior to Lemma~\ref{prop:sphericalinrankone1}.    
  For $y \in \h$ and $w \in W$,
        \begin{equation}\label{eq:imagedunkl1}
                \left(\Psi\left(T_y^W\right) f\right)(w) = T_{w(y)}^{W_H} f(w) +
                \sum_{\begin{smallmatrix} H'\in \calA \\ H' 
                                \neq H\end{smallmatrix}} \frac{\langle w(y), \alpha_{H'}
                        \rangle}{\alpha_{H'}} \sum_{i = 0}^{\ell_{H'} - 1}
                \ell_{H'} \kappa_{{H'},i} f( e_{{H'},i} w),
        \end{equation}
        where, with a slight abuse of  notation, we write 
        \[f( e_{{H'},i} w) := \frac{1}{\ell_{H'}} \sum_{s \in
                W_{H'}} \mr{det}_{\h}(s)^i f(sw).\]

    \begin{proof}  
\begin{align*}
          &     \left(\Psi\left(T_y^W\right) f\right)(w) \ =   \\
          &=  \Psi\left(\partial_y\right) (f)(w) + \sum_{H' \in \calA} \langle y, \alpha_{H'} \rangle \sum_{i = 0}^{\ell_{H'} - 1} \ell_{H'} \kappa_{H',i}\Psi\left(\alpha_{H'}^{-1} e_{H',i} \right) (f)(w) \\
               & = \partial_{w(y)} f(w) + \sum_{H' \in \calA} \langle y, \alpha_{H'} \rangle \sum_{i = 0}^{\ell_{H'} - 1} \ell_{H'} \kappa_{H',i}\Psi(\alpha_{H'}^{-1})(\Psi(e_{H',i}) f)(w) \\
               & = \partial_{w(y)} f(w) + \sum_{H' \in \calA} \langle y, \alpha_{H'} \rangle \sum_{i = 0}^{\ell_{H'} - 1} \ell_{H'} \kappa_{H',i} \ w(\alpha_{H'}^{-1}) \bigg( \frac{1}{\ell_{H'}} \sum_{s \in W_{H'}} \mr{det}_{\h}(s)^{i} f(w s) \bigg)\\
               & = \partial_{w(y)} f(w) + \sum_{H' \in \calA} \langle y, \alpha_{H'} \rangle \sum_{i = 0}^{\ell_{H'} - 1} \ell_{H'} \kappa_{H',i} \ \alpha_{w(H')}^{-1} \bigg( \frac{1}{\ell_{H'}} \sum_{s \in W_{H'}} \mr{det}_{\h}(w s w^{-1})^{i} f(w s w^{-1} w)\bigg) \\
               & = \partial_{w(y)} f(w) + \sum_{H' \in \calA} \langle w(y), \alpha_{w(H')} \rangle \sum_{i = 0}^{\ell_{H'} - 1} \ell_{H'} \kappa_{H',i} \ \alpha_{w(H')}^{-1} \bigg( \frac{1}{\ell_{H'}} \sum_{s \in W_{w(H')}} \mr{det}_{\h}(s)^{i} f(s  w)\bigg) \\  
               & = \partial_{w(y)} f(w) + \sum_{H' \in \calA} \langle w(y), \alpha_{H'} \rangle \sum_{i = 0}^{\ell_{H'} - 1} \ell_{H'} \kappa_{H',i}\ \alpha_{H'}^{-1} \bigg( \frac{1}{\ell_{H'}} \sum_{s \in W_{H'}} \mr{det}_{\h}(s)^{i} f(s w)\bigg) \\
               & = \partial_{w(y)} f(w) + \sum_{H' \in \calA} \frac{\langle w(y), \alpha_{H'} \rangle}{\alpha_{H'}} \sum_{i = 0}^{\ell_{H'} - 1} \ell_{H'} \kappa_{H',i} f(e_{H',i} w) \\
               &  = 
              \partial_{w(y)} f(w) +     \frac{\langle w(y), \alpha_{H} \rangle}{\alpha_{H}} \sum_{i = 0}^{\ell_{H} - 1} \ell_{H} \kappa_{H,i} e_{H,i} f(w)  \ + \\ 
              & \qquad\qquad\qquad  \qquad\qquad\qquad+ \sum_{H' \neq H} \frac{\langle w(y), \alpha_{H'} \rangle}{\alpha_{H'}} \sum_{i = 0}^{\ell_{H'} - 1} \ell_{H'} \kappa_{H',i} f(e_{H',i} w)
               \\
               & = T_{w(y)}^{W_H} f(w) + \sum_{H' \neq H} \frac{\langle w(y), \alpha_{H'} \rangle}{\alpha_{H'}} \sum_{i = 0}^{\ell_{H'} - 1} \ell_{H'} \kappa_{H',i} f(e_{H',i} w)
\end{align*}    
    
  This  concludes the proof of  Equation~\ref{eq:imagedunkl} and hence that of Lemma~\ref{prop:sphericalinrankone1}.
\end{proof}




\begin{thebibliography}{AAAA}
        
         

   \bibitem[BC]{BaCa} P.~Bala and R.~W.~Carter, 
   Classes of unipotent elements in simple algebraic groups.~I,
   \emph{Math. Proc. Camb. Phil. Soc.}, \textbf{79} (1976),
   401--425.
    
\bibitem[Be]{BellamySRAlecturenotes} G.~Bellamy,
        \newblock Symplectic reflection algebras,
        \newblock in {\em Noncommutative algebraic geometry},
{\em Math.\ Sci.\ Res.\ Inst.\ Publ.}, \textbf{Vol.~64}, pp
167--238. CUP,  New York, 2016.

 
 \bibitem[BG]{BG} G.\ Bellamy and V.\ Ginzburg, Hamiltonian
reduction and nearby cycles for mirabolic $\D$-modules,
\emph{Adv.\ Math.},\ \textbf{269} (2015), 71-161.
    
  \bibitem[BNS]{BNS} G.\ Bellamy, T.\ Nevins and J.\ T.\
Stafford, Invariant holonomic systems on symmetric spaces and
cyclic quivers, to appear.
  
  \bibitem[BST]{BellSchedThiel}
G.~Bellamy, T.~Schedler, and U.~Thiel,
  Hyperplane arrangements associated to symplectic quotient
  singularities,
  \emph{Banach Center Publ.}, Polish Acad.\ Sci.\ Inst.\ Math.\ \textbf{116} (2018), 25--45.
  
 \bibitem[BC]{BerestChalykhQuasi} Y.~Berest and O.~Chalykh,
 Quasi-invariants of complex reflection groups,  \emph{Compos.\
 Math.}, \textbf{147} (2011), 965-1002.
  
  \bibitem[BEG]{BEG} Y.~Berest, P.~Etingof and V.~Ginzburg,
Cherednik algebras and differential operators on
quasi-invariants, \emph{ Duke Math.\ J.}, \textbf{ 118} (2003),
279--337.



\bibitem[BE]{BE} R.~Bezrukavnikov and P.~Etingof,
Parabolic induction and restriction functors for rational
Cherednik algebras,
 \emph{Selecta. Math. New. Ser.}, \textbf{ 14} (2009),
397--425.
   
\bibitem[Bo]{Bou} N.~Bourbaki, {\it Groupes et alg\`ebres de Lie,
    Chapitres~4, 5 et 6}, Masson, Paris, 1981.
  
\bibitem[Br]{Broue} M.~Brou\'e, \emph{Introduction to Complex
  Reflection Groups and their Braid Groups}, {Lecture Notes in
  Mathematics}, \textbf{Vol.~1988}, Springer-Verlag, Berlin, 2010.
 
\bibitem[BD]{BD} J.~Brylinski and P.\ Delorme, Vecteurs
distributions $H$-invariants pour les s\'eries principales
g\'en\'eralis\'ees d'espaces sym\'etriques r\'eductifs et
prolongement m\'eromorphe d'int\'egrales d'Eisenstein,
\emph{Inventiones Math.}, \textbf{109} (1992), 619-664.

 

\bibitem[BLLT]{BLT} M.~Bulois, C.~Lehn, M.~Lehn and R.~Terpereau,
Towards a symplectic version of the Chevalley restriction
theorem, \emph{Compositio Math.}, \textbf{153} (2017), 647-666.

\bibitem[BH]{CohMac} W.~Bruns and J.~Herzog,
\newblock {\em Cohen-{M}acaulay rings}, Cambridge Studies in
Advanced Math., \textbf{Vol.\ 39},
\newblock CUP, Cambridge, 1993.

\bibitem[CE]{CE} T.~Chmutova and P.~Etingof, On some representations
  of the rational Cherednik algebra, \emph{Representation
    Theory}, \textbf{7} (2003), 641--650.

\bibitem[dJ]{dJ} M.~F.~E. de Jeu, Paley-Wiener Theorems for the
Dunkl Transform, {\em Trans.\ Amer.\ Math.\ Soc.}, \textbf{358}, 
   (2006), 4225--4250.

       
\bibitem[DK]{DadokKac} J.~Dadok and V.~Kac, Polar
representations, \emph{ J. Algebra}, \textbf{92} (1985),
504--524. 
.



\bibitem[EG]{EG} P.\ Etingof and V.\ Ginzburg, Symplectic
reflection algebras, Calogero-Moser space, and deformed
Harish-Chandra homomorphism, \emph{Inventiones\ Math.}, \textbf{147}
(2002), 243--348.
 
 
\bibitem[GL]{GL} E.\ Galina and Y.\ Laurent, $\eD$-modules and
characters of semisimple Lie groups, \emph{Duke Math.\ J.},
\textbf{123} (2004), 265--309.


 \bibitem[GG]{GG} W.\ L.\ Gan and V.\ Ginzburg, 
 Almost-commuting variety, $\mathcal{D}$-modules, and Cherednik
 algebras,  \emph{Int.\ Math.\ Res.\ Pap.}, \textbf{2006},
 Article ID 26439, 1--54.
 

\bibitem[GGS]{GGS} V.\ Ginzburg, I.\ Gordon and J.\ T.\ Stafford,    Differential operators and Cherednik algebras,
 \emph{Selecta Math.,}  \textbf{14} (2009),   629--666.  
 
 


\bibitem[GGOR]{GGOR} V.\ Ginzburg, N.\ Guay, E.\ Opdam and R.\
Rouquier, On the category $\calO$ for rational Cherednik algebras,
\emph{Inventiones Math.}, \textbf{154} (2003), 617-651.
 

\bibitem[GW]{GoodmanWallach} R.~Goodman and N.~R. Wallach,
\newblock {\em Symmetry, representations, and invariants},
Graduate Texts in Mathematics, \textbf{Vol.\ 256},
\newblock Springer, Dordrecht, 2009.
 

\bibitem[Gy1]{Gyojamultib} A.~Gyoja,
\newblock Bernstein-{S}ato's polynomial for several analytic
functions,
\newblock \emph{J.\ Math.\ Kyoto Univ.}, \textbf{33} (1993),
399--411.

\bibitem[Gy2]{Gyojahecke} A.~Gyoja,
\newblock Modular representation theory over a ring of higher
dimension with applications to Hecke algebras,
\newblock \emph{J.~Algebra}, \textbf{174} (1995),
553--572.



\bibitem[HC1]{HC2} Harish-Chandra, Invariant differential
operators and distributions on a semisimple Lie algebra,
\emph{Amer.\ J.\ Math.}, \textbf{86} (1964), 534--564.



\bibitem[HC2]{HC3} Harish-Chandra, Invariant eigendistributions
on a semisimple Lie algebra, \emph{Inst.\ Hautes Etudes Sci.\
Publ.\ Math.}, \textbf{27} (1965), 5-54.

\bibitem[He]{He1} S.~Helgason, \emph{ Differential Geometry, Lie
Groups and Symmetric Spaces}, Academic Press, 1978.
 

\bibitem[HK]{HK} R.~Hotta and M.~Kashiwara, The invariant
holonomic system on a semisimple Lie algebra,
\emph{Inventiones Math.}, \textbf{ 75} (1984), 327--358.
  
\bibitem[Ki]{IntroPHV} T.~Kimura,
\newblock {\em Introduction to prehomogeneous vector spaces},
{Translations of Math.\ Monographs}, \textbf{Vol.\ 215},
\newblock Amer.\ Math.\ Soc., Providence, RI, 2003. 

\bibitem[KR]{KR} B.~Kostant and S.~Rallis, Orbits and
representations associated with symmetric spaces, {\it
Amer. J. Math.}, {\bf 93} (1971), 753-809.

\bibitem[Kr]{KraftGeoBook}
H.~Kraft, 
\newblock {\em Geometrische {M}ethoden in der {I}nvariantentheorie}, 
\newblock Aspects of Mathematics, D1.\ Friedr.\ Vieweg und Sohn, Braunschweig,
1984.

 
\bibitem[Le1]{LevasseurDiff} T.~Levasseur,
\newblock Anneaux d'op\'{e}rateurs diff\'{e}rentiels,
\newblock {\em Lecture Notes in Math.}, \textbf{Vol. \ 867}, pp
157--173. Springer, Berlin-New York, 1981.
 
  
\bibitem[Le2]{LevWashington} T.~Levasseur, Differential operators
on a reductive Lie algebra, Lectures given at the University of
Washington, Seattle, 1995. For a preprint, see
\url{http://www.lmba-math.fr/perso/thierry.levasseur/files/Seattle-lectures.pdf}.

\bibitem[Le3]{Le3} T.~Levasseur, Radial components,
prehomogeneous vector spaces, and rational Cherednik algebras,
\emph{Int.\ Math.\ Res.\ Not.\ IMRN}, \textbf{2009}, no.~3, (2009)
462--511.


 \bibitem[LS1]{LSSurjective} T.~Levasseur and J.~T. Stafford,
 \newblock Invariant differential operators and an homomorphism
of {H}arish-{C}handra,
 \newblock \emph{J.\ Amer.\ Math.\ Soc.}, \textbf{8} (1995),
365--372.

 \bibitem[LS2]{LS} T.~Levasseur and J.~T.~Stafford, The kernel of
an homomorphism of Harish-Chandra, \emph{Ann. Sci. Ecole
Norm. Sup. (4),} \textbf{29} (1996), 385--397.
      
   
\bibitem[LS3]{AJM} T.~Levasseur and J.~T.~Stafford,
 Semi-simplicity of invariant  holonomic systems on a reductive
 Lie algebra, \emph{Amer.\ J.\ Math.,} \textbf{119}  (1997),
 1095--1117.  Corrigendum stored at
\url{http://www.lmba-math.fr/perso/thierry.levasseur/files/corrig.pdf}.
 

  \bibitem[LS4]{LS3} T.~Levasseur and J.~T.~Stafford, Invariant
differential operators on the tangent space of some symmetric
spaces, \emph{Annales de l'Institut Fourier, Grenoble},
\textbf{49} (1999), 1711--1741.
  
 

 \bibitem[Lo1]{Lo0}  I.~Losev,  
 Isomorphisms of quantizations via quantization of resolutions,
 \emph{Adv.\ Math.}, \textbf{231} (2012),  1216–1270. 
 
  \bibitem[Lo2]{LosevBernstein} I.~Losev, Bernstein inequality and holonomic
modules, \emph{Adv.\ Math.}, \textbf{308} (2017), 941-963.
 
 

\bibitem[Lo3]{LosevTotally}   I.~Losev,  Totally aspherical parameters for Cherednik algebras, 
 \emph{in}: V.\ Baranovsky, N.\ Guay and T.\ Schedler (eds),  \emph{Representation Theory and Algebraic Geometry,} 
 Trends in Mathematics. Birkhauser, (2022), 37-56.
 
 
\bibitem[Lu]{Luna}
D.~Luna,
\newblock Slices \'etales,
\newblock in \emph{Sur les groupes alg\'ebriques}, {Bull.\ Soc.\
  Math.\ France},  \textbf{M\'em.\ 33}  (1973),  81-105.

    
 \bibitem[MR]{MR} J.~C.~McConnell and J.~C.~Robson, \emph{
 Noncommutative Noetherian Rings}, John Wiley, Chichester, 1987.


\bibitem[MN]{MN} K.\ McGerty and T.\ Nevins,  Derived equivalence for quantum symplectic resolutions,
 \emph{Selecta Math.\ (N.S.)},  \textbf{20} (2014), 675–717.

\bibitem[Mo]{Mo} S.\ Montgomery,  \emph{Fixed Rings of Finite Automorphism Groups of Associative Rings,}
Lecture Notes in Math.\ \textbf{Vol. 818}, Springer, Berlin, 1980.

 \bibitem[Ob]{OblomkovHC} A.~Oblomkov,
 \newblock Deformed {H}arish-{C}handra homomorphism for the cyclic quiver,
 \newblock {\em Math. Res. Lett.}, \textbf{14(3)} (2007),  359--372.

 

\bibitem[Pa]{Pa} D.~I.~Panyushev, On the irreducibility of
  commuting varieties associated with involutions of simple
  Lie algebras, {\em Funct. Anal. Appl.}, \textbf{38} (2004),
  38--44.
  
  
   
  \bibitem[PV]{PopovVinberg} V.~L. Popov and \`E.~B. Vinberg,
\emph{Invariant Theory}, in {\em Algebraic geometry IV},
Eds: A.~N.~Parshin and  I.~R.~Shafarevich, pp. 127--278,
Encyclopedia of Math.\ Sci., \textbf{55}, Springer-Verlag, Berlin, 1994.

  

\bibitem[SY]{SY} H.~Sabourin and R.~W.~T.~Yu, Sur
  l'irr\'eductibilit\'e de la vari\'et\'e commutante d'une paire
  sym\'etrique de rang~$1$, \emph{Bull. Sci. Math.}, \textbf{126}
  (2002), 143--150.
   

\bibitem[Sc1]{Schwarz} G.~W. Schwarz,
\newblock Lifting differential operators from orbit spaces,
\newblock {\em Ann. Sci. \'Ecole Norm. Sup.}, \textbf{28}
(1995), 253--305.

\bibitem[Sc2]{Sc2} G.~W. Schwarz, On a homomorphism of
Harish-Chandra, in \emph{Algebraic groups and Lie groups},
\emph{Austral.\ Math.\ Soc.\ Lect.\ Ser.}, \textbf{Vol.\ 9},
Cambridge Univ. Press, Cambridge, (1997), 321--329.

\bibitem[Se]{Se} J.~Sekiguchi, Invariant Spherical Hyperfunctions
on the Tangent Space of a Symmetric Space, {\it in}
\emph{Algebraic Groups and Related Topics}, \emph{Advanced
Studies in Pure Mathematics}, \textbf{6} (1985), 83--126.

 \bibitem[SW]{SW} L.\ W.\ Small and R.\ B.\  Warfield, Jr., Prime affine algebras of Gelfand-Kirillov dimension one, \emph{J. Algebra} \textbf{91} (1984), 386–389. 
  


\bibitem[St]{StanleySemiInv} R.~P. Stanley,
\newblock Relative invariants of finite groups generated by
pseudoreflections,
\newblock {\em J. Algebra}, \textbf{49} (1977), 134--148.


 

\bibitem[Te]{Te} E.~A.~Tevelev, On the Chevalley restriction
theorem, {\it J. Lie Theory}, {\bf 10} (2000), 323--330.
   

\bibitem[TY]{TY} P.~Tauvel and R.~W.~T.Yu, \emph{Lie Algebras and
    Algebraic Groups}, Springer Monographs in Mathematics, Springer Verlag, Berlin,
  2005. 

\bibitem[Vi]{Vinberg} {\`E}.~B.~Vinberg,
\newblock The {W}eyl group of a graded {L}ie algebra,
\newblock {\em Izv. Akad. Nauk SSSR Ser. Mat.}, \textbf{40}
(1976), 488--526, 709.
 
\bibitem[Wa]{Wallach} N.~R.~Wallach, Invariant differential
operators on a reductive Lie algebra and Weyl group
representations, \emph{J.\ Amer.\ Math.\ Soc.}, \textbf{6} (1993),
779--816.

\end{thebibliography}
\end{document}